\numberwithin{equation}{section}
\theoremstyle{plain}
\newcommand\dashto{\mathrel{
  -\mkern-6mu{\to}\mkern-20mu{\color{white}\bullet}\mkern12mu
}}
\newtheorem{theorem}{Theorem}[section]
\newtheorem{lemma}[theorem]{Lemma}
\newtheorem{proposition}[theorem]{Proposition}
\newtheorem{corollary}[theorem]{Corollary}
\newtheorem{conjecture}[theorem]{Conjecture}
\newcommand{\inth}{\textstyle \int}
\theoremstyle{definition}
\newtheorem{definition}[theorem]{Definition}
\newtheorem{example}[theorem]{Example}
\newtheorem{convention}[theorem]{Convention}
\newtheorem{remark}[theorem]{Remark}
\newtheorem{question}[theorem]{Question}
\let\c@equation\c@theorem  
\DeclareMathOperator{\GK}{GK}
\DeclareMathOperator{\Spec}{Spec}
\DeclareMathOperator{\Det}{Det}
\DeclareMathOperator{\Aut}{Aut}
\DeclareMathOperator{\GKdim}{GKdim}
\DeclareMathOperator{\End}{End} 
\DeclareMathOperator{\p}{{\sf p}}
\newcommand{\fm}{\mathfrak{m}}
\newcommand{\n}{\underline{n}}
\newcommand{\mop}{\operatorname{mop}}
\newcommand{\be}{\begin{enumerate}}
\newcommand{\ee}{\end{enumerate}}
\newcommand{\bq}{\begin{eqnarray*}}
\newcommand{\eq}{\end{eqnarray*}}
\newcommand{\bqn}{\begin{eqnarray}}
\newcommand{\eqn}{\end{eqnarray}}
\begin{document}

\title{Noncommutative Cyclic Isolated Singularities}

\author{Kenneth Chan, Alexander Young, James J. Zhang}

\address{(Chan) Department of Mathematics, Box 354350,
University of Washington, Seattle, Washington 98195, USA}

\email{kenhchan@math.washington.edu, ken.h.chan@gmail.com}

\address{(Young) Department of Mathematics,
DigiPen Institute of Technology, Redmond, WA 98052, USA}

\email{young.mathematics@gmail.com}

\address{(Zhang) Department of Mathematics, Box 354350,
University of Washington, Seattle, Washington 98195, USA}

\email{zhang@math.washington.edu}

\begin{abstract}
The question of whether a noncommutative graded quotient 
singularity $A^G$ is isolated depends on a subtle invariant 
of the $G$-action on $A$, called the pertinency. We prove a 
partial dichotomy theorem for isolatedness, which applies to 
a family of noncommutative quotient singularities arising 
from a graded cyclic action on the $(-1)$-skew polynomial 
ring. Our results generalize and extend some results of Bao, 
He and the third-named author and results of Gaddis, Kirkman, 
Moore and Won.
\end{abstract}

\subjclass[2010]{16E65, 16W22, 16S35, 16S38, 14J17}


\keywords{Graded isolated singularity, pertinency, group action, 
Auslander theorem, Gelfand-Kirillov dimension}

\maketitle


\setcounter{section}{-1}
\section{Introduction}
\label{xxsec0}

Auslander \cite{Au} proved that if $G$ is a small finite subgroup of 
${\text{GL}}_n({\mathbb C})$, acting linearly on the symmetric algebra 
over ${\mathbb C}$ (namely, the commutative polynomial ring) 
$R:= {\mathbb C}[{\mathbb C}^{\oplus n}]$, with fixed subring $R^G$, 
then the natural map 
$$R\#  G\to \End_{R^G}(R)$$ 
is an isomorphism of graded algebras. Here $R\#  G$ denotes the skew 
group algebra associated to the $G$-action on $R$ and the hypothesis 
of $G$ being {\it small} means that $G$ does not contain any 
pseudo-reflections (e.g. $G$ is a finite subgroup of 
${\text{SL}}_n({\mathbb C})$). This theorem plays an important role 
in the McKay correspondence, relating representations of $G$ and those 
of $R^G$; and in the special case of dimension two, further relating 
configuration of the exceptional fibers in the minimal resolution of 
$\Spec R^G$. The noncommutative version of this theorem of Auslander 
is an important ingredient in establishing a noncommutative McKay 
correspondence, see \cite{CKWZ1, CKWZ2} for some recent developments. 
In \cite{BHZ1, BHZ2}, a numerical invariant was introduced for a 
semisimple Hopf algebra action on a (not necessarily commutative) 
algebra $R$ with finite Gelfand-Kirillov dimension (or GKdimension 
for short). The {\it pertinency} of a Hopf algebra $H$-action on $R$ 
\cite[Definition 0.1]{BHZ1} is defined to be
$$\p(R,H):=\GKdim (R)-\GKdim (R\# H/(e_0))$$
where $(e_0)$ is the two-sided ideal of the smash product $R\# H$ 
generated by the element $e_0:=1\# \inth$, where $\inth$ denotes an 
integral of $H$. One of the main results in \cite{BHZ1, BHZ2} is the 
following.

\begin{theorem} \cite[Theorem 0.3]{BHZ1}
\label{xxthm0.1}
Let $R$ be a noetherian, connected graded, Artin-Schelter regular, 
Cohen-Macaulay algebra of GKdimension at least 2. Let $H$ be a 
semisimple Hopf algebra acting on $R$ inner-faithfully and 
homogeneously. Then the following are equivalent:
\begin{enumerate}
\item[(1)]
$\p(R,H)\geq 2$.
\item[(2)]
The natural map $R\# H \to \End_{R^H}(R)$ is an isomorphism of 
graded algebras.
\end{enumerate}
\end{theorem}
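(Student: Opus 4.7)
The plan is to exploit the idempotent $e := e_0 = 1\# \int$ (genuinely idempotent since $H$ is semisimple with normalized integral) and the Morita context between $A := R\# H$ and $B := R^H$. Standard Hopf-theoretic identifications give $eAe \cong B$ and $Ae \cong R$ as $(A,B)$-bimodules, so the natural map $\phi: A \to \End_B(R)$ becomes the canonical map $A \to \End_B(Ae)$ sending $a$ to left multiplication on $Ae$. The key observation I want to exploit is that the obstruction to $\phi$ being an isomorphism is captured by the two-sided trace ideal $(e_0) = AeA$: both $\ker \phi$ and $\coker \phi$ can be expressed in terms of $\Hom$ and $\Ext^1$ of $A/(e_0)$ against $A$ via a Morita-context diagram chase starting from the sequence $Ae \otimes_B eA \xrightarrow{\mu} A \to A/(e_0) \to 0$.

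Next, I would transfer the AS-regular, noetherian, and Cohen--Macaulay hypotheses from $R$ to $A$. Since $H$ is semisimple and acts homogeneously, $A$ is a finitely generated free $R$-module, and these properties pass to $A$ with the same GKdim. This provides a grade theory compatible with GKdim: for any finitely generated graded $A$-module $M$,
\[
j_A(M) := \min\{i : \Ext^i_A(M,A) \neq 0\} = \GKdim(A) - \GKdim(M).
\]
In particular, $\Hom_A(M, A) = \Ext^1_A(M, A) = 0$ if and only if $\GKdim(M) \leq \GKdim(A) - 2$.

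Combining the two preceding steps yields the theorem. Applying the grade formula to $M = A/(e_0)$, both $\ker \phi$ and $\coker \phi$ vanish if and only if $\GKdim(A/(e_0)) \leq \GKdim(A) - 2$, and the latter is precisely $\p(R,H) \geq 2$ (using $\GKdim(A) = \GKdim(R)$, which holds because $H$ is finite-dimensional). Both directions of the stated equivalence then fall out simultaneously, without needing a separate argument for each implication.

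The main obstacle is making the first step rigorous: the identification of $\ker \phi$ and $\coker \phi$ with $\Hom$ and $\Ext^1$ against $A/(e_0)$ is nontrivial because $e_0$ does not annihilate $R$, so the connection is necessarily indirect. This will require a careful Morita-context calculation using the identifications $Ae \cong R$ and $eAe \cong B$, together with attentive bookkeeping of left versus right $A$-module structures. A secondary technical point is verifying that the grade formula holds in the strong \emph{equality} form required, which is where AS-regularity enters essentially, providing a dualizing complex that forces grade and codimension (measured by GKdim) to coincide.
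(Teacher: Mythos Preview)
The paper does not prove Theorem~\ref{xxthm0.1}; it is quoted verbatim from \cite[Theorem~0.3]{BHZ1} as a background result in the introduction, and no proof (or sketch) appears anywhere in the present paper. So there is no ``paper's own proof'' to compare your proposal against.

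That said, your outline is essentially the strategy carried out in \cite{BHZ1}. The Morita-context setup with the idempotent $e_0$, the identifications $e_0(R\#H)e_0\cong R^H$ and $(R\#H)e_0\cong R$, the passage of Artin--Schelter regularity and the Cohen--Macaulay property from $R$ to $R\#H$, and the reduction of the isomorphism question to a grade condition on $(R\#H)/(e_0)$ via the Auslander/Cohen--Macaulay equality $j_A(M)=\GKdim A-\GKdim M$ is precisely the architecture of that argument. You have also correctly located the genuinely delicate step: extracting $\ker\phi$ and $\operatorname{coker}\phi$ from the sequence $Ae\otimes_{eAe}eA\to A\to A/(e_0)\to 0$ requires an intermediate identification (roughly, of $\Hom_A(AeA,A)$ with $\End_{eAe}(Ae)$) together with careful left/right bookkeeping, and this is where most of the work in \cite{BHZ1} sits. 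Your proposal is a faithful high-level summary of that proof; filling in the first step would amount to reproducing the core lemmas of \cite{BHZ1}.
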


The above theorem is useful for studying quotient singularities $R^H$ 
and for connecting the representation theory of $H$ and that of $R^H$. 
Several groups of researchers have computed the pertinency $\p(R,H)$ 
in different situations. A lower bound of the pertinency for the 
cyclic permutation action on the $(-1)$-skew polynomial rings and for 
the group actions on the universal enveloping algebra of some Lie 
algebras was given in \cite{BHZ1,BHZ2}; in \cite{GKMW}, the authors 
computed the pertinency for many new examples; the authors in \cite{HZ} 
introduced a new method of computing pertinency by using pertinent 
sequences; the paper \cite{CKZ} provided a lower bound of the pertinency 
for group coactions on noetherian graded down-up algebras.

Although many of these ideas can be applied to the Hopf algebra setting, 
in this paper we only consider group actions, namely, $H$ is a group 
algebra over a finite group $G$. When $G$ is acting on an algebra $R$, 
we usually assume that this action is inner-faithful. 

In algebraic geometry, singularities have been studied extensively. 
We recall the following basic result. When a small finite subgroup 
$G\subseteq {\text{GL}}_n({\mathbb C})$  acts naturally on the 
vector space $V:={\mathbb C}^{\oplus n}$, the quotient $V/G:
={\text{Spec}}({\mathbb C}[V]^{G})$ has isolated singularities if 
and only if $G$ acts freely on $V\setminus \{0\}$, see 
\cite[Lemma 2.1]{MSt}, \cite[Corollary to Lemma 2]{Fu} and 
\cite[p.7359]{MU1}.

In noncommutative algebraic geometry, Ueyama gave the following 
definition of a graded isolated singularity \cite[Definition 2.2]{Ue}. 
Let $B$ be a noetherian connected graded algebra. Then $B$ is a 
{\it graded isolated singularity} if the associated noncommutative 
projective scheme ${\text{tails}}\; B$ (in the sense of \cite{AZ}) 
has finite global dimension. Let $R$ be a noetherian Artin-Schelter 
regular algebra and $G$ a finite subgroup of the graded algebra 
automorphism group $\Aut_{gr}(R)$. Mori-Ueyama \cite[Theorem 3.10]{MU1} 
proved that if $\p(R,G)\geq 2$, then $R^G$ is a graded isolated 
singularity if and only if $\p(R,G)=\GKdim R$ (which is the largest 
possible). This result was extended to the Hopf algebra setting, 
namely, replacing $G$ by a semisimple Hopf algebra, in \cite{BHZ1}. 
The first few examples of graded isolated singularities in the 
noncommutative setting were given in 
\cite[Theorem 1.4, Examples 3.1, 4.7 and 5.5]{Ue} by mimicking
the commutative criterion of free action of $G$ on $V\setminus 
\{0\}$. More examples of graded isolated singularities were given 
in \cite{CKWZ1, CKWZ2, BHZ2, GKMW}. One example of graded isolated 
singularities in dimension three was given in \cite[Lemma 2.11(1)]{CKZ}. 
A more interesting example is \cite[Examples 5.4]{Ue} or 
\cite[Example 3.1]{KKZ1}, where the $G$-action on the degree one 
piece of the regular algebra $R$ is not free. We call such a graded 
isolated singularity {\it non-conventional} [Definition \ref{xxdef10.1}].

Since Mori-Ueyama's condition of maximal pertinency is not easy to 
check in general, we only obtain some special examples of graded 
isolated singularities in high GKdimension \cite{BHZ2}. It would be 
nice to understand exactly when the pertinency is maximal, but it 
seems extremely difficult to achieve this goal. The main object of 
this paper is to calculate a family of pertinencies all together, 
using induction. As a consequence, we obtain new examples of graded 
isolated singularities in arbitrarily large GKdimension. 

We now fix some notation. Let $\Bbbk$ be an algebraically closed 
field of characteristic zero. Let $n$ be an integer $\geq 2$. The 
algebra that we are interested in is the $(-1)$-skew polynomial ring 
$$\Bbbk_{-1}[\mathbf{x}]:=\Bbbk_{-1}[x_0,\ldots,x_{n-1}]$$ 
that is generated by $\{x_0,\ldots,x_{n-1}\}$ and subject to the 
relations
$$x_i x_j=(-1)x_j x_i$$
for all $i\neq j$. Let $C_n$ be the cyclic group of order $n$
acting on $\Bbbk_{-1}[\mathbf{x}]$ by permuting the generators
of the algebra cyclically, namely, $C_n$ is generated by
$\sigma=(012\cdots n-1)$ of order $n$ that acts on the generators by
$$\sigma \ast x_i= x_{i+1}$$
for all $i\in {\mathbb Z}_n:={\mathbb Z}/n {\mathbb Z}$. 
We have two results which establish a partial dichotomy.

\begin{theorem}
\label{xxthm0.2} 
Let $A:=\Bbbk_{-1}[\mathbf{x}]$ and $G:=C_n$. If $n=2^a p^b$ for 
some prime $p\geq 7$ and integers $a,b\geq 0$, then $\p(A, G)=
\GKdim (A)=n$. As a consequence, $A^G$ is a graded isolated 
singularity.
\end{theorem}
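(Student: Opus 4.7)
The theorem reduces to the pertinency computation $\p(A,G)=n$: once this is established, the Mori--Ueyama criterion recalled in the introduction immediately gives that $A^G$ is a graded isolated singularity, since $A$ is noetherian, connected graded, Artin--Schelter regular of GKdimension $n\geq 2$. By definition of pertinency and the fact that $\p(A,G)\leq\GKdim A=n$, this is equivalent to showing
\[
\GKdim\bigl(A\# G/(e_0)\bigr)=0,
\]
i.e.\ that $B:=A\# G/(e_0)$ is finite-dimensional over $\Bbbk$. Writing $ne_0=\sum_{i=0}^{n-1}\sigma^i$ and using $\sigma x_j=x_{j+1}\sigma$ together with $x_ix_j=-x_jx_i$, the ideal $(e_0)$ unfolds as a large family of signed symmetrized expressions in the $x_i$, and the task is to show these relations are strong enough to cut $A\# G$ down to finite dimension.

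The plan is to induct on the exponent pair $(a,b)$ in the factorization $n=2^ap^b$, taking the pure prime-power cases as bases. The pure 2-power case $b=0$ is already in the literature via the work of Bao--He--Zhang and its extension in \cite{GKMW}, so the substantive base case is $n=p^b$ with $p\geq 7$. For the inductive step from $n/p$ to $n$ (respectively from $n/2$ to $n$), I would use the subgroup chain $C_{n/p}\leq C_n$: the pertinent sequence already known for the smaller cyclic action on $\Bbbk_{-1}[x_0,\ldots,x_{n/p-1}]$ should lift and be augmented in $A\# C_n$ to produce a pertinent sequence of the required total length $n$. The mechanics of the augmentation would follow the pertinent sequence method of \cite{HZ}: construct homogeneous $\eta_1,\ldots,\eta_n\in A\# G$ such that $\GKdim\bigl(A\# G/(e_0,\eta_1,\ldots,\eta_k)\bigr)$ drops by exactly one at each step and vanishes at $k=n$.

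The principal obstacle is the base case $n=p^b$. The combinatorial identities needed to produce vanishing relations in $B$ depend on cancellations of the $\pm$ signs coming from the $(-1)$-skew commutation, and the width of these identities grows with $p$; for small primes the cancellations either coarsen too much or fail outright, which is presumably the reason $p=3,5$ must be excluded. Isolating a uniform identity valid for all $p\geq 7$ and verifying that it propagates through the induction is the hardest step. A secondary technical issue will be ensuring that when both the 2-power and $p$-power reductions are combined, the signs contributed by each do not interfere destructively; handling this likely requires a careful bookkeeping of the induction so that the two reductions are carried out in a compatible order.
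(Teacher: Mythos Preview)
Your proposal is a strategy outline rather than a proof, and it explicitly concedes the missing piece: you say the principal obstacle is the base case and that isolating a uniform identity valid for all $p\geq 7$ is the hardest step, but you do not supply that identity or any computation approximating it. The paper's proof is essentially \emph{all} about that step. Without it you have only the easy direction (reduction to finite-dimensionality of $A\# G/(e_0)$) and a plausible induction shell.

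It is also worth noting that the paper's actual route is structurally different from what you sketch. Rather than working in the $x_i$ basis and lifting pertinent sequences along $C_{n/p}\leq C_n$, the paper passes to the eigenbasis $b_\gamma=\frac{1}{n}\sum_i\omega^{i\gamma}x_i$ and the central elements $c_j=[b_k,b_{j-k}]$, and reduces the problem to showing that each $c_i$ is nilpotent modulo certain right ideals $\overline{B}_j\overline{A}$. The induction is over proper divisors of $n$ (Proposition~\ref{xxpro6.8}, Corollary~\ref{xxcor7.8}) and uses algebra maps $\theta_{m,n}:\tilde A\to A$ and $\pi_{n,m}:A\to\tilde A$ between skew polynomial rings of different sizes, not subgroup inclusions. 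The decisive technical ingredient is the notion of an \emph{$n$-special} integer $i_0$ (Definition~\ref{xxdef7.1}); a sequence of explicit commutator calculations (Lemmas~\ref{xxlem7.4}--\ref{xxlem7.6}) shows that when such an $i_0$ exists, one can walk from $b_{i_0}$ to $c_i^N\in\overline{B}_{i_0}\overline{A}$ for every relevant $i$. The arithmetic verification that $i_0=2$ is $n$-special precisely when $n=2^ap^b$ with $p\geq 7$ (Lemma~\ref{xxlem7.2}) is where the restriction $p\neq 3,5$ enters, and this is the uniform identity you were looking for. Your proposed pertinent-sequence lifting from \cite{HZ} may be workable, but nothing in the paper suggests it organizes the combinatorics as cleanly, and you would still owe the reader the concrete relations that force $\GKdim$ down at each stage.
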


\begin{remark}
\label{xxrem0.3}
\begin{enumerate}
\item[(1)]
Theorem \ref{xxthm0.2} is a generalization of \cite[Examples 5.4]{Ue}
(when $n=2$) and \cite[Theorem 5.7(4)]{BHZ1} (when $n=2^a$ for 
some $a\geq 1$).
\item[(2)]
Although \cite[Examples 5.4]{Ue} and \cite[Theorem 5.7(4)]{BHZ1} have 
already provided examples of non-conventional graded isolated 
singularities of a similar type, Theorem \ref{xxthm0.2} is still 
quite surprising and counter-intuitive. 

Note that $\sigma\mid_{V}$ (where $V=\oplus_{i=0}^{n-1} \Bbbk x_i$) 
has eigenvalues $\{1, \xi,\xi^2,\ldots,\xi^{n-1}\}$ where $\xi$ is 
a primitive  $n$th root of unity. In particular, there is an 
eigenvalue of $\sigma$ on $V$ that is $1$ (which is not a primitive 
$n$th root of unity) with eigenvector $\sum_{i=0}^{n-1}x_i$ in $V$, 
or equivalently, the isolated singularity is non-conventional.
\item[(3)]
In fact, almost all graded isolated singularities considered in this 
paper will be non-conventional. One aim of this paper is to show that 
non-conventional graded isolated singularities are common in the 
noncommutative setting.
\item[(4)]
The proof of Theorem \ref{xxthm0.2} is very involved, using several
steps of reduction and induction. We hope to have a more conceptual 
proof in the future.
\end{enumerate} 
\end{remark}

When $p=3$ or $5$, Theorem \ref{xxthm0.2} fails.

\begin{theorem}
\label{xxthm0.4} 
Let $A:=\Bbbk_{-1}[\mathbf{x}]$ and $G:=C_n$. If either $3$ or $5$ 
divides $n$, then $\p(A, G)<\GKdim (A)=n$. Consequently, $A^G$ is 
not a  graded isolated singularity.
\end{theorem}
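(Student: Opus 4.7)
The plan is to prove $\mathfrak{p}(A,G) < n = \GKdim(A)$, equivalently that $A \# G/(e_0)$ has positive GK-dimension. The ``Consequently'' statement then follows immediately from the Mori-Ueyama criterion \cite[Theorem 3.10]{MU1}: in this setting $\mathfrak{p}(A,G) \geq 2$ is known from \cite{BHZ1}, so the pertinency is maximal iff $A^G$ is graded isolated, and failure of maximal pertinency gives the desired non-isolated conclusion.

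To show $A \# G/(e_0)$ is infinite-dimensional, I would construct an explicit infinite-dimensional quotient. The key structural input is the concrete description of $(e_0) = A e_0 A$: writing elements of $A \# G$ in the form $\sum_j c_j \sigma^j$ with $c_j \in A$, an element lies in $(e_0)$ precisely when $(c_0, \ldots, c_{n-1})$ can be written as $\tfrac{1}{n}\bigl(\sum_i a_i (\sigma^j \ast b_i)\bigr)_{j=0}^{n-1}$ for some $a_i, b_i \in A$. In particular, the ``trace'' relations $\sum_j (\sigma^j \ast a)\, \sigma^j \equiv 0$ and $a \sum_j \sigma^j \equiv 0$ hold modulo $(e_0)$ for every $a \in A$, and any proposed witness must survive them. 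Further useful structure is provided by the $\sigma$-eigenvectors $v_k := \sum_j \xi^{-jk} x_j$ (with $\xi$ a primitive $n$-th root of unity), which satisfy $\sigma v_k = \xi^k v_k \sigma$ and generate a subalgebra on which the $G$-action is diagonalized.

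I would first establish the base cases $n = p \in \{3,5\}$ by identifying a specific element whose image in the quotient has infinite order. A natural candidate is built from $v_0 = x_0 + \cdots + x_{n-1}$, since $v_0^2 = \sum_i x_i^2$ is central and $\sigma$-fixed, so the polynomial subring $\Bbbk[v_0]$ embeds in $A^G$; the task is to show that infinitely many of its powers (or twisted combinations with the $\sigma^j$) survive modulo $(e_0)$ for $p \in \{3,5\}$. For general $n$ divisible by $p$, I would reduce to the base case via the subgroup $H = \langle \sigma^{n/p}\rangle$ of order $p$: since $e_0^H \cdot e_0^G = e_0^G$, we have the inclusion of ideals $(e_0^G) \subseteq (e_0^H)$ inside $A \# G$, yielding a surjection $A \# G/(e_0^G) \twoheadrightarrow A \# G/(e_0^H)$; it therefore suffices to prove the latter has positive GK-dimension, which is close to the base case via the $A \# H$-module structure of $A \# G$.

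The main obstacle is the base case. The arithmetic obstruction specific to $p \in \{3,5\}$—and absent for $p \geq 7$ in accordance with Theorem \ref{xxthm0.2}—must be pinpointed, likely an identity among small $p$-th roots of unity interacting with the $(-1)$-skew sign structure at some bounded degree. Verifying non-triviality of the witness modulo $(e_0)$ via the explicit $\sigma^j$-coefficient description above is where the bulk of the technical work will lie.
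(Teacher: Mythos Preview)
Your two-step plan (base cases $p\in\{3,5\}$, then reduction for general $n$) matches the paper's architecture, but both steps are underspecified and the reduction step as written has a real gap.

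The subgroup inclusion $(e_0^G)\subseteq (e_0^H)$ is correct, and since $A\#G$ is free of finite rank over $A\#H$ (with $G$ abelian so that $\sigma^j(e_0^H)=(e_0^H)\sigma^j$), one does get $\GKdim\bigl(A\#G/(e_0^G)\bigr)\geq \GKdim\bigl(A\#H/(e_0^H)_{A\#H}\bigr)$. But this right-hand side is \emph{not} your base case: here $H\cong C_p$ acts on the $n$-variable ring $A$, whereas the base case you set up concerns $C_p$ acting on the $p$-variable ring $\tilde A$. The phrase ``close to the base case via the $A\#H$-module structure'' hides this discrepancy. You can bridge it by constructing an $H$-equivariant surjection $A\to\tilde A$ (for example, kill the $x_i$ lying outside one $H$-orbit of the generators), but that step is missing from your outline. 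The paper avoids subgroups entirely: it defines a surjection $\pi_{n,m}\colon A\to\tilde A$ on the $\sigma$-eigenbasis by $b_j\mapsto \tilde b_{j\bmod m}$, checks that $\pi_{n,m}(R_j)\subseteq\tilde R_j$, and combines this with the identification $\bar e_jE\cong A/R_jA$ to conclude $\GKdim E\geq\GKdim\tilde E$ directly.

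For the base cases, your candidate $v_0^2=\sum_i x_i^2$ is (up to scalar) the paper's $c_0$, and it is indeed non-nilpotent in $E$ for $n\in\{3,5\}$; so your instinct is right. But you offer no mechanism for proving its powers survive modulo $(e_0)$, and direct verification against the coefficient description you wrote down is not obviously tractable. The paper's route is more concrete: using $\bar e_jE\cong A/B_jA$ and the reduction to $j=1$ (valid since $n$ is prime), it computes $B_1A$ explicitly---for $n=3$ one finds $B_1A=b_1A+c_1A$, from which $\GKdim(A/B_1A)=1$ follows by two applications of the inequality $\GKdim M\geq\GKdim(M/cM)\geq\GKdim M-1$; for $n=5$ it exhibits an explicit ideal $I\supseteq B_1A$ with $\GKdim(A/I)=1$. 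This is where the arithmetic peculiar to $3$ and $5$ shows up concretely (the generating set $B_1$ is small enough to be contained in a codimension-one ideal), rather than as an unspecified root-of-unity identity.
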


Combining the above two theorems, if $n=2^ap^b$ for some prime 
number $p$, then $A^{C_n}$ is a graded isolated singularity if 
and only if $p\neq 3,5$. It is not obvious to us why the primes 3 
and 5 are different from other primes in this situation. Based on 
the above two results we make a conjecture.

\begin{conjecture}
\label{xxcon0.5} 
Let $A:=\Bbbk_{-1}[\mathbf{x}]$ and $G:=C_n$. Then $A^G$ is 
a graded isolated singularity if and only if $n$ is not 
divisible by $3$ and $5$. 
\end{conjecture}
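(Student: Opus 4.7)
The ``only if'' direction is exactly Theorem \ref{xxthm0.4}: whenever $3\mid n$ or $5\mid n$, one has $\p(A,C_n)<\GKdim A$, so $A^{C_n}$ fails to be a graded isolated singularity by the Mori--Ueyama criterion recalled in the introduction. So the content lies in the ``if'' direction: assuming that neither $3$ nor $5$ divides $n$, one must prove $\p(A,C_n)=n$. Write $n=2^{a_0}p_1^{a_1}\cdots p_k^{a_k}$ with distinct primes $p_1<\cdots<p_k$, each $\geq 7$. The natural plan is induction on $k$, the number of odd prime divisors. The base case $k\leq 1$ is precisely Theorem \ref{xxthm0.2}, which covers $n=2^{a_0}p^b$.

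For the inductive step, write $n=mp^b$ with $p=p_k$, $\gcd(m,p)=1$, and $m$ satisfying the inductive hypothesis. Two strategies look promising. The first is to adapt the pertinent-sequence method of \cite{HZ}: construct a pertinent sequence of length $n$ in $A\#C_n$ by grafting an $m$-term sequence adapted to the $C_m$-suborbit structure onto a $p^b$-term sequence handling the $C_{p^b}$-suborbit structure, then verify pertinency by an inductive counting argument. The second is to analyse the two-sided ideal $(e_0)\subseteq A\#C_n$ by the ``several steps of reduction and induction'' foreshadowed in Remark \ref{xxrem0.3}(4) for Theorem \ref{xxthm0.2}, aiming to force $\GKdim(A\#C_n/(e_0))=0$ by producing sufficiently many elements of the ideal, and using the inductive hypothesis on $C_m$ to discharge part of the computation.

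The principal obstacle, and the reason this should be treated as genuinely hard rather than as a formal corollary, is that $A=\Bbbk_{-1}[x_0,\ldots,x_{n-1}]$ does \emph{not} decompose as a $C_n$-equivariant tensor product of an $m$-variable piece and a $p^b$-variable piece: the subgroups $C_m=\langle\sigma^{p^b}\rangle$ and $C_{p^b}=\langle\sigma^m\rangle$ act simultaneously on all $n$ variables, so pertinency for $C_n$ does not inherit from its cyclic subgroups in any tautological way. Consequently, the modular arithmetic driving the single-prime argument of Theorem \ref{xxthm0.2} (which fails for $p=3,5$ and only for those primes) must be re-derived in the more entangled setting where congruences modulo different primes $\geq 7$ interact through the single action on the $n$ variables. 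I would expect the bulk of the work to be a careful bookkeeping argument verifying that the only obstructions to maximal pertinency come from divisibility by $3$ or $5$, with no new ``forbidden'' residues emerging once several primes are simultaneously present. The fact that the authors state this only as a conjecture suggests precisely that this entangled multi-prime bookkeeping is where current techniques stall.
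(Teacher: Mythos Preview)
The statement is Conjecture~\ref{xxcon0.5}, not a theorem; the paper does not prove it, so there is no ``paper's own proof'' to compare against. Your identification of the ``only if'' direction with Theorem~\ref{xxthm0.4}, and of the open content as the ``if'' direction, is correct, and your final paragraph rightly flags why the multi-prime case is where matters stall.

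That said, your two suggested strategies are not the route by which the paper makes its partial progress. The paper's machinery is built around the notion of an $n$-special integer (Definition~\ref{xxdef7.1}) together with Corollary~\ref{xxcor7.8}: if every proper factor of $n$ is admissible and $Spl(n)\neq\emptyset$, then $n$ is admissible. This is packaged as Theorem~\ref{xxthm7.9}. So the inductive step you are after is already isolated: it would suffice to show $Spl(n)\neq\emptyset$ for every $n$ coprime to $15$. The paper achieves this for $n=2^ap^b$ (Lemma~\ref{xxlem7.2}, giving Theorem~\ref{xxthm0.2}) and also for $n=p_1p_2$ with both primes $\geq 17$ (Lemma~\ref{xxlem7.3}(1) plus Proposition~\ref{xxpro8.6}, giving Theorem~\ref{xxthm8.7}). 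The concrete obstruction to going further is Lemma~\ref{xxlem7.3}(2): for $n$ of the form $p_1p_2^2n'$ or $2p_1p_2n'$ one has $\Omega_2(n)=\emptyset$, hence $Spl(n)=\emptyset$, so the $n$-special framework as currently formulated cannot cover those cases. This is a structural gap, not a bookkeeping gap: a genuinely new ingredient is needed, not just a more careful version of the existing argument.

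Your first proposed strategy (grafting pertinent sequences via \cite{HZ}) is a plausible alternative, but it is not what the paper does and you have offered no evidence it would succeed here. Your second strategy is essentially a description of the paper's reduction-and-induction approach, but without the specific combinatorial criterion (Definition~\ref{xxdef7.1}) that makes it tractable in the cases handled. Neither amounts to a proof, as you yourself acknowledge.
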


The above conjecture holds for $n$ less than $77$ following
Theorems \ref{xxthm0.2} and \ref{xxthm0.4}.

\begin{corollary}
\label{xxcor0.6} 
If $n<77$, then Conjecture \ref{xxcon0.5} holds.
\end{corollary}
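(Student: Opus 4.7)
The plan is to verify the conjecture for $n<77$ by a purely arithmetic case split on the prime factorization of $n$, invoking Theorem \ref{xxthm0.2} for one direction and Theorem \ref{xxthm0.4} for the other.

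First I would dispose of the ``not isolated'' direction. If $3\mid n$ or $5\mid n$, then Theorem \ref{xxthm0.4} immediately gives $\p(A,G)<\GKdim A$, so $A^G$ is not a graded isolated singularity; this matches the conjecture.

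For the ``isolated'' direction, suppose $n<77$ with $3\nmid n$ and $5\nmid n$. The key number-theoretic observation is that the two smallest primes $\geq 7$ are $7$ and $11$, and $7\cdot 11=77$. Hence any integer strictly less than $77$ can have at most one distinct odd prime factor drawn from $\{7,11,13,\dots\}$. Combined with the hypothesis that $3$ and $5$ do not divide $n$, this forces $n$ to be of the form $n=2^{a}p^{b}$ with $a,b\geq 0$ and $p$ either absent (i.e.\ $b=0$) or a prime $\geq 7$. In either situation, Theorem \ref{xxthm0.2} applies and yields $\p(A,G)=\GKdim A=n$, so $A^{G}$ is a graded isolated singularity, again matching the conjecture.

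Combining these two cases covers every $n$ with $2\leq n<77$, so Conjecture \ref{xxcon0.5} holds in this range. There is no real obstacle; the only point to check carefully is the elementary claim that $7\cdot 11=77$ is indeed the minimum product of two distinct primes exceeding $5$, which is immediate.
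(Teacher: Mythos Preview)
Your proposal is correct and follows essentially the same approach as the paper's proof: both split into the cases $3\mid n$ or $5\mid n$ (handled by Theorem \ref{xxthm0.4}) versus $3\nmid n$ and $5\nmid n$ (where $n<77=7\cdot 11$ forces $n=2^a p^b$ with $p\geq 7$, so Theorem \ref{xxthm0.2} applies). You simply spell out the arithmetic observation about $77$ in slightly more detail than the paper does.
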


Theorem \ref{xxthm8.7} provides further evidence for 
Conjecture \ref{xxcon0.5}. For general $n$ we have the 
following lower bound. Let
 
\begin{equation}
\label{E0.6.1}\tag{E0.6.1}
\phi_{2}(n)=\{k \mid 0\leq k\leq n-1\;{\text{with}}\;
\gcd(k,n)=2^w \;\; {\text{for some $w\geq 0$}}\}.
\end{equation}

\begin{theorem}
\label{xxthm0.7} 
Let $A:=\Bbbk_{-1}[\mathbf{x}]$ and $G:=C_n$. Then 
$\p(A, G)\geq |\phi_2(n)|$. 
\end{theorem}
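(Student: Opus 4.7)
The goal is to show $\GKdim(A\#G/(e_0))\le n-|\phi_2(n)|$, which gives $\p(A,G)\ge|\phi_2(n)|$. First I would pass to a $\sigma$-eigenbasis. Let $\xi$ be a primitive $n$-th root of unity and set $v_k:=\sum_{i=0}^{n-1}\xi^{ki}x_i$ and $e_k:=\frac{1}{n}\sum_{j=0}^{n-1}\xi^{-kj}\sigma^j$; then $\sigma(v_k)=\xi^{-k}v_k$ and the $e_k$'s are the primitive idempotents of $\Bbbk G$. A short calculation yields the Clifford-type relation
\[
v_k v_l+v_l v_k=2\,w_{k+l},\qquad w_s:=\sum_{i=0}^{n-1}\xi^{si}x_i^2,
\]
together with the identity $e_0\,a=\sum_m a_m\,e_m$ in $A\#G$ for any $a\in A$ with weight decomposition $a=\sum_m a_m$; in particular $e_0 v_k=v_k e_k$. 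All of the $w_s$ lie in the central polynomial subring $Z:=\Bbbk[x_0^2,\ldots,x_{n-1}^2]$, over which $A$ is free of rank $2^n$.

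Next I would decompose the quotient structurally. Writing $A=\bigoplus_{p\in\mathbb{Z}_n}A_p$ for the $\sigma$-weight decomposition, the identity above gives, after a direct computation, that
\[
(e_0)=\bigoplus_{p\in\mathbb{Z}_n}(A\cdot A_p)\,e_p\qquad\text{inside}\quad A\#G=\bigoplus_p A\,e_p,
\]
where $A\cdot A_p$ denotes the left ideal of $A$ generated by $A_p$. Consequently $A\#G/(e_0)\cong\bigoplus_p (A/A\cdot A_p)\,e_p$ as graded left $A$-modules, and $\GKdim(A\#G/(e_0))=\max_p\GKdim(A/A\cdot A_p)$. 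Since $1\in A_0$ the $p=0$ summand vanishes, and the task reduces, for each $p\ne 0$, to proving $\GKdim(A/A\cdot A_p)\le n-|\phi_2(n)|$. Because $A$ is Cohen--Macaulay of GK-dimension $n$, this follows once I exhibit a regular sequence of length $|\phi_2(n)|$ inside $A_p$.

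For each $k\in\phi_2(n)$ I propose the weight-$p$ quadratic element $f_k:=v_k v_{p-k}\in A_p$. The Clifford relation gives
\[
f_k^{\,2}=2w_p\,f_k-w_{2k}w_{2(p-k)},
\]
so each $f_k$ satisfies a monic degree-two polynomial over $Z$ with constant term $w_{2k}w_{2(p-k)}\in Z$. The main obstacle is verifying that $(f_k)_{k\in\phi_2(n)}$, in a suitable ordering, is an honest regular sequence in $A$. I expect to reduce this, via the freeness of $A$ over $Z$ and the quadratic dependence above, to a regularity statement for the associated central elements $w_{2k}w_{2(p-k)}$ (and their natural variants) inside $Z\cong\Bbbk[\mathbb{A}^n]$. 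The arithmetic condition $\gcd(k,n)=2^w$ is exactly what is needed at this step: for such $k$ the character $i\mapsto\xi^{2ki}$ retains the full odd-order content of $\mu_n$, making the corresponding $w_{2k}$'s impose independent vanishing conditions on $\Spec Z$; whereas if $\gcd(k,n)$ had an odd prime divisor, $\xi^{2k}$ would generate a strictly smaller cyclic subgroup of $\mu_n$ and the $w_{2k}$'s would satisfy additional linear dependencies that collapse the expected codimension. Making this independence precise, via a Vandermonde-type analysis of character sums over $\mathbb{Z}_n$, is the technical heart of the argument.
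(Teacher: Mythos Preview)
Your structural reduction is correct and matches the paper's Section~3: the decomposition $A\#G/(e_0)\cong\bigoplus_p (A/A\cdot A_p)\,e_p$ reduces the problem to bounding each $\GKdim(A/A\cdot A_p)$. However, the proposed regular sequence $(f_k)_{k\in\phi_2(n)}$ with $f_k=v_kv_{p-k}$ does not work. Take $n=4$ and $p=1$: then $\phi_2(4)=\{0,1,2,3\}$, and
\[
f_0+f_1=v_0v_1+v_1v_0=2w_1,\qquad f_2+f_3=v_2v_3+v_3v_2=2w_5=2w_1,
\]
so $f_0+f_1-f_2-f_3=0$. Thus $f_3$ is already zero modulo $(f_0,f_1,f_2)$ and the four elements cannot form a regular sequence of length~$4$. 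The pairing $k\leftrightarrow p-k$ on $\phi_2(n)$ forces such redundancies in general, so the ``Vandermonde-type analysis'' you sketch cannot rescue this particular choice of sequence.

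The paper's argument (Section~2) is different and avoids non-central regular sequences entirely. It works with the \emph{central} elements $c_k$ (your $w_k$, up to normalization) and shows that for each $k\in\phi_2(n)$ some power $c_k^N$ lies in $(e_0)$. The mechanism is a short dynamical argument on $\mathbb{Z}_n$: the identity $e_\alpha c_k=c_k e_{\alpha-k}$ combined with the decomposition $c_k=b_\alpha b_{k-\alpha}+b_{k-\alpha}b_\alpha$ (which yields $e_\alpha c_k^N\in(e_0)+(e_{2\alpha-k})$) shows that $e_\alpha c_k^N\in(e_0)$ whenever $0$ is reachable from $\alpha$ under iterates of $\alpha\mapsto\alpha-k$ and $\alpha\mapsto 2\alpha-k$. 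A $2$-adic valuation argument then verifies reachability of $0$ from every $\alpha$ precisely when $k\in\phi_2(n)$. Since $A\#G$ is finite over $\Bbbk[c_0,\dots,c_{n-1}]$, nilpotence of $|\phi_2(n)|$ of the $c_k$'s in $E$ immediately gives $\GKdim E\le n-|\phi_2(n)|$. If you want to salvage a regular-sequence viewpoint, the right candidates are these central $c_k$'s (or their powers), not the $p$-dependent products $v_kv_{p-k}$.
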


Note that Theorem \ref{xxthm0.7} is an improvement of 
\cite[Theorem 5.7]{BHZ1} when $n$ is even. Combining 
Theorems \ref{xxthm0.2}, \ref{xxthm0.4}, \ref{xxthm0.7}
and further analysis, we have the following table of 
pertinencies.

\begin{proposition}
\label{xxpro0.8}
Let $p=\p(A,C_n)$. Then 

\[ \begin{array}{|c|l|l|l|l|l|l|l|l|l|l|l|l|l|} 
\hline n & 2 & 3 & 4& 5& 6& 7&8&9&10&11&12&13&14\\ 
\hline p &2 & 2 & 4& 4& 5& 7&8&8& 9 &11&\in [8,11]&13 &14\\ \hline 
\end{array} \]
where the notation $\in [8,11]$ means that $8\leq p\leq 11$.
\end{proposition}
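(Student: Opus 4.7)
The plan is to work column-by-column through the table, combining Theorems \ref{xxthm0.2}, \ref{xxthm0.4}, and \ref{xxthm0.7} with small case-specific refinements. The columns fall into three groups according to which tool is primary.

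First, for $n \in \{2, 4, 7, 8, 11, 13, 14\}$ I would observe that each such $n$ has the shape $2^a q^b$ with $q \in \{1, 7, 11, 13\}$ and $a, b \geq 0$, so Theorem \ref{xxthm0.2} applies directly and yields $\p(A, C_n) = n$. This disposes of seven of the thirteen columns immediately.

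Second, for the remaining columns $n \in \{3, 5, 6, 9, 10, 12\}$ one has $3 \mid n$ or $5 \mid n$, so Theorem \ref{xxthm0.4} gives the strict upper bound $\p(A, C_n) \leq n - 1$. For matching lower bounds I would compute $|\phi_2(n)|$ directly from (\ref{E0.6.1}), obtaining $|\phi_2(3)| = 2$, $|\phi_2(5)| = 4$, $|\phi_2(6)| = 4$, $|\phi_2(9)| = 6$, $|\phi_2(10)| = 8$, $|\phi_2(12)| = 8$, and then apply Theorem \ref{xxthm0.7}. This pins down $\p(A, C_3) = 2$, $\p(A, C_5) = 4$, and the range $\p(A, C_{12}) \in [8, 11]$ exactly as stated in the table.

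Third, the entries $n \in \{6, 9, 10\}$ are not settled by the generic bounds alone: the lower bound is off by $1$ for $n = 6, 10$ and by $2$ for $n = 9$. To close these gaps I would construct, in each case, extra pertinent relations in $A \# C_n$ --- that is, specific homogeneous elements whose image in $A \# C_n / (e_0)$ forces $\GKdim(A \# C_n /(e_0))$ to drop one (or two) more than the $|\phi_2(n)|$ estimate predicts --- and then verify the drop by an explicit filtration argument on the resulting quotient.

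The main obstacle will be $n = 9$: since $3 \mid 9$ but $9$ carries no factor of $2$, the $\phi_2$ count misses the target by two, and one needs a careful analysis of the $C_9$-orbit structure on $(-1)$-skew monomials together with a direct GK-dimension computation on the relevant quotient algebra. The cases $n = 6, 10$ should be easier because the factorizations $6 = 2 \cdot 3$ and $10 = 2 \cdot 5$ allow combining the $\phi_2$-count with a single ad hoc element, and the gap to close is only $1$. Throughout, the strict upper bound from Theorem \ref{xxthm0.4} guarantees that the sharpened lower bound is in fact the correct value rather than merely another estimate.
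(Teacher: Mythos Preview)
Your plan is correct and matches the paper's approach: the cases $n\in\{2,4,7,8,11,13,14\}$ come from Theorem~\ref{xxthm0.2}, $n=12$ from the sandwich of Theorems~\ref{xxthm0.4} and~\ref{xxthm0.7}, and $n=3,5$ from the same sandwich (the paper instead cites Propositions~\ref{xxpro4.1} and~\ref{xxpro4.2} directly, but your route works equally well). For $n=6,9,10$ the paper does exactly what you anticipate, in Lemmas~\ref{xxlem9.1}--\ref{xxlem9.3}: explicit commutator computations in the quotient $\overline{A}=A/c_{\Phi_n}$ show that the residual central elements $c_i$ (namely $c_0$, and $c_3,c_6$ when $n=9$) are nilpotent modulo $\overline{B}_j\overline{A}$ for each divisor $j$ of $n$, forcing $\GKdim(E)=1$ and hence $\p=n-1$.
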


By Proposition \ref{xxpro0.8}, the integer 12 is the smallest 
$n$ where that the exact value of $\p(A,C_n)$ is unknown. It 
would be nice to have exact values of $\p(A,C_n)$ for all $n$. 
In particular, we ask:

\begin{question}
\label{xxque0.9} Retain the above notation.
\begin{enumerate}
\item[(1)]
If $n$ is divisible by either $3$ or $5$, what
is the exact value of $\p(A,C_n)$? 
\item[(2)]
Does the sequence, from Proposition \ref{xxpro0.8},
\begin{equation}
\label{E0.9.1}\tag{E0.9.1}
2,2,4,4,5,7,8,8,9,11... 
\end{equation}
match up with any other sequences in literature? 
The On-Line Encyclopedia of Integer Sequences
website

https://oeis.org/

\noindent
does not give any sequences that match up with 
\eqref{E0.9.1}
\end{enumerate}
\end{question}

Graded isolated singularities have various special properties. 
Ueyama and Mori-Ueyama investigated certain properties of 
graded isolated singularities from the viewpoint of derived 
categories and representation theory. As an immediate 
consequence of \cite{Ue, MU1, MU2}, we have the following. 
We refer to \cite{Ue, MU1, MU2} for undefined terms in the 
next corollary.

\begin{corollary}
\label{xxcor0.10} 
Suppose $n=2^a p^b$ for some prime $p\geq 7$ and 
integers $a,b\geq 0$. Then the following hold.
\begin{enumerate}
\item[(1)]
${\text{tails}} \; A^G\cong {\text{tails}} \; A\#  G$.
\item[(2)]
$A$ is a $(n-1)$-cluster tilting object
in the category of graded maximal Cohen-Macaulay modules 
over $A^G$.
\item[(3)]
The derived category $D^b({\text{tails}} \; A^G)$ has a tilting 
object.
\item[(4)]
The derived category $D^b({\text{tails}} \; A^G)$  has a Serre 
functor.
\end{enumerate}
\end{corollary}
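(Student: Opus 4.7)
The plan is to combine Theorem \ref{xxthm0.2} with the machinery developed by Ueyama and Mori-Ueyama in \cite{Ue, MU1, MU2}. Once maximal pertinency is in hand, each of (1)--(4) becomes a direct invocation of a known result; the argument amounts to citations plus a routine verification of hypotheses.

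First I would apply Theorem \ref{xxthm0.2} to conclude $\p(A, C_n) = \GKdim(A) = n$. Since $n \geq 2$, this gives $\p(A,G) \geq 2$, so Theorem \ref{xxthm0.1} yields a graded algebra isomorphism $A\# G \cong \End_{A^G}(A)$. Combined with the standard Morita-type equivalence between $\End_{A^G}(A)$ and $A^G$ after passing to noncommutative projective schemes (valid because $A$ is a finitely generated Cohen-Macaulay $A^G$-module of maximal rank), this produces the equivalence ${\text{tails}}\; A^G \cong {\text{tails}}\; A\# G$ of (1).

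Next I would use \cite[Theorem 3.10]{MU1} to translate the equality $\p(A,G) = \GKdim(A)$ into the statement that $A^G$ is a graded isolated singularity. With this translation, the hypotheses of the main theorems of \cite{MU1, MU2} are met by $(A, G) = (\Bbbk_{-1}[\mathbf{x}], C_n)$: the algebra $A$ is noetherian, connected graded, AS-regular and Cohen-Macaulay of GKdimension $n$; the group $C_n$ acts homogeneously and inner-faithfully; and $A^G$ is a graded isolated singularity. Statement (2) is then the cluster-tilting theorem of Mori-Ueyama, which exhibits $A$ as an $(n-1)$-cluster tilting object in the graded maximal Cohen-Macaulay category over $A^G$. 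Statements (3) and (4) are the corresponding derived-category assertions from \cite{Ue, MU2}, which furnish a tilting object and a Serre functor on $D^b({\text{tails}}\; A^G)$ whenever $A^G$ is a graded isolated singularity arising from a finite group action on an AS-regular cover.

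The main obstacle has already been cleared by Theorem \ref{xxthm0.2}, whose elaborate reduction and induction constitute the real work of the paper. What remains here is only bookkeeping: confirming that $\Bbbk_{-1}[\mathbf{x}]$ is AS-regular and Cohen-Macaulay (well known), that the cyclic permutation action is inner-faithful and homogeneous (immediate), and that the theorems of \cite{Ue, MU1, MU2} apply verbatim in our graded connected noetherian setting. No genuinely new difficulty arises at this stage.
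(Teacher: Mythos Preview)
Your proposal is correct and matches the paper's approach: invoke Theorem \ref{xxthm0.2} to obtain maximal pertinency (hence graded isolated singularity), then cite the relevant results of Ueyama and Mori--Ueyama for each of (1)--(4). The one hypothesis you do not list in your bookkeeping is that $A^G$ is AS-Gorenstein, which is needed for \cite[Theorem 1.3]{Ue} in part (4); the paper supplies this via \cite[Theorem 1.5]{KKZ1}, and you should add it to your checklist since it is not automatic for arbitrary finite group actions on AS-regular algebras.
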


This paper is organized as follows. We provide background material 
in Section \ref{xxsec1}. Theorem \ref{xxthm0.7} is proven in 
Section \ref{xxsec2}. In Section \ref{xxsec3}, we give some 
preliminary results and Theorem \ref{xxthm0.4} is proven in Section 
\ref{xxsec4}. We continue some preparation in Sections \ref{xxsec5} 
and \ref{xxsec6}. The main result, Theorem \ref{xxthm0.2}, is 
proven in Section \ref{xxsec7}. In Section \ref{xxsec8}, we discuss
some partial results when $n=p_1 p_2$. Proposition \ref{xxpro0.8} 
is proven in Section \ref{xxsec9}. In Section \ref{xxsec10}, we 
construct more examples of non-conventional graded isolated 
singularities. The final section contains some questions and 
comments.

\subsection*{Acknowledgments}
The authors thank Jason Bell, Ken Goodearl, Zheng Hua, Lance 
Small, Agata Smoktunowicz and Robert Won for many useful 
conversations on the subject and thank Jason Bell for the 
proof of Lemma \ref{xxlem10.6}. J.J. Zhang was partially 
supported by the US National Science Foundation
(No. DMS-1700825).

\section{Preliminaries}
\label{xxsec1}

Throughout let $\Bbbk$ be a base field that is algebraically closed 
of characteristic zero. All objects are $\Bbbk$-linear. 

An algebra $R$ is called {\it connected graded} if 
$R=\bigoplus_{n\geq 0} R_n$ satisfying $R_i R_j\subseteq R_{i+j}$ 
for all $i,j$ and $1\in R_0=\Bbbk$. We say $R$ is {\it locally 
finite} if $\dim_{\Bbbk} R_n<\infty$ for all $n$. In this paper 
all connected graded algebras will be locally finite.

We refer to \cite{KL, MR} for the definition of the 
{\it Gelfand-Kirillov dimension} (or {\it GKdimension}) of an 
algebra or a module. When $R$ is connected graded and 
finitely generated, its GKdimension is equal to 
\begin{equation}
\label{E1.0.1}\tag{E1.0.1}
\GKdim (R)=\limsup_{n\to\infty} \log_{n}(\sum_{i=0}^{n} 
\dim_{\Bbbk} R_i).
\end{equation}

Observe that $\GKdim(R) = 0$ if and only if $\dim_{\Bbbk} R 
< \infty$. For $q \in \Bbbk^\times$, the $q$-polynomial ring 
$$\Bbbk_q[x_1, \dots, x_m]:= 
\Bbbk \langle x_1, \ldots, x_m\rangle/
(x_i x_j - qx_j x_i\mid i<j)$$
has GKdimension $m$ (equal to the number of generators). 
If $B$ is either a subalgebra or a homomorphic image 
of an algebra $R$, then $\GKdim(B)\leq\GKdim(R)$.

Let $B$ be a noetherian connected graded algebra. If $M$ is a 
finitely generated graded right $B$-module, then we have a 
formula similar to \eqref{E1.0.1}, see \cite[p.1594]{SZ},
\begin{equation}
\label{E1.0.2}\tag{E1.0.2}
\GKdim (M)=\limsup_{n\to\infty} \log_{n}(\sum_{i\leq n} \dim_{\Bbbk} M_i).
\end{equation}
Let $c$ be a homogenous central element of $B$ of positive degree. 
If $M$ is a finitely generated left graded $B$-module, it follows 
from \eqref{E1.0.2} that
\begin{equation}
\label{E1.0.3}\tag{E1.0.3}
\GKdim M\geq \GKdim M/cM\geq \GKdim M -1.
\end{equation}

Definitions of other standard concepts such as Artin-Schelter 
regularity, Auslander regularity, Cohen-Macaulay property are 
omitted as these can be found in many papers such as 
\cite{Le, CKWZ1, MSm}.

For the first nine sections we consider noncommutative cyclic 
singularities arising from the action of the cyclic group on 
the $(-1)$-skew polynomial ring as follows. 

Let $n$ be a fixed integer $\geq 2$. Let $\n:=\{0,\ldots, n-1\}$.
Note that $\n$ can be identified with the additive group 
${\mathbb Z}_n$. Let $\mathbf{x}$ be the set 
$\{x_0 ,\ldots, x_{n-1}\}$ or $\{x_i\mid i\in {\mathbb Z}_n\}$ 
and $A$ be the $(-1)$-skew polynomial ring $\Bbbk_{-1}[\mathbf{x}]$
as defined in the introduction. Then $A$ is a graded $\Bbbk$-algebra 
with $\mathrm{deg}(x_i)=1$ for each $i$ and we denote $A_j$ the 
$\Bbbk$-subspace of degree $j$ elements of $A$. It is well-known 
that $A$ is noetherian, Artin-Schelter regular, Auslander regular 
and Cohen-Macaulay of global dimension and GKdimension $n$. Let 
$\sigma$ be the cycle $(012\cdots n-1)$ which generates the 
cyclic group $C_n$ of order $n$ as a subgroup of the symmetric 
group $S_n$ (considering $S_n$ as a set of bijections of 
$\n:=\{0,\ldots, n-1\}$). As abstract groups, we have 
${\mathbb Z}_n\cong C_n$. The action of $C_n$ on $A$ is 
determined by its action on generators
$$\sigma \ast x_i= x_{i+1}, \quad \forall \; i \in \n={\mathbb Z}_n.$$
The skew group algebra $A\#C_n$ with respect to this action 
consists of all linear combinations of elements $a \# g$ 
with $a\in A$ and $g\in C_n$, with multiplication given by 
$$(a\# g)( a'\# g') = ag(a')\# gg',$$ 
extended linearly to all of $A\#C_n$. We omit $\#$ if no 
confusion occurs. 

The skew group algebra can be presented in the standard way,
$$
A\#C_n \cong \frac{\Bbbk \langle\mathbf{x}, \sigma\rangle}
{(x_ix_j+x_jx_i , \sigma^n, \sigma x_i - x_{i+1} \sigma)}.
$$
We now describe a different presentation of the above skew group 
algebra, using eigenvectors of the $\sigma$-action, which we 
will use for the rest of the paper.

Since the action of $C_n$ on $A$ is graded, the generating 
subspace $A_1$ is a $C_n$-module. Let $\omega$ be a primitive 
$n$th root of unity and $M_{\omega^j}$ be the simple (hence 
$1$-dimensional) $C_n$-module where $\sigma$ acts by 
multiplication by $\omega^j$. The $\sigma$-action on $A_1$ 
has minimal polynomial $p(X)=X^n-1$, so we can decompose $A_1$ 
as a $C_n$-module as follows
\begin{equation}
\label{E1.0.4}\tag{E1.0.4}
A_1 \cong  \bigoplus_{\gamma=0}^{n-1} M_{\omega^{\gamma}}
\end{equation}
For $\gamma=0,\ldots,n-1$, define the following elements of $A_1
\subseteq A\# C_n$
\begin{equation}
\label{E1.0.5}\tag{E1.0.5}
b_{\gamma} \; := \;  \frac{1}{n} \sum_{i = 0}^{n - 1} 
\omega^{i \gamma} x_i. 
\end{equation}
The following calculation shows that $b_{\gamma}$ is a 
$\omega^{-\gamma}$-eigenvector of $\sigma$,
\begin{equation}
\label{E1.0.6}\tag{E1.0.6}
\sigma \ast b_{\gamma} = \frac{1}{n} \sum_{i = 0}^{n - 1} 
\omega^{i \gamma} x_{i + 1} = \omega^{- \gamma} b_{\gamma}.
\end{equation}
In other words, we have $\Bbbk b_{\gamma}\cong M_{-\gamma}$ as 
$C_n$-modules, so the basis  $\{b_0,\ldots,b_{n-1}\}$ gives 
the $C_n$-module decomposition of $A_1$ in \eqref{E1.0.4}. 
We also define the following idempotent elements
\begin{equation}
\notag
e_{\alpha} \; := \; 
\frac{1}{n} \sum_{i = 0}^{n - 1} (\omega^{\alpha} \sigma)^i
\end{equation}
in $\Bbbk C_n\subseteq A\#C_n$. Let 
$\mathbf{b} := (b_0,\ldots,b_{n-1})$ and $\mathbf{e} := 
(e_0,\ldots,e_{n-1})$. Define the {\it graded commutator}, 
denoted by $[\cdot,\cdot]$, for any homogeneous elements 
$u,v\in A\#C_n$ (or $u,v$ in another graded algebra) by
$$[u,v] = u v - (-1)^{\mathrm{deg}(u)\mathrm{deg}(v)} v u.$$

We have the following Lemma.

\begin{lemma}
\label{xxlem1.1}
Suppose $\deg(b_i)=1$ and $\deg (e_i)=0$ for all
$i\in {\mathbb Z}_n$. The graded algebra $A\#C_n$ can 
be presented as follows
\[A\#C_n \cong
\frac{\Bbbk \langle\mathbf{b},\mathbf{e}\rangle}{
( e_{\alpha}b_{\gamma} - b_{\gamma}e_{\alpha-\gamma}, 
e_{i}e_{j}-\delta_{ij}e_{i}, [b_{0},b_{k}]- [b_{l},b_{k-l}] )}
\]
where $\delta_{ij}$ is the Kronecker delta and indices are 
taken modulo $n$.
\end{lemma}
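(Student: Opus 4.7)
The plan is to define a graded algebra homomorphism $\psi$ from the right-hand side (call it $B$) onto $A\#C_n$, sending each generator $b_\gamma$ and $e_\alpha$ to the element of the same name defined in \eqref{E1.0.5} and the paragraph following it. I would then verify $\psi$ is well-defined and surjective, and finally prove injectivity by a Hilbert-series comparison.

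To see the relations hold in $A\#C_n$, the idempotent identities $e_ie_j=\delta_{ij}e_i$ come from the orthogonality relation $\sum_s\omega^{(i-j)s}=n\delta_{i,j}$ applied to $e_\alpha=\frac{1}{n}\sum_s(\omega^\alpha\sigma)^s$; the twist relations $e_\alpha b_\gamma=b_\gamma e_{\alpha-\gamma}$ follow by iterating \eqref{E1.0.6} to get $\sigma^s b_\gamma=\omega^{-s\gamma}b_\gamma\sigma^s$ and summing against $\omega^{\alpha s}$; and the commutator relations are checked by direct expansion
$$b_ib_j+b_jb_i=\tfrac{1}{n^2}\sum_{p,q}\omega^{ip+jq}(x_px_q+x_qx_p)=\tfrac{2}{n^2}\sum_p\omega^{(i+j)p}x_p^2,$$
where the cross terms $p\neq q$ vanish by the $(-1)$-skew relation, and the surviving sum depends only on $i+j\bmod n$, so $[b_0,b_k]=[b_l,b_{k-l}]$. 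Surjectivity of $\psi$ is immediate, since $\{b_\gamma\}$ is an invertible linear change of basis from $\{x_i\}$ (hence generates $A$) and the $e_\alpha$, together with $\sum_\alpha e_\alpha=1$, exhaust $\Bbbk C_n$.

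For injectivity I would put a normal form on $B$. The twist relation lets one push every $e_\alpha$ past every $b_\gamma$ to the right, and the idempotent relations collapse any string of consecutive $e$'s into a single $e_\alpha$ or zero. Using the identity $1=\sum_\alpha e_\alpha$ (which either is included among the relations or is implicit in the authors' convention for presenting the group-algebra factor), every element of $B$ reduces to a $\Bbbk$-linear combination of monomials $b_{\gamma_1}\cdots b_{\gamma_k}\,e_\alpha$. Hence $H_B(t)\le n\cdot H_{B_0}(t)$, where $B_0$ is the sub-presentation generated by $\mathbf{b}$ modulo the commutator relations. The decisive sub-step is $B_0\cong A$: via the inverse change of basis $x_i=\sum_\gamma\omega^{-i\gamma}b_\gamma$ and character orthogonality, one checks that the condition "$b_\gamma b_\delta+b_\delta b_\gamma$ depends only on $\gamma+\delta\bmod n$" is equivalent to the $(-1)$-skew relations $x_ix_j+x_jx_i=0$ for $i\neq j$. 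Once this identification is in place, $H_B(t)\le n\cdot H_A(t)=H_{A\#C_n}(t)$, so combined with surjectivity we conclude that $\psi$ is an isomorphism of graded algebras.

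The main obstacle is the identification $B_0\cong A$, i.e.\ showing that no quadratic relation among the $b_\gamma$ is lost in passing from the $x$-basis to the $b$-basis. This is a finite character computation and is the only non-formal step; the remaining manipulations (pushing idempotents past generators, collapsing strings of $e$'s, matching Hilbert series) are standard PBW-style arguments.
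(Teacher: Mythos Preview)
Your overall strategy matches the paper's: define a surjective graded map from the presented algebra $B$ to $A\#C_n$, verify the relations, and then prove injectivity. The verification of the three families of relations is essentially identical to the paper's computations. Where you genuinely differ is in the decisive sub-step $B_0\cong A$. The paper proves this by \emph{counting}: it shows the $r_{kl}:=[b_0,b_k]-[b_l,b_{k-l}]$ span exactly $\binom{n}{2}$ independent quadratic relations, via a case split on the parity of $n$ and a $C_2$-action on each set $R_k=\{r_{k0},\dots,r_{k,n-1}\}$. Your route is more direct: writing $x_i=\sum_\gamma\omega^{-i\gamma}b_\gamma$ and using orthogonality of characters, one computes in $B_0$ that $x_ix_j+x_jx_i=\sum_k[b_0,b_k]\,\omega^{-jk}\sum_\gamma\omega^{-(i-j)\gamma}=0$ for $i\neq j$, giving $I_x\subseteq I_b$; the reverse inclusion $I_b\subseteq I_x$ is the paper's \eqref{E1.1.1}. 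This avoids the parity case analysis entirely and is arguably cleaner. Conversely, the paper's counting argument has the virtue of making the linear dependencies among the $r_{kl}$ explicit, which is useful elsewhere.

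One point you flag deserves emphasis: the relation $\sum_\alpha e_\alpha=1$ is needed for your Hilbert-series bound $H_B(t)\le n\cdot H_{B_0}(t)$ (without it, monomials with no $e$-factor give an extra summand and the bound becomes $(n{+}1)H_{B_0}(t)$). The paper's stated presentation omits this relation too, and its assertion that $\Bbbk C_n\cong\Bbbk\langle\mathbf{e}\rangle/(e_ie_j-\delta_{ij}e_i)$ is, strictly read, off by one dimension; both arguments implicitly rely on $\sum_\alpha e_\alpha=1$ being part of the presentation. You are right to call this out.
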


\begin{proof}
Let $b_i$ be defined as in \eqref{E1.0.5} and let 
\[ r_{kl} = [b_{0},b_{k}]- [b_{l},b_{k-l}]. \]
We first show that the  map 
$$\iota:\Bbbk\langle\mathbf{b}\rangle/(r_{kl}) \to \Bbbk_{-1}[\mathbf{x}]$$ 
is well-defined and is an isomorphism. By \eqref{E1.0.6} 
the elements $b_0,\dots,b_{n-1}$ are eigenvectors for 
the $\sigma$-action on $A_1$ with distinct eigenvalues, 
hence this is a basis for $A_1$, so $\iota$ is an 
isomorphism in degree $1$. To see that $\iota$ is 
well-defined as an algebra map, note that the graded 
commutator of $b_{\gamma}$ and $b_{\delta}$ depends 
only on the sum of $\gamma$ and $\delta$,
\begin{equation}
\label{E1.1.1}\tag{E1.1.1}
[b_{\gamma},b_{\delta}] =  \frac{1}{n^2} \sum_{i, j
= 0}^{n - 1} \omega^{i \gamma + j \delta} [x_i, x_j]
=  \frac{2}{n^2} \sum_{i = 0}^{n - 1} 
\omega^{i (\gamma + \delta)} x_i^2
\end{equation}
So the relations $r_{kl}$ go to zero in $\Bbbk_{-1}[\mathbf{x}]$. 
To show that $\iota$ is an algebra isomorphism, we 
count the number of independent quadratic relations 
in $\mathbf{b}$ and show that this number is equal to 
$\binom{n}{2}$. 

For any fixed $k$, the only linear relations among 
$R_{k} := \{ r_{k 0} ,r_{k1}, \ldots ,r_{k,n-1}\}$ are 
$r_{k 0} =r_{k k} =0$ and $r_{k l} =r_{k,k-l}$.
Define a $C_{2}$-action on $R_{k}$ by $r_{k l} \mapsto r_{k,k-l}$. 
Then the number of independent relations in $R_{k}$ is 
equal to $|R_{k} /C_{2}| -1$.

Case 1: For odd $n$, the $C_{2}$-action has exactly one fixed 
point $r_{k l}$ where $2 l = k \mod n$. Therefore 
$| R_{k} /C_{2} | = ( n+1 ) /2$. The relations in $R_{k}$ are 
independent from the relations in $R_{k'}$ for distinct $k,k'$. 
Since $k$ ranges from $0$ to $n-1$, the total number of
independent relations is equal to
$$n ( | R_{k} /C_{2} | -1 ) = \binom{n}{2} . $$

Case 2: Let $n$ be even.  For odd $k$, the $C_{2}$-action 
has no fixed points. Therefore $|R_{k} /C_{2} | =n/2$. 
If $k$ is even, then the $C_{2}$-action has two fixed
points, coming from the two solutions of $2l= k 
\mod n$. Therefore $|R_{k} /C_{2} | =n/2+1$. By 
considering the odd and even cases separately, we
get that the total number of independent relations 
is equal to
\[\sum_{k \; \text{odd}} ( | R_{k} /C_{2} | -1 ) 
+ \sum_{k \; \text{even}} (| R_{k} /C_{2} | -1 ) 
= \frac{n}{2} \left( \frac{n}{2} -1 \right) +
\frac{n}{2} \left( \frac{n}{2} \right) = \binom{n}{2} . \]
Therefore $\iota$ is an algebra isomorphism.

The isomorphism 
$$\Bbbk C_n\cong \Bbbk \langle \mathbf{e}\rangle 
/(e_i e_j -\delta_{ij}e_i)$$ 
is well-known. The relations between $\mathbf{b}$ and 
$\mathbf{e}$ are obtained as follows
$$
e_{\alpha} b_{\gamma} =  \frac{1}{n} \sum_{i = 0}^{n - 1}
(\omega^{\alpha} \sigma)^i b_{\gamma}
=  \frac{b_{\gamma}}{n} \sum_{i = 0}^{n - 1} 
\omega^{(\alpha - \gamma) i} \sigma^i
=  b_{\gamma} e_{\alpha - \gamma}.
$$
By using the facts
$$\sigma^{i}=\sum_{\alpha} \omega^{-\alpha i} e_{\alpha}$$
and
$$x_j=\sum_{\gamma} \omega^{-\gamma j} b_{\gamma}$$
for $i,j\in {\mathbb Z}_n$, 
it is easy to check that the set of relations
$$\{e_{\alpha} b_{\gamma}=b_{\gamma} e_{\alpha - \gamma}
\mid \alpha, \gamma \in {\mathbb Z}_n\}$$
is equivalent to the set of relations
$$\{ \sigma^i x_j= x_{j+i} \sigma^i \mid i,j \in 
{\mathbb Z}_n\}.$$
This completes the proof.
\end{proof}

We define the elements
\begin{equation}
\label{E1.1.2}\tag{E1.1.2} 
c_j := [b_k,b_{j-k}]
\end{equation}
for all $j\in {\mathbb Z}_n$. 
Equation \eqref{E1.1.1} shows that the definition of 
$c_j$ does not depend on $k$, and while they are 
central elements of $A$, they are not central in 
$A\#C_n$. By the relations in Lemma \ref{xxlem1.1},
we have
\begin{equation}
\notag
e_{\alpha} c_j=c_{j} e_{\alpha-j}
\end{equation}
for all $\alpha, j\in {\mathbb Z}_n$.
As above, we denote by 
$\mathbf{c}=(c_0,c_1,\dots,c_{n-1})$.
Recall that, for a vector $\mathbf{i}
=(i_0,i_1,\ldots,i_{n-1})\in {\mathbb N}^n$,
$$|\mathbf{i}|_1=|i_0|+|i_1|+\cdots +|i_{n-1}|.$$
We will use the following notation
$$\begin{aligned}
\mathbf{b}^{\mathbf{i}}&:= 
     b_0^{i_1}b_1^{i_1}\cdots b_{n-1}^{i_{n-1}},\\
\mathbf{c}^{\mathbf{i}}&:= 
     c_0^{i_1}c_1^{i_1}\cdots c_{n-1}^{i_{n-1}}.
\end{aligned}
$$

\begin{proposition}
\label{xxpro1.2}
For each $r\geq 0$, the set 
$$\mathcal{B}_{r} = \{ \mathbf{b}^{\mathbf{i}}
\mathbf{c}^{\mathbf{j}} \mid \mathbf{i} \in \{ 0,1 \}^{n} ,
\mathbf{j} \in \mathbb{N}^{n} , | \mathbf{i} |_{1} 
+2 |\mathbf{j}|_{1} =r \}$$ 
is a $\Bbbk$-linear basis for $A_{r}$. 
\end{proposition}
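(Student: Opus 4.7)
The plan is to prove this in two steps: show that $\mathcal{B}_r$ spans $A_r$, then finish via a dimension count. First I would record two preparatory facts. Since $x_i x_k = -x_k x_i$ for $i\neq k$, two sign flips show $x_i^2$ is central in $A$; by formula (E1.1.1), each $c_j$ is a scalar multiple of $\sum_i \omega^{ij}x_i^2$, so every $c_j$ lies in the center of $A$ (though not of $A\# C_n$). Moreover, (E1.1.1) together with the definition (E1.1.2) gives the graded-commutator identity $b_\gamma b_\delta + b_\delta b_\gamma = c_{\gamma+\delta}$ in $A$; specializing $\gamma=\delta$ yields $b_\gamma^2 = \tfrac{1}{2}c_{2\gamma}$.

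For spanning, since $\{b_0,\ldots,b_{n-1}\}$ is a $\Bbbk$-basis of $A_1$, the ordered monomials $b_{\gamma_1}\cdots b_{\gamma_r}$ span $A_r$. I would induct on a lexicographic well-ordering of pairs (length, index-multiset) of $b$-monomials, possibly with a $c$-coefficient. If adjacent indices satisfy $\gamma_i=\gamma_{i+1}$, substitute $b_{\gamma_i}^2=\tfrac{1}{2}c_{2\gamma_i}$ and pull the central scalar $c_{2\gamma_i}$ to the right, reducing the length of the $b$-string by two. If $\gamma_i>\gamma_{i+1}$, apply $b_{\gamma_i}b_{\gamma_{i+1}}=-b_{\gamma_{i+1}}b_{\gamma_i}+c_{\gamma_i+\gamma_{i+1}}$; the first summand is strictly smaller in lex order (same length, a descent eliminated) and the second summand has shorter $b$-string times a central $c$. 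Because each $c_j$ is central in $A$, the $c$-factors accumulated at any stage can be consolidated on the right into a single $\mathbf{c}^{\mathbf{j}}$, so the induction terminates with a linear combination of elements of $\mathcal{B}_r$.

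For the dimension count, $A$ has the same Hilbert series as $\Bbbk[x_0,\ldots,x_{n-1}]$, so $\dim_\Bbbk A_r=\binom{n+r-1}{r}$. The generating function of $|\mathcal{B}_r|$ factors as
\[
\sum_{r\geq 0}|\mathcal{B}_r|\,z^r \;=\; \Bigl(\sum_{\mathbf{i}\in\{0,1\}^n} z^{|\mathbf{i}|_1}\Bigr)\Bigl(\sum_{\mathbf{j}\in\mathbb{N}^n} z^{2|\mathbf{j}|_1}\Bigr) \;=\; (1+z)^n\cdot\frac{1}{(1-z^2)^n} \;=\; \frac{1}{(1-z)^n},
\]
which matches $\sum_r \dim_\Bbbk A_r\cdot z^r$. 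Hence $|\mathcal{B}_r|=\dim_\Bbbk A_r$, and combined with spanning this forces $\mathcal{B}_r$ to be a $\Bbbk$-basis.

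The main obstacle is the bookkeeping in the spanning argument: one must verify that the rewriting process genuinely terminates and lands inside $\mathcal{B}_r$ rather than producing runaway $c$-terms at the wrong position. The centrality of $c_j$ within $A$ is essential for sweeping the generated $c$-factors past the remaining $b$'s to the right, and without it the induction would not close. Once that centrality is exploited, a standard double induction on $b$-length and on the lex order of index sequences makes the reduction canonical and the rest is mechanical.
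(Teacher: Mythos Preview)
Your proof is correct and follows essentially the same approach as the paper: establish spanning by rewriting $b$-monomials via the relation $b_\gamma b_\delta + b_\delta b_\gamma = c_{\gamma+\delta}$ and the centrality of the $c_j$'s, then compare the generating function $(1+z)^n/(1-z^2)^n = 1/(1-z)^n$ with the Hilbert series of $A$. Your write-up is more explicit about the termination of the rewriting (the double induction on $b$-length and lex order), while the paper leaves this as a one-line remark, but the underlying argument is the same.
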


\begin{proof} The generating function for
$\mathcal{B}_{r}$, namely,
$g(t)=\sum_{r\geq 0} |\mathcal{B}_{r}| t^r$ is
$$(1+t)^n \frac{1}{(1-t^2)^n}=\frac{1}{(1-t)^n},$$
which agrees with the Hilbert series of $A$. It remains
to show that $\mathcal{B}_{r}$ spans $A_r$ for each $r$. 

Since $\mathbf{b}$ generates $A$, the set 
$\{b_{t_{1}} \cdots b_{t_{r}}\mid {\text{ for different $t_s$}}\}$ 
spans $A_{r}$. Using the relation $c_j=[b_k, b_{j-k}]$
and the fact that $c_j$ are central, we can 
ensure that $b_{t_{1}} \cdots b_{t_{r}}$ is in the linear
span of $\mathcal{B}_{r}$, as required.
\end{proof}

We can extend the above basis for $A_r$ to a basis for 
$(A\#C_n)_r$ by adjoining the $n$ idempotent elements 
coming from $\Bbbk C_n$. Therefore
$$
\mathcal{B}_r\times \mathbf{e} = \{z e_j \mid z\in\mathcal{B}_r,\;
j =0,\dots n-1 \}
$$
and
$$
\mathbf{e}\times \mathcal{B}_r  = \{e_jz \mid z\in\mathcal{B}_r,\;
j =0,\dots n-1 \}
$$
are both $\Bbbk$-linear bases for $(A\#C_n)_r$. The following
is an immediate consequence of Proposition \ref{xxpro1.2}.

\begin{corollary}
\label{xxcor1.3} 
Retain the above notation.
\begin{enumerate}
\item[(1)]
The union $\mathcal{B}=\bigcup_{r\in\mathbb{N}}\mathcal{B}_r$ is a 
$\Bbbk$-linear basis for $A$. 
\item[(2)] 
Both $\mathbf{e}\times \mathcal{B}$ and $\mathcal{B}\times\mathbf{e}$ 
are $\Bbbk$-linear bases for $A\#C_n$.
\item[(3)]
$A\# C_n$ is a finitely generated left and right module 
over the commutative subring $\Bbbk [\mathbf{c}]\subseteq A$.
\end{enumerate}
\end{corollary}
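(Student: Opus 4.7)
The plan is to deduce all three parts directly from Proposition \ref{xxpro1.2}, the relations in Lemma \ref{xxlem1.1}, and the standard fact that $A\#C_n \cong A \otimes_{\Bbbk} \Bbbk C_n$ as a $\Bbbk$-vector space (with $(a\#g)(a'\#g') = ag(a')\#gg'$). Part (1) is essentially immediate: since $A = \bigoplus_r A_r$ and $\mathcal{B}_r$ is a basis for $A_r$ by Proposition \ref{xxpro1.2}, the disjoint union $\mathcal{B} = \bigcup_r \mathcal{B}_r$ is a basis for $A$.

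For part (2), I would observe that the idempotents $\mathbf{e}=(e_0,\dots,e_{n-1})$ form a $\Bbbk$-basis for $\Bbbk C_n$ (this is a linear change of basis from $\{\sigma^i\}$, nonsingular because the $\omega^{ij}$ form a Vandermonde-type matrix). Under the identification $A\#C_n = A\otimes_{\Bbbk} \Bbbk C_n$, the multiplication satisfies $z \cdot e_j = z\#e_j$, so the set $\mathcal{B}\times\mathbf{e}$ corresponds to the tensor basis $\mathcal{B}\otimes \mathbf{e}$ and is therefore a $\Bbbk$-basis. For the other side, I would note that $A\#C_n$ is a free right $A$-module on $\{1\#\sigma^i\}$ (since $(1\#\sigma^i)(a\#1) = \sigma^i(a)\#\sigma^i$ and $\sigma^i$ is an automorphism of $A$), hence also on $\{e_j\}$; thus every element has a unique expression $\sum_j e_j a_j$ with $a_j\in A$, and expanding the $a_j$ in $\mathcal{B}$ shows $\mathbf{e}\times\mathcal{B}$ is a basis.

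For part (3), I would use the fact (from the proof of Proposition \ref{xxpro1.2}) that $c_0,\dots,c_{n-1}$ are central in $A$, so $\Bbbk[\mathbf{c}]$ is a commutative subring of the center of $A$. By part (2), every element of $A\#C_n$ is a linear combination of monomials $\mathbf{b}^{\mathbf{i}}\mathbf{c}^{\mathbf{j}} e_k$ with $\mathbf{i}\in\{0,1\}^n$. Using centrality of the $c$'s in $A$, I can rewrite $\mathbf{b}^{\mathbf{i}}\mathbf{c}^{\mathbf{j}}e_k = \mathbf{c}^{\mathbf{j}}\cdot(\mathbf{b}^{\mathbf{i}}e_k)$, exhibiting $A\#C_n$ as a left $\Bbbk[\mathbf{c}]$-module generated by the finite set $\{\mathbf{b}^{\mathbf{i}}e_k : \mathbf{i}\in\{0,1\}^n,\ k\in\mathbb{Z}_n\}$. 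Symmetrically, using the basis $\mathbf{e}\times\mathcal{B}$, each element is a linear combination of $e_k\mathbf{b}^{\mathbf{i}}\mathbf{c}^{\mathbf{j}} = (e_k\mathbf{b}^{\mathbf{i}})\cdot\mathbf{c}^{\mathbf{j}}$, giving finite right generation by $\{e_k\mathbf{b}^{\mathbf{i}}\}$.

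There is no real obstacle here; the only point requiring a moment's care is that the $c_j$'s are \emph{not} central in $A\#C_n$ (they satisfy $e_\alpha c_j = c_j e_{\alpha-j}$), so one must distinguish the left and right module structures over $\Bbbk[\mathbf{c}]$ and pick the appropriate basis from part (2) in each case — the left basis $\mathcal{B}\times\mathbf{e}$ puts the $e_k$ on the right, which is what is needed to pull the central $\mathbf{c}^{\mathbf{j}}$ out on the left, and vice versa for the right module structure.
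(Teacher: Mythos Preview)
Your proof is correct and follows essentially the same approach as the paper, which declares the corollary an immediate consequence of Proposition~\ref{xxpro1.2} and the preceding remark that adjoining the idempotents $\mathbf{e}$ extends the basis $\mathcal{B}_r$ to one for $(A\#C_n)_r$. You have simply supplied the details the paper omits, including the careful observation about left versus right module structures in part~(3).
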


Let $(e_0)\subset A\#C_n$ denote the two sided ideal generated 
by the idempotent $e_0$. We will be concerned with computing 
the GKdimension of the quotient algebra 
$$E:=(A\#C_n)/(e_0).$$ 
Since $e_0$ is the integral of the group algebra $\Bbbk C_n$, 
we obtain that
$$\p(A,C_{n})=\GKdim A -\GKdim E .$$

Let 
\begin{equation}
\label{E1.3.1}\tag{E1.3.1}
\Phi_n :=\{ k \mid c_k^{N_k} \in (e_0) {\text{  for some 
$N_k\geq 0$}}\}.
\end{equation}

The following lemma is easy.

\begin{lemma}
\label{xxlem1.4} Retain the above notation.
\begin{enumerate}
\item[(1)]
Let $\overline{C}$ be the quotient ring
$\Bbbk [\mathbf{c}]/(c_k^{N_k}; k\in \Phi_n)$. Then 
$\GKdim \overline{C} \leq n -|\Phi_n|$.
\item[(2)]
The algebra $E$ is a finitely generated right module 
over $\overline{C}$. As a consequence, 
$$\GKdim E \leq \GKdim \overline{C}\leq n-|\Phi_n|.$$
\item[(3)]
$k\in \Phi_n$ if and only if, for each $\alpha$,
$e_{\alpha} c_k^N\in (e_0)$ for $N\gg 0$.
\end{enumerate}
\end{lemma}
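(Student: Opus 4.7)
The plan is to prove each of the three parts using Corollary \ref{xxcor1.3} together with elementary GKdimension properties and the fact that the primitive idempotents $e_\alpha$ sum to $1$ in $\Bbbk C_n$.

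For part (1), I would first observe that, taking $\mathbf{i}=0$ in Proposition \ref{xxpro1.2}, the monomials $\mathbf{c}^{\mathbf{j}}$ are $\Bbbk$-linearly independent, so $\Bbbk[\mathbf{c}]$ is genuinely a polynomial ring in $n$ variables and has GKdimension $n$. Setting $S := \Bbbk[c_k \mid k\notin \Phi_n]$, the quotient $\overline{C}$ is generated as a module over (the image of) $S$ by the finitely many monomials $\prod_{k\in\Phi_n}c_k^{a_k}$ with $0\le a_k<N_k$. Since a module-finite extension of a commutative affine algebra has the same GKdimension as the base, we get $\GKdim \overline{C}=\GKdim S=n-|\Phi_n|$.

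For part (2), Corollary \ref{xxcor1.3}(3) gives that $A\#C_n$ is finitely generated as a right $\Bbbk[\mathbf{c}]$-module, so after quotienting by $(e_0)$ the algebra $E$ is finitely generated over the image of $\Bbbk[\mathbf{c}]$ in $E$. By the definition of $\Phi_n$, each $c_k^{N_k}$ with $k\in\Phi_n$ lies in $(e_0)$, hence maps to zero in $E$, so this image is a quotient of $\overline{C}$. Consequently $E$ is a finitely generated right $\overline{C}$-module, and the standard GKdimension bound for modules over commutative affine algebras combined with part (1) yields $\GKdim E \le \GKdim \overline{C}\le n-|\Phi_n|$.

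For part (3), the forward direction is immediate: if $c_k^{N_k}\in (e_0)$ then $e_\alpha c_k^{N_k}\in(e_0)$ for every $\alpha$ because $(e_0)$ is a two-sided ideal. For the converse, suppose $e_\alpha c_k^{N_\alpha}\in(e_0)$ for each $\alpha\in\mathbb{Z}_n$ and let $N=\max_\alpha N_\alpha$. Using the orthogonality identity $\sum_{\alpha}e_\alpha=1$ in $\Bbbk C_n$, which follows directly from the definition of $e_\alpha$ and $\sum_\alpha\omega^{\alpha i}=n\delta_{i0}$, we obtain
\[
c_k^N=\Bigl(\sum_\alpha e_\alpha\Bigr)c_k^N=\sum_\alpha e_\alpha c_k^N\in(e_0),
\]
so $k\in\Phi_n$. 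Altogether the lemma is routine bookkeeping; the only place that requires more than a line is invoking the standard behavior of GKdimension under module-finite extensions, which is already in the references cited in the preliminaries.
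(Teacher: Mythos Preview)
Your proposal is correct and follows essentially the same approach as the paper: part (1) via the polynomial-ring structure of $\Bbbk[\mathbf{c}]$ (the paper phrases this as $\{c_k^{N_k}\}$ being a regular sequence, which amounts to your module-finiteness over $S$), part (2) via Corollary \ref{xxcor1.3} and the standard GKdimension bound for module-finite extensions (the paper cites \cite[Proposition 8.3.2]{MR}), and part (3) via $\sum_\alpha e_\alpha = 1$. Your write-up is more explicit in places---notably in taking $N=\max_\alpha N_\alpha$ and in spelling out why the $\Bbbk[\mathbf{c}]$-action on $E$ factors through $\overline{C}$---but the underlying argument is the same.
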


\begin{proof} (1) This is true because 
$\{c_k^{N_k}\mid k\in \Phi_n\}$ is a regular sequence of
$\Bbbk[\mathbf{c}]$.

(2) The first assertion follows from Proposition
\ref{xxpro1.2} (or Corollary \ref{xxcor1.3}(2)).
The consequence follows from \cite[Proposition 8.3.2]{MR}.

(3) If $c_k^{N}\in (e_0)$, then clearly $e_{\alpha}
c_k^{N}\in (e_0)$ for all $\alpha$. The converse
follows from the fact $1=\sum_{\alpha} e_{\alpha}$.
\end{proof}

In the next few sections we provide upper and lower estimates 
for $\GKdim E$.

\section{An upper bound on $\GKdim E $}
\label{xxsec2}

This section is a warm-up for more complicated computations 
to be done in later sections. Fix $n \in \mathbb{N}$, define 
the following functions on $\mathbb{Z}_n$. Let $k$ be in 
${\mathbb Z}_n$. For every $\alpha\in {\mathbb Z}_n$,
\begin{eqnarray*}
  f_k (\alpha) & := & \alpha - k,\\
  g_k (\alpha) & := & 2 \alpha - k.
\end{eqnarray*}
Let $S_k$ be the multiplicative semigroup of 
$\End_{\mathbb{Z}} (\mathbb{Z}_n)$ generated by 
$f_k$ and $g_k$.

\begin{lemma}
\label{xxlem2.1}
For each $s\in S_k$ we have 
$e_{\alpha} c_k^N \in (e_0) + (e_{s (\alpha)})$
for $N\gg 0$.
\end{lemma}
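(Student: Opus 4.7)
The plan is to induct on the length of $s$ as a word in the two semigroup generators $f_k$ and $g_k$, with two base cases (one for each generator) and an inductive step that moves an extra factor of $c_k^{M'}$ through the ideal decomposition coming from the inductive hypothesis.

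For the base case $s=f_k$, I would just apply the commutation relation from Lemma~\ref{xxlem1.1} twice to the two summands of $c_k = b_\ell b_{k-\ell} + b_{k-\ell} b_\ell$, obtaining $e_\alpha c_k = c_k e_{\alpha-k}$, which already sits in $(e_{f_k(\alpha)})$ with $N = 1$. The base case $s=g_k$ is the only step with any content: here I would exploit the freedom in the definition of $c_k = [b_\ell, b_{k-\ell}]$ (this independence of $\ell$ is exactly what equation (E1.1.1) provides) by choosing $\ell = \alpha$. Since the graded commutator of two degree-one elements is the anti-commutator, $c_k = b_\alpha b_{k-\alpha} + b_{k-\alpha} b_\alpha$, and pushing $e_\alpha$ past the $b$'s using $e_\alpha b_\gamma = b_\gamma e_{\alpha-\gamma}$ gives
\[
 e_\alpha c_k \;=\; b_\alpha\, e_0\, b_{k-\alpha} \;+\; b_{k-\alpha}\, e_{2\alpha-k}\, b_\alpha \;\in\; (e_0) + (e_{g_k(\alpha)}).
\]

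For the inductive step I would write $s = s_0 \circ \tilde s$ with $s_0 \in \{f_k, g_k\}$ and $\tilde s$ a shorter word, and choose $M$ uniformly over the finite set $\mathbb{Z}_n$ so that the inductive hypothesis gives $e_\alpha c_k^M = u + \sum_i x_i\, e_{\tilde s(\alpha)}\, y_i$ with $u \in (e_0)$. I would then multiply on the right by $c_k^{M'}$, where $M'$ comes from the base case applied to $s_0$ at the point $\tilde s(\alpha)$. Since $c_k$ is central in $A$ and $e_\beta c_k = c_k e_{\beta - k}$, one can rewrite $y_i c_k^{M'} = c_k^{M'}\,\tilde y_i$ for a suitable $\tilde y_i \in A\#C_n$ (simply shift the $\Bbbk C_n$-component of each summand of $y_i$), producing
\[
 e_\alpha c_k^{M+M'} \;=\; u\, c_k^{M'} \;+\; \sum_i x_i\, \bigl(e_{\tilde s(\alpha)} c_k^{M'}\bigr)\, \tilde y_i,
\]
and the middle factor lies in $(e_0)+(e_{s_0(\tilde s(\alpha))}) = (e_0)+(e_{s(\alpha)})$ by the base case. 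Any $N \geq M+M'$ then works because both ideals are closed under right multiplication.

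The only real obstacle is discovering the $\ell=\alpha$ specialization in the $g_k$ base case; without that choice one does not see where the doubling $\alpha \mapsto 2\alpha-k$ comes from. Everything afterward is bookkeeping: the inductive step reduces cleanly to the base cases because $c_k$ is central in $A$ and only shifts the $\Bbbk C_n$-indices when commuted past idempotents, so the semigroup structure of $S_k$ maps directly onto concatenating ideal-membership statements.
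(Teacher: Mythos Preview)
Your proposal is correct and uses the same two key computations as the paper: the commutation $e_\alpha c_k = c_k e_{\alpha-k}$ for $f_k$, and the specialization $\ell=\alpha$ in $c_k=[b_\ell,b_{k-\ell}]$ for $g_k$. The only difference is organizational: the paper carries the full power $c_k^N$ through both computations from the start, writing for instance
\[
e_\alpha c_k^N = b_\alpha e_0 c_k^{N-1} b_{k-\alpha} + b_{k-\alpha}\, e_{g_k(\alpha)}\, c_k^{N-1}\, b_\alpha,
\]
so that the factor $e_{g_k(\alpha)} c_k^{N-1}$ is already positioned for the next iteration. This makes the induction immediate---each application of a generator consumes one $c_k$---and avoids your extra step of commuting $c_k^{M'}$ past the tail factors $y_i$. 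Your version works, but the paper's bookkeeping is cleaner.
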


\begin{proof}
We have two simple calculations
\begin{eqnarray*}
e_{\alpha} c_k^N & = & c_k e_{\alpha - k} c_k^{N-1}
=c_k e_{f_{k}(\alpha)} c_k^{N-1}, \qquad \quad {\text{and}}\\
e_{\alpha} c_k^N & = & e_{\alpha} (b_{\alpha} b_{k - \alpha} + 
      b_{k -\alpha} b_{\alpha}) c_k^{N - 1}\\
& = & b_{\alpha} e_0 c_k^{N - 1} b_{k - \alpha} + b_{k - \alpha} 
      e_{2\alpha - k} c_k^{N - 1} b_{\alpha}\\
& = & b_{\alpha} e_0 c_k^{N - 1} b_{k - \alpha} + b_{k - \alpha} 
      e_{g_{k}(\alpha)}c_k^{N - 1} b_{\alpha}, 
\end{eqnarray*}
which imply that $e_{\alpha} c_k^N \in (e_{f_k (\alpha)})$ and 
that $e_{\alpha} c_k^N \in (e_0) + (e_{g_k (\alpha)})$. Since $s$ 
is generated by $f_k$ and $g_k$, the claim follows.
\end{proof}

For fixed $\alpha$, it is easy to see that 
\begin{equation}
\notag
S_k(\alpha)=\{2^s \alpha+tk \mod n \mid s,t\geq 0\}
\subseteq {\mathbb Z}_n.
\end{equation}

\begin{lemma}
\label{xxlem2.2}
Let $k\in {\mathbb Z}_n$ be fixed. If $0\in S_k(\alpha)$ for 
every $\alpha$, then $k\in \Phi_n$.
\end{lemma}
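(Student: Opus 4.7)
The plan is to combine Lemma \ref{xxlem2.1} with the characterization of $\Phi_n$ given in Lemma \ref{xxlem1.4}(3). The hypothesis $0\in S_k(\alpha)$ means there exists an element $s\in S_k$ (some word in $f_k$ and $g_k$) with $s(\alpha)=0$. Feeding this particular $s$ into Lemma \ref{xxlem2.1} yields, for $N$ sufficiently large,
\[
e_\alpha c_k^N\;\in\;(e_0)+(e_{s(\alpha)})\;=\;(e_0)+(e_0)\;=\;(e_0).
\]
Thus for every $\alpha\in\mathbb{Z}_n$ we can find an exponent $N_\alpha$ with $e_\alpha c_k^{N_\alpha}\in(e_0)$.

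Next I would take $N:=\max_{\alpha}N_\alpha$, which exists since $\mathbb{Z}_n$ is finite, so that $e_\alpha c_k^N\in(e_0)$ holds simultaneously for all $\alpha$. By Lemma \ref{xxlem1.4}(3), this forces $k\in\Phi_n$. Equivalently, one may simply sum over $\alpha$ using $1=\sum_\alpha e_\alpha$ to conclude directly that $c_k^N=\sum_\alpha e_\alpha c_k^N\in(e_0)$.

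There is essentially no obstacle here: the only small point to verify is that applying Lemma \ref{xxlem2.1} requires an element of $S_k$ (a composition of $f_k$ and $g_k$) rather than just an element of the orbit $S_k(\alpha)\subseteq\mathbb{Z}_n$, but by the definition of $S_k(\alpha)$ any $0$ in this orbit is witnessed by some $s\in S_k$, so the two notions match. The argument is thus a direct two-line application of the preceding lemma combined with the finiteness of $\mathbb{Z}_n$.
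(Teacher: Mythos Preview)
Your proposal is correct and follows essentially the same approach as the paper: pick $s\in S_k$ with $s(\alpha)=0$, apply Lemma~\ref{xxlem2.1} to get $e_\alpha c_k^N\in (e_0)+(e_{s(\alpha)})=(e_0)$, and then invoke Lemma~\ref{xxlem1.4}(3). The paper's proof is the same two-line argument, omitting only the explicit uniformization step you spelled out.
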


\begin{proof} For each $\alpha$, pick $s\in S_k$ so that 
$s(\alpha)=0$. By Lemma \ref{xxlem2.1}, $e_{\alpha} c_k^N
\in (e_0) + (e_{s(\alpha)})=(e_0)$. The assertion follows by 
Lemma \ref{xxlem1.4}(3).
\end{proof}

Recall from \eqref{E0.6.1} that  
\begin{equation}
\notag
\phi_{2}(n)=\{k \mid 0\leq k\leq n-1, \gcd(k,n)=2^w \;\;
{\text{for some $w\geq 0$}}\}.
\end{equation}

\begin{proposition}
\label{xxpro2.3}
Retain the above notation. 
\begin{enumerate}
\item[(1)] 
If $k=2^w q<n$ such that $q$ is odd and  $(n,q)=1$, then
$k\in \Phi_n$. Equivalently, $\phi_2(n)\subseteq \Phi_n$.
\item[(2)]
$|\Phi_n|\geq |\phi_2(n)|$.
\item[(3)]
$\GKdim (E ) \leq n - |\phi_2(n)|$. As a consequence,
\begin{enumerate}
\item[(3a)]
If $n = 2^j$, then $\GKdim E=0$.
\item[(3b)] 
If $n$ is an odd prime, then $\GKdim E\leq  1$.
\end{enumerate}
\end{enumerate}
\end{proposition}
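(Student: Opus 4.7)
The plan is to deduce part (1) from the combinatorial description of $S_k(\alpha)$ given just before Lemma \ref{xxlem2.2}, and then to harvest parts (2) and (3) essentially for free. Given $k = 2^w q$ with $q$ odd and $\gcd(n, q) = 1$, I would write $n = 2^a m$ with $m$ odd. Since $q$ is odd and coprime to $n$, it is a unit in $\mathbb{Z}_n$, so $\gcd(k, n) = \gcd(2^w, n) = 2^{\min(w, a)}$. Consequently the cyclic subgroup $\langle k\rangle \subseteq \mathbb{Z}_n$ equals $2^{\min(w, a)} \mathbb{Z}_n$, and because $\langle k\rangle$ is finite, $\{tk \bmod n : t \geq 0\} = \langle k\rangle$. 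Using $S_k(\alpha) = \{2^s\alpha + tk \bmod n : s,t \geq 0\}$, showing $0 \in S_k(\alpha)$ reduces to finding some $s \geq 0$ with $2^s \alpha \in 2^{\min(w,a)}\mathbb{Z}_n$.

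I would then simply take $s = a$: the integer $2^a \alpha$ is divisible by $2^a$, so its class in $\mathbb{Z}_n = \mathbb{Z}/2^a m\mathbb{Z}$ lies in $2^a\mathbb{Z}_n \subseteq 2^{\min(w,a)}\mathbb{Z}_n$ (using $\min(w,a) \leq a$). This gives $0 \in S_k(\alpha)$ for every $\alpha$, and Lemma \ref{xxlem2.2} yields $k \in \Phi_n$, establishing (1). Part (2) is the cardinality form of the inclusion $\phi_2(n) \subseteq \Phi_n$, and part (3) follows by combining (2) with Lemma \ref{xxlem1.4}(2). For (3a), when $n = 2^j$ every $k \in \{0, 1, \dots, n-1\}$ satisfies $\gcd(k, n) = 2^r$ for some $r \geq 0$ (including $k = 0$, where $\gcd(0, n) = n = 2^j$), so $\phi_2(n) = \mathbb{Z}_n$ and $\GKdim E \leq 0$. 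For (3b), when $n$ is an odd prime, $\gcd(0, n) = n$ is not a power of $2$ while $\gcd(k, n) = 1$ for all $k \in \{1, \dots, n-1\}$, so $|\phi_2(n)| = n - 1$ and $\GKdim E \leq 1$.

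The only substantive content is the computation in (1), which collapses as soon as one observes that $\gcd(k, n)$ divides $2^a$ under the stated hypothesis on $q$. I anticipate no real obstacle; the proposition is essentially an orchestration of Lemmas \ref{xxlem1.4}, \ref{xxlem2.1}, and \ref{xxlem2.2} together with this elementary divisibility observation.
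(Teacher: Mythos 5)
Your proposal is correct and follows essentially the same route as the paper: reduce to Lemma \ref{xxlem2.2} via the description $S_k(\alpha)=\{2^s\alpha+tk \bmod n\}$, then deduce (2) and (3) from Lemma \ref{xxlem1.4}(2). The only difference is cosmetic — where the paper splits into cases comparing the $2$-adic valuations of $\alpha$ and $k$, you uniformly take $s=a$ and use $\langle k\rangle=\gcd(k,n)\mathbb{Z}_n=2^{\min(w,a)}\mathbb{Z}_n$, which is a slightly cleaner packaging of the same mechanism.
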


\begin{proof} Let $n$ be a positive integer such that $n=2^m p$
where $p$ is odd. Then let $|n|_2=m$. 

(1) By Lemma \ref{xxlem2.2}, we need to show that,
for every $\alpha$, there is an $s\in S_{k}$ such that $s(\alpha)=0$. 
Write $\alpha=2^r\beta$ where $r=|\alpha|_2$. Recall that $k=2^w q<n$ 
such that $(p,q)=1$ where $w=|k|_2$. We have two cases, depending on 
the relative magnitudes of $r$ and $w$. 

Case 1: If $r \geq w$, then 
there exists $j$ such that $\alpha = j k$ in $\mathbb{Z}_n$ 
($j=2^{r-w} q^{-1} \beta$ where $q^{-1}$ exists in ${\mathbb Z}_n$), so
$$
f^j_k (\alpha) = \alpha - j k = 0 \qquad {\text{ in ${\mathbb Z}_n$}}.
$$
So we take $s=f^j_k$.

Case 2: If $r < w$, then
$$
g^{w - r}_k (\alpha) = 2^{w - r} \alpha -(2^{w - r } - 1) k,
$$
hence $| g_k^{w - r} (\alpha) |_2 \geq w$. This reduces to the 
first case. 

Hence, in both cases, there is an $s\in S_k$ such that $s(\alpha)=0$ 
as required.

(2) This is an immediate consequence of part (1).

(3) The main assertion follows from part (2) and Lemma \ref{xxlem1.4}(2).
Two consequences are special cases of the main assertion.
\end{proof}

It is easy to see that Theorem \ref{xxthm0.7} is equivalent to 
Proposition \ref{xxpro2.3}(3).

\section{Preparation, part one}
\label{xxsec3}

Recall that $E=(A\# C_n)/(e_0)$. In this section, we reduce 
the problem of computing $\GKdim E$ to that of a right 
quotient module of $A$. Let $\bar{e}_k$ denote the image of 
the idempotent $e_k$ in $E$. This gives a right module 
decomposition 
$$E = \bar{e}_1E \oplus\cdots\oplus \bar{e}_{n-1}E,$$
and it follows that
\begin{equation}
\label{E3.0.1}\tag{E3.0.1}
\GKdim(E )=\max_{1\leq j\leq n-1} \GKdim(\bar{e}_jE ).
\end{equation}
For each $j$, we have the following isomorphism of right 
$A\#C_n$-modules 
\begin{equation}\label{E3.0.2}\tag{E3.0.2}
\bar{e}_j E \cong \frac{e_j (A\#C_n)}{e_j (A\#C_n)\cap(e_0)}.
\end{equation}

Using the basis $\mathbf{e} \times \mathcal{B}$ for $A\#C_{n}$ 
we obtain immediately the right $A$-module isomorphism 
$e_{j} (A\#C_{n})\cong A$ by $e_{j} a \mapsto a$ with inverse 
given by $a \mapsto e_{j} a$. We will use this isomorphism 
to identify $e_{j} (A\#C_{n})$ with $A$ below.

In the following, it will be useful to decompose $A$ according to the 
characters of the $C_n$-action, or equivalently, as modules over the 
invariant subring $A^{C_n}$. Let $R_j$ be the $\Bbbk$-subspace of $A$ 
spanned by the basis consisting of the elements 
$\mathbf{b}^{\mathbf{i}} \mathbf{c}^{\mathbf{j}}$ where 
$(\mathbf{i} + \mathbf{j} ) \cdot \mathbf{v} = j \mod n$ and 
$\mathbf{v}:=( 0,1, \ldots ,n-1 )$. Since $\mathcal{B}$ is an eigenbasis 
with respect to the $\sigma$-action, we have that $R_0$ is the invariant 
subring $A^{C_n}$ and $R_j$ is the $M_{\omega^{-j}}$-isotypic component 
of the $C_n$-action on $A$. This gives an $R_0$-module decomposition
\begin{equation}
\notag
A \cong R_0 \oplus R_1 \oplus \cdots\oplus R_{n-1}.
\end{equation} 
We next find a finite generating set for $R_{j}$.

\begin{lemma}
\label{xxlem3.1}
For each $j=1,\dots,n-1$, define $B_{j}$ to be the set of elements
$\mathbf{b}^{\mathbf{i}} \mathbf{c}^{\mathbf{j}}$ satisfying
\begin{enumerate}
\item[(i)] 
$( \mathbf{i} + \mathbf{j} ) \cdot \mathbf{v} = j
\mod n$, and
\item[(ii)] 
for each nontrivial $\mathbf{b}^{\mathbf{i}'}
\mathbf{c}^{\mathbf{j}'}$ with $\mathbf{i}' \leq \mathbf{i}$
and $\mathbf{j}' \leq \mathbf{j}$ we have
$\mathbf{b}^{\mathbf{i}'} \mathbf{c}^{\mathbf{j}'} \not\in R_{0}$.
\end{enumerate}
Then $B_{j}$ generates $R_{j}$ as a right $R_{0}$-submodule of $A$. 
\end{lemma}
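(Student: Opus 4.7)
The plan is to use double induction on the pair $(|\mathbf{i}|_1,\, |\mathbf{i}|_1 + 2|\mathbf{j}|_1)$ ordered lexicographically. By Proposition \ref{xxpro1.2}, those basis monomials $\mathbf{b}^{\mathbf{i}}\mathbf{c}^{\mathbf{j}}$ of $A$ satisfying (i) form a $\Bbbk$-basis of $R_j$, so it is enough to write each such monomial as a right $R_0$-combination of elements of $B_j$.

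Fix such a basis element $m := \mathbf{b}^{\mathbf{i}}\mathbf{c}^{\mathbf{j}}\in R_j$. If $m$ already satisfies (ii), then $m\in B_j$ and there is nothing to prove. Otherwise, choose a nontrivial pair $(\mathbf{i}',\mathbf{j}')$ with $\mathbf{i}'\leq\mathbf{i}$, $\mathbf{j}'\leq\mathbf{j}$, and $m' := \mathbf{b}^{\mathbf{i}'}\mathbf{c}^{\mathbf{j}'}\in R_0$; since $j\neq 0$ we have $m\notin R_0$, so $(\mathbf{i}',\mathbf{j}')\neq(\mathbf{i},\mathbf{j})$. Set $q := \mathbf{b}^{\mathbf{i}-\mathbf{i}'}\mathbf{c}^{\mathbf{j}-\mathbf{j}'}$. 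This is another basis monomial and, by the $\sigma$-eigenvalue calculation, $q\in R_j$. The crucial identity will be
\[
\pm m \;=\; q\cdot m' \;-\; \Sigma\cdot \mathbf{c}^{\mathbf{j}},
\]
obtained by bringing the product $\mathbf{b}^{\mathbf{i}-\mathbf{i}'}\cdot\mathbf{b}^{\mathbf{i}'}$ into the normal form of Proposition \ref{xxpro1.2}, using centrality of the $c_i$'s in $A$ together with the reordering rule $b_k b_l + b_l b_k = c_{k+l}$ coming from \eqref{E1.1.1}. Here $\Sigma$ is a $\Bbbk$-linear combination of basis monomials $\mathbf{b}^{\mathbf{i}''}\mathbf{c}^{\mathbf{j}''}$ with $|\mathbf{i}''|_1<|\mathbf{i}|_1$, each of which still lies in $R_j$ because the reordering preserves the $\sigma$-weight.

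To close the induction I would then verify that both summands on the right have strictly smaller lexicographic parameter. For $q\cdot m'$: if $\mathbf{i}'\neq\mathbf{0}$ the primary parameter $|\mathbf{b}|$-degree drops, while if $\mathbf{i}'=\mathbf{0}$ the primary parameter is preserved but the secondary (total degree) drops since $\mathbf{j}'\neq\mathbf{0}$; in either case $q\in B_j\cdot R_0$ by induction, and multiplication by $m'\in R_0$ keeps the product in $B_j\cdot R_0$. For each summand $\mathbf{b}^{\mathbf{i}''}\mathbf{c}^{\mathbf{j}''+\mathbf{j}}$ of $\Sigma\cdot\mathbf{c}^{\mathbf{j}}$, the inequality $|\mathbf{i}''|_1<|\mathbf{i}|_1$ and membership in $R_j$ put it in $B_j\cdot R_0$ by induction as well. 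The base case $|\mathbf{i}|_1=0$ forces any extracted $m'$ to have $\mathbf{i}'=\mathbf{0}$, so $\Sigma=0$ and the secondary induction on total degree handles it cleanly.

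The main obstacle is precisely this reordering step: one would like the clean identity $\pm m = q\cdot m'$, but the $b$-variables neither commute nor purely anticommute, and the defect $b_kb_l+b_lb_k=c_{k+l}$ produces the genuine correction $\Sigma$. This is exactly why the induction must be ordered with $b$-degree as the primary parameter, since the corrections preserve total degree but strictly reduce the number of $b$-factors; everything else is bookkeeping once this ordering is fixed.
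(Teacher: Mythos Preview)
Your proposal is correct and follows essentially the same approach as the paper's proof: extract a nontrivial $R_0$-factor from a basis monomial failing (ii), rewrite $m$ as $q\cdot m'$ plus correction terms of strictly smaller $b$-degree arising from the reordering $b_kb_l=-b_lb_k+c_{k+l}$, and induct. The paper states its induction as being on the $\mathbf{b}$-exponent vector and treats the case $\mathbf{i}'=\mathbf{0}$ separately by splitting off $\mathbf{c}^{\mathbf{j}'}$; your lexicographic ordering on $(|\mathbf{i}|_1,\ |\mathbf{i}|_1+2|\mathbf{j}|_1)$ folds both cases into a single well-founded induction, which is a slightly cleaner packaging of the same argument.
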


\begin{proof}
By definition, the elements $\mathbf{b}^{\mathbf{i}}
\mathbf{c}^{\mathbf{j}}$ satisfying (i) generate $R_{j}$. 
Now suppose $\mathbf{b}^{\mathbf{i}} \mathbf{c}^{\mathbf{j}}$ 
satisfies (i) but not (ii), that is, there exist some nontrivial
$\mathbf{i}' \leq \mathbf{i}$ and $\mathbf{j}' 
\leq \mathbf{j}$ such that $\mathbf{b}^{\mathbf{i}'} 
\mathbf{c}^{\mathbf{j}'} \in R_{0}$. If
$\mathbf{i}' =0$, then we can write it as 
$\mathbf{b}^{\mathbf{i}}
\mathbf{c}^{\mathbf{j} - \mathbf{j}'}\mathbf{c}^{\mathbf{j}'}$.
If $\mathbf{i}' \neq 0$, then using the commutation relations
\eqref{E1.1.2}, we can move the $\mathbf{b}^{\mathbf{} \mathbf{i}'}$ 
terms, one at a time, to the right side of the expression so that
\[ \mathbf{b}^{\mathbf{i}} \mathbf{c}^{\mathbf{j}} =
\mathbf{b}^{\mathbf{i} - \mathbf{i}'} \mathbf{c}^{\mathbf{j}-\mathbf{j}'} 
\mathbf{b}^{\mathbf{i}'} \mathbf{c}^{\mathbf{j}'}
+ \sum_{\mathbf{k} , \mathbf{l}} \lambda_{\mathbf{k},\mathbf{l}} 
\mathbf{b}^{\mathbf{k}} \mathbf{c}^{\mathbf{l}} \]
where each $\mathbf{k}$ in the summation above satisfies 
$\mathbf{k} < \mathbf{i}$ and $\lambda_{\mathbf{k},\mathbf{l}} 
\in \Bbbk$. In particular, we have expressed 
$\mathbf{b}^{\mathbf{i}} \mathbf{c}^{\mathbf{j}}$ as an $R_{0}$-linear 
combination of terms in $\mathcal{B}$ whose $\mathbf{b}$-exponent 
vector is strictly less than $\mathbf{i}$. By induction on the 
$\mathbf{b}$-exponent vector, we obtain the result.
\end{proof}

\begin{lemma}
\label{xxlem3.2}
Retain the above notation. Suppose 
$1\leq j\leq n-1$ and $0\leq k\leq n-1$.
\begin{enumerate}
\item[(1)]
The intersection $e_{j} (A\#C_{n}) \cap ( e_{0} )$ considered as 
a right ideal in $A$ is generated by $B_{j}$.
\item[(2)]
For $N\geq 0$, $c_k^N\in B_jA$ if and 
only if $e_j c_k^N\in (e_0)$.  
\item[(3)]
$k\in \Phi_n$ if and only if, for each $j$,
$e_{j} c_k^N\in (e_0)$ for some $N\gg 0$; and 
if and only if, for each $j$,
$c_k^N\in B_jA$ for some $N\gg 0$.
\end{enumerate}
\end{lemma}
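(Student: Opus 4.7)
The plan is to prove part (1) directly by computing $(e_0) \cap e_j(A\# C_n)$ explicitly; parts (2) and (3) will then fall out as specializations. My first step is the simplification $(e_0) = A e_0 A$. For any $u, v \in A\#C_n$, write $u = \sum_\alpha a_\alpha e_\alpha$ and $v = \sum_\beta e_\beta b_\beta$ using the bases $\mathcal{B}\times\mathbf{e}$ and $\mathbf{e}\times\mathcal{B}$ respectively; orthogonality gives $e_\alpha e_0 e_\beta = \delta_{\alpha,0}\delta_{\beta,0}\, e_0$, so that $u e_0 v = a_0 e_0 b_0 \in A e_0 A$, and the reverse inclusion is trivial.

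Next I decompose $A e_0 A$ along the $C_n$-isotypic components $A = R_0 \oplus \cdots \oplus R_{n-1}$. The relations $e_\alpha b_\gamma = b_\gamma e_{\alpha - \gamma}$ extend multiplicatively by additivity of the $\sigma$-weight to the identity $e_\alpha v = v e_{\alpha - k}$ for any $v \in R_k$. In particular $a_k e_0 = e_k a_k$ whenever $a_k \in R_k$, so for $a = \sum_k a_k \in A$ and any $b \in A$,
\[ a e_0 b \;=\; \sum_k e_k \cdot (a_k b). \]
This yields the direct sum decomposition $(e_0) = \bigoplus_{k=0}^{n-1} e_k \cdot (R_k A)$, where directness comes from $A\#C_n = \bigoplus_k e_k (A\#C_n)$.

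Intersecting with $e_j(A\# C_n)$ selects the single summand $e_j \cdot R_j A$. Transporting through the right $A$-module isomorphism $e_j(A\# C_n) \xrightarrow{\sim} A$, $e_j a \mapsto a$, identifies this intersection with the right $A$-submodule $R_j A \subseteq A$. Lemma \ref{xxlem3.1} gives $R_j = B_j R_0$, and since $1 \in R_0$ yields $R_0 A = A$, we conclude $R_j A = B_j A$, proving (1). Part (2) is obtained by specializing (1) to $a = c_k^N$. For (3), the first equivalence is Lemma \ref{xxlem1.4}(3) (the $\alpha = 0$ case being vacuous since $e_0 c_k^N \in (e_0)$ automatically), and the second equivalence follows from (2) applied to each $j$. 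The only nontrivial piece of work is the decomposition $(e_0) = \bigoplus_k e_k \cdot (R_k A)$; once that is in place the remainder is essentially bookkeeping.
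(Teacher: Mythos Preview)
Your proof is correct and follows essentially the same approach as the paper: both reduce $(e_0)$ to $A e_0 A$, use the commutation rule $e_k a_k = a_k e_0$ for $a_k \in R_k$ to identify the $e_j$-component as $e_j R_j A$, and then invoke Lemma~\ref{xxlem3.1} to rewrite $R_j A$ as $B_j A$. Your version is slightly more structured in that you state the decomposition $(e_0) = \bigoplus_k e_k\cdot(R_k A)$ explicitly, whereas the paper argues term-by-term with basis elements $u e_0 v$, but the content is the same.
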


\begin{proof}
(1) Using the fact $A\# C_n=\sum_i A e_i=\sum_i e_i A$, one sees that
every element $f\in (e_0):= (A\# C_n) e_0 (A\# C_n)$ can be written 
as a linear combination of terms $ue_0v$ where $u,v\in\mathcal{B}$. 
Without loss of generality let $f= ue_0 v$ where $u,v \in \mathcal{B}$. 
If, in addition, $f\in e_j (A\#C_n)$,  then 
$$f= e_j f= e_j u e_0 v= u e_{j-\gamma} e_0 v=\begin{cases} u e_0 v & j=\gamma\\
0 & j\neq \gamma\end{cases}$$
where $u\in R_{\gamma}$. Hence we can assume that $j=\gamma$ and 
$u\in R_j=B_j R_0$ by Lemma \ref{xxlem3.1}. Since elements of $R_0$ 
commute with $e_0$, we can actually assume that $u\in B_j$. Finally,
$ue_0 v=e_j uv$ since $u\in B_j$.

(2) This follows from part (1).

(3) This follows from part (2) and Lemma \ref{xxlem1.4}(3).
\end{proof}

\medskip
Identify $e_j (A\# C_n)$ with $A$ and combining Lemma \ref{xxlem3.2} 
and \eqref{E3.0.2}, we get
\begin{equation}
\label{E3.2.1}\tag{E3.2.1}
\bar{e}_jE \cong A/B_jA =A/R_jA.
\end{equation}
We can say more: Lemma \ref{xxlem3.4} below finds a sufficient 
condition for when these quotients are isomorphic.

\begin{definition}
\label{xxdef3.3} 
Let $\lambda\in {\mathbb Z}$ be an integer with $\gcd(\lambda,n)=1$. 
Let $f_{\lambda} :A \longrightarrow A$ be the algebra 
map determined by 
$$f_{\lambda}(b_{i})= b_{\lambda i}$$ 
for all $i\in {\mathbb Z}_n$. To see this is an algebra 
homomorphism, note that 
$$f_{\lambda} ([ b_{0} ,b_{j} ] - [ b_{r} ,b_{j-r} ] ) 
= [b_{0} ,b_{\lambda j}]-[b_{\lambda r} ,b_{\lambda j-\lambda r}].$$ 
Since $\lambda$ is invertible in ${\mathbb Z}_n$, 
$f_{\lambda}$ is an algebra automorphism of $A$. It is 
easy to check that $f_{\lambda}(x_i)=x_{a i}$ where $a=\lambda^{-1}$ 
in ${\mathbb Z}_n$. 
\end{definition}

\begin{lemma}
\label{xxlem3.4}
For any positive integer $\lambda$ with $\gcd(\lambda,n)=1$, we 
have the following isomorphism of $\Bbbk$-vector spaces 
$$\bar{e}_{j} E  \cong \bar{e}_{\lambda j} E .$$ 
In particular, if $n$ is prime, then for each $j=2, \ldots ,n-1$, 
we have $\bar{e}_{1} E  \cong \bar{e}_{j} E$.
\end{lemma}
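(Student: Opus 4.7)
The plan is to leverage the algebra automorphism $f_{\lambda}$ from Definition \ref{xxdef3.3} together with the identification $\bar{e}_j E \cong A/R_j A$ from \eqref{E3.2.1}. So the whole task reduces to showing that $f_\lambda$ carries the right ideal $R_j A$ onto the right ideal $R_{\lambda j} A$; once that is done, $f_\lambda$ descends to a $\Bbbk$-linear isomorphism $A/R_j A \cong A/R_{\lambda j} A$.

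The first step is to determine how $f_\lambda$ acts on the isotypic decomposition. Since $\sigma \ast b_\gamma = \omega^{-\gamma} b_\gamma$ by \eqref{E1.0.6}, and $c_\gamma = [b_k, b_{\gamma-k}]$ for any $k$, a short computation shows that $\sigma \ast c_\gamma = \omega^{-\gamma} c_\gamma$ as well. Consequently a monomial $\mathbf{b}^{\mathbf{i}} \mathbf{c}^{\mathbf{j}}$ is a $\sigma$-eigenvector with eigenvalue $\omega^{-(\mathbf{i}+\mathbf{j})\cdot \mathbf{v}}$, so $R_j$ is exactly the $\omega^{-j}$-eigenspace of $\sigma$. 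Applying $f_\lambda$ sends $b_\gamma \mapsto b_{\lambda\gamma}$ and, because $f_\lambda$ is an algebra map, $c_\gamma \mapsto c_{\lambda\gamma}$. Therefore a monomial in $R_j$ (whose total degree vector $\mathbf{i} + \mathbf{j}$ satisfies $(\mathbf{i}+\mathbf{j})\cdot \mathbf{v}\equiv j \pmod n$) is sent to a monomial whose exponent vector has $\mathbf{v}$-weighted sum $\equiv \lambda j \pmod n$. Hence $f_\lambda(R_j)\subseteq R_{\lambda j}$, and using the inverse automorphism $f_{\lambda^{-1}}$ (which exists because $\gcd(\lambda,n)=1$) we obtain the equality $f_\lambda(R_j) = R_{\lambda j}$.

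Since $f_\lambda$ is an algebra automorphism of $A$, it follows that $f_\lambda(R_j A) = f_\lambda(R_j)\, f_\lambda(A) = R_{\lambda j} A$. Thus $f_\lambda$ induces a $\Bbbk$-vector space isomorphism
\[
A/R_j A \;\xrightarrow{\;\cong\;}\; A/R_{\lambda j} A,
\]
which by \eqref{E3.2.1} is the desired isomorphism $\bar{e}_j E \cong \bar{e}_{\lambda j} E$. For the final statement, when $n$ is prime every $j\in\{2,\ldots,n-1\}$ is a unit mod $n$, so choosing $\lambda = j$ in the above gives $\bar{e}_1 E \cong \bar{e}_j E$.

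No step really poses an obstacle here; the only subtle point is verifying that $f_\lambda$ genuinely preserves the isotypic grading, which uses only that both $b_\gamma$ and $c_\gamma$ carry the same $\sigma$-weight $\omega^{-\gamma}$. Everything else is formal.
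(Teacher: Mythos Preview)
Your proof is correct and follows essentially the same approach as the paper: invoke the automorphism $f_\lambda$ of Definition~\ref{xxdef3.3}, verify $f_\lambda(R_j)=R_{\lambda j}$, and then use \eqref{E3.2.1} to conclude. The paper simply asserts $f_\lambda(R_j)=R_{\lambda j}$ without the eigenvalue computation you carried out, so your version is a more detailed rendering of the same argument.
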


\begin{proof}
Let $f_{\lambda} :A \longrightarrow A$ be the algebra isomorphism 
defined in Definition \ref{xxdef3.3}. Now 
$$f_{\lambda} (R_{j} ) =R_{\lambda j},$$ hence 
$$\bar{e}_{j}
E \cong A/R_{j} A\cong A/R_{\lambda j} A\cong \bar{e}_{\lambda j}
E $$ 
as $\Bbbk$-vector spaces. 
\end{proof}

Since GKdimension of a finitely generated $A$-module is only
dependent on its Hilbert series \eqref{E1.0.2}, we have the 
following immediate consequences.

\begin{corollary}
\label{xxcor3.5} Retain the above notation.
\begin{enumerate}
\item[(1)]
For any $0<j<n$, we have the following lower bound 
for $\GKdim(E)$ 
$$\GKdim(E)\geq \GKdim (A/B_jA).$$
\item[(2)]
We have
$$\GKdim(E)=\max_j \GKdim (A/B_jA)$$
where $j$ ranges over positive integers less than $n$ 
that divide $n$.  
\item[(3)]
If $n$ is prime, then 
$$\GKdim(E) = \GKdim (A/B_1A).$$
\end{enumerate}
\end{corollary}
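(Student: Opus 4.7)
The plan is to deduce all three parts directly from the decomposition \eqref{E3.0.1}, the identification \eqref{E3.2.1}, and Lemma \ref{xxlem3.4}, observing throughout that GKdimension of a finitely generated graded $A$-module depends only on its Hilbert series, and that the isomorphisms in play are isomorphisms of graded vector spaces (since $f_{\lambda}$ of Definition \ref{xxdef3.3} preserves degree, and $B_jA = R_jA$ is a graded right ideal of $A$).

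First I would prove part (1): combining \eqref{E3.0.1} with \eqref{E3.2.1} gives
\[
\GKdim(E)=\max_{1\leq j\leq n-1}\GKdim(\bar e_jE)=\max_{1\leq j\leq n-1}\GKdim(A/B_jA),
\]
which immediately yields the asserted inequality for any single $j$.

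Next, for part (2), the goal is to reduce the index set from $\{1,\ldots,n-1\}$ to the set of proper divisors of $n$. Given $j\in\{1,\ldots,n-1\}$, set $d=\gcd(j,n)$, so $j=dm$ with $\gcd(m,n/d)=1$. I would produce an integer $\lambda$ with $\gcd(\lambda,n)=1$ and $\lambda d\equiv j\pmod n$: this amounts to finding $\lambda$ in the residue class of $m$ modulo $n/d$ that is coprime to $n$, which is possible because $\gcd(m,n/d)=1$ (apply CRT to lift any coprime residue class mod $n/d$ to a residue mod $n$ coprime to $n$). Then Lemma \ref{xxlem3.4} gives $\bar e_dE\cong\bar e_jE$ as (graded) $\Bbbk$-vector spaces, so their Hilbert series agree and hence so do their GKdimensions. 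Therefore the maximum in part (1) is attained on the set of divisors $d$ of $n$ with $0<d<n$, yielding part (2).

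Finally, part (3) is immediate from Lemma \ref{xxlem3.4}: if $n$ is prime, then every $j\in\{1,\ldots,n-1\}$ is coprime to $n$, so taking $\lambda=j^{-1}\bmod n$ gives $\bar e_jE\cong\bar e_1E$ as graded vector spaces, and hence $\GKdim(\bar e_jE)=\GKdim(\bar e_1E)=\GKdim(A/B_1A)$ for every $j$, which combined with \eqref{E3.0.1} yields the equality. There is no substantial obstacle here; the only mild subtlety is verifying that the vector-space isomorphism produced by $f_\lambda$ respects the grading so that Hilbert series are preserved, but this is transparent from the definition of $f_\lambda$ on the degree-one generators $b_i$.
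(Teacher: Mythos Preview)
Your proof is correct and follows essentially the same approach as the paper: the paper simply records that the corollary is an immediate consequence of \eqref{E3.0.1}, \eqref{E3.2.1}, and Lemma \ref{xxlem3.4}, together with the observation that GKdimension of a finitely generated graded $A$-module depends only on its Hilbert series. You have supplied more detail than the paper does, in particular the number-theoretic verification in part (2) that every $j\in\{1,\dots,n-1\}$ is of the form $\lambda d\bmod n$ for some divisor $d\mid n$ and some $\lambda$ coprime to $n$ (equivalently, surjectivity of $(\mathbb{Z}/n)^{\times}\to(\mathbb{Z}/(n/d))^{\times}$), which the paper leaves implicit.
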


For the rest of this section we will consider two distinct 
values of $n$, with one a factor of the other, and arguments 
will involve two particular natural algebra homomorphisms 
between the $(-1)$-skew polynomial rings of these different 
dimensions.

We fix two integers $m$ and $n$ such that $m$ divides $n$. 
Let $A$ (resp. $\tilde{A}$) denote the $(-1)$-skew polynomial 
ring of dimension $n$ (respectively, $m$). Usually we use 
$\tilde{\quad}$ to denote the corresponding notation for 
the algebra $\tilde{A}$. For example, since we use 
$\mathbf{b}$ for the generating set for $A$
(see \eqref{E1.0.5}), then we use $\tilde{\mathbf{b}}$ 
to denote the corresponding generating set for $\tilde{A}$.  
Recall from the proof of Lemma \ref{xxlem1.1}, the algebra $A$
is determined completely by the set of relations of the form
\begin{equation}
\label{E3.5.1}\tag{E3.5.1}
r_{kl}: [b_0,b_k]-[b_l,b_{k-l}]=0
\end{equation}
for all $k,l\in {\mathbb Z}_n$. Similarly for the algebra
$\tilde{A}$.

\begin{definition}
\label{xxdef3.6}
Suppose $m$ divides $n$. There is a surjective  homomorphism 
$$\pi_{n,m}:A \to \tilde{A}$$ 
determined by sending $b_j \mapsto \tilde{b}_{j}$, where in 
the $\tilde{\mathbf{b}}$ variables the indices are taken 
modulo $m$. Since $m$ divides $n$, $\pi_{n,m}$ maps any 
relation of $A$ of the form \eqref{E3.5.1} to a relation of 
$\tilde{A}$. Therefore $\pi_{n,m}$ is an algebra homomorphism.
The surjectivity of $\pi_{n,m}$ follows from the fact that 
it is surjective in degree 1. 
\end{definition}

\begin{lemma}
\label{xxlem3.7}
Suppose that $m$ divides $n$. Then 
$\GKdim (E)\geq \GKdim (\tilde{E})$. 
\end{lemma}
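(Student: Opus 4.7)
The plan is to combine Corollary \ref{xxcor3.5}, which expresses $\GKdim(E)$ and $\GKdim(\tilde{E})$ as maxima of GKdimensions of isotypic quotients $A/R_jA$ and $\tilde{A}/\tilde{R}_{\tilde{j}}\tilde{A}$, with the surjective algebra map $\pi_{n,m}\colon A\twoheadrightarrow\tilde{A}$ of Definition \ref{xxdef3.6}. Concretely, I would pick $\tilde{j}\in\{1,\dots,m-1\}$ that realizes $\GKdim(\tilde{E})=\GKdim(\tilde{A}/\tilde{R}_{\tilde{j}}\tilde{A})$ (via Corollary \ref{xxcor3.5}(2)) and then take $j=\tilde{j}\in\{1,\dots,n-1\}$. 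The goal is to produce a surjection of right $A$-modules $A/R_jA\twoheadrightarrow\tilde{A}/\tilde{R}_{\tilde{j}}\tilde{A}$, which immediately gives $\GKdim(A/R_jA)\geq\GKdim(\tilde{A}/\tilde{R}_{\tilde{j}}\tilde{A})$; combined with Corollary \ref{xxcor3.5}(1), this yields $\GKdim(E)\geq\GKdim(\tilde{E})$.

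The technical heart is to verify $\pi_{n,m}(R_jA)\subseteq \tilde{R}_{\tilde{j}}\tilde{A}$, which reduces to showing $\pi_{n,m}(R_j)\subseteq\tilde{R}_{\tilde{j}}$. For this I would first check that $\pi_{n,m}(c_i)=\tilde{c}_{i\bmod m}$: using $c_i=[b_k,b_{i-k}]$ and the definition $\pi_{n,m}(b_s)=\tilde{b}_{s\bmod m}$, the graded commutator maps to $[\tilde{b}_{k\bmod m},\tilde{b}_{(i-k)\bmod m}]=\tilde{c}_{i\bmod m}$. Given this, if $\mathbf{b}^{\mathbf{i}}\mathbf{c}^{\mathbf{k}}\in R_j$, i.e., $(\mathbf{i}+\mathbf{k})\cdot\mathbf{v}=j\bmod n$, then its image is a $\Bbbk$-linear combination of monomials in $\tilde{\mathbf{b}},\tilde{\mathbf{c}}$ of weight $\sum_t t(i_t+k_t)\bmod m=j\bmod m=\tilde{j}$, hence lies in $\tilde{R}_{\tilde{j}}$. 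Multiplying by $A$ and using surjectivity of $\pi_{n,m}$ on the algebra then gives the required inclusion $\pi_{n,m}(R_jA)\subseteq \tilde{R}_{\tilde{j}}\tilde{A}$.

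Putting these ingredients together, the composition $A\xrightarrow{\pi_{n,m}}\tilde{A}\twoheadrightarrow\tilde{A}/\tilde{R}_{\tilde{j}}\tilde{A}$ kills $R_jA$, producing a surjection $A/R_jA\twoheadrightarrow\tilde{A}/\tilde{R}_{\tilde{j}}\tilde{A}$ of graded right $A$-modules (the right-hand side being an $A$-module through $\pi_{n,m}$). Since GKdimension of a graded module only depends on its Hilbert series, this GKdim coincides whether one views the target as an $A$- or $\tilde{A}$-module, and surjectivity gives $\GKdim(A/R_jA)\geq\GKdim(\tilde{A}/\tilde{R}_{\tilde{j}}\tilde{A})=\GKdim(\tilde{E})$. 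The conclusion follows from Corollary \ref{xxcor3.5}(1).

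The only subtle point—though really a routine verification—is keeping track of the weight decomposition under $\pi_{n,m}$: one must confirm that reducing indices modulo $m$ reduces the $C_n$-weight modulo $m$ to the corresponding $C_m$-weight, both for the $b$-generators and for the central elements $c_j$. Once these compatibilities are checked, everything else is formal.
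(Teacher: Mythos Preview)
Your proposal is correct and follows essentially the same route as the paper: use $\pi_{n,m}(R_j)\subseteq\tilde{R}_j$ to obtain a surjection $A/R_jA\twoheadrightarrow\tilde{A}/\tilde{R}_j\tilde{A}$, and then compare the maxima coming from Corollary~\ref{xxcor3.5}. The paper states the inclusion $\pi_{n,m}(R_j)\subset\tilde{R}_j$ without further comment, whereas you spell out the weight bookkeeping in the $\mathbf{b},\mathbf{c}$ basis; this extra detail is fine but not required.
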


\begin{proof}
Let $\tilde{R}_j\subset \tilde{A}$, for $j=0, \ldots, m-1$, 
be defined as in the beginning of Section \ref{xxsec3} for 
the algebra $\tilde{A}$ with $m$ variables. Since 
$\pi_{n,m}(R_j) \subset \tilde{R}_j$, for $j=0,\ldots, m-1$, 
we get a surjective homomorphism 
$A/R_j A\to\tilde{A}/\tilde{R_j}\tilde{A}$. Then 
$$\GKdim (\tilde{E}) 
=\max_{j=1}^{m-1} \{\GKdim \tilde{A}/\tilde{R}_j\tilde{A}\}
\leq \max_{j=1}^{m-1} \{\GKdim A/R_jA\}
\leq \GKdim (E ).$$
\end{proof}

Lemma \ref{xxlem3.7} will be used in the proof of Theorem 
\ref{xxthm0.4}. 

For the proof the main result (Theorem \ref{xxthm0.2}), we 
need to consider another homomorphism. As before, let $m$ 
and $n$ be two integers such that $m$ divides $n$. Write 
$q=n/m$. 

\begin{definition}
\label{xxdef3.8}
Suppose $m$ divides $n$ and write $q=n/m$. Let 
$$\theta_{m,n}: \tilde{A}\to A$$
be an algebra homomorphism determined by 
$\theta_{m,n}(\tilde{b}_i)= b_{qi}$ for all 
$i\in {\mathbb Z}_m$. Since $\theta_{m,n}$ maps the relation 
$\tilde{r}_{kl}$ of $\tilde{A}$ of the form \eqref{E3.5.1} 
to $r_{qk,ql}$ of $A$, $\theta_{m,n}$ is an algebra homomorphism. 
\end{definition}

We have the following easy lemma.

\begin{lemma}
\label{xxlem3.9} Suppose that $m$ divides $n$ and write $q=n/m$.
Let $N$ be a positive integer. Then
\begin{enumerate}
\item[(1)]
$\theta_{m,n}(\tilde{R}_j)\subseteq R_{qj}$ for all 
$j\in {\mathbb Z}_m$.
\item[(2)]
$\theta_{m,n}(\tilde{c}_j)=c_{qj}$ for all $j\in 
{\mathbb Z}_m$.
\item[(3)]
If $\tilde{c}^N_i\in \tilde{R}_j\tilde{A}$ for some 
$i,j\in {\mathbb Z}_m$ and $N\geq 0$, then $c^N_{qi}\in R_{qj}A$.
\end{enumerate}
\end{lemma}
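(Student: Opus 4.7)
The plan is to verify the three parts in order, deriving (3) from (1) and (2), since each is essentially direct from the definitions of $\theta_{m,n}$, $R_j$, and $c_j$.

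For part (2), I would start by writing $\tilde{c}_j = [\tilde{b}_k,\tilde{b}_{j-k}]$ for any choice of $k\in\mathbb{Z}_m$ (recall the definition of $c_j$ is independent of the choice of $k$), then simply apply $\theta_{m,n}$ to get $[b_{qk},b_{q(j-k)}]$; since $qk + q(j-k)=qj$ in $\mathbb{Z}_n$, this equals $c_{qj}$ by the definition \eqref{E1.1.2}.

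For part (1), the key point is to compare the two gradings. An arbitrary basis element of $\tilde{R}_j$ has the form $\tilde{\mathbf{b}}^{\mathbf{i}}\tilde{\mathbf{c}}^{\mathbf{j}}$ with $(\mathbf{i}+\mathbf{j})\cdot\tilde{\mathbf{v}}\equiv j\pmod m$, where $\tilde{\mathbf{v}}=(0,1,\ldots,m-1)$. Applying $\theta_{m,n}$ and using part (2) sends this to a monomial whose $b$- and $c$-factors are supported only at indices of the form $qi$ in $\mathbb{Z}_n$. The new $C_n$-weight is therefore $\sum_i (i_i+j_i)(qi)=q\bigl((\mathbf{i}+\mathbf{j})\cdot\tilde{\mathbf{v}}\bigr)$, and reducing modulo $n=qm$ gives $qj\pmod n$. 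Hence the image lies in $R_{qj}$.

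For part (3), I would simply apply the ring homomorphism $\theta_{m,n}$ to an equation $\tilde{c}_i^N=\sum_s \tilde{r}_s\tilde{a}_s$ with $\tilde{r}_s\in\tilde{R}_j$, $\tilde{a}_s\in\tilde{A}$; part (2) turns the left side into $c_{qi}^N$, and part (1) shows each $\theta_{m,n}(\tilde{r}_s)$ lies in $R_{qj}$, so the whole sum lies in $R_{qj}A$. There is no real obstacle here beyond being careful with the index arithmetic modulo $n$ versus modulo $m$ in part (1); the identification $qm=n$ makes everything consistent.
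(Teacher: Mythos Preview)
Your proposal is correct and is exactly the kind of direct verification the paper has in mind; the paper in fact omits the proof entirely, labeling the result an ``easy lemma.'' Your eigenvalue/weight computation in part (1), together with the observation that $q\cdot\bigl((\mathbf{i}+\mathbf{j})\cdot\tilde{\mathbf{v}}\bigr)\equiv qj\pmod{n}$ because $qm=n$, is precisely the point, and parts (2) and (3) follow immediately as you describe.
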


\section{Proof of Theorem \ref{xxthm0.4}}
\label{xxsec4}

We first show that for $n=3,5$ the GKdimension of $A/B_1A$ 
is equal to $1$. Hence the GKdimension of $E$ is also 
equal to $1$ by Corollary \ref{xxcor3.5}(3). It turns out 
that we can use these two cases to infer that the GKdimension 
of $E $ is positive whenever $3$ or $5$ divides $n$.

\begin{proposition}
\label{xxpro4.1}
Let $n=3$.
\begin{enumerate}
\item[(1)]
$B_1A\cong (b_1A+c_1A)$ as right $A$-modules.
\item[(2)]
$\GKdim(A/B_1A)=1$.
\item[(3)]
$\GKdim(E )=1$.
\end{enumerate}
\end{proposition}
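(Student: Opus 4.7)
The plan is to prove the three parts in sequence, with part~(3) following at once from Corollary~\ref{xxcor3.5}(3) since $n=3$ is prime. The key algebraic observation for $n=3$ is that the identity $c_j = [b_k, b_{j-k}]$, which holds for every $k \in {\mathbb Z}_3$, specializes (by choosing $k$ with $2k \equiv j \pmod 3$) to
\[
c_0 = 2b_0^2, \qquad c_1 = 2b_2^2, \qquad c_2 = 2b_1^2.
\]
In particular $c_2 \in b_1 A$, which will do much of the heavy lifting.

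For part~(1), the inclusion $b_1 A + c_1 A \subseteq B_1 A = R_1 A$ is clear because $b_1, c_1 \in R_1$. For the reverse, I would show $R_1 \subseteq b_1 A + c_1 A$ by a case analysis on a basis monomial $\mathbf{b}^{\mathbf{i}} \mathbf{c}^{\mathbf{j}} \in R_1$: (a) if $j_1 \geq 1$, the monomial already lies in $c_1 A$ by centrality of $c_1$; (b) if $i_1 = 1$, the commutations $b_0 b_1 = c_1 - b_1 b_0$ and $b_2 b_1 = c_0 - b_1 b_2$ allow one to move $b_1$ to the left, landing in $b_1 A + c_1 A$; (c) if $i_1 = 0$ and $j_1 = 0$, the character constraint $2 i_2 + 2 j_2 \equiv 1 \pmod 3$ forces $j_2 \geq 1$, and then $c_2^{j_2} = 2^{j_2} b_1^{2j_2}$ produces a factor of $b_1$ that one commutes to the left modulo $c_1 A$. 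The subtle sub-case of (c) with $i_2 = 1$ uses the computation $b_2 b_1^2 = b_1^2 b_2$, which is a direct consequence of $b_2 b_1 = c_0 - b_1 b_2$ and the centrality of $c_0$.

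For part~(2), I would compute the Hilbert series of $A/(b_1 A + c_1 A)$. Since $c_1$ is central, $c_1 A$ is two-sided and the quotient algebra $A/c_1 A$ has Hilbert series $(1 - t^2)/(1-t)^3 = (1+t)/(1-t)^2$. Left multiplication by the image $\bar{b}_1$ of $b_1$ on $A/c_1 A$ is injective (equivalently $b_1 A \cap c_1 A = b_1 c_1 A$), which follows from standard regular-sequence properties of the noetherian AS-regular algebra $A$ together with the centrality of $c_1$. Consequently
\[
H_{A/(b_1 A + c_1 A)}(t) = (1-t) \cdot \frac{1+t}{(1-t)^2} = \frac{1+t}{1-t} = 1 + 2t + 2t^2 + 2t^3 + \cdots,
\]
whose partial sums grow linearly in degree, giving $\GKdim(A/B_1 A) = 1$. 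An alternative, fully explicit version identifies $\{b_0^{i_0} b_2^{i_2} c_0^{j_0} : i_0, i_2 \in \{0,1\}, j_0 \geq 0\}$ as a $\Bbbk$-basis of the quotient by matching dimensions in each degree (the surviving monomials from the reductions in part~(1)). Part~(3) then follows from Corollary~\ref{xxcor3.5}(3), since $n=3$ is prime.

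The main obstacle is the bookkeeping in part~(1): the identification and reduction of less obvious minimal generators of $B_1$ such as $b_2 c_2$ hinges on the non-trivial commutation $b_2 b_1^2 = b_1^2 b_2$, which is where the calculation has real content.
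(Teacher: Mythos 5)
Parts (1) and (3) are fine and run essentially parallel to the paper's proof: the paper simply lists $B_1=\{b_1,c_1,c_2^2,b_2c_2\}$ and uses $c_2=2b_1^2$ to collapse it to $b_1A+c_1A$, while you verify $R_1\subseteq b_1A+c_1A$ by direct reduction; both are correct, and your identity $c_2=2b_1^2$ is exactly the paper's key observation. The problem is in part (2). You assert that left multiplication by $\bar b_1$ on $W:=A/c_1A$ is injective "by standard regular-sequence properties of the noetherian AS-regular algebra $A$ together with the centrality of $c_1$." That justification does not work: since $c_1=2b_2^2$, the ring $W$ has $\bar b_2^{\,2}=0$, so $\bar b_2$ is a zero-divisor in $W$, and whatever general principle you are invoking would "prove" its regularity just as well. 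Regularity of a degree-one element modulo $c_1$ is an element-specific fact, not a formal consequence of AS-regularity plus centrality. The claim for $\bar b_1$ is in fact true, but it needs an actual computation --- e.g., using Proposition \ref{xxpro1.2}, $W$ is free over $\Bbbk[c_0,c_2]$ on the eight square-free $\mathbf{b}$-monomials, and left multiplication by $b_1$ permutes this basis up to units and factors of $c_2$, hence is injective. Your "alternative, fully explicit" route does not rescue this, because proving that the surviving monomials are linearly \emph{independent} "by matching dimensions" presupposes the very Hilbert series you are trying to establish; only the spanning half (hence the upper bound $\GKdim\le 1$) comes for free from your part (1) reductions.

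The paper avoids the injectivity question entirely, and you already have the ingredients to do the same: from $c_2=2b_1^2$ one gets $c_2W\subseteq b_1W$, so $W/b_1W$ is a quotient of $W/c_2W$, whose GKdimension is $\GKdim(W)-1=1$ (here $c_2$ is central and regular on $W$ by the freeness over $\Bbbk[\mathbf{c}]$); this gives the upper bound. The lower bound $\GKdim(W/b_1W)\ge\GKdim(W)-1=1$ is the counting argument of \eqref{E1.0.3}. If you either insert the explicit free-module verification of injectivity or replace the Hilbert-series computation by this two-sided sandwich, your proof is complete.
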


\begin{proof}
(1) Recall that the definition of $B_j$ is given in Lemma \ref{xxlem3.1}. 
By definition one can easily check that 
$B_{1} = \{ b_{1}, c_{1}, c_2^2, b_2 c_2\}$. Note that
$c_{2} =2b_{1}^{2}$ so $c_2^2=4 b_1^4$ and $b_{2} c_{2} =2b_{1}^{2}b_{2}$, 
hence $B_{1} A=b_{1} A+c_{1} A$.

(2) Since $c_1$ is central in $A$ the quotient $W=A/(c_1)$ has 
the structure of a $\Bbbk$-algebra with $\GKdim(W) = \GKdim(A)-1=2$. 
Then $A/B_1A\cong W/b_1W$, so 
$$\GKdim A/B_1A =\GKdim(W/b_1W)\geq \GKdim(W)-1=1.$$
On the other hand, $b_1W \supseteq c_2W$. Then 
$$ \GKdim A/B_1A =\GKdim(W/b_1W)\leq \GKdim(W/c_2W)=\GKdim (W)-1=1.$$
The assertion follows.

(3) The assertion follows from Corollary \ref{xxcor3.5}(3) and 
part (2).
\end{proof}

\begin{proposition}
\label{xxpro4.2}
Let $n=5$.
\begin{enumerate}
\item[(1)]  
$B_{1} A \subseteq I$ where $I= ( b_{1} A+b_{2} A+c_{1}
A+c_{2} A+c_{3} A+c_{4} A)$. 
\item[(2)]
$\GK (A/I) =1$.
\item[(3)] 
$\GKdim (E )=\GKdim (A/B_{1}A) =1$. 
\end{enumerate}
\end{proposition}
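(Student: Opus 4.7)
The plan is to establish parts (1)--(3) in order, combining Corollary \ref{xxcor3.5}(3) with the upper bound from Proposition \ref{xxpro2.3}(3b) at the end of part (3).

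For (1), I would classify the elements of $B_1$ using Lemma \ref{xxlem3.1}. A monomial $\mathbf{b}^{\mathbf{i}}\mathbf{c}^{\mathbf{j}}$ lies in $B_1$ iff $(\mathbf{i}+\mathbf{j})\cdot(0,1,2,3,4)\equiv 1\pmod{5}$ and no nontrivial proper sub-monomial lies in $R_0$. Since $b_0$ and $c_0$ individually lie in $R_0$, neither can appear; that is, $i_0=j_0=0$. The key observation is then: if none of $b_1,b_2,c_1,c_2,c_3,c_4$ appears either, the monomial is built solely from $\{b_3,b_4\}$ with exponents in $\{0,1\}$, so its weight lies in $\{0,2,3,4\}\pmod{5}$, never $1$. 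Hence every element of $B_1$ contains at least one factor from $\{b_1,b_2,c_1,c_2,c_3,c_4\}$. In the normal form of Proposition \ref{xxpro1.2} these factors sit on the left: each $c_k$ is central in $A$, and $b_1$ or $b_2$ is the leftmost $b$-generator present. So each $B_1$-monomial lies in $I$, giving $B_1A\subseteq I$.

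For (2), setting $b_1,b_2,c_1,c_2,c_3,c_4$ to zero and using the relations $c_{i+j\bmod 5}=[b_i,b_j]$ from \eqref{E1.1.2} produces, in $A/I$, the identities $b_3^2=b_4^2=0$, $b_3b_4+b_4b_3=0$, $b_0b_3+b_3b_0=0$, and $b_0b_4+b_4b_0=0$, while $b_0^2=\frac{1}{2}c_0$ is retained. Thus $A/I$ is generated by $b_0,b_3,b_4$ and spanned by $\{b_0^ab_3^{\epsilon}b_4^{\delta}\mid a\geq 0,\ \epsilon,\delta\in\{0,1\}\}$; this spanning set contributes at most four vectors per graded degree, yielding $\GKdim(A/I)\leq 1$. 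For the lower bound, there is a surjection from $A/I$ onto the algebra $\Bbbk\langle y_0,y_3,y_4\rangle/(y_0y_3+y_3y_0,\ y_0y_4+y_4y_0,\ y_3y_4+y_4y_3,\ y_3^2,\ y_4^2)$, in which $y_0^k$ is nonzero for all $k$. So $b_0^k\neq 0$ in $A/I$ for all $k$, giving $\Bbbk[b_0]\hookrightarrow A/I$ and $\GKdim(A/I)\geq 1$.

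For (3), Corollary \ref{xxcor3.5}(3) applied to the prime $n=5$ gives $\GKdim(E)=\GKdim(A/B_1A)$. The inclusion $B_1A\subseteq I$ from (1) yields a surjection $A/B_1A\twoheadrightarrow A/I$, hence $\GKdim(A/B_1A)\geq\GKdim(A/I)=1$ by (2). The matching upper bound $\GKdim(E)\leq 1$ is Proposition \ref{xxpro2.3}(3b). The main obstacle is the combinatorial classification in (1); my simplification reduces it to a single residual case (monomials using only $\{b_0,b_3,b_4,c_0\}$), which is ruled out by a weight computation on $\{b_3,b_4\}$, sidestepping brute-force case analysis over all $(\mathbf{i},\mathbf{j})$.
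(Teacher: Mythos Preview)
Your overall strategy matches the paper's, and your arguments for (1) and (3) are correct (your treatment of (1) is in fact a bit cleaner than the paper's explicit enumeration of the pure-$b$ elements $b_1$ and $b_2b_4$). The upper bound in (2) is also fine: the spanning set $\{b_0^a b_3^{\epsilon} b_4^{\delta}\}$ for $A/I$ is correctly derived.

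However, your lower bound in (2) has a genuine gap. The surjection you propose from $A/I$ onto $Q=\Bbbk\langle y_0,y_3,y_4\rangle/(y_0y_3+y_3y_0,\,y_0y_4+y_4y_0,\,y_3y_4+y_4y_3,\,y_3^2,\,y_4^2)$ cannot come from an algebra map $A\to Q$ sending $b_1,b_2\mapsto 0$: the relation $c_0=[b_0,b_0]=[b_1,b_4]$ in $A$ (see \eqref{E1.1.1}) says $2b_0^2=b_1b_4+b_4b_1$, so killing $b_1$ forces $b_0^2\mapsto 0$, and hence $y_0^2=0$ in any such quotient. Thus $y_0^k$ is \emph{not} nonzero for all $k$, and the argument that $b_0^k\neq 0$ in $A/I$ collapses. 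If instead you intended a purely linear map $A/I\to Q$ sending $b_0^ab_3^{\epsilon}b_4^{\delta}\mapsto y_0^ay_3^{\epsilon}y_4^{\delta}$, its well-definedness presupposes that your spanning set is linearly independent, which is exactly what you are trying to prove.

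The fix is to argue, as the paper does, that the spanning set is actually a basis. First pass to the algebra $A/J$ with $J=(c_1,c_2,c_3,c_4)$ (a two-sided ideal, since the $c_k$ are central); here $\beta_0$ skew-commutes with each $\beta_i$ and $\beta_i^2=0$ for $i\neq 0$, so $A/J$ is a free left $\Bbbk[\beta_0]$-module on the squarefree monomials in $\beta_1,\dots,\beta_4$. Then $A/I\cong (A/J)/(\beta_1(A/J)+\beta_2(A/J))$, and in this basis the right ideal $\beta_1(A/J)+\beta_2(A/J)$ is exactly the span of the monomials involving $\beta_1$ or $\beta_2$. Hence $\{\beta_0^a\beta_3^{\epsilon}\beta_4^{\delta}\}$ is a $\Bbbk$-basis of $A/I$, giving $\GKdim(A/I)=1$ on the nose.
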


\begin{proof}
(1) An element $\mathbf{b}^{\mathbf{i}} \mathbf{c}^{\mathbf{j}} \in
B_{1}$ with $\mathbf{j} \neq 0$ is clearly in $I$. To verify the 
inclusion, it suffices to show that if $\mathbf{b}^{\mathbf{i}} 
\in B_{1}$ with $\mathbf{i} \in \{ 0,1 \}^n$ then 
$\mathbf{b}^{\mathbf{i}} \in I$. There are two such elements 
$b_{1}$ and $b_{2} b_{4}$ and these are both in $I$.

(2) Let $J$ be the two sided ideal of $A$ generated by central 
elements $c_1,c_2,c_3,c_4$. Let $\beta_{j}$ (resp.
$\gamma_{0}$) denote the image of $b_{j}$ (resp. $c_{0}$) 
in $A/J$. Then $A/J$ is a finitely generated left $\Bbbk[b_0]$-module. 
Since $A/J$ has no $\beta_0$-torsion, it is actually a free module 
over $\Bbbk [\beta_0]$. Moreover, $\beta_0$ skew-commutes with 
the other $\beta_i$'s, so a $\Bbbk[\beta_0]$-basis for $A/J$ is 
given by squarefree monomials (with respect to the lexicographical 
ordering) in $\beta_1,\dots,\beta_4$. Using this basis, we see that 
$$A/I\cong \frac{A/J}{\beta_1(A/J)+\beta_2(A/J)} \cong
\Bbbk [\gamma_0] \oplus 
\Bbbk [\gamma_0] \beta_3 \oplus
\Bbbk [\gamma_0] \beta_4 \oplus
\Bbbk [\gamma_0] \beta_3\beta_4.$$
Hence $\GK ( A/I ) =1$. 

(3) By part (1), the map $A/B_{1} A \rightarrow A/I$ is
surjective, so $\GKdim ( A/B_{1} A ) \geq 1$. 
By Proposition \ref{xxpro2.3}(3b), $\GKdim (E )\leq 1$.
Combining with Corollary \ref{xxcor3.5}(3), 
we have $\GKdim (E )=\GKdim(A/B_{1}A)=1$.
\end{proof}

Now we are ready to prove Theorem \ref{xxthm0.4}.

\begin{proof}[Proof of Theorem \ref{xxthm0.4}]
Retain notation as in Lemma \ref{xxlem3.7}.
Take $m=3$ or $5$ (two different cases). By Proposition 
\ref{xxpro4.1}(3) and \ref{xxpro4.2}(3), 
$\GKdim (\tilde{E})=1$. By Lemma \ref{xxlem3.7},
$\GKdim (E)\geq 1$. Hence $\p(A, G)\leq n-1$. 
By \cite[Theorem 3.10]{MU1}, $A^G$ is not a graded 
isolated singularity.
\end{proof}

\section{Preparation, part two}
\label{xxsec5}
In Section \ref{xxsec7} we will prove Theorem \ref{xxthm0.2}. 
We need to do several reduction steps, some of which 
are given in this section. First we fix some convention
throughout the rest of the paper.

\begin{convention}
\label{xxcon5.1} 
Let $n$ denote a fixed integer $\geq 2$. Letters such as 
$i,j,k$ denote elements in ${\mathbb Z}_n$. Usually these 
take values in $[0,1,2,\ldots, n-1]$. However $0$ is identified 
with $n$. If we use induction, the induction process starts with $1$ 
and ends with $n$ (then $n$ is identified with $0$). So, when 
we use induction on the integer $i$ it will take values in $[1,2,\ldots, n]$. 

In Section \ref{xxsec5}, we only use $i$ and $j$.
\end{convention}

Some ideas in this section 
have appeared in previous sections, but we will do finer analysis. 
In order to prove Theorem \ref{xxthm0.2}, we seek to show that 
for every $j$ with $1\leq j<n$, the $\Bbbk$-vector space $A/B_j A$ is 
finite dimensional. It is necessary and sufficient to show that 
for every $i$, the element $c_i^N\in B_j A$ for some $N \geq 0$. 

\begin{definition}
\label{xxdef5.2} Retain notation above. 
\begin{enumerate}
\item[(1)]
We say $c_i$ is {\it nilpotent} in $A/B_jA$ if
$c_i^N\in B_j A$ for some $N \geq 0$. In this case 
we write $i\in \Psi^{[n]}_j$.
\item[(2)]
We say $n$ is {\it admissible} if, for every all
$i$ and $j$, $i\in \Psi^{[n]}_j$, or
equivalently, $\GKdim E=0$, see Lemma \ref{xxlem5.3}(1) 
below. 
\end{enumerate}
\end{definition}

Note that it is automatic that $i\in \Psi^{[n]}_0$. 
Therefore usually we only consider the case when 
$1\leq j\leq n-1$. We start with some initial analysis 
and easy reductions.

\begin{lemma}
\label{xxlem5.3} Retain notation above. 
\begin{enumerate}
\item[(1)]
$n$ is admissible if and only if $\GKdim (E)=0$. In this 
case, $\p(A,G)=n$.
\item[(2)]
If $i\in \Psi^{[n]}_j$ for all $i$ and all divisors $j\mid n$
with $1\leq j<n$, then $n$ is admissible.
\item[(3)]
If $m$ is a factor of $n$ and $n$ is admissible, 
then $m$ is admissible.
\end{enumerate}
\end{lemma}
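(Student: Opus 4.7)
The plan is to reduce everything to structural facts already proved: the direct-sum decomposition \eqref{E3.0.1}, the identification \eqref{E3.2.1}, the finite $\Bbbk[\mathbf{c}]$-generation of $A$ from Corollary \ref{xxcor1.3}(3), and the symmetries of Lemmas \ref{xxlem3.4} and \ref{xxlem3.7}. None of the three parts requires a genuinely new idea; they repackage earlier results.

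For part (1), \eqref{E3.0.1} gives $\GKdim E=\max_{1\le j\le n-1}\GKdim(\bar e_j E)$, and under the identification $\bar e_j E\cong A/B_j A$ from \eqref{E3.2.1}, the right $A$-module $A/B_j A$ is finitely generated over the commutative subring $\Bbbk[\mathbf{c}]\subset A$ by Corollary \ref{xxcor1.3}(3). Since $A/B_j A$ is positively graded with $\deg c_i=2$, finite dimensionality over $\Bbbk$ is equivalent to every $c_i$ acting nilpotently on $A/B_j A$, which is exactly the condition $i\in\Psi^{[n]}_j$ for all $i$. Hence $\GKdim(\bar e_j E)=0$ iff $\Psi^{[n]}_j$ contains every $i$; taking the maximum over $j$ gives the claimed equivalence. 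In the admissible case, $\p(A,C_n)=\GKdim A-\GKdim E=n-0=n$.

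For part (2), the main idea is to use Lemma \ref{xxlem3.4} to replace an arbitrary $j\in\{1,\dots,n-1\}$ by the divisor $d:=\gcd(j,n)$ of $n$. Since $\gcd(j/d,n/d)=1$, the class $j/d$ is a unit in $\mathbb{Z}/(n/d)$. The natural reduction $(\mathbb{Z}/n)^{\times}\to(\mathbb{Z}/(n/d))^{\times}$ is surjective (by CRT, checking one prime power at a time), so we may lift the inverse of $j/d$ to some $\lambda$ with $\gcd(\lambda,n)=1$ and $\lambda j\equiv d\pmod n$. Lemma \ref{xxlem3.4} then produces a graded $\Bbbk$-linear isomorphism $\bar e_j E\cong \bar e_d E$, and hence $A/B_j A\cong A/B_d A$ as graded $\Bbbk$-vector spaces. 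Since $d$ is a divisor of $n$ with $1\le d<n$, the hypothesis together with part (1) gives $\dim_\Bbbk A/B_d A<\infty$, and this transfers to $\dim_\Bbbk A/B_j A<\infty$. Positive grading then forces $c_i^N\in B_j A$ for $N\gg 0$ and every $i$, so $n$ is admissible. The only mildly nontrivial input here is the number-theoretic surjectivity, which is precisely what allows one to test admissibility on proper divisors of $n$ only.

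Part (3) is immediate: if $n$ is admissible then $\GKdim E=0$ by part (1); Lemma \ref{xxlem3.7} gives $\GKdim\tilde E\le\GKdim E=0$; and part (1) applied to the algebra $\tilde A$ in dimension $m$ yields that $m$ is admissible. The overall proof is essentially bookkeeping, and no step should present a serious obstacle.
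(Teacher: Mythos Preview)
Your proof is correct and follows essentially the same approach as the paper. The paper's proof is terser: for (1) it cites \eqref{E3.0.1}, \eqref{E3.2.1}, and the equivalence ``$\GKdim(A/B_jA)=0$ iff $i\in\Psi^{[n]}_j$ for all $i$''; for (2) it simply invokes Corollary~\ref{xxcor3.5}(2); and for (3) it cites part (1) and Lemma~\ref{xxlem3.7}. Your argument for (2) unpacks exactly what underlies Corollary~\ref{xxcor3.5}(2), including the number-theoretic surjectivity of $(\mathbb{Z}/n)^\times\to(\mathbb{Z}/(n/d))^\times$ needed to apply Lemma~\ref{xxlem3.4}, a detail the paper leaves implicit.
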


\begin{proof} (1) The assertion follows from 
\eqref{E3.0.1}, \eqref{E3.2.1} and the fact 
that 
$${\text{$\GKdim (A/B_jA)=0$ if and only if 
$i\in \Psi^{[n]}_j$ for all $i$.}}$$

(2) This is Corollary \ref{xxcor3.5}(2). 

(3) This follows from part (1) and Lemma \ref{xxlem3.7}.
\end{proof}

\begin{lemma}
\label{xxlem5.4}
Retain notation above. 
\begin{enumerate}
\item[(1)]
If $n=mq$ and $i\in \Psi^{[m]}_{j}$, then 
$iq\in \Psi^{[n]}_{jq}$.
\item[(2)]
Let $j$ be a divisor of $n$. 
If $\gcd(i,n) = \gcd(i,j)$, or $\gcd(i,n) | j$,
then $i\in \Psi^{[n]}_j$.
\end{enumerate}
\end{lemma}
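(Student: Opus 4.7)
For part (1), the natural tool is the algebra homomorphism $\theta_{m,n}:\tilde A\to A$ from Definition \ref{xxdef3.8}, whose key properties are recorded in Lemma \ref{xxlem3.9}. The hypothesis $i\in\Psi^{[m]}_j$ unpacks, via Definition \ref{xxdef5.2}(1), to $\tilde c_i^N\in \tilde B_j\tilde A=\tilde R_j\tilde A$ for some $N\geq 0$, where we have used the identification $B_j A=R_j A$ coming from \eqref{E3.2.1} together with Lemma \ref{xxlem3.1}. Applying Lemma \ref{xxlem3.9}(3) transports this to $c_{qi}^N\in R_{qj}A=B_{qj}A$, which is exactly the assertion $qi\in\Psi^{[n]}_{qj}$. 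So part (1) is essentially a direct translation through $\theta_{m,n}$; no work beyond quoting Lemma \ref{xxlem3.9}.

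For part (2), I would first observe that when $j\mid n$ the two listed conditions are equivalent: $\gcd(i,n)=\gcd(i,j)$ immediately forces $\gcd(i,n)\mid j$, while conversely, if $\gcd(i,n)\mid j$ then $\gcd(i,n)$ divides both $i$ and $j$, hence $\gcd(i,n)\mid\gcd(i,j)$, and the reverse divisibility $\gcd(i,j)\mid\gcd(i,n)$ is automatic from $j\mid n$. So it suffices to treat the case $\gcd(i,n)\mid j$.

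The key point is that the decomposition $A=\bigoplus_{k=0}^{n-1} R_k$ into $C_n$-isotypic components is a ${\mathbb Z}_n$-grading of $A$, that is, $R_k\cdot R_l\subseteq R_{k+l\bmod n}$, because characters multiply. Since $c_i\in R_i$, it follows that $c_i^N\in R_{Ni\bmod n}$ for every $N\geq 1$. By elementary number theory, the congruence $Ni\equiv j\pmod{n}$ admits a positive solution $N$ precisely when $\gcd(i,n)\mid j$; choosing such an $N$ gives $c_i^N\in R_j\subseteq R_j A=B_j A$, so $i\in\Psi^{[n]}_j$ by Definition \ref{xxdef5.2}(1).

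I do not expect any substantive obstacle: the lemma is really a bookkeeping statement packaging functoriality of the construction under $\theta_{m,n}$ (for part (1)) with the ${\mathbb Z}_n$-grading by isotypic components (for part (2)). The only point requiring a moment's care is identifying $B_j A$ with $R_j A$, which is immediate from \eqref{E3.2.1} (and from Lemma \ref{xxlem3.1}, which says $B_j$ generates $R_j$ as a right $R_0$-module).
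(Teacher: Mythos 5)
Your proof is correct and takes essentially the same route as the paper: part (1) is exactly Lemma \ref{xxlem3.9}(3), and part (2) reduces to the observation that $c_i^N$ lies in the isotypic component $R_{Ni \bmod n}$ together with solving $Ni\equiv j \pmod{n}$. The only (cosmetic) difference is that the paper first uses part (1) to reduce to the case $\gcd(i,n)=1$ and then inverts $i$ in $\mathbb{Z}_n$, whereas you invoke the solvability criterion $\gcd(i,n)\mid j$ for the linear congruence directly.
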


\begin{proof}
(1) This is Lemma \ref{xxlem3.9}(3).

(2) Let $q=\gcd(i,n)$. Then $q=\gcd(i,j)=\gcd(i,j,n)$.
By part (1), we might assume that $q=1$. In this case,
$i$ is invertible in ${\mathbb Z}_n$. Let $s$ be the 
inverse of $i$ in ${\mathbb Z}_n$. Then there is a
$t:=j s$ such that $j=ti$ in ${\mathbb Z}_n$. 
In this case $c_i^t \in R_j= B_j A$ as desired.
\end{proof}

Parts (1) to (3) of the next lemma are in fact a slightly 
different version of Lemma \ref{xxlem2.1}.

\begin{lemma}
\label{xxlem5.5}
Retain notation above. 
\begin{enumerate}
\item[(1)]
If $i\in \Psi^{[n]}_{2j-i}$, then $i\in \Psi^{[n]}_j$.
\item[(2)]
If $i\in \Psi^{[n]}_{j-i}$, then $i\in \Psi^{[n]}_{j}$.
\item[(3)]
If $i\in \Psi^{[n]}_{2^s j+t i}$ for some integers
$s,t\geq 0$, then $i\in \Psi^{[n]}_{j}$.
\item[(4)]
Suppose that every proper divisor of $n$ is admissible. 
If $\gcd(i,n)$ is even, then $i\in \Psi^{[n]}_j$.
\end{enumerate}
\end{lemma}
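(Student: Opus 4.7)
Parts (1)--(3) are direct translations of the semigroup computation in Lemma \ref{xxlem2.1} into the notation of Section \ref{xxsec5}, using the equivalence $c_i^N \in B_j A \Leftrightarrow e_j c_i^N \in (e_0)$ from Lemma \ref{xxlem3.2}(2). For (1), specialize the identity
\[ e_j c_i^{N} \;=\; b_j\, e_0\, c_i^{N-1}\, b_{i-j} \;+\; b_{i-j}\, e_{2j-i}\, c_i^{N-1}\, b_j \]
which appeared in the proof of Lemma \ref{xxlem2.1} (take $\alpha = j$, $k = i$). The first summand already lies in $(e_0)$; the hypothesis $i\in \Psi^{[n]}_{2j-i}$ provides some $M$ with $e_{2j-i}c_i^M \in (e_0)$, placing the second summand in $(e_0)$ as well when $N = M+1$. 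Hence $c_i^{M+1}\in B_j A$. Part (2) is strictly analogous, using the companion identity $e_j c_i^N = c_i\, e_{j-i}\, c_i^{N-1}$ (with $\alpha = j$, $k = i$ in the other calculation from Lemma \ref{xxlem2.1}).

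For (3), set $j_0 = j$ and $j_{r+1} = 2 j_r - i$; an easy induction gives $j_r = 2^r j - (2^r - 1) i$. Repeated use of (1) shows $i \in \Psi^{[n]}_{j_s} \Rightarrow i \in \Psi^{[n]}_{j_0} = \Psi^{[n]}_j$, so it suffices to deduce $i \in \Psi^{[n]}_{j_s}$ from the hypothesis. Iterating (2) starting from $i \in \Psi^{[n]}_{2^s j + t i}$ produces $i \in \Psi^{[n]}_{2^s j + (t+u) i}$ for every $u \geq 0$; choosing $u \geq 0$ with $(t + u + 2^s - 1)\, i \equiv 0 \pmod{n}$ (always possible, since the map $u \mapsto u\,i \bmod n$ is periodic) arranges $2^s j + (t+u)i \equiv j_s$ in $\mathbb{Z}_n$, and the two chains meet.

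For (4), observe that $\gcd(i,n)$ being even forces both $i$ and $n$ to be even, so $m := n/2$ is a proper divisor of $n$ and hence admissible by hypothesis, and we may write $i = 2 i'$ for some $i' \in \mathbb{Z}_m$. Split on the parity of $j$. If $j$ is even, write $j = 2 j'$; admissibility of $m$ gives $i' \in \Psi^{[m]}_{j'}$, and Lemma \ref{xxlem5.4}(1) with quotient $q = 2$ lifts this to $i = 2 i' \in \Psi^{[n]}_{2 j'} = \Psi^{[n]}_j$. If $j$ is odd, then $2j - i$ is even, so the even case just handled yields $i \in \Psi^{[n]}_{2j-i}$, and part (1) of the present lemma concludes $i \in \Psi^{[n]}_j$.

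The main technical point is the index bookkeeping in part (3); parts (1) and (2) reduce to one-line manipulations of the identities from Lemma \ref{xxlem2.1}, and part (4) is a clean two-case argument leveraging the reduction map $\theta_{m,n}$ and the admissibility of the proper divisor $n/2$.
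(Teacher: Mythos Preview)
Your proof is correct and follows essentially the same route as the paper's. The only noteworthy difference is in part (4): the paper avoids your parity split on $j$ by directly invoking admissibility of $n'=n/2$ to get $i'\in\Psi^{[n']}_{j-i'}$ (with $j-i'$ read modulo $n'$), lifting via Lemma~\ref{xxlem5.4}(1) to $i\in\Psi^{[n]}_{2j-i}$, and then applying (1) once.
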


\begin{proof}
(1) If $c_i^N \in B_{2j-i} A$, then we can show $c_i^{N+1} 
\in B_j A$ as follows:
$$\begin{aligned} 
c_i^{N+1} &= c_i c_i^N =(b_j b_{i-j} + b_{i-j} b_j) c_i^N \\
&= b_j (b_{i-j} c_i^N) + (b_{i-j} c_i^N) b_j \\
&\in B_j (b_{i-j} c_i^N) + (b_{i-j} B_{2j-i}A) b_j \\
& \subseteq R_j A=B_j A. 
\end{aligned}
$$

(2) By definition, we have $c_i^N\in B_{j-i}A$ for some $N>0$. Then 
$$c_i^{N+1}=c_i c_i^N\in c_i B_{j-i}A\subseteq B_{j}A.$$
The assertion follows. 

(3) Applying the statement of (2) multiple 
times, we have that $i\in \Psi^{[n]}_{2^s j-i}$. 
By part (1), we have $i\in \Psi^{[n]}_{2^{s-1}j}$. 
The assertion follows by induction on $s$.

(4) Let $i=2 i'$ and $n=2 n'$. 
Since $n'$ is admissible, $i'\in \Psi^{[n']}_{j-i'}$. By Lemma 
\ref{xxlem5.4}(1), $i\in \Psi^{[n]}_{2j-i}$. The assertion follows 
from part (1).
\end{proof}

\section{Preparation, part three}
\label{xxsec6}
Recall from \eqref{E1.3.1} that
\begin{equation}
\notag
\Phi_n :=\{ k \mid c_k^{N_k}\in (e_0) {\text{  for some 
$N_k\geq 0$}}\}.
\end{equation}
For each $k\in \Phi_n$, there exists $N_k \ge 0$ such that 
$c_k^{N_k}=0$ in $E=(A\# C_n)/(e_0)$. It is easy to see 
that the set $\Phi_n$ satisfies the condition in the 
following definition.

\begin{definition}
\label{xxdef6.1}
A subset of $\Phi\subseteq {\mathbb Z}_n$ is called {\it special} 
if $k\in \Phi$ if and only if $\lambda k\in \Phi$ for all 
invertible elements $\lambda\in {\mathbb Z}_n$. In this case, the 
ideal $c_{\Phi}:=\langle c_k \mid k\in \Phi\rangle$ of $A$ is 
called the {\it special ideal} of $A$ associated to $\Phi$. 
\end{definition}

Here are some examples of special subsets:
\begin{enumerate}
\item[(1)]
$\Phi=\emptyset$ (in which case, $c_\Phi = 0$).
\item[(2)]
$\Phi=\Phi_n$ as in \eqref{E1.3.1}.
\item[(3)]
$\Phi=\phi_2(n)$ as in \eqref{E0.6.1}.
\item[(4)]
$\Phi=\{1,2,\ldots, n-1\}$.
\item[(5)]
$\Phi=\{0,1,2,\ldots, n-1\}$ (in which case, 
$c_{\Phi}=\{c_k\mid 0\leq k \leq n-1\}$).
\end{enumerate}

Fix one special ideal $c_{\Phi}$ of $A$, and write $\overline{A}=A/c_{\Phi}$.
Clearly, $C_n$ acts on $\overline{A}$. Let $\overline{E}$ be the algebra 
$(\overline{A}\#C_n)/(e_0)$. The following lemma shows that it is useful 
to pass into the quotient rings.

\begin{lemma}
\label{xxlem6.2} Retain the notation above and suppose that $\Phi=\Phi_n$.
Then 
$$\GKdim E=\GKdim \overline{E}.$$
\end{lemma}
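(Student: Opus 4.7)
The plan is to exploit the natural surjection $A\twoheadrightarrow \overline{A}$, which induces a surjective algebra homomorphism $E\twoheadrightarrow \overline{E}$. This gives $\GKdim \overline{E}\le \GKdim E$ for free, so all the work lies in the reverse inequality. I will write $I$ for the kernel of $E\twoheadrightarrow \overline{E}$, that is, the image in $E$ of the two-sided ideal of $A\#C_n$ generated by $\{c_k: k\in\Phi_n\}$, and aim to show that $I$ is nilpotent with well-controlled subquotients as a right $E$-module.

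The key structural observation is that each $c_k$ is central in $A$ and obeys the quasi-commutation $e_\alpha c_k = c_k e_{\alpha-k}$ in $A\#C_n$, which forces $c_k(A\#C_n)=(A\#C_n)c_k$. Consequently $I=\sum_{k\in\Phi_n}c_k E$ is finitely generated as a right ideal, and arbitrary products of $c_k$'s may be freely reordered inside $E$. Combined with the defining property $c_k^{N_k}=0$ in $E$ for $k\in\Phi_n$, a pigeonhole argument gives $I^M=0$ for $M:=\sum_{k\in\Phi_n}N_k$. This yields a finite descending filtration
\[ E\supseteq I\supseteq I^2\supseteq\cdots\supseteq I^M=0 \]
of right $E$-modules whose subquotients $I^s/I^{s+1}$ are finitely generated right $E$-modules annihilated on the right by $I$, hence finitely generated right $\overline{E}$-modules. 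Therefore $\GKdim(I^s/I^{s+1})\le \GKdim \overline{E}$ for every $s$.

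To finish, I will invoke the additivity $\GKdim B=\max(\GKdim N,\GKdim Q)$ for short exact sequences $0\to N\to B\to Q\to 0$ of finitely generated right modules over the noetherian algebra $E$, applied inductively along the filtration with $E$ regarded as a right module over itself. This gives $\GKdim E\le \max_s \GKdim(I^s/I^{s+1})\le \GKdim \overline{E}$, completing the proof. The main obstacle will be the nilpotence of $I$: it genuinely uses both the centrality of the $c_k$ inside $A$ and their quasi-centrality inside $A\#C_n$, since without the identity $c_k(A\#C_n)=(A\#C_n)c_k$ one could not reduce an arbitrary product in $I^M$ to one of the form $\mathbf{c}^{\mathbf{j}}\cdot(\text{stuff})$ and apply pigeonhole.
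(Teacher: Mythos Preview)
Your proof is correct. Both directions are handled properly: the easy inequality via the surjection, and the harder one via the $I$-adic filtration. The normality of each $c_k$ in $A\#C_n$ (and hence in $E$) is exactly what lets you commute the $c_k$'s past arbitrary elements and collapse $I^M$ to sums of monomials $c_{k_1}\cdots c_{k_M}E$; since the $c_k$ are central in $A$ they commute among themselves, so pigeonhole applies and $I$ is nilpotent. The filtration subquotients are then finitely generated $\overline{E}$-modules because $E$ is noetherian, and the exactness of GKdim on short exact sequences of finitely generated modules over a noetherian ring finishes the job.

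The paper takes a different route to the nontrivial inequality. Rather than build the $I$-adic filtration, it invokes the identity $\GKdim E=\max_{\mathfrak p}\GKdim E/\mathfrak p$ over prime ideals of the noetherian ring $E$, then observes that each $c_k$ (being normal and nilpotent in $E$) lies in every prime $\mathfrak p$. Thus $I\subseteq\mathfrak p$ and $E/\mathfrak p$ is a quotient of $E/I=\overline{E}$, giving $\GKdim E/\mathfrak p\le\GKdim\overline{E}$ directly. Your argument is more hands-on and avoids the prime-ideal reduction entirely, at the cost of explicitly verifying that $I$ is nilpotent; the paper's argument is terser but implicitly relies on the same noetherian filtration machinery hidden inside the prime-decomposition formula for GKdim. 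Both approaches ultimately rest on the same two structural facts: normality of the $c_k$ in $A\#C_n$ and their nilpotence in $E$ when $\Phi=\Phi_n$.
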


\begin{proof}
Since $E$ is noetherian, 
$$\GKdim E=\max_{{\mathfrak p}}\GKdim E/{\mathfrak p}$$
where the $\max$ runs over all prime ideals ${\mathfrak p}$ of $E$.  
Since $c_k$, for each $k\in \Phi_n$, is normal and nilpotent 
in $E$, we have $c_k\in {\mathfrak p}$ for each prime ${\mathfrak p}$. 
Hence $E/{\mathfrak p}$ is annihilated by the ideal $c_{\Phi}$.
As a consequence,
$$\GKdim E/{\mathfrak p}=\GKdim E/{\mathfrak p}
\otimes A/c_{\Phi}\leq \GKdim E\otimes A/c_{\Phi}=\GKdim \overline{E}.$$
This implies that $\GKdim E\leq \GKdim \overline{E}$. It is
clear that $\GKdim E\geq \GKdim \overline{E}$. The assertion 
follows.
\end{proof}

Next we repeat some arguments in Section \ref{xxsec3}.
Going back to a general fixed special ideal (not necessarily
associated to $\Phi_n$),
by abuse of notation, let $\bar{e}_k$ also denote the image 
of the idempotent $e_k$ in $\overline{E}$. Then we have a right 
$\overline{E}$-module decomposition 
$$\overline{E} = \bar{e}_1 \overline{E} \oplus\cdots\oplus 
\bar{e}_{n-1}\overline{E}$$
and it follows that
\begin{equation}
\notag
\GKdim(\overline{E})=\max_{1\leq j\leq n-1} \GKdim(\bar{e}_j\overline{E}).
\end{equation}
For any $j$, we have the following isomorphism of right 
$\overline{A}\# C_n$-modules 
\begin{equation}
\notag
\bar{e}_j \overline{E} \cong \frac{e_j (\overline{A}\# C_n)}
{e_j (\overline{A}\# C_n)\cap(e_0)}.
\end{equation}

We recycle the letters $x_i, b_i, c_i$ for $\overline{A}$
(with some of $c_i=0$ in $\overline{A}$).
There is a right $\overline{A}$-module isomorphism 
$e_{j} (\overline{A}\#C_{n})\cong \overline{A}$ by $e_{j} a \mapsto a$ 
with inverse given by $a \mapsto e_{j} a$. So we will 
identify $e_{j} (\overline{A}\#C_{n})$ with $\overline{A}$ below.

Let $\overline{\mathcal{B}}_j$ (respectively, 
$\overline{\mathcal{B}}$, $\overline{B}_j$) 
be defined as in Proposition \ref{xxpro1.2} (respectively, 
Corollary \ref{xxcor1.3}, Lemma \ref{xxlem3.1})
after removing all $\{c_k \mid k\in \Phi\}$. Let $\overline{R}_j$ be 
the $M_{\omega^{-j}}$-isotypic component of the $C_n$-action on $\overline{A}$.
We have an $R_0$-module decomposition
\begin{equation}
\notag
\overline{A} \cong \overline{R}_0 \oplus \overline{R}_1 
\oplus \cdots\oplus \overline{R}_{n-1}
\end{equation} 
where $\overline{R}_0=(\overline{A})^{C_n}$. The following is 
an $\overline{A}$-version of Lemma \ref{xxlem3.2}.

\begin{lemma}
\label{xxlem6.3}
Retain the notation above. We are working in the 
algebra $\overline{A}\# C_n$.
\begin{enumerate}
\item[(1)]
The intersection $e_{j} (\overline{A}\#C_{n}) \cap ( e_{0} )$ considered as 
a right ideal in $\overline{A}$ is generated by $\overline{B}_{j}$.
\item[(2)]
For $N\geq 0$, we have $c_k^N\in \overline{B}_j\overline{A}$ if and 
only if $e_j c_k^N\in (e_0)$.  
\item[(3)]
If there is an integer $N\geq 0$ such that, for each $j$, we have
$e_{j} c_k^N\in (e_0)$, then $c_k^N=0$ in $\overline{E}$.
If, in addition, we have $\Phi=\Phi_n$, then $c_k^N=0$ in $E$, or equivalently,
$c_k=0$ in $\overline{E}$.
\end{enumerate}
\end{lemma}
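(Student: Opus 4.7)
The plan is to follow the template of Lemma \ref{xxlem3.2} verbatim for parts (1) and (2), since $\overline{A}\#C_n$ inherits the same structural features (idempotent decomposition of $e_0$, the PBW-type basis $\overline{\mathcal B}$, the isotypic decomposition $\overline{A}=\bigoplus_j \overline{R}_j$, and the analog of Lemma \ref{xxlem3.1} saying $\overline{R}_j=\overline{B}_j\overline{R}_0$). For part (1), I would take any $f\in e_j(\overline{A}\#C_n)\cap(e_0)$, write it as a $\Bbbk$-linear combination of elements $u e_0 v$ with $u,v\in\overline{\mathcal B}$, and then apply $e_j$ on the left. If $u\in\overline{R}_\gamma$, then $e_j u e_0 v = u\, e_{j-\gamma}e_0 v$, which vanishes unless $\gamma=j$. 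So we may take $u\in\overline{R}_j=\overline{B}_j\overline{R}_0$; since $\overline{R}_0$ commutes with $e_0$, we may in fact take $u\in \overline{B}_j$. Rewriting $u e_0 v=e_j u v$ exhibits $f$ (under the identification $e_j(\overline{A}\#C_n)\cong\overline{A}$) as an element of $\overline{B}_j\overline{A}$.

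Part (2) is then a direct translation of part (1): under the identification $e_j a\mapsto a$, the element $e_j c_k^N$ corresponds to $c_k^N$, so $e_j c_k^N\in(e_0)$ iff $e_j c_k^N\in e_j(\overline{A}\#C_n)\cap(e_0)$ iff $c_k^N\in\overline{B}_j\overline{A}$. For the first half of part (3), I would use $1=\sum_{j=0}^{n-1}e_j$ in $\overline{A}\#C_n$ (note the hypothesis includes $j=0$ trivially since $e_0\in(e_0)$) to write
\[
c_k^N = \Bigl(\sum_j e_j\Bigr)c_k^N = \sum_j e_j c_k^N \in (e_0),
\]
and thus $c_k^N=0$ in $\overline{E}$.

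For the second half of part (3), under the additional hypothesis $\Phi=\Phi_n$, the first half gives $c_k^N\in(e_0)$ in $\overline{A}\#C_n$, which lifts to $c_k^N\in (e_0)+c_\Phi(A\#C_n)$ inside $A\#C_n$. The key observation is that for each $l\in\Phi=\Phi_n$, some power $c_l^{M_l}$ lies in $(e_0)$ by definition of $\Phi_n$, and moreover each $c_l$ is \emph{normal} in $A\#C_n$ (this follows from $e_\alpha c_l = c_l e_{\alpha-l}$ in Lemma \ref{xxlem1.1} together with $c_l$ being central in $A$, so conjugation by $c_l$ induces an algebra automorphism permuting the idempotents). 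Consequently the image of $c_\Phi$ in $E$ is a finitely generated ideal in which each normal generator is nilpotent, hence nilpotent as an ideal: there exists $M\gg 0$ with $c_\Phi^M\subseteq(e_0)$ in $A\#C_n$. Expanding $\bigl((e_0)+c_\Phi\bigr)^M\subseteq (e_0)+c_\Phi^M\subseteq(e_0)$ shows $c_k^{NM}\in(e_0)$, i.e.\ some power of $c_k$ is zero in $E$; this means $k\in\Phi_n=\Phi$, so $c_k\in c_\Phi$ and hence $c_k=0$ already in $\overline{A}$, and therefore in $\overline{E}$.

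The only non-routine step is the nilpotence of the ideal $c_\Phi$ modulo $(e_0)$ in the second half of (3); this is where normality of the central (in $A$) elements $c_l$ inside the noncommutative algebra $A\#C_n$ is essential, since it allows factors $c_l^{M_l}$ arising in a monomial $c_{l_1}^{a_1}\cdots c_{l_r}^{a_r}$ (with total degree $M$ large) to be slid to one end and absorbed into $(e_0)$. Everything else is bookkeeping parallel to Section \ref{xxsec3}.
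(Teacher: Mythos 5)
Your proof of parts (1), (2) and the first half of (3) is exactly the paper's: the paper simply refers back to the proof of Lemma \ref{xxlem3.2} for (1) and (2), and uses $1=\sum_j e_j$ for the first claim in (3), just as you do. For the second half of (3) you take a genuinely different route. The paper argues via prime ideals: $c_k$ is normal and nilpotent in $\overline{E}$, hence lies in every prime of $\overline{E}$; by the proof of Lemma \ref{xxlem6.2} every prime quotient of $E$ factors through $\overline{E}$ when $\Phi=\Phi_n$, so $c_k$ lies in the prime radical of the noetherian ring $E$ and is therefore nilpotent there. You instead work directly in $A\#C_n$: you lift $c_k^N\in(e_0)$ to $c_k^N\in(e_0)+c_\Phi(A\#C_n)$ and show the ideal $c_\Phi$ is nilpotent modulo $(e_0)$, using that the generators $c_l$ ($l\in\Phi_n$) are normal in $A\#C_n$, commute with one another (being central in $A$), and each have a power in $(e_0)$ by the very definition of $\Phi_n$; a pigeonhole count then gives $c_k^{NM}\in(e_0)$, i.e.\ $k\in\Phi_n$, so $c_k=0$ in $\overline{A}$ and hence in $\overline{E}$. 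Both arguments are correct. Yours is more elementary and self-contained (it avoids invoking Lemma \ref{xxlem6.2} and the structure theory of primes in noetherian rings, and even produces an explicit exponent), at the cost of the bookkeeping with normal elements; the paper's is shorter given that Lemma \ref{xxlem6.2} is already in place. One cosmetic remark: ``conjugation by $c_l$'' is not literally an automorphism since $c_l$ is not invertible --- what you are really using is the endomorphism $a\mapsto a'$ defined by $c_l a=a'c_l$, which exists precisely because $c_l$ is normal; this does not affect the argument.
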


\begin{proof}
For (1) and (2), see the proof of Lemma \ref{xxlem3.2}.

(3) Since $1=\sum e_j$, we have $c_k^N\in (e_0)$. This means that $c_k^N=0$ in
$\overline{E}$. 

Now assume $\Phi=\Phi_n$.
Since $c_k$ is normal, $c_k\in {\mathfrak q}$
for every prime ideal ${\mathfrak q}$ of $\overline{E}$. By the proof of 
Lemma \ref{xxlem6.2}, every prime quotient $E/{\mathfrak p}$ of $E$
is isomorphic to $\overline{E}/{\mathfrak q}$ for some prime ideal 
${\mathfrak q}$ of $\overline{E}$. This implies that $c_k$ is zero 
in $E/{\mathfrak p}$, consequently, $c_k$ is nilpotent in $E$, 
or $c_i^{N'}=0$ in $E$ for some $N'$. The assertion follows.
\end{proof}

We also have the $\overline{A}$-versions of Lemma \ref{xxlem3.4} and 
Corollary \ref{xxcor3.5}. The statements are the following and proofs 
are omitted.

\begin{lemma}
\label{xxlem6.4}
For any positive integer $\lambda$ with $\gcd(\lambda,n)=1$, we have the
following isomorphism of $\Bbbk$-vector spaces 
$$\bar{e}_{j} \overline{E}  \cong \bar{e}_{\lambda j} \overline{E}.$$ 
In particular, if $n$ is prime, then for each
$j=2, \ldots ,n-1$, we have $\bar{e}_{1} \overline{E}\cong 
\bar{e}_{j} \overline{E}$.
\end{lemma}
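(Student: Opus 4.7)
The plan is to mimic the proof of Lemma \ref{xxlem3.4}, but verify that the relevant algebra automorphism descends to the quotient $\overline{A} = A/c_{\Phi}$. Recall from Definition \ref{xxdef3.3} that $f_{\lambda}: A \to A$ is the algebra automorphism sending $b_i \mapsto b_{\lambda i}$; since $\gcd(\lambda,n)=1$, it is indeed a bijection on $\mathbb{Z}_n$.

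The first step is to compute $f_{\lambda}(c_k)$. Using \eqref{E1.1.2} and choosing $l=0$, we have
$$f_{\lambda}(c_k) = f_{\lambda}([b_0, b_k]) = [b_0, b_{\lambda k}] = c_{\lambda k}.$$
Because $\Phi$ is a special subset (Definition \ref{xxdef6.1}) and $\lambda$ is invertible in $\mathbb{Z}_n$, we have $k \in \Phi$ if and only if $\lambda k \in \Phi$. Hence $f_{\lambda}(c_{\Phi}) \subseteq c_{\Phi}$, and by applying the same argument to $\lambda^{-1}$ we get equality. Therefore $f_{\lambda}$ descends to an algebra automorphism $\bar{f}_{\lambda}: \overline{A} \to \overline{A}$.

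The next step is to track what $\bar{f}_{\lambda}$ does to the isotypic components. Since $\sigma \ast b_{i} = \omega^{-i} b_{i}$ and $f_{\lambda}$ sends $b_i$ to $b_{\lambda i}$, the automorphism $f_{\lambda}$ intertwines the $C_n$-action via $\lambda$; more precisely, a monomial of character $\omega^{-j}$ is sent to one of character $\omega^{-\lambda j}$. Therefore $\bar{f}_{\lambda}(\overline{R}_j) = \overline{R}_{\lambda j}$, and consequently $\bar{f}_{\lambda}$ restricts to a $\Bbbk$-linear isomorphism of right ideals $\overline{R}_j \overline{A} \cong \overline{R}_{\lambda j} \overline{A}$. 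Passing to quotients and using the $\overline{A}$-analog of \eqref{E3.2.1} (which follows immediately from Lemma \ref{xxlem6.3}(1) by identifying $e_j (\overline{A}\#C_n)$ with $\overline{A}$) gives the desired $\Bbbk$-vector space isomorphism
$$\bar{e}_{j} \overline{E} \;\cong\; \overline{A}/\overline{R}_j \overline{A} \;\cong\; \overline{A}/\overline{R}_{\lambda j} \overline{A} \;\cong\; \bar{e}_{\lambda j} \overline{E}.$$

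For the final assertion, when $n$ is prime every $j \in \{2,\dots,n-1\}$ is invertible in $\mathbb{Z}_n$, so setting $\lambda = j$ yields $\bar{e}_1 \overline{E} \cong \bar{e}_j \overline{E}$. There is no real obstacle here; the only point requiring care is verifying that $f_{\lambda}$ descends to $\overline{A}$, which is exactly why the hypothesis that $c_{\Phi}$ is special was built into Definition \ref{xxdef6.1}.
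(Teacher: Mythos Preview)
Your proof is correct and follows exactly the approach the paper intends: the paper omits the proof of Lemma~\ref{xxlem6.4}, stating only that it is the $\overline{A}$-version of Lemma~\ref{xxlem3.4}, and your argument is precisely that---mimic Lemma~\ref{xxlem3.4} using $f_\lambda$, with the one additional check that $f_\lambda$ descends to $\overline{A}$ because $\Phi$ is special. You have correctly identified that this descent is the only new ingredient, and your verification via $f_\lambda(c_k)=c_{\lambda k}$ is exactly the point of Definition~\ref{xxdef6.1}.
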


\begin{lemma}
\label{xxlem6.5} Retain the above notation.
\begin{enumerate}
\item[(1)]
For any $0<j<n$, we have the following lower bound 
for $\GKdim \overline{E}$ 
$$\GKdim \overline{E}\geq \GKdim (\overline{A}/\overline{B}_j\overline{A}).$$
\item[(2)]
We have
$$\GKdim \overline{E}=\max_j \GKdim (\overline{A}/\overline{B}_j\overline{A})$$
where $j$ ranges over positive integers less than $n$ 
that divide $n$.  
\item[(3)]
If $n$ is prime, then 
$$\GKdim \overline{E} = \GKdim (\overline{A}/\overline{B}_1\overline{A}).$$
\end{enumerate}
\end{lemma}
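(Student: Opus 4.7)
The plan is to mirror the proof of Lemma~\ref{xxlem3.4} and Corollary~\ref{xxcor3.5} in the setting of $\overline{A} = A/c_\Phi$, using the $\overline{A}$-versions of the relevant lemmas (Lemmas~\ref{xxlem6.3} and \ref{xxlem6.4}) already stated in the paper. The arguments carry over essentially unchanged, so the proof should be short.

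First, I would invoke the right $\overline{E}$-module decomposition $\overline{E} = \bigoplus_{1 \le j \le n-1} \bar{e}_j \overline{E}$ (the summand at $j=0$ vanishes since $\bar{e}_0=0$), which gives the identity $\GKdim \overline{E} = \max_{1 \le j \le n-1} \GKdim(\bar{e}_j \overline{E})$ already noted in Section~\ref{xxsec6}. Combining the right $\overline{A} \# C_n$-module isomorphism
\[
\bar{e}_j \overline{E} \cong \frac{e_j(\overline{A} \# C_n)}{e_j(\overline{A} \# C_n) \cap (e_0)}
\]
with the identification $e_j(\overline{A} \# C_n) \cong \overline{A}$ of right $\overline{A}$-modules and with Lemma~\ref{xxlem6.3}(1), which says the denominator corresponds to $\overline{B}_j \overline{A}$, yields $\bar{e}_j \overline{E} \cong \overline{A}/\overline{B}_j \overline{A}$. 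Part~(1) then follows immediately from the displayed $\max$ formula.

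For part~(2), the key input is Lemma~\ref{xxlem6.4}: for any $\lambda$ coprime to $n$, there is a $\Bbbk$-vector space isomorphism $\bar{e}_j \overline{E} \cong \bar{e}_{\lambda j} \overline{E}$. Since the GKdimension of a finitely generated graded module over a finitely generated graded algebra depends only on its Hilbert series (formula~\eqref{E1.0.2}), this vector space isomorphism is enough to conclude $\GKdim(\bar{e}_j \overline{E}) = \GKdim(\bar{e}_{\lambda j} \overline{E})$. Therefore $\GKdim(\bar{e}_j \overline{E})$ is constant on the orbit of $j$ under multiplication by units of $\mathbb{Z}_n$; and every such orbit contains the element $\gcd(j,n)$, which is a divisor of $n$. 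So the maximum over $1 \le j \le n-1$ coincides with the maximum over the proper divisors of $n$, establishing~(2). Part~(3) is the immediate specialization: when $n$ is prime, every $1 \le j \le n-1$ is invertible in $\mathbb{Z}_n$, so all $\bar{e}_j \overline{E}$ are isomorphic as vector spaces and in particular have the same GKdimension as $\bar{e}_1 \overline{E} \cong \overline{A}/\overline{B}_1 \overline{A}$.

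The proof is essentially formal and has no real obstacle; the one point worth flagging is that Lemma~\ref{xxlem6.4} produces only a $\Bbbk$-linear (not a right-module) isomorphism, but since the argument needs only equality of Hilbert series to conclude equality of GKdimension, this weaker statement suffices.
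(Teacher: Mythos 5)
Your proposal is correct and follows exactly the route the paper intends: the authors omit the proof of Lemma \ref{xxlem6.5} precisely because it is the $\overline{A}$-version of Corollary \ref{xxcor3.5}, obtained from the decomposition $\overline{E}=\bigoplus_j \bar{e}_j\overline{E}$, the identification $\bar{e}_j\overline{E}\cong \overline{A}/\overline{B}_j\overline{A}$ via Lemma \ref{xxlem6.3}(1), and the fact that the graded $\Bbbk$-linear isomorphisms of Lemma \ref{xxlem6.4} preserve Hilbert series and hence GKdimension. Your added observation that each unit-orbit in $\mathbb{Z}_n$ contains $\gcd(j,n)$ correctly fills in the one step the paper leaves implicit in part (2).
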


One advantage of 
working with $\overline{A}$ is that 
\begin{equation}
\label{E6.6.1}\tag{E6.6.1}
b_i b_{k-i}= b_i b_{k-i}+b_{k-i}b_i-b_{k-i}b_i=c_k-b_{k-i}b_i=-
b_{k-i}b_i
\end{equation}
for all $k\in \Phi$. 

Similar to Definition \ref{xxdef5.2}(1),
we say $c_i\in \overline{A}$ is {\it nilpotent} in 
$\overline{A}/\overline{B}_j\overline{A}$ if
$c_i^N\in \overline{B}_j \overline{A}$ for some $N \geq 0$. 
In this case we write $i\in \overline{\Psi}^{[n]}_j$.
Now we are ready to take care 
of Theorem \ref{xxthm0.2} when $n$ is prime.

\begin{proposition}
\label{xxpro6.6}
Suppose $n\geq 2$ is neither 3 nor 5.
\begin{enumerate}
\item[(1)]
Suppose $\Phi\supseteq \{1,\ldots, n-1\}$. Then 
$0\in \overline{\Psi}^{[n]}_1$. 
\item[(2)]
If $n$ is prime, then $\GKdim E=0$. Consequently, 
$n$ is admissible. 
\end{enumerate}
\end{proposition}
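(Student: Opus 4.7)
My plan is to prove (1) by an explicit identity-based calculation in $\overline{A}:=A/(c_1,\ldots,c_{n-1})$, and then deduce (2) for prime $n$ via the $f_\lambda$-symmetry of Lemma~\ref{xxlem6.4}. In $\overline{A}$ the relations \eqref{E6.6.1} read $b_i b_j = -b_j b_i$ whenever $i+j\not\equiv 0\bmod n$, together with $b_i b_{-i}+b_{-i}b_i = c_0$. The base case is the cancellation identity $c_0 b_2 = w b_1 - b_1 w$ where $w:=b_2 b_{n-1}$: indeed $\{2,n-1\}$ sums to $1\bmod n$ with no zero-sum proper subset (for $n\neq 3$), so $w\in\overline{B}_1$, and $b_1\in\overline{B}_1$ trivially. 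Expanding
\[
w b_1 \;=\; b_2\bigl(c_0 - b_1 b_{n-1}\bigr) \;=\; c_0 b_2 - b_2 b_1 b_{n-1} \;=\; c_0 b_2 + b_1 w,
\]
where the last equality uses $b_2 b_1 = -b_1 b_2$ (valid because $n\neq 3$), exhibits $c_0 b_2\in\overline{B}_1\overline{A}$.

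To produce a pure power $c_0^N\in\overline{B}_1\overline{A}$, I would iterate this idea using additional minimal zero-sum-free subsets of $\{1,\ldots,n-1\}$ summing to $1\bmod n$: for $n=7$ the subset $\{4,5,6\}$ yields $c_0 b_4 b_5\in\overline{B}_1\overline{A}$ by the same kind of manipulation. Right-multiplying $c_0 b_2$ by $b_{n-2}$ gives $c_0^2 \equiv c_0 b_{n-2} b_2 \pmod{\overline{B}_1\overline{A}}$, and the auxiliary elements (such as $c_0 b_4 b_5$) are designed to absorb these residual $b$-factors; chaining these cancellations produces the desired $c_0^N$. The main obstacle is organizing such cancellations uniformly for every prime $n\neq 3,5$, which requires ensuring an adequate supply of minimal elements of $\overline{B}_1$. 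An alternative, potentially cleaner route is to localize at $c_0$ and argue that the right ideal $\overline{R}_1\overline{A}[c_0^{-1}]$ coincides with the full semisimple Clifford-type algebra $\overline{A}[c_0^{-1}]$, from which $c_0^N\in\overline{R}_1\overline{A}$ would follow immediately; however this needs a careful analysis of the $C_n$-action on the matrix-algebra factors of $\overline{A}[c_0^{-1}]$.

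For (2), assume $n$ is prime with $n\neq 3,5$. Then $\phi_2(n) = \{1,\ldots,n-1\}$, so $\Phi_n\supseteq\{1,\ldots,n-1\}$ by Proposition~\ref{xxpro2.3}(1), aligning the hypothesis of (1) with the setting of Lemma~\ref{xxlem6.2}. Part (1) together with Lemma~\ref{xxlem6.3}(2) gives $e_1 c_0^N \in (e_0)$ in $\overline{A}\#C_n$. Since $f_\lambda$ fixes $c_0$ for every $\lambda$ coprime to $n$, the isomorphism of Lemma~\ref{xxlem6.4} transports this to $e_j c_0^N\in(e_0)$ for every $j=1,\ldots,n-1$; summing via $\sum_j e_j = 1$ gives $c_0^N\in(e_0)$, so $c_0$ is nilpotent in $\overline{E}$. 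Combined with $c_k = 0$ in $\overline{A}$ for $k\neq 0$, Lemma~\ref{xxlem1.4}(2) (adapted to $\overline{A}$) forces $\GKdim\overline{E} = 0$, and Lemma~\ref{xxlem6.2} then yields $\GKdim E = 0$; that is, $n$ is admissible.
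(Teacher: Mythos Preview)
Your computation $c_0 b_2 = w b_1 - b_1 w$ with $w=b_2 b_{n-1}$ is exactly the first step of the paper's argument, but the proof of part~(1) has a genuine gap: you have not organized the iteration, and you say so explicitly. The paper's resolution is a clean two-claim induction on $j$, not an ad hoc search for further minimal $\overline{B}_1$-elements. \textbf{Claim~1:} if $2j+1\not\equiv 0\pmod n$ and $c_0^s b_j\in\overline{B}_1\overline{A}$, then $c_0^{s+1}b_{j+1}\in\overline{B}_1\overline{A}$, via $-[c_0^s b_j,\,b_{j+1}b_{n-j}]=c_0^{s+1}b_{j+1}$; note $b_{j+1}b_{n-j}\in\overline{B}_1$ since the indices sum to $1\bmod n$. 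This is precisely the pattern of your base case, and it already handles all even $n$: starting from $b_1$ one reaches $c_0^{n-2}b_{n-1}$, and then $c_0^{n-1}=[c_0^{n-2}b_{n-1},b_1]$. For odd $n$ the chain breaks once, at $j=(n-1)/2$ where $2j+1=n$, and this is exactly where the hypothesis $n\neq 3,5$ is used. \textbf{Claim~2:} at that $j$ one skips ahead by two using the triple product $b_{j+1}b_{j+2}b_{n-1}\in\overline{B}_1$ (indices sum to $1\bmod n$, and are distinct and strictly increasing precisely when $n\geq 7$); two commutators give $c_0^{s+2}b_{j+2}$, and Claim~1 resumes. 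Your $\{4,5,6\}$ element for $n=7$ is this triple, but you did not see that one such skip suffices in general.

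Your proposed alternative via localization at $c_0$ is not needed and would require substantial additional structure theory; the elementary commutator chain above is both shorter and self-contained.

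Your argument for part~(2) is correct and matches the paper's: $\Phi_n\supseteq\{1,\ldots,n-1\}$ by Proposition~\ref{xxpro2.3}(1), part~(1) gives $c_0$ nilpotent modulo $\overline{B}_1\overline{A}$, the $f_\lambda$-symmetry (equivalently Lemma~\ref{xxlem6.5}(3) for prime $n$) transports this to every $j$, and Lemma~\ref{xxlem6.2} finishes.
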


\begin{proof} (1)
We start with the trivial observation that if $u,v \in 
\overline{B}_{1}\overline{A}$, then $[u,v] \in 
\overline{B}_{1}\overline{A}$. The strategy is to start 
with $b_1 \in \overline{B}_{1}\overline{A}$ 
and apply a sequence of graded commutations with 
selected elements of $\overline{B}_{1}\overline{A}$ to 
obtain $c_0^{n-1} \in \overline{B}_{1}\overline{A}$. 
Note that in $\overline{A}$, we have $[b_{i},b_{j}]=0$ 
unless $i+j=0 \mod n$.

\bigskip
\noindent
{\bf Claim 1:} Suppose that $0\leq j\leq n-1$ and 
$2j+1 \neq 0 \mod n$ (or $2j+1\neq n$). If 
$c_0^s b_j\in \overline{B}_{1}\overline{A}$ for some $s$, then
$c_0^{s+1} b_{j+1}\in \overline{B}_{1}\overline{A}$

\noindent
{\bf Proof of Claim 1:}
First of all, $b_{j+1} b_{n-j} \in \overline{B}_{1}\overline{A}$ 
(which is not central). Since $2j+1 \neq 0 \mod n$, 
$b_{j} b_{j+1} =- b_{j+1} b_{j}$ in $\overline{A}$
\eqref{E6.6.1}. We obtain 
\begin{eqnarray*}
- [c_0^s b_{j}, b_{j+1} b_{n-j} ] & = & -  c_0^s b_{j} b_{j+1}
b_{n-j} + c_0^s b_{j+1} b_{n-j} b_{j}\\
& = & c_0^s(b_{j+1} b_{j} b_{n-j} + b_{j+1} b_{n-j}b_{j})\\
& = & c_0^{s}b_{j+1} c_{0}=c_0^{s+1}b_{j+1}.
\end{eqnarray*}
The assertion follows.

\bigskip
\noindent
{\bf Claim 2:} Suppose otherwise that $2j+1=0\mod n$, so that
$2j+1=n$. If $c_0^s b_j\in \overline{B}_{1}\overline{A}$, then
$c_0^{s+2} b_{j+2}\in \overline{B}_{1}\overline{A}$.

\noindent
{\bf Proof of Claim 2:} Under the hypothesis of $j$, we have  
that $n$ is odd and that $b_{j+1} b_{j+2} b_{n-1} \in 
\overline{B}_{1}\overline{A}$. Given that $n \neq 3,5$, we 
have $n\geq 7$, and consequently, 
$$j+2= (n+3)/2 <n-1,$$ 
so that the indices in $b_{j+1} b_{j+2} b_{n-1}$ are strictly 
increasing. This gives the following commutator 
computation in $\overline{B}_{1}\overline{A}$,
\begin{eqnarray*}
[ c_0^s b_{j} , b_{j+1} b_{j+2} b_{n-1} ] & = & c_0^s(b_{j}
b_{j+1} b_{j+2} b_{n-1} + b_{j+1} b_{j+2}
b_{n-1} b_{j})\\
& = & c_0^s(b_{j}
b_{j+1} b_{j+2} b_{n-1} + b_{j+1} b_{j} b_{j+2}
b_{n-1} )\\
& = & c_0^{s+1} b_{j+2} b_{n-1} .
\end{eqnarray*}
We apply the one additional commutator to get
\[ [ [ c_0^s b_{j} , b_{j+1} b_{j+2} b_{n-1} ] , b_{1} ] =
[ c_0^{s+1} b_{j+2} b_{n-1}, b_{1}] = c_0^{s+2} b_{j+2}. \]
Therefore, if $c_0^s b_j \in \overline{B}_{1}\overline{A}$ 
and $2j+1=n$, then $c_0^{s+2}b_{j+2} \in \overline{B}_{1}\overline{A}$.

\bigskip
\noindent
{\bf Claim 3:} $c_0^{n-1}\in \overline{B}_{1}\overline{A}$.

\noindent
{\bf Proof of Claim 3:} 
Starting with $b_{1}$, we can apply {\bf Claim 1} $(n-2)$-times 
to get $c_0^{n-2} b_{n-1} \in \overline{B}_{1}\overline{A}$ whenever
$n$ is even. Hence $c_0^{n-1}=[c_0^{n-2} b_{n-1}, b_1]
\in \overline{B}_{1}\overline{A}$ as required.

If $n$ is odd, we apply {\bf Claim 1} $(j_0-1)$-times 
to get $c_0^{j_0-1}b_{j_0} \in \overline{B}_{1}\overline{A}$ where 
$j_0=\frac{n-1}{2}$. Next we apply {\bf Claim 2} to get $c_0^{j_0+1}b_{j_0+2} 
\in \overline{B}_{1}\overline{A}$. Then apply {\bf Claim 1}
again $(j_0-2)$-times to get $c_0^{n-2}b_{n-1} \in 
\overline{B}_{1}\overline{A}$. Finally we have 
\[ c_{0}^{n-1}=[c_0^{n-2}b_{n-1} , b_{1} ] 
\in \overline{B}_{1}\overline{A}\]
as desired.

The assertion follows from {\bf Claim 3}.

(2) Now $n$ is a prime integer $\neq 3,5$. By Proposition \ref{xxpro2.3}(1),
$\{1,\ldots, n-1\}\subseteq \Phi_n$, so the hypothesis of part (1) holds
when taking $\Phi=\Phi_n$. 
By part (1), $0\in \overline{\Psi}^{[n]}_1$. 
Since $c_k=0$ in $\overline{A}$ for all $k\neq 0\mod n$, we have that
$c_k^N=0$ in $\overline{B}_1 \overline{A}$ for all $k$. 
This implies that $\GKdim (\overline{A}/\overline{B}_1 \overline{A})=0$. 
By Lemma \ref{xxlem6.5}(2), 
$\GKdim \overline{E}=0$. The assertion follows from Lemma \ref{xxlem6.2}.
\end{proof}

Proposition \ref{xxpro6.6}(2) is one of the initial steps in the 
proof of Theorem \ref{xxthm0.2} and Proposition \ref{xxpro6.6}(1)
is a step of reduction. The following technical lemma is needed for
the proof of the proposition below.

\begin{lemma}
\label{xxlem6.7} 
Suppose $i$ and $j$ satisfy the following conditions:
\begin{enumerate}
\item[(1)]
$i$ is an {\rm{(}}odd{\rm{)}} integer with $0\leq i\leq n-1$ 
and $\gcd(i,n)>1$,
\item[(2)]
$2\leq j\leq n-1$ such that $\gcd(i,j,n)=1$.
\end{enumerate}
Then there is an integer $t\geq 0$ such that $\gcd(j+ti,n)=1$.
\end{lemma}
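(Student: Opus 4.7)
The plan is to carry out a prime-by-prime analysis of the condition $\gcd(j+ti,n)=1$ and then glue together the local solutions via the Chinese Remainder Theorem. Let $p_1,\ldots,p_s$ be the distinct prime divisors of $n$, and split this set into two subsets according to whether $p_r \mid i$ or $p_r \nmid i$.

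First, for each prime $p_r$ dividing $i$: since $p_r \mid \gcd(i,n)$ and the hypothesis $\gcd(i,j,n)=1$ forces $p_r \nmid j$, we have $j+ti \equiv j \not\equiv 0 \pmod{p_r}$ for every choice of $t$. Thus these primes impose no constraint on $t$.

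Next, for each prime $p_r$ not dividing $i$: the element $i$ is invertible modulo $p_r$, so the congruence $j+ti \equiv 0 \pmod{p_r}$ has the unique solution $t \equiv -j\, i^{-1} \pmod{p_r}$. To have $p_r \nmid j+ti$ we must therefore choose $t$ to avoid this single residue class modulo $p_r$, and since $p_r \geq 2$ there are at least $p_r - 1 \geq 1$ acceptable residues.

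Finally, apply the Chinese Remainder Theorem to the primes $p_r$ with $p_r \nmid i$: there exists $t \in \mathbb{Z}$ whose residue modulo each such $p_r$ is one of the $p_r - 1$ acceptable values. Adding a large enough positive multiple of $\prod_{p_r \nmid i} p_r$ produces a non-negative representative, so $t \geq 0$ can be arranged. There is no genuine obstacle here beyond organizing the case split; the hypotheses $\gcd(i,n) > 1$, $i$ odd, and $2 \leq j \leq n-1$ appear to reflect the setting in which the lemma will be invoked rather than being indispensable ingredients of this particular argument.
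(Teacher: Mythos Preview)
Your proof is correct. Both your argument and the paper's proceed prime-by-prime, but the execution differs: you split the primes of $n$ according to whether they divide $i$, then invoke the Chinese Remainder Theorem to assert the existence of a suitable $t$; the paper instead orders the primes of $n$ so that $p_1,\ldots,p_{s-1}$ are exactly those dividing $j$ (hence not dividing $i$, by the hypothesis $\gcd(i,j,n)=1$) and then writes down the explicit choice $t=p_s\cdots p_r$. The paper's explicit $t$ is a nice concrete witness, while your CRT argument is slightly more robust and makes transparent, as you observed, that the hypotheses on $i$ being odd and $\gcd(i,n)>1$ are not actually needed for the conclusion.
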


\begin{proof}
Let $n=p_1^{n_1} \cdots p_{s}^{n_s}\cdots p_{r}^{n_r}$ for some
$1\leq s\leq r$, where $\{p_i\}$ are the prime factors of $n$ and
$n_u\geq 1$ for all $1\leq u\leq r$. The ordering of the prime and the
integer $s$ are chosen so that $j=p_1^{j_1} \cdots p_{s-1}^{j_{s-1}} j'$ 
where $\gcd(j',n)=1$ and $j_w\geq 1$ for all $1\leq w \leq s-1$. 
Since we assume that $\gcd(i,j,n)=1$, we can write 
$i=p_{s}^{i_{s}}\cdots p_{r}^{i_r} i'$ where $\gcd(i',n)=1$ 
with $i_v\geq 0$ for all $s\leq v\leq r$. Let $t=p_{s}\cdots 
p_{r}$. Then it is easy to see that each $p_u$, for 
$1\leq u\leq r$, does not divide $j+ ti$. Thus $\gcd(j+ti,n)=1$.
\end{proof}

\begin{proposition}
\label{xxpro6.8} 
Let $n\geq 2$ and denote $\Phi=\Phi_n$. Suppose that
\begin{enumerate}
\item[(a)]
every proper factor of $n$ is admissible, and that
\item[(b)]
for each $0\leq i\leq n-1$,
$i\in \overline{\Psi}^{[n]}_1$.
\end{enumerate} 
Then $n$ is admissible.
\end{proposition}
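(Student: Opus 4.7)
My plan is first to use Lemma~\ref{xxlem6.2} together with Lemma~\ref{xxlem6.5}(2) to reduce the assertion that $n$ is admissible (i.e., $\GKdim E = 0$) to showing that $i\in\overline{\Psi}^{[n]}_j$ for every $i\in\mathbb{Z}_n$ and every divisor $j$ of $n$ with $1\leq j<n$. I will then prove this by a case analysis on the pair $(\gcd(i,n),\gcd(i,j,n))$, after one preparatory step.

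The preparatory step is to strengthen hypothesis (b). Since $\Phi=\Phi_n$ is special in the sense of Definition~\ref{xxdef6.1}, we have $\lambda\Phi=\Phi$ for every $\lambda\in\mathbb{Z}_n^\times$, so the automorphism $f_\lambda$ of Definition~\ref{xxdef3.3} descends to $\overline{A}$. It satisfies $f_\lambda(c_k)=c_{\lambda k}$ and $f_\lambda(\overline{R}_j)=\overline{R}_{\lambda j}$, so the analogue of Lemma~\ref{xxlem6.4} gives
\[ i\in\overline{\Psi}^{[n]}_j \iff \lambda i\in\overline{\Psi}^{[n]}_{\lambda j}\quad \text{for all } \lambda\in\mathbb{Z}_n^\times. \]
Applying this to hypothesis (b), I obtain that $k\in\overline{\Psi}^{[n]}_\ell$ holds for \emph{every} $k\in\mathbb{Z}_n$ and \emph{every} $\ell\in\mathbb{Z}_n$ with $\gcd(\ell,n)=1$.

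Now fix a divisor $j$ of $n$ with $1\leq j<n$ and an element $i\in\mathbb{Z}_n$. If $\gcd(i,n)\mid j$, Lemma~\ref{xxlem5.4}(2) gives $i\in\Psi^{[n]}_j\subseteq\overline{\Psi}^{[n]}_j$. If $\gcd(i,n)$ is even, Lemma~\ref{xxlem5.5}(4) --- whose hypothesis is precisely (a) --- gives $i\in\Psi^{[n]}_j$. In the remaining case, $d:=\gcd(i,n)$ is odd and $d\nmid j$; set $e:=\gcd(i,j,n)$. If $e>1$, write $i=ei_0$, $j=ej_0$, $n=en_0$; then $n_0<n$ is a proper factor of $n$, hence admissible by (a), so $i_0\in\Psi^{[n_0]}_{j_0}$, which Lemma~\ref{xxlem5.4}(1) lifts to $i\in\Psi^{[n]}_j$. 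Finally, in the subcase $e=1$: if $j=1$ this is hypothesis (b); if $j\geq 2$, Lemma~\ref{xxlem6.7} produces an integer $t\geq 0$ with $\gcd(j+ti,n)=1$, the upgraded (b) yields $i\in\overline{\Psi}^{[n]}_{j+ti}$, and Lemma~\ref{xxlem5.5}(3) (applied with $s=0$) descends to $i\in\overline{\Psi}^{[n]}_j$.

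I expect the main obstacle to be this final subcase, since it requires coordinating three distinct tools --- Lemma~\ref{xxlem6.7} to produce a target index coprime to $n$, the $f_\lambda$-upgrade of hypothesis (b) to cover that target, and Lemma~\ref{xxlem5.5}(3) to walk the conclusion back to the original $j$. The other cases reduce in a single step to Lemmas~\ref{xxlem5.4} or \ref{xxlem5.5}(4), so assembling them into the full case analysis is largely bookkeeping.
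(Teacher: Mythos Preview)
Your proof is correct and uses exactly the same toolkit as the paper: the reduction via Lemmas~\ref{xxlem6.2} and \ref{xxlem6.5}(2), the $f_\lambda$-automorphisms to upgrade hypothesis~(b), Lemmas~\ref{xxlem5.4} and \ref{xxlem5.5}(4) for the easy $\gcd$ cases, hypothesis~(a) plus Lemma~\ref{xxlem5.4}(1) when $\gcd(i,j,n)>1$, and Lemma~\ref{xxlem6.7} plus the $\overline{A}$-version of Lemma~\ref{xxlem5.5}(3) when $\gcd(i,j,n)=1$.

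The organizational difference is worth noting. The paper structures the argument as a nested induction on $j$ and then on $i$; the inner induction on $i$ is invoked only for the case ``$i$ even, $n$ odd'', where one writes $i=2i'$, appeals to the induction hypothesis for $i'$, applies $f_2$, and then descends via Lemma~\ref{xxlem5.5}(3). Your direct case split on the pair $(\gcd(i,n),\gcd(i,j,n))$ absorbs that case without any induction: when $n$ is odd and $i$ is even, $\gcd(i,n)$ is odd, so the pair $(i,j)$ lands in one of your later cases and is handled by Lemma~\ref{xxlem5.4}(2), hypothesis~(a), or Lemma~\ref{xxlem6.7} just like everything else. This makes your write-up slightly cleaner. (One small remark: the parenthetical ``(odd)'' in Lemma~\ref{xxlem6.7}(1) is contextual rather than a hypothesis --- the proof there never uses the parity of $i$ --- so your application of that lemma without assuming $i$ odd is legitimate.)
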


\begin{proof} By Lemmas \ref{xxlem6.2} and \ref{xxlem6.3}, 
and the ideas in Lemma \ref{xxlem3.4},
it suffices to show that $i\in \overline{\Psi}^{[n]}_j$
for all $0\leq i\leq n-1$ and $1\leq j\leq n-1$ with $j\mid n$. 
We use induction on $j$ and then on $i$. The minimal possible
$j$ is $1$, in which the assertion follows from hypothesis (b).
Now assume that $j>1$.

If $\gcd(i,n)=1$, or $i=2^w q$ with $q$ odd and $\gcd(q,n)=1$, then 
the assertion follows from Proposition \ref{xxpro2.3}(1). This shows
that the assertion holds for $i=1$. So we can assume that 
$i\geq 2$ and proceed with induction on $i$. 

Suppose $i$ is even and write $i=2i'$. If $n$ is even, then
$i'\in \Psi^{[n/2]}_j$ by hypothesis (a). By Lemma 
\ref{xxlem3.9}(3), $i\in \Psi^{[n]}_{2j}$. Consequently,
$i\in \overline{\Psi}^{[n]}_{2j}$. By the $\overline{A}$-version 
of Lemma \ref{xxlem5.5}(3), $i\in \overline{\Psi}^{[n]}_{j}$. 
If $n$ is odd, then by the induction hypothesis,
$i'\in \overline{\Psi}^{[n]}_{j}$. Applying the automorphism
$f_2$ in Definition \ref{xxdef3.3}, we obtain
that $i\in \overline{\Psi}^{[n]}_{2j}$. By the $\overline{A}$-version
of Lemma \ref{xxlem5.5}(3), we have $i\in \overline{\Psi}^{[n]}_{j}$. 

For the rest of proof we assume that $i$ is odd and $\gcd(i,n)>1$.
If $\gcd(i,j,n)=:q>1$, write $n=qn'$, $i=qi'$ and $j=qj'$.
By hypothesis (a), $i'\in \Psi^{[n']}_{j'}$. By Lemma \ref{xxlem3.9}(3),
$i\in \Psi^{[n]}_{j}$. Consequently, $i\in \overline{\Psi}^{[n]}_{j}$.
The other alternative is $\gcd(i,j,n)=1$.
By Lemma \ref{xxlem6.7}, there is a $t\geq 0$ such that 
$\gcd(j+ti,n)=1$. By hypothesis (b), $i'\in \overline{\Psi}^{[n]}_{1}$ 
for all $i'$. Let $\lambda$ be $j+ti$, which is invertible
in ${\mathbb Z}_n$ by Lemma \ref{xxlem6.7}, and let $f_{\lambda}$ 
be the ($\overline{A}$-version of the) algebra automorphism 
defined as in Definition \ref{xxdef3.3}. Pick $i'=i\lambda^{-1}$ 
in ${\mathbb Z}_n$. Then, after applying $f_{\lambda}$, 
$i'\in \overline{\Psi}^{[n]}_{1}$ becomes 
$i\in \overline{\Psi}^{[n]}_{j+ti}$. By the 
$\overline{A}$-version of Lemma \ref{xxlem5.5}(3), 
$i\in \overline{\Psi}^{[n]}_{j}$ for all $i$. Thus 
we have finished the inductive step and the whole proof.
\end{proof}

\section{Proof of Theorem \ref{xxthm0.2}}
\label{xxsec7}

The proof of Theorem \ref{xxthm0.2} follows the strategy
of Proposition \ref{xxpro6.6}. Let us recall the argument 
for showing that some power of $c_0$ is in 
$\overline{B}_1\overline{A}$. Given an element 
$c_0^sb_j\in\overline{B}_1\overline{A}$, depending on 
whether $j$ satisfies a certain congruence, we applied 
commutators to $c_0^sb_j$ to conclude that 
$c_0^{s+1}b_{j+1}$ or $c_0^{s+2}b_{j+2}$ is in 
$\overline{B}_1\overline{A}$. Starting with $b_1$ and 
continuing in this way, we eventually reach 
$c_0^{n-2}b_{n-1}\in\overline{B}_1\overline{A}$. 
Applying $[-,b_1]$ to this gives 
$c_0^{n-1}\in\overline{B}_1\overline{A}$.

In the more general situation of Theorem \ref{xxthm0.2}, 
we have to show that for each divisor $i$ of $n$, some 
power of $c_i$ is in $\overline{B}_{i_0}\overline{A}$ 
for all $i_0$. Given an element $c_i^sb_j\in
\overline{B}_{i_0}\overline{A}$ we apply certain 
commutators to $c_i^sb_j$ depending on congruences 
satisfied by $i,j,i_0$ to conclude that other elements 
of the form $c_i^{s'}b_{j'}$ are in 
$\overline{B}_{i_0}\overline{A}$ (see Lemmas 
\ref{xxlem7.4} and \ref{xxlem7.5}). These congruences 
are described in Definition \ref{xxdef7.1}. Then we 
show that there are indeed integers satisfying 
Definition \ref{xxdef7.1} (see Lemma \ref{xxlem7.2}) 
and that we eventually reach $c_0^{s'}b_{i-i_0}$ 
(see Lemma \ref{xxlem7.6}), so that applying 
$[-,b_{i_0}]$ gives what we want. The final induction 
steps needed for the proof of Theorem \ref{xxthm0.2} 
are given in Proposition \ref{xxthm7.7} and Corollary 
\ref{xxcor7.8}.

%
%
%
%

To simplify notation, let
\begin{equation}
\notag
\Lambda_{i,i_0}:=\{ j \mid b_j c_{i}^t (=c_{i}^t b_j)
\in \overline{B}_{i_0}\overline{A},
\quad {\text{for some }}t\geq 0\}. 
\end{equation}
It is clear that $i_0\in \Lambda_{i,i_0}$. Write 
$$i_{\xi}=i_0+\xi(i_0-i)$$
and 
$$\bar{i}_{\xi}=(-\xi)(i_0-i)$$
for all integers $\xi$. Since $i_0$ will be a fixed 
integer in most of proofs below, we hope that the 
probability of serious confusion is not high. Let 
\begin{equation}
\notag
\Xi_{i,i_0}:=\{ \xi  \mid i_{\xi}\in \Lambda_{i,i_0} 
\quad {\text{and}}
\quad 0\leq \xi\leq \frac{1}{2}(\mop(n)-3)\}. 
\end{equation}
where for $n\ge 2$
\begin{equation}
\notag
\mop(n):={\text{the minimal odd prime factor of $n$}}.
\end{equation}
It is clear that $0\in \Xi_{i,i_0}$.

Let $\mathbb{Z}_n^{\times}$ be the invertible elements in 
$\mathbb{Z}_n$ and $S$ be the set of odd integers between $1$ 
and $n$ which are not in $\phi_2(n)$ \eqref{E0.6.1}. Then define
\begin{align}
\label{E7.0.1}\tag{E7.0.1}
\Omega_{2}(n):=
\bigcap_{s\in S} \left(\mathbb{Z}_n^{\times}+s\right)
\end{align}
It is not hard to show that $\Omega_2(n)\subseteq \phi_2(n)$.

\begin{definition}
\label{xxdef7.1}
Let $n\geq 2$ be an integer. An integer $i_0$ with $1\leq i_0\leq n-1$ 
is called {\bf $n$-special} if
\begin{enumerate}
\item[(1)]
$i_0\in \phi_2(n)$. (This is also a consequence of (2) below.) 
\end{enumerate}
For all odd integers $i\not\in \phi_2(n)$ with $1\leq i\leq n$, the 
following hold.
\begin{enumerate}
\item[(2)]
$i_0-i$ is invertible in ${\mathbb Z}_n$.
\end{enumerate}
Part (2) is just that $i_0\in \Omega_2(n)$.
Now fix any $i$ as in part (2). For every $1\leq j \leq n-1$, either
\begin{enumerate}
\item[(3)]
$2j+(i_0-i)\in \phi_2(n)$,
\end{enumerate}
or 
\begin{enumerate}
\item[(4)]
(If $2j+(i_0-i)\not\in \phi_2(n)$, then) there is a $\xi \in 
\Xi_{i,i_0}$ such that
\begin{enumerate}
\item[(4$(\xi)$i)]
$2j+(\xi+2)(i_0-i)\in \phi_2(n)$,
\item[(4$(\xi)$ii)]
$j-(\xi+1)(i_0-i)\in \phi_2(n)$, and
\item[(4$(\xi)$iii)]
$j+i_0+2(\xi+1)(i_0-i)\in \phi_2(n)$.
\end{enumerate}
For any $\xi \in \Xi_{i,i_0}$, conditions (4$(\xi)$i), 
(4$(\xi)$ii) and (4$(\xi)$iii) all together are denoted by 
(4$(\xi)$). 
\end{enumerate}
Let $Spl(n)$ denote the set of integers $i_0$ that are 
$n$-special.
\end{definition}

\begin{lemma}
\label{xxlem7.2}
Let $n=2^a p^b$ where $p$ is a prime $\geq 3$. 
\begin{enumerate}
\item[(1)]
$2\in \Omega_2(n)$.
\item[(2)]
If $p\geq 7$, then 
$i_0=2$ is $n$-special.
\end{enumerate}
\end{lemma}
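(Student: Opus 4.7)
The plan for part (1) is to identify $S$ explicitly for $n=2^{a}p^{b}$ and then check the required invertibility condition directly. An integer $k$ lies in $\phi_{2}(n)$ iff $\gcd(k,n)$ is a power of $2$; for odd $k$ this simplifies to $\gcd(k,p)=1$, so $S$ consists of the odd $s\in[1,n]$ divisible by $p$. For each such $s$, the integer $2-s$ is odd (hence coprime to $2^{a}$) and $2-s\equiv 2\pmod{p}$, which is coprime to $p$ since $p\geq 3$. Therefore $\gcd(2-s,n)=1$, i.e.\ $2\in \mathbb{Z}_{n}^{\times}+s$. Intersecting over all $s\in S$ yields $2\in\Omega_{2}(n)$, proving part (1). (If $b=0$ then $S=\emptyset$ and the statement is vacuous.)

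For part (2) with $i_{0}=2$, conditions (1) and (2) of Definition \ref{xxdef7.1} are immediate: $\gcd(2,n)=2$ is a power of $2$, and the invertibility of $2-i$ for odd $i\not\in\phi_{2}(n)$ is exactly what part (1) supplies (as recorded by the note ``Part (2) is just that $i_{0}\in\Omega_{2}(n)$''). The real work is conditions (3)/(4). Fix an odd $i$ with $p\mid i$ and fix $j$ with $1\leq j\leq n-1$. Since $n=2^{a}p^{b}$, membership of an integer $m$ in $\phi_{2}(n)$ is equivalent to $p\nmid m$. Reducing condition (3) modulo $p$ and using $i\equiv 0$ shows that (3) fails precisely when $j\equiv -1\pmod{p}$; in that single residue class we must produce a $\xi\in\Xi_{i,i_{0}}$ satisfying (4$(\xi)$).

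The decisive step is to verify that $\xi=0$ always works. Note that $0\in\Xi_{i,i_{0}}$ holds tautologically: $i_{0}\in\Lambda_{i,i_{0}}$ by definition, and $0\leq\tfrac{1}{2}(\mop(n)-3)=\tfrac{1}{2}(p-3)$ since $p\geq 3$. Reducing the three quantities in (4$(0)$i), (4$(0)$ii), (4$(0)$iii) modulo $p$, with $i\equiv 0$ and $j\equiv -1$, gives
\begin{align*}
2j+2(i_{0}-i)&\equiv -2+4\equiv 2\pmod{p},\\
j-(i_{0}-i)&\equiv -1-2\equiv -3\pmod{p},\\
j+i_{0}+2(i_{0}-i)&\equiv -1+2+4\equiv 5\pmod{p}.
\end{align*}
Since $p\geq 7$, none of $2,3,5$ is divisible by $p$, so each quantity is coprime to $p$ and hence lies in $\phi_{2}(n)$. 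This verifies (4$(0)$) and completes the argument that $i_{0}=2$ is $n$-special.

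The only real obstacle is this last congruence check, and it is exactly where the hypothesis $p\geq 7$ enters: the residues $3$ and $5$ appearing in (4$(0)$ii) and (4$(0)$iii) are precisely what rule out $p=3$ and $p=5$, which is consistent with the failure of the pertinency bound at those primes recorded in Theorem \ref{xxthm0.4}.
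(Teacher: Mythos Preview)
Your proof is correct and follows essentially the same approach as the paper: both arguments identify that odd $i\notin\phi_2(n)$ forces $p\mid i$, verify invertibility of $2-i$ directly, reduce the failure of condition (3) to $j\equiv -1\pmod p$, and then check that $\xi=0$ satisfies (4$(0)$) via the residues $2,-3,5\pmod p$, which are nonzero precisely when $p\geq 7$. The exposition and order of steps match the paper's proof almost exactly.
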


\begin{proof} (1) For every odd integer $i\not\in \phi_2(n)$, 
we have $p\mid i$. Therefore $2$ and $p$ do not divide 
$i_0-i=2-i$, so Definition \ref{xxdef7.1}(2) holds, and 
the assertion follows.

(2) Now assume $p\geq 7$. 
Note that Definition \ref{xxdef7.1}(1) is obvious.
Definition \ref{xxdef7.1}(2) holds by part (1). For 
Definition \ref{xxdef7.1}(3,4), note that $0\in \Xi_{i,i_0}$.
Note that, for every $i$ given in Definition \ref{xxdef7.1}(2),
$i\not\in \phi_2(n)$. Hence $i$ is divisible by $p$. 
When Definition \ref{xxdef7.1}(3) fails, namely, $2j+(i_0-i)$ 
(or equivalently, $2j+i_0$) is divisible by $p$, then 
$j+1=\frac{1}{2}(2j+i_0)$ is divisible by $p$, that is, 
$j=-1\mod p$. By taking $\xi=0$, we have 
$$\begin{aligned}
2j+2(2-i)& =-2+4=2\neq 0\mod p,\\
j-(1)(2-i)&=-1-2=-3\neq 0\mod p,\\
j+2+2(1)(2-i)&=-1+2+4=5\neq 0\mod p.
\end{aligned}
$$
This means that (4$(0)$i), (4$(0)$ii) and (4$(0)$iii)
hold. Therefore $i_0=2$ is $n$-special.
\end{proof}

We have another case when $\Omega_2(n)$ is non-empty. The 
following lemma is not needed for the proof of Theorem 
\ref{xxthm0.2}. It will be used in \S\ref{xxsec8} (see 
Theorem \ref{xxthm8.7}).

\begin{lemma}
\label{xxlem7.3}
Let $p_1$ and $p_2$ be two distinct odd primes.
\begin{enumerate}
\item[(1)]
If $n=p_1p_2$, then 
$\Omega_2(n)\neq \emptyset$.
\item[(2)]
If $n$ is either $p_1 p_2^2 n'$ or $2 p_1 p_2 n'$ 
for some $n'\geq 1$, then 
$\Omega_2(n)= \emptyset$. As a consequence, $Spl(n)=\emptyset$.
\end{enumerate}
\end{lemma}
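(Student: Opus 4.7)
The plan is to translate membership $i_{0}\in\Omega_{2}(n)=\bigcap_{s\in S}(\mathbb{Z}_{n}^{\times}+s)$ into explicit non-vanishing congruences on $i_{0}$ modulo each odd prime factor of $n$, using CRT, and then either count admissible residues (for (1)) or produce a contradiction by exhibiting a single prime $p\mid n$ for which the residues $\{s\bmod p:s\in S\}$ already fill $\mathbb{Z}_{p}$ (for (2)).

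For (1), with $n=p_{1}p_{2}$, the set $S$ is the set of odd integers in $[1,n]$ divisible by $p_{1}$ or by $p_{2}$. Splitting $s\in S$ by which prime divides it and using CRT, the condition $i_{0}-s\in\mathbb{Z}_{n}^{\times}$ reduces to: $p_{1}\nmid i_{0}$, $p_{2}\nmid i_{0}$; the residue $i_{0}\bmod p_{2}$ must avoid $\{p_{1}m\bmod p_{2}:m\text{ odd},\,1\le m\le p_{2}-1\}$; and the symmetric statement modulo $p_{1}$. Since $p_{1}$ is invertible mod $p_{2}$ and there are $(p_{2}-1)/2$ odd integers in $[1,p_{2}-1]$, the forbidden set in $\mathbb{Z}_{p_{2}}$ has cardinality $(p_{2}+1)/2$, leaving $(p_{2}-1)/2$ admissible residues; likewise modulo $p_{1}$. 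By CRT, $|\Omega_{2}(n)|\ge\frac{(p_{1}-1)(p_{2}-1)}{4}\ge 1$.

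For (2), I would use only the odd multiples $s=p_{1}m\le n-1$ of $p_{1}$ and show that the residues $s\bmod p_{2}$ already exhaust $\mathbb{Z}_{p_{2}}$. Since $p_{1}$ is invertible mod $p_{2}$, this reduces to showing that odd $m$ in $[1,\lfloor(n-1)/p_{1}\rfloor]$ realise every residue mod $p_{2}$. In the two cases $n=p_{1}p_{2}^{2}n'$ and $n=2p_{1}p_{2}n'$, this range has length $p_{2}^{2}n'-1$ and $2p_{2}n'-1$ respectively, each at least $2p_{2}-1$. Within $[1,2p_{2}-1]$ every residue $r\in\mathbb{Z}_{p_{2}}$ is hit by an odd integer (take $m=r$ for odd $r\in[1,p_{2}-2]$, $m=p_{2}$ for $r=0$, and $m=r+p_{2}$ for even $r\in[2,p_{2}-1]$, using throughout that $p_{2}$ is odd). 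Thus $i_{0}\bmod p_{2}$ would have to avoid all of $\mathbb{Z}_{p_{2}}$, which is impossible, so $\Omega_{2}(n)=\emptyset$; the consequence $Spl(n)=\emptyset$ then follows from Definition~\ref{xxdef7.1}(2), which forces any $n$-special $i_{0}$ to lie in $\Omega_{2}(n)$. There is no serious obstacle here: the only step to double-check is that the range for $m$ in (2) is genuinely long enough, which reduces to a trivial inequality on $n'$ and $p_{2}$.
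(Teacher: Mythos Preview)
Your argument is correct. For part~(2) your approach and the paper's essentially coincide: given $i_0$, the paper writes $i_0 = cp_1 + dp_2$ with $0 \le c < p_2$ and then takes $i = cp_1$ or $i = (c+p_2)p_1$ according to the parity of $c$; this is precisely your observation that the odd multiples of $p_1$ in $[1,2p_1p_2-1]$ hit every residue class modulo $p_2$, packaged constructively.

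For part~(1) you take a genuinely different route. The paper exhibits a single explicit element: writing $1 = a_1 p_1 + b_1 p_2$ with $0 \le a_1 < p_2$, it shows by a short case analysis that $i_0 = -1$ (if $a_1$ is odd) or $i_0 = 1$ (if $a_1$ is even) lies in $\Omega_2(n)$. Your CRT count is cleaner and yields the stronger statement $|\Omega_2(n)| = \tfrac{(p_1-1)(p_2-1)}{4}$; your inequality is in fact an equality, since the residue conditions you list are both necessary and sufficient. The paper's approach has the minor advantage of naming a concrete $i_0$ (which is the kind of datum one might feed into Definition~\ref{xxdef7.1}), while yours explains transparently why $\Omega_2(n)$ is nonempty and how large it is. One presentational nit: when you say ``the forbidden set in $\mathbb{Z}_{p_2}$ has cardinality $(p_2+1)/2$'', make explicit that this counts the $(p_2-1)/2$ residues $\{p_1 m \bmod p_2\}$ together with the residue $0$ coming from the separate condition $p_2\nmid i_0$.
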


\begin{proof} (1) Let $p_1<p_2$. Every integer $m$ can 
be written uniquely as $m=a p_1+b p_2$ where $0\leq a< p_2$. 
In particular,
$1=a_1 p_1+b_1 p_2$. If $a_1$ is odd, we claim that $i_0=-1
\in \Omega_2(n)$. If $a_1$ is even, we claim that $i_0=1\in 
\Omega_2(n)$. Since the proofs are very similar, we only 
consider the first case. 

Suppose that $i$ is an odd 
integer $1\leq i\leq n$ that is not in $\phi_2(n)$ such 
that $i-i_0=i+1$ is not invertible in ${\mathbb Z}_n$. 
Then $i$ and $i+1$ are divisible by different 
prime factors. We need to consider two cases.

Case 1: $p_1\mid i$ and $p_2\mid i+1$. Write
$i=i' p_1$ (where $i'$ is odd as $i$ is odd) and 
$i+1=j p_2$. Then $$-1=i-(i+1)=i'p_1-j p_2$$
where $i'$ is odd, which implies that
$$1=(p_2-i')p_1+(j-p_1)p_2.$$
Note that $p_2-i'$ is even, which contradicts the fact
that $a_1$ is odd. 

Case 2: $p_2\mid i$ and $p_1\mid i+1$. Write
$i=i' p_2$ (where $i'$ is odd as $i$ is odd) and $i+1=j p_1$
(where $j$ is even as $i+1$ is even). Then
$$-1=i-(i+1)=i'p_2-j p_1,$$
which implies that
$$1=j p_1 -i'p_2.$$
Note that $j$ is even, which contradicts the fact
that $a_1$ is odd. 

(2) Since $p_1$ and $p_2$ are distinct, $1=a p_1+b p_2$.
For every $i_0$, one can write it as 
$$i_0= cp_1+ dp_2$$
for some $c,d$ with $0\leq c< p_2$. If $c$ is odd, 
take $i=cp_1<n$, which is odd and not in $\phi_2(n)$. 
Then $i_0-i=dp_2$ is not invertible in ${\mathbb Z}_n$.
If $c$ is even, take $i=(c+p_2)p_1<n$, which is odd
and not in $\phi_2(n)$. Then $i_0-i=(d-p_1) p_2$ 
is not invertible in ${\mathbb Z}_n$. This means
that $i_0\not\in \Omega_2(n)$ for every $i_0$.
\end{proof}

The next two lemmas describe a family of partially 
defined maps $\Lambda_{i,i_0} \dashto \Lambda_{i,i_0}$, 
where $i_0$ and $i$ satisfy hypotheses (1) and (2) of 
Definition \ref{xxdef7.1}.

\begin{lemma}
\label{xxlem7.4} 
Retain the above notation.
\begin{enumerate}
\item[(1)]
Let $j\in \Lambda_{i,i_0}$. If $j$ satisfies 
hypothesis (3) of Definition \ref{xxdef7.1}, 
that is, $2j+(i_0-i)\in \phi_2(n)$, then 
$j+(i_0-i)\in \Lambda_{i,i_0}$. In particular, 
we get a partially defined map $\omega_{0}: 
\Lambda_{i,i_0}\dashto \Lambda_{i,i_0}$
given by $\omega_{0}(j):= j+(i_0-i)$.
\item[(2)]
If $i_{\xi}\in \Lambda_{i,i_0}$ and $2i_{\xi}+(i_0-i)
=2i_0+(2\xi+1)(i_0-i) \in \phi_2(n)$, then 
$i_{\xi+1}\in \Lambda_{i,i_0}$.
\item[(3)]
If $2i_{\xi}+(i_0-i)=2i_0+(2\xi+1)(i_0-i) \in \phi_2(n)$
for all $0\leq \xi< \frac{1}{2}(\mop(n)-3)$,
then $\Xi_{i,i_0}=[0,1,\ldots,\frac{1}{2}(\mop(n)-3)]$.
\end{enumerate}
\end{lemma}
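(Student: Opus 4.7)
The plan is to mimic the commutator strategy from Claim~1 in the proof of Proposition~\ref{xxpro6.6}, adapted to the current setting with a general central element $c_i$, a general target $i_0$, and the appropriate skew-commutation in $\overline{A}$.

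For part~(1), start from the hypothesis $c_i^s b_j \in \overline{B}_{i_0}\overline{A}$ and compute the graded commutator $-[c_i^s b_j,\; b_{j+(i_0-i)} b_{i-j}]$. The second factor lies in $R_{i_0}$ because $(j+(i_0-i))+(i-j)\equiv i_0 \pmod{n}$, hence in $\overline{B}_{i_0}\overline{A}$, and the first factor lies in $\overline{B}_{i_0}\overline{A}$ by hypothesis. Since $\overline{B}_{i_0}\overline{A}$ is a right ideal, both $uv$ and $vu$ stay inside it whenever $u,v\in\overline{B}_{i_0}\overline{A}$, so the whole commutator lands in $\overline{B}_{i_0}\overline{A}$. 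The hypothesis $2j+(i_0-i)\in\phi_2(n)$, combined with $\phi_2(n)\subseteq\Phi_n$ from Proposition~\ref{xxpro2.3}(1), gives $c_{2j+(i_0-i)}=0$ in $\overline{A}$; by \eqref{E6.6.1} this is exactly the skew-commutation $b_j b_{j+(i_0-i)} = -\,b_{j+(i_0-i)} b_j$. Then, using the centrality of $c_i$ in $\overline{A}$ together with this skew-commutation, the commutator telescopes to
\[
-\bigl[c_i^s b_j,\; b_{j+(i_0-i)}\, b_{i-j}\bigr] \;=\; c_i^s\, b_{j+(i_0-i)}\bigl(b_j b_{i-j} + b_{i-j} b_j\bigr) \;=\; c_i^{s+1}\, b_{j+(i_0-i)},
\]
which exhibits $j+(i_0-i)\in\Lambda_{i,i_0}$ and produces the partial map $\omega_0$.

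For part~(2), specialize part~(1) to $j=i_\xi$: the image $\omega_0(i_\xi)=i_\xi+(i_0-i)=i_{\xi+1}$ is precisely the desired element, and the condition $2i_\xi+(i_0-i)\in\phi_2(n)$ rewrites as $2i_0+(2\xi+1)(i_0-i)\in\phi_2(n)$, which is the hypothesis. For part~(3), induct on $\xi$: the base case $0\in\Xi_{i,i_0}$ is automatic since $i_0\in\Lambda_{i,i_0}$, and whenever $\xi\in\Xi_{i,i_0}$ with $\xi+1\leq\frac{1}{2}(\mop(n)-3)$, part~(2) promotes $i_\xi$ to $i_{\xi+1}$, placing $\xi+1$ in $\Xi_{i,i_0}$.

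The only delicate point is the skew-commutation step in the commutator computation; this is where the hypothesis $2j+(i_0-i)\in\phi_2(n)$ is essential, via the containment $\phi_2(n)\subseteq\Phi_n$ and relation~\eqref{E6.6.1} of $\overline{A}$. Everything else—the ideal closure of the commutator, the specialization in part~(2), and the induction in part~(3)—is routine bookkeeping.
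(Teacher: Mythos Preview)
Your proof is correct and essentially identical to the paper's. The paper names $r=j+(i_0-i)$ and $s=i-j$ and computes the same graded commutator $-[b_j c_i^t,\,b_r b_s]$, using the skew-commutation $b_j b_r=-b_r b_j$ (from $j+r=2j+(i_0-i)\in\phi_2(n)\subseteq\Phi_n$) to collapse it to $b_r c_i^{t+1}$; parts~(2) and~(3) are likewise dispatched by specialization and induction exactly as you do.
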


\begin{proof} Part (2) is a special case of part (1) by taking 
$j=i_{\xi}$. Part (3) follows from part (2) and induction. 
So we only prove part (1) below.

Let $s=i-j$ and $r=j-(i-i_0)$. Then $j+r=2j-i+i_0$ is in
$\phi_2(n)$ by the hypothesis. By Proposition \ref{xxpro2.3}(1),
$j+r\in \Phi_n$ and $b_j b_r=-b_rb_j$ in $\overline{A}$. We 
start with $b_j c_i^t\in \overline{B}_{i_0}\overline{A}$ for 
some $t\geq 0$ (as $j\in \Lambda_{i,i_0}$). By the choice of 
$r,s$, we have $b_r b_s=b_{j-i+i_0}b_{i-j}
\in \overline{B}_{i_0}\overline{A}$. 

Consider the commutator $[b_jc_i^t, b_r b_s]$, we have the 
following elements in $\overline{B}_{i_0}\overline{A}$
\begin{eqnarray*}
-[b_j c_{i}^{t}, b_r b_s] 
&=& c_{i}^t (-b_jb_rb_s + b_rb_sb_j)\\
&=& c_{i}^t( b_rb_jb_s + b_rb_sb_j)\\
&=& c_{i}^t b_r c_{j+s}\\
&=& b_r c_{i}^{t+1}=b_{j+(i_0-i)} c_i^{t+1}.\\
\end{eqnarray*}
The assertion follows. 
\end{proof}

\begin{lemma}
\label{xxlem7.5}
Let $\xi\in \Xi_{i,i_0}$ and $j\in \Lambda_{i,i_0}$. Suppose 
that $\xi$ and $j$ satisfy the hypotheses $(4(\xi))$ in 
Definition \ref{xxdef7.1}. Then $j+(\xi+2)(i_0-i)\in 
\Lambda_{i,i_0}$. In particular, we get a partially defined 
map $\omega_{\xi+1}: \Lambda_{i,i_0}\dashto \Lambda_{i,i_0}$ 
given by $\omega_{\xi+1}(j):= j+(\xi+2)(i_0-i)$.
%
\end{lemma}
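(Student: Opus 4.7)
The plan is to mimic Claim 2 in the proof of Proposition \ref{xxpro6.6}, adapting it to the general setting. Write $\delta = i_0 - i$. From the two hypotheses we have $b_j c_i^s \in \overline{B}_{i_0}\overline{A}$ for some $s\geq 0$ (since $j \in \Lambda_{i,i_0}$) and $b_{i_\xi} c_i^t \in \overline{B}_{i_0}\overline{A}$ for some $t\geq 0$ (since $\xi \in \Xi_{i,i_0}$). The goal is to construct, via a nested graded commutator, an element of the form $c_i^{s+m} b_{j+(\xi+2)\delta}$ lying in $\overline{B}_{i_0}\overline{A}$, which witnesses $j+(\xi+2)\delta \in \Lambda_{i,i_0}$ by definition.

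The driving observation is that each of the three congruences in hypothesis $(4(\xi))$ encodes an anticommutation relation in $\overline{A}$. Indeed, since $\phi_2(n)\subseteq \Phi$, whenever $p+q \in \phi_2(n)$ we have $c_{p+q}=0$ in $\overline{A}$ and hence $b_p b_q = -b_q b_p$ by \eqref{E6.6.1}. Specifically, $(4(\xi)\mathrm{i})$ makes $b_j$ anticommute with $b_{j+(\xi+2)\delta}$; $(4(\xi)\mathrm{ii})$ makes $b_j$ anticommute with $b_{-(\xi+1)\delta}$; and $(4(\xi)\mathrm{iii})$ makes $b_{i_0}$ anticommute with $b_{j+2(\xi+1)\delta}$.

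Following the template of Claim 2 in Proposition \ref{xxpro6.6}, I expect the computation to proceed in two stages. First, compute a commutator of the form $[b_j c_i^s, b_{\alpha_1} b_{\alpha_2} b_{\alpha_3}]$ for a carefully chosen triple whose indices sum to a translate of $(\xi+2)\delta$: one index, say $\alpha_2$, is chosen so that $j+\alpha_2 = i$, which produces a factor of $c_i$ via $b_j b_{\alpha_2}+b_{\alpha_2}b_j = c_i$; the two anticommutations provided by $(4(\xi)\mathrm{i})$ and $(4(\xi)\mathrm{ii})$ are then used to collapse the remaining cross-terms. The outcome should be a scalar multiple of $c_i^{s+1} b_{j+(\xi+2)\delta} b_{-\delta}$. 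Second, take a commutator of this element with an element of $\overline{B}_{i_0}\overline{A}$ containing $b_{i_0}$ (for instance $b_{i_\xi} c_i^t$ suitably dressed, or simply $b_{i_0}$ when $\xi=0$); here $(4(\xi)\mathrm{iii})$ is invoked to anticommute $b_{i_0}$ past $b_{j+2(\xi+1)\delta}$, while the residual pair $b_{-\delta} b_{i_0}$ produces the second factor of $c_i$ because $-\delta+i_0 = i$. The net effect is a scalar multiple of $c_i^{s+2} b_{j+(\xi+2)\delta}$, as required.

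The main obstacle is selecting the triple $(\alpha_1,\alpha_2,\alpha_3)$ so that every pairwise index sum that arises during the expansion is accounted for: each such sum should either equal $i$ (yielding the desired $c_i$), lie in $\phi_2(n)$ and be covered by one of the three hypothesized anticommutations, or lie in $\Phi\setminus\{0\}$ so that the corresponding $c_{p+q}$ vanishes in $\overline{A}$. Careful sign bookkeeping across these substitutions is also required. The $\xi=0$ case specializes exactly to the computation of Claim 2 when $i=0$ and $i_0=1$, which provides a useful sanity check for the general pattern.
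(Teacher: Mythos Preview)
Your overall strategy is exactly the paper's: a two-stage graded commutator, with the first stage producing one factor of $c_i$ via $b_j b_{i-j}+b_{i-j}b_j=c_i$ and the anticommutations from $(4(\xi)\mathrm{i})$--$(4(\xi)\mathrm{ii})$, and the second stage producing another $c_i$ using $(4(\xi)\mathrm{iii})$ together with the hypothesis $\xi\in\Xi_{i,i_0}$. You also decode the three congruences correctly as anticommutation relations.

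However, there is an index slip that makes your proposed computation fail for $\xi>0$. You write that the first commutator yields $c_i^{s+1}b_{j+(\xi+2)\delta}b_{-\delta}$, but with the triple $(i-j,\,j+(\xi+2)\delta,\,-\delta)$ the index sum is $i+(\xi+1)\delta$, which equals $i_0$ only when $\xi=0$; for $\xi>0$ this triple is \emph{not} in $\overline{B}_{i_0}$, so you cannot take a commutator with it and stay inside $\overline{B}_{i_0}\overline{A}$. The correct third index is $d=-(\xi+1)\delta$ (which is precisely the index you yourself extracted from $(4(\xi)\mathrm{ii})$), giving the triple $(a,c,d)=(i-j,\,j+(\xi+2)\delta,\,-(\xi+1)\delta)$ with $a+c+d=i_0$. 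The first commutator then yields $c_i^{s+1}b_c b_d$.

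Correspondingly, the second stage must use $b_{i_\xi}c_i^{t}$ rather than $b_{i_0}$: the point of the hypothesis $\xi\in\Xi_{i,i_0}$ is exactly that $b_{i_\xi}c_i^{t}\in\overline{B}_{i_0}\overline{A}$ for some $t$. Condition $(4(\xi)\mathrm{iii})$ is then used as the anticommutation $b_{i_\xi}b_c=-b_c b_{i_\xi}$ (since $i_\xi+c=j+i_0+2(\xi+1)\delta$), not as an anticommutation involving $b_{i_0}$. The second $c_i$ arises from $b_d b_{i_\xi}+b_{i_\xi}b_d=c_{d+i_\xi}$ with $d+i_\xi=-(\xi+1)\delta+i_0+\xi\delta=i$, replacing your (only-valid-for-$\xi=0$) identity $-\delta+i_0=i$. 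With these two corrections your sketch becomes the paper's proof verbatim.
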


\begin{proof} Note that $\xi\in \Xi_{i,i_0}$
means that $i_{\xi}\in \Lambda_{i,i_0}$ where
$i_{\xi}=i_0+\xi(i_0-i)$.

Let $a=i-j$, $c=j+(\xi+2)(i_0-i)$ and $d=-(\xi+1)(i_0-i)$. By 
hypotheses (4$(\xi)$i)-(4$(\xi)$iii), we have that $c+j$, $d+j$ 
and $i_{\xi}+c$ are in $\phi_2(n)$. This means that 
$[b_c,b_j]=0=[b_d,b_j]=[b_c,b_{i_\xi}]$ in $\overline{A}$. 

Starting with $b_j c_i^t\in \overline{B}_{i_0}\overline{A}$ for some
$t\geq 0$ (as $j\in \Lambda_{i,i_0}$), we have the two sets of 
equations in $\overline{B}_{i_0}\overline{A}$. The first set is
\begin{eqnarray*}
[b_j c_i^{t}, b_ab_cb_d] 
&=& c_i^t(b_jb_ab_cb_d + b_ab_cb_db_j) \\
&=& c_i^t(b_jb_ab_cb_d + b_ab_jb_cb_d)\\
&=& c_i^t c_{a+j}b_cb_d\\
&=& c_i^{t+1} b_cb_d.\\
\end{eqnarray*}
In the above computation, note that 
$b_ab_cb_d=b_{i-j}b_{j+(\xi+2)(i_0-i)}b_{-(\xi+1)(i_0-i)}\in 
\overline{B}_{i_0}$. 
We also need $b_j$ to skew commute with $b_d$ 
and $b_c$, and $a+j= i$. Since $\xi\in \Xi_{i,i_0}$, there
is a $t'\geq 0$ such that $b_{i_\xi} c_i^{t'}
\in \overline{B}_{i_0}\overline{A}$.
The second set of equations is
\begin{eqnarray*}
[c_i^{t+1}b_cb_d, b_{i_\xi} c_i^{t'}] 
&=& c_i^{t+1+t'}(b_cb_db_{i_\xi} -b_{i_\xi}b_cb_d) \\
&=& c_i^{t+t'+1}(b_cb_db_{i_\xi} +b_cb_{i_\xi}b_d) \\
&=& c_i^{t+t'+1} b_c c_{d+i_\xi} \\
&=& c_i^{t+t'+2} b_c =c_i^{t+t'+2} b_{j+(\xi+2)(i_0-i)}. \\
\end{eqnarray*}
Therefore $j+(\xi+2)(i_0-i)\in \Lambda_{i,i_0}$ and the 
assertion follows.
\end{proof}

We usually apply the above lemma with the additional hypothesis 
$2j+(i_0-i)\not\in \phi_2(n)$.

If $i_0$ is $n$-special and $i_0-i\in\mathbb{Z}_n^{\times}$, 
then by Definition \ref{xxdef7.1}, the unions of the domains 
of definition of  $\omega_0$ and $\omega_{\xi(i)+1}$ for 
$\xi\in \Xi_{i,i_0}$ is equal to $\Lambda_{i,i_0}$. In other 
words, for any $j\in \Lambda_{i,i_0}$ there is some $\omega_t$ 
which can be applied to $j$.

\begin{lemma}
\label{xxlem7.6} Suppose that $i_0$ is $n$-special. Then 
$-(i_0-i)\in \Lambda_{i,i_0}$. Consequently
$c_i^N\in \overline{B}_{i_0}\overline{A}$ for some $N\geq 0$.
\end{lemma}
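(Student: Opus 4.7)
The plan is first to show $-(i_0-i) \in \Lambda_{i,i_0}$ by a bootstrapping argument using the partial maps $\omega_0$ and $\omega_{\xi+1}$ from Lemmas~\ref{xxlem7.4} and~\ref{xxlem7.5}, and then to deduce the nilpotence of $c_i$ modulo $\overline{B}_{i_0}\overline{A}$ by a short commutator identity. The starting point is the trivial observation that $b_{i_0}\in\overline{B}_{i_0}$ (take the exponent vector with a single $1$ in position $i_0$ and $\mathbf{j}=0$, which satisfies the conditions of Lemma~\ref{xxlem3.1}), so $i_0\in\Lambda_{i,i_0}$. Because $i_0-i\in\mathbb{Z}_n^{\times}$ by the $n$-special condition (2) of Definition~\ref{xxdef7.1}, each residue in $\mathbb{Z}_n$ has a unique expression as $i_0+m(i_0-i)$, and the target $-(i_0-i)$ corresponds to one specific value of $m$.

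In these parameters, $\omega_0$ advances $m$ by $1$ and $\omega_{\xi+1}$ advances $m$ by $\xi+2$. The $n$-special hypothesis is designed precisely so that at every $j\in\Lambda_{i,i_0}$, either condition (3) of Definition~\ref{xxdef7.1} holds and $\omega_0$ is available, or condition (4) holds with some $\xi\in\Xi_{i,i_0}$ and $\omega_{\xi+1}$ is available. I would proceed by a joint induction on $m$, maintaining simultaneously that $i_0+m(i_0-i)\in\Lambda_{i,i_0}$ and that the indices needed later for detours have already been added to $\Xi_{i,i_0}$. Whenever condition (3) holds at the current step, a single application of $\omega_0$ adds the next residue; whenever (3) fails, condition (4) yields an index $\xi\in\Xi_{i,i_0}$ (starting with the free element $\xi=0$, and growing as the iteration certifies more values of $i_\xi$ as members of $\Lambda_{i,i_0}$), and $\omega_{\xi+1}$ jumps over the obstruction. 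Iterating, every $m\ge 0$ is eventually realised, in particular the one corresponding to $-(i_0-i)$.

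The main obstacle is the parallel growth of $\Lambda_{i,i_0}$ and $\Xi_{i,i_0}$: one must verify that whenever a detour is forced the required $\xi$ has already been certified as an element of $\Xi_{i,i_0}$, and that the bound $0\le\xi\le\frac{1}{2}(\mop(n)-3)$ in the definitions suffices. Lemma~\ref{xxlem7.4}(3) suggests the intended mechanism: a run of steps where condition (3) holds populates $\Xi_{i,i_0}$ with a contiguous block of indices, so that any later detour has a sufficient reservoir of certified $\xi$'s to draw from. The bookkeeping around this simultaneous growth is the delicate part of the argument.

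For the consequence, let $j:=-(i_0-i)=i-i_0$ and pick $t\ge 0$ with $b_j c_i^t\in\overline{B}_{i_0}\overline{A}$. Since $\overline{B}_{i_0}\overline{A}$ is a right ideal and $c_i$ is central in $\overline{A}$, right-multiplication by $b_{i_0}$ yields $c_i^t b_j b_{i_0}\in\overline{B}_{i_0}\overline{A}$. Separately, since $b_{i_0}\in\overline{B}_{i_0}$, we have $b_{i_0}\cdot b_j c_i^t\in\overline{B}_{i_0}\overline{A}$. Summing the two and using the identity $[b_j,b_{i_0}]=b_j b_{i_0}+b_{i_0} b_j=c_{j+i_0}=c_i$, we obtain $c_i^{t+1}\in\overline{B}_{i_0}\overline{A}$, completing the proof.
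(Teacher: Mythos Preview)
Your final paragraph (deducing $c_i^{t+1}\in\overline{B}_{i_0}\overline{A}$ from $-(i_0-i)\in\Lambda_{i,i_0}$ via the commutator with $b_{i_0}$) is correct and matches the paper. The difficulty lies entirely in the first part.

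There are two problems with the core argument. First, your concern about ``parallel growth'' of $\Xi_{i,i_0}$ is misplaced. The set $\Xi_{i,i_0}$ is not something built up during the proof; it is the fixed set $\{\xi\mid i_\xi\in\Lambda_{i,i_0},\ 0\le\xi\le M\}$ determined by the algebra, and the hypothesis ``$i_0$ is $n$-special'' says precisely that whenever condition~(3) fails at some $j$, condition~(4) holds for some $\xi$ already in this fixed $\Xi_{i,i_0}$. So at every $j\in\Lambda_{i,i_0}$ some $\omega_\zeta$ with $\zeta\in[0,M+1]$ is applicable, full stop---no bookkeeping is required.

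Second, and more seriously, the assertion ``iterating, every $m\ge 0$ is eventually realised'' is exactly the heart of the matter and you have not proved it. The available step sizes lie in $\{1,2,\dots,M+2\}$, so a priori the orbit of $i_0$ under the $\omega_\zeta$'s could skip the target residue indefinitely as it cycles modulo $n$. The paper resolves this with a different idea: work in the parameterisation $\bar{i}_\xi=-\xi(i_0-i)$, so that the target sits at a boundary of the range $\xi\in[0,M]$. A direct computation gives $2\bar{i}_\xi+(i_0-i)=(1-2\xi)(i_0-i)$, and for $\xi\in[0,M]$ the factor $1-2\xi$ is odd with $|1-2\xi|<\mop(n)$, hence coprime to $n$; thus condition~(3) holds automatically throughout this window and $\omega_0$ alone slides $\bar{i}_\xi$ down to the target. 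It then suffices to land once in this window, which follows from a short maximality argument: if $r$ is maximal with $\bar{i}_\xi\notin\Lambda_{i,i_0}$ for all $\xi\in\{-1,\dots,r\}$, then from $\bar{i}_{r+1}\in\Lambda_{i,i_0}$ one application of some $\omega_\zeta$ yields $\bar{i}_{r-\zeta}\in\Lambda_{i,i_0}$; maximality forces $r-\zeta<-1$, hence $r+1\le M$, and the window argument finishes. This ``safe window near the target plus one overshoot'' is the missing mechanism in your sketch.
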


\begin{proof} 
Suppose that $\bar{i}_{\xi}\in\Lambda_{i,i_0}$ for 
some $\xi \in [0,M]$, where $M=\frac{1}{2}(\mop(n)-3)$. 
Then $2\bar{i}_{\xi} +(i_0-i) = (1-2\xi) (i_0-i) 
\in\phi_2(n) $ since $2\xi-1$ is invertible if 
$0\le\xi\le M$. By Lemma \ref{xxlem7.4}(1) we 
have $\bar{i}_{\xi}$ is in the domain of definition 
of $\omega_0$, so $\omega_0(\bar{i}_{\xi}) = 
\bar{i}_{\xi-1} \in \Lambda_{i,i_0}$. Repeating 
the argument gives $\bar{i}_{-1}\in\Lambda_{i,i_0}$. 

Now, let $r$ be the maximal integer such that 
$\bar{i}_{\xi}\not\in\Lambda_{i,i_0}$ for every 
$\xi\in\{-1,0,\ldots,r\}$. In other words, 
$\bar{i}_{r+1}\in\Lambda_{i,i_0}$. Since $i_0$ 
is $n$-special, either Lemma \ref{xxlem7.4} or 
\ref{xxlem7.5} applies. That is, $\bar{i}_{r+1}$ 
is in the domain of definition of $\omega_{\zeta}$ 
for some $\zeta\in[0,M+1]$. Applying any such 
$\omega_{\zeta}$ gives 
$\omega_{\zeta}(\bar{i}_{r+1})
=\bar{i}_{r-\zeta}\in\Lambda_{i,i_0}$. By maximality 
of $r$, we have $r-\zeta < -1$ so $r < \zeta -1\le M$. 
Thus we may apply the argument in the first 
paragraph to conclude that $\bar{i}_{-1}\in\Lambda_{i,i_0}$. 

The above shows that we always have $\bar{i}_{-1}\in
\Lambda_{i,i_0}$ under the hypotheses that $i_0$ is 
$n$-special. Equivalently, $b_{i-i_0}c_i^t\in 
\overline{B}_{i_0}\overline{A}$. Finally, 
$c_i^{t+1}=[b_{i-i_0}c_i^t, b_{i_0}]\in 
\overline{B}_{i_0}\overline{A}$.
\end{proof}

Here is one of the main results of this section, 
which leads to Theorem \ref{xxthm0.2}.

\begin{theorem}
\label{xxthm7.7}
Let $n\geq 2$. Suppose that 
\begin{enumerate}
\item[(1)]
every proper factor of $n$ is admissible,
\item[(2)]
there is an $n$-special integer $i_0$.
\end{enumerate}
Then, for each $0\leq i\leq n-1$, $i\in 
\overline{\Psi}^{[n]}_1$.
\end{theorem}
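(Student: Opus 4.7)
The plan is to combine Lemma~\ref{xxlem7.6} with the algebra automorphism $f_{\lambda}$ of Definition~\ref{xxdef3.3} and the $\overline{A}$-version of Lemma~\ref{xxlem5.5}(3). The real combinatorial work will be done by Lemma~\ref{xxlem7.6}, which will give $c_i^{N}\in\overline{B}_{i_0}\overline{A}$ for each $i$; the remaining task is to translate this conclusion from $\overline{B}_{i_0}\overline{A}$ to $\overline{B}_{1}\overline{A}$ via symmetry.

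First I will invoke Lemma~\ref{xxlem7.6}: since $i_0$ is $n$-special, for each $0\leq i\leq n-1$ there exists $N\geq 0$ with $c_i^{N}\in \overline{B}_{i_0}\overline{A}$, i.e., $i\in \overline{\Psi}^{[n]}_{i_0}$. Next, since $i_0$ is $n$-special, Definition~\ref{xxdef7.1}(1) gives $i_0\in \phi_2(n)$, so I can factor $i_0=2^{w}q$ with $\gcd(q,n)=1$ and $w\geq 0$. Let $q^{-1}$ denote the multiplicative inverse of $q$ in $\mathbb{Z}_n$; since $\gcd(q^{-1},n)=1$, the map $f_{q^{-1}}$ of Definition~\ref{xxdef3.3} is an algebra automorphism of $A$ sending $b_{i}\mapsto b_{q^{-1}i}$, hence $c_i\mapsto c_{q^{-1}i}$ and $R_{j}A\mapsto R_{q^{-1}j}A$. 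Because $\Phi_n$ is a special subset of $\mathbb{Z}_n$ (stable under multiplication by units), $f_{q^{-1}}$ preserves $c_{\Phi_n}$ and therefore descends to an algebra automorphism of $\overline{A}$ that maps $\overline{B}_{j}\overline{A}$ to $\overline{B}_{q^{-1}j}\overline{A}$. Applying this descended automorphism to the conclusion of the first step and reindexing (since $q^{-1}$ acts as a bijection on $\mathbb{Z}_n$), I will obtain: for each $0\leq i\leq n-1$, $c_i^{N}\in \overline{B}_{2^w}\overline{A}$, i.e., $i\in\overline{\Psi}^{[n]}_{2^w}$.

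Finally I will apply the $\overline{A}$-version of Lemma~\ref{xxlem5.5}(3) with $j=1$, $s=w$, and $t=0$, so that $2^{s}j+ti=2^{w}$; the hypothesis $i\in \overline{\Psi}^{[n]}_{2^w}$ then yields $i\in\overline{\Psi}^{[n]}_{1}$ for every $i$, as required.

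The principal obstacle in this overall scheme is Lemma~\ref{xxlem7.6}, whose proof supplies the combinatorial engine via the partial maps $\omega_{\xi}\colon \Lambda_{i,i_0}\dashto\Lambda_{i,i_0}$ built in Lemmas~\ref{xxlem7.4} and~\ref{xxlem7.5}. Given Lemma~\ref{xxlem7.6}, Theorem~\ref{xxthm7.7} itself will reduce to a routine symmetry argument. I note that hypothesis~(1) of Theorem~\ref{xxthm7.7} does not appear to be needed in this plan; its presence is presumably to ensure that the hypotheses of Proposition~\ref{xxpro6.8}, which will be combined with Theorem~\ref{xxthm7.7} in the inductive proof of Theorem~\ref{xxthm0.2}, hold simultaneously in the ambient induction.
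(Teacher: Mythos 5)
Your reduction from $\overline{\Psi}^{[n]}_{i_0}$ to $\overline{\Psi}^{[n]}_{1}$ via $f_{q^{-1}}$ and Lemma \ref{xxlem5.5}(3) is correct and is exactly the paper's first step (run in the opposite direction: the paper uses $f_g$ with $i_0=2^wg$ to reduce the target from $\overline{\Psi}^{[n]}_1$ to $\overline{\Psi}^{[n]}_{i_0}$). The gap is in your first step: you apply Lemma \ref{xxlem7.6} to \emph{every} $i$ with $0\leq i\leq n-1$, but that lemma, together with the maps $\omega_{\xi}$ of Lemmas \ref{xxlem7.4} and \ref{xxlem7.5} that drive it, is only available when $i_0-i$ is invertible in ${\mathbb Z}_n$. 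Definition \ref{xxdef7.1}(2) guarantees this only for \emph{odd} $i\not\in\phi_2(n)$ (the paper states this restriction explicitly just before Lemma \ref{xxlem7.4}), and the proof of Lemma \ref{xxlem7.6} uses it essentially, e.g.\ in asserting $(1-2\xi)(i_0-i)\in\phi_2(n)$ and in claiming that some $\omega_{\zeta}$ is always applicable. Concretely, for $n=2^ap^b$ with $a\geq 1$ and $i_0=2$, any even $i$ gives $i_0-i$ even and hence non-invertible; if in addition $p\mid i$ then $c_i\neq 0$ in $\overline{A}$, so there is no trivial escape, and Lemma \ref{xxlem7.6} simply does not apply.

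This missing case is precisely where hypothesis (1) enters, so your closing remark that it "does not appear to be needed" is the symptom of the gap rather than a simplification. The paper's proof of the Claim ($i\in\overline{\Psi}^{[n]}_{i_0}$ for all $i$) is an induction on $i$ with four cases: $i=1$ and odd $i\in\phi_2(n)$ are trivial since $c_i=0$ in $\overline{A}$; odd $i\not\in\phi_2(n)$ is Lemma \ref{xxlem7.6}; and even $i$ is handled either by Lemma \ref{xxlem5.5}(4) when $n$ is even --- which requires every proper divisor of $n$ to be admissible, i.e.\ hypothesis (1) --- or, when $n$ is odd, by writing $i=2i'$, applying the induction hypothesis to $i'$, transporting by $f_2$, and invoking the $\overline{A}$-version of Lemma \ref{xxlem5.5}(3). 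To repair your argument you need to restrict the appeal to Lemma \ref{xxlem7.6} to odd $i\not\in\phi_2(n)$ and supply the remaining cases as above.
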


\begin{proof} By Definition \ref{xxdef7.1}(1), we can 
express the $n$-special integer $i_0$ as the product 
$i_0=2^w g$ where $w\geq 0$ with $g$ odd and $\gcd(n,g)=1$. 
Since $g$ is invertible in ${\mathbb Z}_n$, by using 
the automorphism $f_{g}$ of $A$ defined in Definition 
\ref{xxdef3.3}, the assertion is equivalent to 
$i\in \overline{\Psi}^{[n]}_g$ for all $0\leq i\leq n-1$.
By the $\overline{A}$-version of Lemma \ref{xxlem5.5}(3),
it suffices to show the following claim.

\bigskip
\noindent
{\bf Claim:} 
for each $0\leq i\leq n-1$, $i\in \overline{\Psi}^{[n]}_{i_0}$.

\noindent
{\bf Proof of Claim:}
We prove the {\bf Claim} by induction on $i$ starting at 
$i=1$ and ending at $i=n$ (which is also $0$ in 
${\mathbb Z}_n$). We consider several cases.

\bigskip
\noindent
Case 1: $i=1$. 

The assertion follows from Proposition \ref{xxpro2.3}(1).
For the inductive step, we assume that $i\geq 2$ and 
that $i' \in \overline{\Psi}^{[n]}_{i_0}$ for all $i'<i$.

\bigskip
\noindent
Case 2: $i$ is even.

If $n$ is also even, it follows from Lemma \ref{xxlem5.5}(4)
that $i\in \Psi^{[n]}_{i_0}$. Passing to the quotient ring, 
we have $i\in \overline{\Psi}^{[n]}_{i_0}$ as desired.

If $n$ is not even, then $f_2$ in Definition \ref{xxdef3.3}
is an automorphism. Write $i=2i'$ for some $1\leq i'< i$.
By the induction hypothesis, $i'\in \overline{\Psi}^{[n]}_{i_0}$.
Applying $f_2$, we obtain that $i=2i'\in \overline{\Psi}^{[n]}_{2i_0}$.
By the $\overline{A}$-version of Lemma \ref{xxlem5.5}(3),
we have $i\in  \overline{\Psi}^{[n]}_{i_0}$. This takes care 
of the case when $i$ is even. For cases 3 and 4 below, we assume
that $i$ is odd.

\bigskip
\noindent
Case 3: $i$ is odd and $i\in \phi_2(n)$.

In this case, the assertion follows from Proposition \ref{xxpro2.3}(1)
as $c_i=0$ in $\overline{A}$. 

The remaining case to consider is 

\bigskip
\noindent
Case 4: $i$ is odd and $i\not\in \phi_2(n)$. By 
hypothesis, $i_0$ is $n$-special. 
By Lemma \ref{xxlem7.6}, $i\in  \overline{\Psi}^{[n]}_{i_0}$

Hence we finished the inductive step and therefore 
we complete the proof of the {\bf Claim}. 
\end{proof}

\begin{corollary}
\label{xxcor7.8}
Let $n\geq 2$. Suppose that 
\begin{enumerate}
\item[(1)]
every proper factor of $n$ is admissible,
\item[(2)]
$Spl(n)\neq \emptyset$, namely, there is 
an  $n$-special integer $i_0$.
\end{enumerate}
Then $n$ is admissible.
\end{corollary}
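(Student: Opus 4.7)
The plan is to simply deduce Corollary \ref{xxcor7.8} by chaining together Theorem \ref{xxthm7.7} and Proposition \ref{xxpro6.8}, both of which have already been proven. The two hypotheses of the corollary are designed to feed directly into Theorem \ref{xxthm7.7}, and its conclusion in turn supplies the missing hypothesis of Proposition \ref{xxpro6.8}.

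First I would observe that hypothesis (1) of Corollary \ref{xxcor7.8} (every proper factor of $n$ is admissible) is exactly hypothesis (1) of Theorem \ref{xxthm7.7}, and hypothesis (2) (the existence of an $n$-special integer $i_0$) is exactly hypothesis (2) of Theorem \ref{xxthm7.7}. Applying Theorem \ref{xxthm7.7} then yields $i \in \overline{\Psi}^{[n]}_1$ for every $0 \leq i \leq n-1$, which is precisely condition (b) of Proposition \ref{xxpro6.8}. Meanwhile, condition (a) of Proposition \ref{xxpro6.8} (every proper factor of $n$ is admissible) coincides with hypothesis (1) of the corollary. Thus Proposition \ref{xxpro6.8} applies and delivers the admissibility of $n$.

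There is essentially no obstacle, since all the genuine combinatorial and commutator work was carried out earlier: the reduction from $\GKdim E = 0$ to showing $i \in \overline{\Psi}^{[n]}_j$ for every divisor $j$ of $n$ (and then further to $j=1$) is the content of Lemma \ref{xxlem5.3}, Lemma \ref{xxlem6.2}, Lemma \ref{xxlem6.3}, and Proposition \ref{xxpro6.8}, while the actual production of $c_i^N \in \overline{B}_{i_0}\overline{A}$ via the iterated commutator maps $\omega_0, \omega_{\xi+1}$ was accomplished in Lemmas \ref{xxlem7.4}--\ref{xxlem7.6} and assembled in Theorem \ref{xxthm7.7}. Hence the proof of Corollary \ref{xxcor7.8} reduces to a one-line citation, and I would write it simply as: by Theorem \ref{xxthm7.7} the hypotheses of Proposition \ref{xxpro6.8} are fulfilled, so $n$ is admissible.
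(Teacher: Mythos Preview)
Your proposal is correct and matches the paper's own proof essentially verbatim: the paper simply invokes Theorem \ref{xxthm7.7} to obtain $i \in \overline{\Psi}^{[n]}_1$ for all $i$, and then applies Proposition \ref{xxpro6.8} to conclude that $n$ is admissible.
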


\begin{proof}
By Theorem \ref{xxthm7.7}, for each 
$0\leq i\leq n-1$, we have $i\in 
\overline{\Psi}^{[n]}_1$. The assertion then 
follows from Proposition \ref{xxpro6.8}.
\end{proof}

Now we are ready to show Theorem \ref{xxthm0.2}.

\begin{proof}[Proof of Theorem \ref{xxthm0.2}]
In this case $n=2^a p^b$ for some prime $p\geq 7$.
If $(a,b)=(0,1)$ or $(1,0)$, the assertion follows from 
Proposition \ref{xxpro6.6}(2). This takes care
of the initial step for induction.

By Lemma \ref{xxlem7.2}, $i_0=2$ is $n$-special, 
which is hypothesis (2) in Corollary \ref{xxcor7.8}.
Hypothesis (1) in Corollary \ref{xxcor7.8} follows
by induction. Hence we can conclude from 
Corollary \ref{xxcor7.8} that $n$ is admissible. 
By definition, $\GKdim (E )=0$. 
Hence $\p(A, G)=n$ and, by \cite[Theorem 3.10]{MU1}, 
$A^G$ is a graded isolated singularity.
\end{proof}

\begin{proof}[Proof of Corollary \ref{xxcor0.6}]
For each $n<77$, $n$ is either divisible by $3$ or $5$, 
or $n$ is of the form $2^a p^b$ for some prime $p\geq 7$.
Hence the assertion follows by Theorem \ref{xxthm0.2}
and \ref{xxthm0.4}.
\end{proof}

\begin{proof}[Proof of Corollary \ref{xxcor0.10}]
By Theorem \ref{xxthm0.2} and \cite[Theorem 5.7(1)]{BHZ1},
$A^G$ is a graded isolated singularity. 
By \cite[Theorem 1.5]{KKZ1}, $A^G$ is Gorenstein.

(1) This follows from \cite[Corollary 2.6]{MU1}.

(2) This follows from \cite[Theorem 3.15]{MU1}.

(3) This follows from \cite[Theorem 3.14]{MU1}.

(4) This follows from \cite[Theorem 1.3]{Ue}.
\end{proof}

Below is a slightly more general result than Theorem 
\ref{xxthm0.2}.

\begin{theorem}
\label{xxthm7.9}
Let $S$ be a set of integers $n\geq 2$. Suppose that
\begin{enumerate}
\item[(1)]
each $n$ in $S$ is not divisible by $3$ or $5$,
\item[(2)]
every proper factor of $n\in S$ is still in $S$.
\item[(3)]
for each $n\in S$, $Spl(n)\neq 
\emptyset$.
\end{enumerate}
Then every $n\in S$ is admissible.
\end{theorem}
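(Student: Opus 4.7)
The plan is to argue by strong induction on $n \in S$, invoking Corollary~\ref{xxcor7.8} at each stage. The key observation is that the three hypotheses of Theorem~\ref{xxthm7.9} have been tailored precisely so that, for every $n \in S$, the two hypotheses needed to apply Corollary~\ref{xxcor7.8} are automatically met: every proper factor of $n$ is admissible, and $Spl(n)\neq\emptyset$.

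More precisely, fix $n \in S$ and assume inductively that every $m \in S$ with $2 \leq m < n$ is admissible. By hypothesis (2), every proper factor $m \geq 2$ of $n$ lies in $S$; since such $m$ are strictly smaller than $n$, the induction hypothesis gives that $m$ is admissible. This verifies hypothesis (1) of Corollary~\ref{xxcor7.8}. Hypothesis (3) of Theorem~\ref{xxthm7.9} supplies an $n$-special integer, which is precisely hypothesis (2) of Corollary~\ref{xxcor7.8}. Thus Corollary~\ref{xxcor7.8} yields that $n$ itself is admissible, completing the inductive step.

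The base case is absorbed into the same argument: if $n_0$ is the least element of $S$, then by hypothesis (1) it is not divisible by $3$ or $5$, so $n_0$ is either $2$ or a prime $\geq 7$, and in either case $n_0$ has no proper factor $\geq 2$, making the induction hypothesis vacuous. I do not anticipate any substantive obstacle here: the theorem is essentially an abstraction of the proof strategy of Theorem~\ref{xxthm0.2}, with hypothesis (3) playing the role that Lemma~\ref{xxlem7.2} plays in the proof of Theorem~\ref{xxthm0.2}. All of the technical combinatorial work (the construction of the maps $\omega_\xi$ and the orbit analysis showing $\bar{i}_{-1}\in\Lambda_{i,i_0}$) has already been absorbed into Corollary~\ref{xxcor7.8}, so the present theorem requires nothing beyond the induction setup described above.
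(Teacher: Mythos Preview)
Your proposal is correct and follows essentially the same approach as the paper: strong induction on $n\in S$, with the inductive step supplied by Corollary~\ref{xxcor7.8}. The only cosmetic difference is that the paper invokes Proposition~\ref{xxpro6.6}(2) explicitly for the base case (primes $\neq 3,5$), whereas you observe that Corollary~\ref{xxcor7.8} already covers this since its hypothesis~(1) is vacuous for prime $n$; both treatments are valid.
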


\begin{proof} The assertion follows by induction on $n\in S$.
Since each $n$ in $S$ is not divisible by $3$ or $5$,
the initial step follows from Proposition \ref{xxpro6.6}(2).
Now we assume that the assertion holds for all proper
factors of $n$. The induction step follows from hypothesis
(3) and Corollary \ref{xxcor7.8}.
\end{proof}

\section{Partial results when $n=p_1p_2$}
\label{xxsec8}

In this section we give some partial answer to the case
when $n=p_1 p_2$ for $p_i$ being distinct primes.
Some lemmas works for the case when $n=2^a p_1^b p_2^c$. 

Another way of defining $\Omega_{2}(n)$ \eqref{E7.0.1} is 
the following 
\begin{align}
\notag
\Omega_{2}(n)&:=\\
&\notag
\{i_0\in {\mathbb Z}_n\mid {\text{if $1\leq i\leq n$ is odd and 
$i\not\in \phi_2(n)$, then $i_0-i\in {\mathbb Z}_n$ is invertible}}\}.
\end{align}
As noted in Section \ref{xxsec7}, $\Omega_2(n)\subseteq \phi_2(n)$.

In the rest of this section, {\bf let $n$ be $2^a p_1^b p_2^c$ 
where $p_1$ and $p_2$ are distinct odd primes $\geq 7$ and 
$b,c\geq 1$.} We start with a linear algebra fact.

\begin{lemma}
\label{xxlem8.1}
Let $i_0\in \phi_2(n)$ and $i\in [1,\ldots,n]$
such that $p_1\mid i$. If $p_2$ divides 
both $c_{11} i_0+c_{12} i$ and $c_{21}i_0+c_{22}i$,
then $p_2$ divides 
\begin{equation}
\notag
\Det:=\det \begin{pmatrix} c_{11} &c_{12}\\c_{21}&c_{22}\end{pmatrix}
=c_{11}c_{22}-c_{12}c_{21}.
\end{equation}
\end{lemma}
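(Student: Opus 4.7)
The plan is to recast the two divisibility hypotheses as a single matrix congruence modulo $p_2$:
\begin{equation*}
\begin{pmatrix} c_{11} & c_{12} \\ c_{21} & c_{22} \end{pmatrix}
\begin{pmatrix} i_0 \\ i \end{pmatrix}
\equiv
\begin{pmatrix} 0 \\ 0 \end{pmatrix} \pmod{p_2},
\end{equation*}
view this as a statement in $\mathbb{F}_{p_2}^2$, and deduce that the coefficient matrix must be singular mod $p_2$ by exhibiting a nonzero vector in its kernel.

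The key step is to verify that the column vector $(i_0, i)^{T}$ is nonzero modulo $p_2$. For this I would use the hypothesis $i_0 \in \phi_2(n)$: by definition \eqref{E0.6.1}, $\gcd(i_0,n)=2^w$ for some $w\geq 0$, and since $n=2^a p_1^b p_2^c$ has $p_2$ as a prime factor with $c\geq 1$, this forces $p_2 \nmid i_0$. In particular $i_0 \not\equiv 0 \pmod{p_2}$, so regardless of whether $p_2 \mid i$ or not, the column vector is nonzero in $\mathbb{F}_{p_2}^2$.

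Once that is established, the matrix has a nontrivial element in its kernel over the field $\mathbb{F}_{p_2}$, so its determinant $\Det = c_{11}c_{22}-c_{12}c_{21}$ vanishes modulo $p_2$, which is the desired conclusion.

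There is essentially no obstacle here; the proof is pure linear algebra over $\mathbb{F}_{p_2}$, and the only nontrivial input is the observation that $\phi_2(n)$ is contained in the residues coprime to $p_2$. I note that the hypothesis $p_1 \mid i$ is not used in the argument as I have sketched it; presumably it is recorded in the statement because it reflects the situation in which the lemma will be invoked in the sequel, but it plays no role in the determinantal conclusion itself.
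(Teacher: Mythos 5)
Your proof is correct and is essentially the argument the paper gives: both reduce to the observation that the vector $(i_0,i)^T$ lies in the kernel of the coefficient matrix modulo $p_2$ and that $p_2\nmid i_0$ because $i_0\in\phi_2(n)$ while $p_2\mid n$ (the paper phrases this via the adjugate, deducing $p_2\mid \Det i_0$ and $p_2\mid \Det i$, then using $\gcd(p_2,i_0)=1$). Your remark that the hypothesis $p_1\mid i$ is not needed for the conclusion is also accurate; the paper's proof does not use it either.
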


\begin{proof}
By easy linear algebra, $p_2$ divides both $\Det i_0$ and $\Det i$.
Since $p_2$ and $i_0$ are coprime, $p_2$ divides $\Det$.
\end{proof}

\begin{lemma}
\label{xxlem8.2}
Let $i_0\in \phi_2(n)$ and $i\in [1,\ldots,n]$ be an odd 
integer not in $\phi_2(n)$.
There is at most one integer 
\begin{equation}
\label{E8.2.1}\tag{E8.2.1}
\xi\in 
[0,1,\ldots, -1+\frac{1}{2}(\mop(n)-3)]
\end{equation}
such that $2 i_{\xi} +(i_0-i)\not\in \phi_2(n)$.
\end{lemma}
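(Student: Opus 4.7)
The plan is to reformulate the condition $2 i_\xi + (i_0-i) \not\in \phi_2(n)$ as divisibility by $p_1$ or $p_2$, and then exploit the narrowness of the range to bound the number of $\xi$ for which this occurs. First I would write $m_\xi := 2 i_\xi + (i_0-i) = 2 i_0 + (2\xi+1)(i_0-i)$, and note that since $\phi_2(n)$ consists of elements coprime to $p_1 p_2$, the condition $m_\xi \not\in \phi_2(n)$ is equivalent to $p_1 \mid m_\xi$ or $p_2 \mid m_\xi$. Writing $R = \{0,1,\ldots,(\mop(n)-5)/2\}$ for the range in \eqref{E8.2.1} and $N_j = \{\xi \in R : p_j \mid m_\xi\}$ for $j=1,2$, the goal becomes $|N_1 \cup N_2| \leq 1$.

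The key observation is that if $p_j \mid i$, then $N_j = \emptyset$. Indeed, since $i_0 \in \phi_2(n)$ gives $p_j \nmid i_0$, and $p_j \mid i$ gives $i_0 - i \equiv i_0 \pmod{p_j}$, one computes $m_\xi \equiv (2\xi + 3) i_0 \pmod{p_j}$. Therefore $p_j \mid m_\xi$ if and only if $\xi \equiv (p_j - 3)/2 \pmod{p_j}$, whose smallest nonnegative representative is $(p_j-3)/2$. Since $p_j \geq \mop(n)$, this is at least $(\mop(n)-3)/2$, which strictly exceeds $(\mop(n)-5)/2 = \max R$; hence no representative of that residue class lies in $R$.

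Since $i$ is odd and not in $\phi_2(n)$, at least one of $p_1, p_2$ divides $i$. Say $p_1 \mid i$, so $N_1 = \emptyset$ by the key observation. If additionally $p_2 \mid i$, the same argument gives $N_2 = \emptyset$ and we are done. Otherwise $p_2 \nmid i$; reducing $m_\xi$ modulo $p_2$, if $p_2 \mid (i_0-i)$ then $m_\xi \equiv 2 i_0 \not\equiv 0 \pmod{p_2}$, so $N_2 = \emptyset$; otherwise $2(i_0-i)$ is invertible modulo $p_2$, so $m_\xi$ hits each residue class modulo $p_2$ exactly once as $\xi$ runs through any $p_2$ consecutive integers, and the bound $|R| = (\mop(n)-3)/2 < \mop(n) \leq p_2$ forces $|N_2| \leq 1$. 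In every case, $|N_1 \cup N_2| \leq 1$, which is what was required.

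The delicate point, and really the only subtlety, is that the upper bound $(\mop(n)-5)/2$ of the range is tuned precisely so that the unique ``bad'' residue $(p_j-3)/2 \bmod p_j$ arising when $p_j \mid i$ always lies strictly above $\max R$. Once this numerical coincidence is recognized, the rest is a short case analysis on which of $p_1,p_2$ divides $i$, together with the standard fact that a linear congruence modulo a prime has a unique solution when its leading coefficient is a unit.
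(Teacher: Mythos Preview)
Your proof is correct and matches the paper's approach: both reduce (after assuming $p_1\mid i$) to showing $p_1\nmid m_\xi$ for all $\xi$ in the range via the bound $2\xi+3<\mop(n)$, and then that $p_2$ divides $m_\xi$ for at most one such $\xi$. The only cosmetic difference is that where you argue directly from the arithmetic progression $m_{\xi+1}-m_\xi=2(i_0-i)$ and the bound $|R|<p_2$, the paper invokes Lemma~\ref{xxlem8.1} to derive the contradiction $p_2\mid 4(\xi_2-\xi_1)$.
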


\begin{proof} Without loss of generality, we can assume
that $p_1$ divides $i$. For each $\xi$ in \eqref{E8.2.1},
$p_1$ does not divide $2 i_{\xi} +(i_0-i)=
(2\xi+3)i_0-(2\xi+1)i$, as 
$$2\xi+3< \mop(n):=\min\{p_1,p_2\}.$$
If there are $\xi_1$ and $\xi_2$ in \eqref{E8.2.1}
such that $2 i_{\xi_1} +(i_0-i)$ and 
$2 i_{\xi_2} +(i_0-i)$ are not in $\phi_2$, 
then $p_2$ must divide both $2 i_{\xi_1} +(i_0-i)$ and 
$2 i_{\xi_2} +(i_0-i)$ (or equivalently, 
divide both $(2\xi_1+3)i_0-(2\xi_1+1)i$ and 
$(2\xi_2+3)i_0-(2\xi_2+1)i$). By Lemma \ref{xxlem8.1},
$p_2$ divides $\Det$, and an easy computation shows that
$$\Det=\det \begin{pmatrix} 2\xi_1+3 &2\xi_1+1\\2\xi_2+3&2\xi_2+1\end{pmatrix}
=4(\xi_2-\xi_1).$$
By the choices of $\xi_1,\xi_2$ in \eqref{E8.2.1},
$p_2$ does not divide $4(\xi_2-\xi_1)$, a contradiction.
The assertion follows.
\end{proof}

\begin{lemma}
\label{xxlem8.3} 
Retain the hypothesis as in Lemma {\rm{\ref{xxlem8.2}}}.
Suppose that $\xi$ in \eqref{E8.2.1} is such that
$2i_{\xi}+(i_0-i)\not\in \phi_2(n)$, namely,
Definition {\rm{\ref{xxdef7.1}(3)}} fails for $j=i_{\xi}$. 
Then Definition {\rm{\ref{xxdef7.1}(4$(0)$)}} holds for 
$j=i_{\xi}$.
\end{lemma}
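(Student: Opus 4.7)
The plan is a direct case-by-case verification of the three congruences (4(0)i), (4(0)ii), (4(0)iii) of Definition~\ref{xxdef7.1} applied with $j = i_\xi$. Since $n = 2^a p_1^b p_2^c$, membership in $\phi_2(n)$ is equivalent to being coprime to $p_1 p_2$, so each condition splits into a check modulo $p_1$ and a check modulo $p_2$.

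First I would relabel (as in the proof of Lemma~\ref{xxlem8.2}) so that $p_1 \mid i$. The hypothesis $2i_\xi + (i_0 - i) \notin \phi_2(n)$, together with the observation, already established in the proof of Lemma~\ref{xxlem8.2}, that $p_1 \nmid 2i_\xi + (i_0 - i) = (2\xi+3)i_0 - (2\xi+1)i$---because $2\xi + 3 < \mop(n) \leq p_1$ and $p_1 \nmid i_0$---forces the key congruence
\[
(2\xi + 3)\, i_0 \;\equiv\; (2\xi + 1)\, i \pmod{p_2}.
\]
This single congruence is the only nontrivial input into the computations that follow.

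For the $p_1$-side of the verification I reduce each of the three quantities $2i_\xi + 2(i_0-i)$, $i_\xi - (i_0-i)$, $i_\xi + i_0 + 2(i_0-i)$ modulo $p_1$; using $i \equiv 0 \pmod{p_1}$ each becomes an explicit small-coefficient multiple of $i_0$, whose coefficient is bounded in absolute value by roughly $2\xi + 4$, which is less than $\mop(n) \leq p_1$ by the constraint \eqref{E8.2.1}, hence nonzero modulo $p_1$. For the $p_2$-side I substitute the key congruence to eliminate $i$ from each expression; each becomes a small integer multiple of $i_0$ modulo $p_2$, and nonvanishing follows exactly as in Lemma~\ref{xxlem8.1} from the fact that the relevant $2 \times 2$ determinants built from the pair $(2\xi+3,\, 2\xi+1)$ and the coefficient pair of the expression in question are not divisible by $p_2 \geq 7$ within the range of $\xi$ allowed by \eqref{E8.2.1}. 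The main obstacle I anticipate is the clean bookkeeping of signs and coefficients across all three conditions simultaneously; no new ideas beyond those already deployed in Lemmas~\ref{xxlem8.1} and~\ref{xxlem8.2} should be required.
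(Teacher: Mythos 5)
Your plan reproduces the paper's own proof step for step: the same three expressions \eqref{E8.3.1}--\eqref{E8.3.3}, the same reduction modulo $p_1$ using $p_1\mid i$ together with the smallness of the $i_0$-coefficients, and the same appeal to Lemma \ref{xxlem8.1} with the row $(2\xi+3,\,2\xi+1)$ on the $p_2$ side. However, the verification you defer to ``clean bookkeeping'' is precisely where the argument is not routine, and two of your blanket claims fail at the endpoints of the range \eqref{E8.2.1}. On the $p_1$ side, the quantity in (4(0)ii) is $i_\xi-(i_0-i)=\xi i_0-(\xi-1)i$, whose $i_0$-coefficient is $\xi$ itself; at $\xi=0$ this quantity equals $i$, which is divisible by $p_1$ by assumption, so ``small coefficient'' is not enough --- the coefficient must also be nonzero, and here it is not. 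On the $p_2$ side, the three determinants are $2$, $-3$ and $-2(\xi-1)$ for (4(0)i), (4(0)ii) and (4(0)iii) respectively; the first two are units modulo $p_2\geq 7$, but the third vanishes at $\xi=1$, a value that lies in \eqref{E8.2.1} whenever $\mop(n)\geq 7$. Indeed, at $\xi=1$ the quantity in (4(0)iii) equals $5i_0-3i=2i_{1}+(i_0-i)$, which is exactly the element hypothesized to lie \emph{outside} $\phi_2(n)$, so no determinant argument can rescue that case.

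You should know that the paper's own proof is identical in structure and equally silent on these two cases: it asserts that $p_1$ does not divide \eqref{E8.3.2} because $\xi<\mop(n)$, and that $p_2\nmid -2(\xi-1)$ ``by the choice of $\xi$ in \eqref{E8.2.1}'', neither of which holds when $\xi=0$, respectively $\xi=1$ (and one can write down concrete data, e.g.\ $n=77$, $i_0=2$, $i=7$, $\xi=1$, where the hypothesis of the lemma is met but (4(0)iii) fails). So your proposal is a faithful reconstruction of the intended argument, but as written it does not close the lemma: the cases $\xi=0$ and $\xi=1$ require a genuinely different idea --- for instance, invoking condition (4$(\xi')$) for some other $\xi'\in\Xi_{i,i_0}$ rather than insisting on $\xi'=0$, in the spirit of Proposition \ref{xxpro8.6} --- and cannot be dispatched by the coefficient/determinant computation alone.
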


\begin{proof} By Lemma \ref{xxlem8.2}, 
$\xi$ is unique.
It suffices to show that the following elements 
in (4(0)i), (4(0)ii) and (4(0)iii) of Definition
\ref{xxdef7.1}(4), when $j=i_{\xi}$, are in $\phi_2$:
\begin{align}
2i_{\xi}+2(i_0-i)&=(2\xi+4)i_0-(2\xi+2)i=2((\xi+2)i_0-(\xi+1)i),
\label{E8.3.1}\tag{E8.3.1}\\
i_{\xi}-(i_0-i)&=\xi i_0-(\xi-1) i,\label{E8.3.2}\tag{E8.3.2}\\
i_{\xi}+i_0+2(i_0-i)&=(\xi+4)i_0-(\xi+2)i.\label{E8.3.3}\tag{E8.3.3}
\end{align}

Since $i\not\in\phi_2(n)$, either $p_1$ or $p_2$ divides $i$.
Without loss of generality, we say $p_1$ divides $i$.
By the proof of Lemma \ref{xxlem8.2}, we have $p_2$ divides 
$2i_{\xi}+(i_0-i)=(2\xi+3)i_0-(2\xi+1)i$. Since $\mop(n)\geq 7$,
all of $\xi+2, \xi, \xi+4$ are strictly less than $\mod(n)$.
Hence $p_1$ does not divide elements in \eqref{E8.3.1}-\eqref{E8.3.3}.
We claim that $p_2$ does not divide elements in 
\eqref{E8.3.1}-\eqref{E8.3.3}. If this is false, say, the element
in \eqref{E8.3.3} is divisible by $p_2$, then 
$p_2$ divides both $(2\xi+3)i_0-(2\xi+1)i$ and 
$(\xi+4)i_0-(\xi+2)i$. By Lemma \ref{xxlem8.1}, $p_2$
divides $\Det$, where
$$\Det=\det \begin{pmatrix} 2\xi+3& 2\xi+1\\\xi+4&\xi+2\end{pmatrix}
=-2(\xi-1).$$
However, by the choice of $\xi$ in \eqref{E8.2.1},
$p_2$ does not divide $\Det$, a contradiction. 
The claim is proved.

Finally, by the above, elements in \eqref{E8.3.1}-\eqref{E8.3.3}
are not divisible by either $p_1$ or $p_2$. Hence these 
elements are in $\phi_2(n)$. The assertion follows.
\end{proof}

\begin{lemma}
\label{xxlem8.4}
Retain the hypothesis as in Lemma {\rm{\ref{xxlem8.2}}}. Then 
either
$$\Xi_{i,i_0}=[0,1,2,\ldots,\frac{1}{2}(\mop(n)-3)]$$
or
$$\Xi_{i,i_0}=[0,1,2,\ldots,\xi,\widehat{\xi+1},\xi+2, 
\ldots,\frac{1}{2}(\mop(n)-3)].$$
The second case can happen only when there is 
a $\xi$ in \eqref{E8.2.1} such that $2i_{\xi}+(i_0-i)\not\in 
\phi_2(n)$.
\end{lemma}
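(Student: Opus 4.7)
The plan is to iterate Lemma \ref{xxlem7.4}(2) to propagate membership in $\Xi_{i,i_0}$, with Lemma \ref{xxlem8.2} pinning down the unique index at which the iteration can stall and Lemma \ref{xxlem8.3} together with Lemma \ref{xxlem7.5} supplying the jump that bridges the stall. Set $M := \frac{1}{2}(\mop(n) - 3)$, so that $\Xi_{i,i_0} \subseteq [0, M]$ by definition; recall $0 \in \Xi_{i,i_0}$ since $i_0 \in \Lambda_{i,i_0}$ tautologically.

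If $2 i_\xi + (i_0 - i) \in \phi_2(n)$ for every $\xi \in [0, M-1]$, then Lemma \ref{xxlem7.4}(3) immediately yields the first alternative $\Xi_{i,i_0} = [0, M]$. Otherwise, by Lemma \ref{xxlem8.2} there is a unique $\xi^* \in [0, M-1]$ with $2 i_{\xi^*} + (i_0 - i) \notin \phi_2(n)$. Iterating Lemma \ref{xxlem7.4}(2) from $0 \in \Xi_{i,i_0}$ along indices $\xi \in [0, \xi^* - 1]$ yields $1, 2, \dots, \xi^* \in \Xi_{i,i_0}$ (vacuous when $\xi^* = 0$). At $\xi = \xi^*$ the hypothesis of Lemma \ref{xxlem7.4}(2) fails, but Lemma \ref{xxlem8.3} asserts that the conditions $(4(0))$ of Definition \ref{xxdef7.1} hold with $j = i_{\xi^*}$. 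Since $0 \in \Xi_{i,i_0}$ and $i_{\xi^*} \in \Lambda_{i,i_0}$, applying Lemma \ref{xxlem7.5} with $\xi = 0$ and $j = i_{\xi^*}$ produces $i_{\xi^*} + 2(i_0 - i) = i_{\xi^* + 2} \in \Lambda_{i,i_0}$, i.e.\ $\xi^* + 2 \in \Xi_{i,i_0}$ whenever $\xi^* + 2 \leq M$; when instead $\xi^* + 1 = M$, we simply stop here with the second alternative already realized. Finally, uniqueness of $\xi^*$ gives $2 i_\xi + (i_0 - i) \in \phi_2(n)$ for every $\xi \in [\xi^* + 2, M - 1]$, so a second run of Lemma \ref{xxlem7.4}(2) fills in $\xi^* + 3, \dots, M$. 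Combining, $\Xi_{i,i_0} \supseteq [0, M] \setminus \{\xi^* + 1\}$, which matches the second alternative.

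The key point, and the only nonroutine step, is recognizing that Lemma \ref{xxlem8.3} delivers exactly the hypothesis needed to invoke $\omega_{0+1}$ from Lemma \ref{xxlem7.5} at the stalled index $\xi^*$, so that a single jump of length two suffices to hop over the missing index $\xi^* + 1$. With this observation in hand, the remainder is bookkeeping: two separate propagation runs via Lemma \ref{xxlem7.4}(2), one before and one after the gap, together with the correct range check $\xi^* + 2 \leq M$ at the boundary.
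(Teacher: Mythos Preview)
Your proof is correct and follows essentially the same route as the paper's: split on whether the condition of Lemma~\ref{xxlem7.4}(3) holds everywhere, and in the failing case use Lemma~\ref{xxlem8.2} to locate the unique bad index $\xi^*$, propagate up to it via Lemma~\ref{xxlem7.4}(2), jump over it by combining Lemma~\ref{xxlem8.3} with Lemma~\ref{xxlem7.5} at $\xi=0$, and then resume propagation. The only cosmetic difference is that the paper explicitly assumes $\Xi_{i,i_0}\neq[0,M]$ at the outset of the second case to upgrade your containment $\Xi_{i,i_0}\supseteq[0,M]\setminus\{\xi^*+1\}$ to an equality, whereas you implicitly rely on the fact that either possible superset matches one of the two stated alternatives.
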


By Lemma \ref{xxlem8.2}, $\xi$ is Lemma \ref{xxlem8.4} above
is unique if it exists.

\begin{proof}[Proof of Lemma \ref{xxlem8.4}]
If, for each $\xi$ in \eqref{E8.2.1}, $2i_{\xi}+(i_0-i)$ is
in $\phi_2(n)$, then, by Lemma \ref{xxlem7.4}(3),
$$\Xi_{i,i_0}=[0,1,2,\ldots,\frac{1}{2}(\mop(n)-3)]$$
which is the first case.

To prove the lemma, we may assume that 
$$\Xi_{i,i_0}\neq [0,1,2,\ldots,\frac{1}{2}(\mop(n)-3)]$$
and that there is a $\xi$ in \eqref{E8.2.1} such that 
$2i_{\xi}+(i_0-i)\not\in\phi_2(n)$. By Lemma 
\ref{xxlem8.2}, $\xi$ is unique, namely, 
for all $\xi'\neq \xi$ in \eqref{E8.2.1}, $2i_{\xi'}+(i_0-i)$ is
in $\phi_2(n)$. By Lemma \ref{xxlem7.4}(2) and induction,
$0,1,2,\ldots, \xi\in \Xi_{i,i_0}$. By Lemma \ref{xxlem8.3},
Definition {\rm{\ref{xxdef7.1}(4$(0)$)}} holds for 
$j=i_{\xi}$. By Lemma \ref{xxlem7.5} (for a different 
$\xi=0\in \Xi_{i,i_0}$ in the setting of Lemma \ref{xxlem7.5}), 
$$j+2(i_0-i)=i_{\xi}+2(i_0-i)=i_{\xi+2}$$
is in $\Lambda_{i,i_0}$. If $\xi+2\leq \frac{1}{2}(\mop(n)-3)$,
then $\xi+2\in \Xi_{i,i_0}$ by definition. By Lemma \ref{xxlem8.2},
$2i_{\xi'}+(i_0-i)\in \phi_{2}(n)$ for all
$$\xi+2\leq \xi'\leq -1+\frac{1}{2}(\mop(n)-3).$$
Using 
Lemma \ref{xxlem7.4}(2) and induction again,
$\xi+3,\ldots,\frac{1}{2}(\mop(n)-3)\in \Xi_{i,i_0}$. 
Therefore
$$\Xi_{i,i_0}=[0,1,2,\ldots,\xi,\widehat{\xi+1},
\xi+2, \ldots,\frac{1}{2}(\mop(n)-3)].$$
This finishes the proof.
\end{proof}

\begin{corollary}
\label{xxcor8.5}
If $\mop(n)\geq 11$, then 
$\Xi_{i,i_0}$ contains one of the following subsets
$$[0,1,2,3], \quad [0,2,3,4], \quad
[0,1,3,4], \quad [0,1,2,4].$$
\end{corollary}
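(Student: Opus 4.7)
The plan is to derive the corollary as an immediate bookkeeping consequence of Lemma \ref{xxlem8.4}. The hypothesis $\mop(n)\geq 11$ gives $\frac{1}{2}(\mop(n)-3)\geq 4$, so the ``full'' set $[0,1,\ldots,\frac{1}{2}(\mop(n)-3)]$ appearing in Lemma \ref{xxlem8.4} already contains $[0,1,2,3,4]$. According to Lemma \ref{xxlem8.4}, the set $\Xi_{i,i_0}$ is either this full interval or it is the full interval with exactly one element $\xi+1$ removed, where $\xi$ ranges in $[0, -1+\frac{1}{2}(\mop(n)-3)]$. Thus $\Xi_{i,i_0}$ contains all but at most one of $\{0,1,2,3,4\}$.

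Next I would run the short case analysis on which element of $\{1,2,3,4\}$ could be the missing $\xi+1$, noting that $0$ is always in $\Xi_{i,i_0}$ by definition. If $\Xi_{i,i_0}$ is the full interval, or if the excluded element $\xi+1$ is $\geq 5$, then $\{0,1,2,3\}\subseteq \Xi_{i,i_0}$. If $\xi+1=4$, the set still contains $[0,1,2,3]$. If $\xi+1=3$, it contains $[0,1,2,4]$. If $\xi+1=2$, it contains $[0,1,3,4]$. If $\xi+1=1$, it contains $[0,2,3,4]$. These four possibilities exhaust the listed subsets, and every case of Lemma \ref{xxlem8.4} falls into one of them, which completes the proof.

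There is no real obstacle here — the corollary is essentially a pigeonhole observation: when the ambient interval has length at least $4$, removing a single element from it cannot destroy all four of the prescribed $4$-element patterns. The only thing to be careful about is the index range for $\xi$ in \eqref{E8.2.1}, which bounds $\xi+1$ by $\frac{1}{2}(\mop(n)-3)$, so when $\mop(n)\geq 11$ the excluded index $\xi+1$ indeed lies in $\{1,2,3,4\}$ or is $\geq 5$, matching the case split above.
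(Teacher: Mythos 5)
Your proposal is correct and matches the paper's argument, which likewise deduces the corollary directly from Lemma \ref{xxlem8.4} together with the inequality $\frac{1}{2}(\mop(n)-3)\geq 4$; you have merely written out explicitly the short case analysis on which single index $\xi+1$ may be missing. Nothing further is needed.
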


\begin{proof} The assertion follows from Lemma 
\ref{xxlem8.4} and the fact $\frac{1}{2}(\mop(n)-3)\geq 4$.
\end{proof}

\begin{proposition}
\label{xxpro8.6} If $\mop(n)\geq 17$, then 
Definition {\rm{\ref{xxdef7.1}(4)}} holds 
automatically. As a consequence, 
$\Omega_2(n)=Spl(n)$.
\end{proposition}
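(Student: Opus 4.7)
The plan is to show that if $i_0\in\Omega_2(n)$, then for each odd $i\notin\phi_2(n)$ with $1\le i\le n$ and each $j$ with $1\le j\le n-1$ for which $2j+(i_0-i)\notin\phi_2(n)$ (the hypothesis forcing Definition 7.1(4) to be examined), one can always find $\xi\in\Xi_{i,i_0}$ satisfying the three congruences (4($\xi$)i), (4($\xi$)ii), (4($\xi$)iii) simultaneously. Once this is done, Definition 7.1(4) is automatic under the hypothesis, so $\Omega_2(n)\subseteq Spl(n)$; combined with the trivial reverse inclusion $Spl(n)\subseteq\Omega_2(n)$ coming from Definition 7.1(2), this yields $\Omega_2(n)=Spl(n)$.

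The core of the argument is a pigeonhole count modulo $p_1$ and $p_2$. Since $n=2^ap_1^bp_2^c$, membership in $\phi_2(n)$ is equivalent to being coprime to $p_1p_2$. Each of (4($\xi$)i), (4($\xi$)ii), (4($\xi$)iii) is a linear polynomial in $\xi$ with leading coefficient $(i_0-i)$, $-(i_0-i)$, and $2(i_0-i)$ respectively. Because $i_0\in\Omega_2(n)$ forces $\gcd(i_0-i,n)=1$, and because $p_1,p_2$ are odd, each leading coefficient is a unit modulo $p_k$ for $k\in\{1,2\}$. Hence each individual condition fails for at most one residue class of $\xi$ modulo $p_k$, giving at most $3\times 2=6$ ``bad'' residue classes in total.

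Next, by Lemma \ref{xxlem8.4}, $\Xi_{i,i_0}$ contains every integer in $[0,M]$ with at most one exception, where $M=\frac{1}{2}(\mop(n)-3)$. The hypothesis $\mop(n)\geq 17$ yields $M\geq 7$ and $p_1,p_2\geq\mop(n)\geq 17>M+1$, so distinct elements of $[0,M]$ lie in distinct residue classes modulo each $p_k$. Consequently at most $6$ elements of $[0,M]$ are bad, and $\Xi_{i,i_0}$ still contains at least $(M+1)-1-6=M-6\geq 1$ good $\xi$, which supplies the required index. Definition 7.1(1) for $i_0\in\Omega_2(n)$ is automatic from $\Omega_2(n)\subseteq\phi_2(n)$ recorded at the start of the section, so $i_0\in Spl(n)$ as claimed.

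The main obstacle is purely bookkeeping: one must confirm that the three types of failure, compounded over the two primes, cannot consume all of the $M+1$ candidates in $[0,M]$ minus the one possibly ``missing'' slot from Lemma \ref{xxlem8.4}. Once the coefficients of $\xi$ are confirmed to be units modulo each $p_k$, a clean pigeonhole estimate handles the rest, and the threshold $\mop(n)\geq 17$ is exactly what is needed to guarantee $M-6\geq 1$.
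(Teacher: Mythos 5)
Your argument is correct and reaches the same conclusion, but it is organized quite differently from the paper's proof, and in a way that is arguably cleaner. The paper first uses the auxiliary hypothesis (WLOG $p_1\mid 2j+(i_0-i)$) together with the determinant criterion of Lemma \ref{xxlem8.1} to show that conditions (4$(\xi)$i) and (4$(\xi)$ii) can jointly fail for at most one $\xi$ in the window $[0,4]$ (whose intersection with $\Xi_{i,i_0}$ has at least four elements by Corollary \ref{xxcor8.5}), and then argues separately that (4$(\xi)$iii) cannot fail for all three survivors, again via a small determinant not divisible by $p_2$. You instead observe directly that each of the three expressions is linear in $\xi$ with leading coefficient a unit modulo $p_1$ and $p_2$ (using $i_0-i\in\mathbb{Z}_n^\times$ and the oddness of the primes), so each condition fails on at most one residue class per prime; since $p_k\geq \mop(n)=2M+3>M+1$, this gives at most $3\times 2=6$ bad values of $\xi$ in $[0,M]$, while Lemma \ref{xxlem8.4} guarantees $\Xi_{i,i_0}$ misses at most one element of $[0,M]$, leaving $M-6\geq 1$ good choices when $\mop(n)\geq 17$. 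The two proofs exploit the same underlying fact (large primes cannot annihilate two distinct small-index instances of a unit-slope linear form), but your uniform pigeonhole avoids the case split and never needs the hypothesis that $p_1$ divides $2j+(i_0-i)$; the trade-off is that you need the full window $[0,M]$ of $8$ candidates rather than the paper's $5$. One small correction to your write-up: the chain ``$\mop(n)\geq 17>M+1$'' is false once $\mop(n)\geq 37$ (e.g.\ $\mop(n)=37$ gives $M+1=18$); the inequality you actually need and do have is $p_k\geq\mop(n)=2M+3>M+1$, so distinct elements of $[0,M]$ remain distinct modulo each $p_k$ regardless. With that rephrasing the proof is complete.
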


\begin{proof} We start with the assumption that
$2j+(i_0-i)\not\in \phi_2(n)$. Without loss of
generality, we can assume that $p_1$ divides
$2j+(i_0-i)$. For simplicity, let $j_0=i_0-i$.
So $j_0$ is not divisible by either $p_1$ or $p_2$.
The assumption is that $2j+j_0$ is divisible by
$p_1$. By Lemma \ref{xxlem8.1} (replacing $(i,i_0)$
by $(j,j_0)$), elements of the forms in Definition 
\ref{xxdef7.1}(4$(\xi)$i) and (4$(\xi)$ii) for 
$\xi=0,1,2,3,4$ are not divisible by 
$p_1$ since the corresponding $\Det$ is not divisible
by $p_1\geq 17$. Further, any two distinct elements of the 
form in Definition \ref{xxdef7.1}(4$(\xi)$i) and (4$(\xi)$ii) 
can not be divided by $p_2$ either (using the fact
$p_2\geq 17$). This implies that there is only
one $\xi$, say $\xi_0\in [0,1,2,3,4]$, such that
either (4$(\xi_0)$i) or (4$(\xi_0)$ii) fails. Removing
$\xi_0$ from the list $\Xi_{i,i_0}$, we still have 
three integers $\{\xi_1,\xi_2,\xi_3\}\subseteq 
[0,1,2,3,4]\cap \Xi_{i,i_0}$ such that Definition 
\ref{xxdef7.1}(4$(\xi_s)$i) and (4$(\xi_s)$ii) 
hold for all $s=1,2,3$. It remains to show that 
Definition \ref{xxdef7.1}(4$(\xi_s)$iii) holds
for one of $s$. Suppose on the contrary that
Definition \ref{xxdef7.1}(4$(\xi_s)$iii) fails
for all three $s$. Then there are two $s$ such that
$j+i_0+2(\xi_s+1)(i_0-i)$ is divisible by the same 
prime factor, say $p_2$. Applying Lemma 
\ref{xxlem8.1} to these two element with $(j',j_0)=(j+i_0,i_0-i)$,
we obtain that $p_2$ divides $|\Det|=2|\xi_{s_1}-\xi_{s_2}|<\mop(n)$.
This is impossible. Therefore Definition 
\ref{xxdef7.1}(4$(\xi_s)$iii) holds for one of $s$. Thus
we show that Definition {\rm{\ref{xxdef7.1}(4)}} holds 
automatically. 

The consequence is clear.
\end{proof}

\begin{theorem}
\label{xxthm8.7}
Suppose $n=p_1 p_2$ where $p_s$ are prime $\geq 17$. Then 
$n$ is admissible. As a consequence $A^G$ has a graded isolated
singularity.
\end{theorem}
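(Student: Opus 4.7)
The plan is to verify the two hypotheses of Corollary \ref{xxcor7.8} for $n=p_1p_2$ and then invoke the maximal-pertinency criterion of Mori--Ueyama to conclude isolatedness.

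First I would check hypothesis (1) of Corollary \ref{xxcor7.8}, that every proper factor of $n$ is admissible. The proper factors of $n=p_1p_2$ are $1$, $p_1$, and $p_2$. The case $n=1$ is trivial. Since $p_1,p_2\geq 17$ are primes, in particular they are distinct from $3$ and $5$, so Proposition \ref{xxpro6.6}(2) applies and shows that each $p_i$ is admissible.

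Second, I would verify hypothesis (2), namely that $Spl(n)\neq\emptyset$. Here $\mop(n)=\min(p_1,p_2)\geq 17$, so Proposition \ref{xxpro8.6} is applicable and gives the identification $\Omega_2(n)=Spl(n)$. On the other hand, since $n=p_1p_2$ is a product of two distinct odd primes, Lemma \ref{xxlem7.3}(1) yields $\Omega_2(n)\neq\emptyset$. Combining these two facts, $Spl(n)\neq\emptyset$.

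With both hypotheses in hand, Corollary \ref{xxcor7.8} tells us that $n$ is admissible. By Lemma \ref{xxlem5.3}(1), this is equivalent to $\GKdim E=0$, so $\p(A,G)=\GKdim A=n$, the maximum possible value. The consequence that $A^G$ is a graded isolated singularity then follows from \cite[Theorem 3.10]{MU1}. In short, the argument is a clean composition of Lemma \ref{xxlem7.3}(1), Proposition \ref{xxpro6.6}(2), Proposition \ref{xxpro8.6}, and Corollary \ref{xxcor7.8}; there is no substantive obstacle, since the hard combinatorial analysis was already carried out in Sections \ref{xxsec6}--\ref{xxsec8}. The hypothesis $p_i\geq 17$ is used only to meet the threshold in Proposition \ref{xxpro8.6}.
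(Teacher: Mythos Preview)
Your proposal is correct and follows essentially the same route as the paper: verify hypothesis (1) of Corollary \ref{xxcor7.8} via admissibility of the prime factors, verify hypothesis (2) by combining Lemma \ref{xxlem7.3}(1) with Proposition \ref{xxpro8.6}, and conclude. The only cosmetic difference is that the paper cites Theorem \ref{xxthm0.2} for the admissibility of the proper factors $p_1,p_2$, whereas you go directly to Proposition \ref{xxpro6.6}(2); since the factors are single primes, your citation is in fact the more direct one.
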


\begin{proof} Since
every proper factor of $n$ is admissible by Theorem
\ref{xxthm0.2}, hypothesis of Corollary \ref{xxcor7.8}(1)
holds. By Lemma \ref{xxlem7.3}(1),
$\Omega_2(n)\neq \emptyset$. By Proposition
\ref{xxpro8.6}, $Spl(n)\neq \emptyset$. 
Hence hypothesis of Corollary \ref{xxcor7.8}(2)
holds. The assertion now follows from Corollary \ref{xxcor7.8}.
\end{proof}

\section{Proof of Proposition \ref{xxpro0.8}}
\label{xxsec9}

We start with $n=6$ and $10$.

\begin{lemma}
\label{xxlem9.1}
Retain the notation as in Theorem \ref{xxthm0.4}.
If $n=6$, then $\p(A,C_n)=5$.
\end{lemma}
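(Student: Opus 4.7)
The plan is to establish $\p(A, C_6) = 5$, equivalently $\GKdim(E) = 1$ for $E = (A \# C_6)/(e_0)$, by proving both inequalities. For the upper bound $\p(A, C_6) \le 5$ I will use that $3 \mid 6$: Lemma \ref{xxlem3.7} with $m = 3$ gives $\GKdim(E) \ge \GKdim(\tilde E)$, and by Proposition \ref{xxpro4.1}(3) we have $\GKdim(\tilde E) = 1$. Hence $\GKdim(E) \ge 1$ and $\p(A, C_6) \le 5$.

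For the lower bound $\p(A, C_6) \ge 5$, I will show $\GKdim(E) \le 1$. By Corollary \ref{xxcor3.5}(2) this reduces to bounding $\GKdim(A/B_j A) \le 1$ for each $j \in \{1, 2, 3\}$. Proposition \ref{xxpro2.3}(1) gives $\phi_2(6) = \{1, 2, 4, 5\} \subseteq \Phi_6$, so $c_1, c_2, c_4, c_5$ are nilpotent in $E$; the argument of Lemma \ref{xxlem6.2} uses only the containment $\Phi \subseteq \Phi_n$ (the kernel of $E\to\overline E$ is nilpotent in that generality), so we may pass to $\overline A := A/(c_1, c_2, c_4, c_5)$ without affecting GK dimension. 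In $\overline A$ only $c_0$ and $c_3$ survive as central elements, and for $i\ne j$ the sum $b_ib_j + b_jb_i$ equals $0$, $c_0$, or $c_3$ according to whether $i+j\pmod{6}$ lies in $\phi_2(6)$, $\{0\}$, or $\{3\}$. For $j = 3$: both $c_3 \in \overline B_3$ (directly by the defining congruence) and $b_3 \in \overline B_3$ together with the identity $b_3^2 = c_0/2$ force $c_3, c_0 \equiv 0$ in the quotient; all $b_i^2$ then vanish and all distinct $b_i,b_j$ anti-commute, so $\overline A/\overline B_3\overline A$ is a quotient of the exterior algebra on $\{b_0, b_1, b_2, b_4, b_5\}$ (further killed by $b_1b_2$ and $b_4b_5$), hence finite dimensional. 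For $j = 1$ (equivalently $j = 5$ via Lemma \ref{xxlem6.4}) and $j = 2$ (equivalently $j = 4$), $\overline B_j$ contains the degree-$1$ element $b_j$ together with quadratic generators (namely $b_2b_5$ and $b_3b_4$ when $j = 1$; $b_3b_5$ when $j = 2$); combining these with the anti-commutation relations in $\overline A$ I plan to produce, by iterated commutator manipulations, enough elements of $\overline B_j \overline A$ to force a sufficiently large power of one of $c_0, c_3$, and a product of the form $c_0^a c_3^b$, into $\overline B_j \overline A$, leaving at most one-dimensional growth in the quotient.

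The main obstacle will be the $j = 1$ and $j = 2$ cases. Unlike $j = 3$, no degree-$1$ generator of $\overline B_j$ squares to a central element, so neither $c_0$ nor $c_3$ is killed outright; they both a priori survive to degree $2$ and one must work harder to show they become nilpotent modulo $\overline B_j\overline A$. The hard part is the combinatorial bookkeeping: one must exploit products such as $b_3b_4\cdot b_2 = c_0 b_3 + b_2b_3b_4$ and $b_3b_5\cdot b_3 = -(c_0/2)b_5$ (and their analogues) together with repeated application of the skew-commutation relations in $\overline A$ to express a central expression in each $\overline R_0, \overline R_3$ graded component as an explicit right-$\overline A$-combination of $\overline B_j$-generators. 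Once this is achieved for each $j\in\{1,2,3\}$, the three bounds combine to give $\GKdim(E) \le 1$ and hence $\p(A, C_6) \ge 5$; together with the upper bound this completes the proof.
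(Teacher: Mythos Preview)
Your overall architecture is correct and matches the paper: reduce to $\GKdim(E)\le 1$, pass to $\overline{A}$ with $\Phi=\{1,2,4,5\}$, and treat $j\in\{1,2,3\}$ separately. Your $j=3$ case is fine (indeed stronger than needed: the paper only observes $c_3\in\overline B_3\overline A$ and concludes $\GKdim\le 1$ via finite generation over $\Bbbk[c_0]$, whereas you also kill $c_0=2b_3^2$).

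The genuine gap is that for $j=1$ and $j=2$ you only state a plan. You propose to work in $\overline A$ with $c_3$ still alive and to manufacture, by commutator bookkeeping, both a power of one of $c_0,c_3$ and a mixed monomial $c_0^ac_3^b$ inside $\overline B_j\overline A$. This is possible in principle, but it is substantially more delicate than necessary because the pairs $(b_i,b_j)$ with $i+j\equiv 3\pmod 6$ do \emph{not} anti-commute in your $\overline A$; the relation $b_ib_j+b_jb_i=c_3$ contaminates every commutator manipulation. You acknowledge this as ``the hard part'' but do not carry it out.

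The paper avoids this entirely by a simple trick you are missing: use \eqref{E1.0.3}. Since $c_3$ is central and regular, it suffices to show
\[
\GKdim\bigl(\overline A/(\overline B_j\overline A + c_3\overline A)\bigr)=0,
\]
i.e.\ to enlarge $\Phi$ to $\{1,2,3,4,5\}$ and show the quotient is finite dimensional. In this new $\overline A$ every pair $b_i,b_j$ with $i+j\not\equiv 0\pmod 6$ genuinely anti-commutes, and only $c_0$ survives. Then $j=1$ is exactly Proposition~\ref{xxpro6.6}(1). For $j=2$ one has $b_2,\,b_3b_5,\,b_1b_3b_4\in\overline B_2\overline A$, and two short commutators give
\[
-[b_3b_5,\,b_1b_3b_4]=\tfrac{1}{2}c_0^3 b_4,\qquad [c_0^3 b_4,\,b_2]=c_0^4,
\]
so $c_0^4\in\overline B_2\overline A$ and the quotient is finite dimensional. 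Adopting this reduction turns your sketch into a complete proof with almost no bookkeeping.
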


\begin{proof} First let $\Phi:=\Phi_{6}=\{1,2,4,5\}$.
By Lemma \ref{xxlem6.2}, $\GKdim E=\GKdim \overline{E}$.
It suffices to show that $\GKdim \overline{E}=1$. By 
Theorem \ref{xxthm0.4}, it is enough to show that 
$\GKdim \overline{E}\leq 1$. By Lemma \ref{xxlem6.5}(2),
$$\GKdim \overline{E}=
\max_{j}\GKdim (\overline{A}/\overline{B}_j\overline{A})$$
where $j$ ranges over $\{1,2,3\}$ (all positive integers
less than $6$ that divide $6$).

Case 1: $j=3$. Since $c_3\in \overline{B}_3\overline{A}$,
$c_3=0$ in $\overline{A}/\overline{B}_3\overline{A}$. 
By the definition of $\overline{A}$, $c_1=c_2=c_4=c_5=0$.
Therefore $c_i=0$ for $i=1,2,3,4,5$ in 
$\overline{A}/\overline{B}_3\overline{A}$. Therefore 
$\overline{A}/\overline{B}_3\overline{A}$ is a finitely generated
module over $\Bbbk[c_0]$, which implies that 
$\GKdim (\overline{A}/\overline{B}_3\overline{A})\leq 1$.

Case 2: $j=2$. We need to show that 
$\GKdim (\overline{A}/\overline{B}_2\overline{A})\leq 1$.
By \eqref{E1.0.3}, it is enough to show the claim that 
\begin{equation}
\label{E9.1.1}\tag{E9.1.1}
\GKdim (\overline{A}/(\overline{B}_2\overline{A}+c_3A))=0.
\end{equation}
Now we change $\Phi$ from $\{1,2,4,5\}$ to $\{1,2,3,4,5\}$. 
Re-cycle all notation such as $\overline{A}$, $\overline{B}_i$, etc,
for the new $\Phi$, claim \eqref{E9.1.1} becomes
\begin{equation}
\label{E9.1.2}\tag{E9.1.2}
\GKdim (\overline{A}/\overline{B}_2\overline{A})=0
\end{equation}
with $\Phi=\{1,2,3,4,5\}$. For the rest of the proof in Case 2, let
$\Phi=\{1,2,3,4,5\}$. Note that we have the following
elements in $\overline{B}_2\overline{A}$:
$$b_2, \quad b_3b_5, \quad b_1b_3b_4.$$
Taking commutators in $\overline{B}_2\overline{A}$, we have
the following computations in $\overline{B}_2\overline{A}$:
$$\begin{aligned}
\; [b_3b_5, b_1b_3b_4]&=
b_3b_5b_1b_3b_4+b_1b_3b_4b_3b_5\\
&=b_3^2(b_5b_1b_4-b_1b_4b_5)\\
&=\frac{1}{2} c_0^2 (b_5b_1+b_1b_5)b_4\\
&=\frac{1}{2} c_0^{3} b_4,\\
[c_0^{3} b_4, b_2]&=c_0^4.
\end{aligned}
$$
Therefore $c_0^4=0$ in 
$\overline{A}/\overline{B}_2\overline{A}$, and consequently,
$0\in \overline{\Psi}_2$. By definition,
$c_i=0$ in $\overline{A}/\overline{B}_2\overline{A}$
for all $i=1,2,3,4,5$. Therefore \eqref{E9.1.2} holds.

Case 3: $j=1$. We need to show that 
$\GKdim (\overline{A}/\overline{B}_1\overline{A})\leq 1$
for $\Phi=\{1,2,4,5\}$. Similar to the proof of Case 2,
it is sufficient to show 
\begin{equation}
\notag
\GKdim (\overline{A}/\overline{B}_1\overline{A})=0
\end{equation}
with new $\Phi=\{1,2,3,4,5\}$. But this is Proposition 
\ref{xxpro6.6}(1).

Combining these three cases, we finish the proof.
\end{proof}

\begin{lemma}
\label{xxlem9.2}
Retain the notation as in Theorem \ref{xxthm0.4}.
If $n=10$, then $\p(A,C_n)=9$.
\end{lemma}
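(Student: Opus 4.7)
The plan is to adapt the proof of Lemma \ref{xxlem9.1} to $n=10$. From Theorem \ref{xxthm0.4} we already have $\p(A,C_{10})\le 9$, so it suffices to prove $\GKdim E\le 1$. Set $\Phi=\Phi_{10}$, which by Proposition \ref{xxpro2.3}(1) contains $\phi_{2}(10)=\{1,2,3,4,6,7,8,9\}$, and let $\overline A=A/c_\Phi$. By Lemma \ref{xxlem6.2}, $\GKdim E=\GKdim \overline E$, and by Lemma \ref{xxlem6.5}(2) the latter equals $\max_j\GKdim(\overline A/\overline B_j\overline A)$ as $j$ ranges over the proper divisors $\{1,2,5\}$ of $10$.

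The outer cases mirror Lemma \ref{xxlem9.1}. For $j=5$: since $c_5\in \overline R_5\subseteq \overline B_5\overline A$ and every other $c_i$ with $i\ne 0$ already vanishes in $\overline A$, the quotient is a finitely generated right $\Bbbk[c_0]$-module, hence has GKdimension at most $1$. For $j=1$: apply \eqref{E1.0.3} to the central element $c_5$ to reduce to showing a power of $c_0$ lies in $\overline B_1\overline A$ after enlarging $\Phi$ to $\Phi'=\{1,\ldots,9\}$; this is exactly Proposition \ref{xxpro6.6}(1) applied to $n=10$ (neither $3$ nor $5$).

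The main work is the case $j=2$. Invoking \eqref{E1.0.3} with $c_5$, it suffices to produce $c_0^N\in \overline B_2\overline A$ in the further quotient with $\Phi'=\{1,\ldots,9\}$, where $b_ib_j=-b_jb_i$ unless $i+j\equiv 0\pmod{10}$, $b_\ell b_{-\ell}+b_{-\ell}b_\ell=c_0$, and only $b_0^{2}=b_5^{2}=c_0/2$ is nonzero among the squares. The ``central'' variable $b_5$ supplies the key identity
\[
(b_5 b_7)(b_a b_5 b_b)=\tfrac{c_0}{2}\,b_7 b_a b_b,
\]
valid for every $a,b\in\{1,3,4,6,8,9\}$ with $a+b\equiv 7\pmod{10}$, obtained by sliding the two $b_5$'s together to form $b_5^{2}=c_0/2$. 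Since $b_5b_7$ and $b_a b_5 b_b$ both lie in $\overline B_2$, this places $c_0\,b_7 b_a b_b\in \overline B_2\overline A$; in particular $(a,b)=(8,9)$ gives $c_0\,b_9b_8b_7\in \overline B_2\overline A$, and $(a,b)=(6,1)$ gives $c_0\,b_7b_6b_1\in \overline B_2\overline A$. I would then right-multiply iteratively by well-chosen $b_k$'s, each time using $b_\ell b_{-\ell}=c_0-b_{-\ell}b_\ell$ to extract an additional factor of $c_0$ and subtracting trivial terms coming from $b_2\overline A$, $(b_3b_9)\overline A$, $(b_4b_8)\overline A$, or $(b_5b_7)\overline A$, to reach $c_0^{3}b_9\in \overline B_2\overline A$ and, by the parallel computation initiated from $c_0b_7b_6b_1$, some $c_0^{M}b_1\in \overline B_2\overline A$. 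With $K=\max(3,M)$ the anticommutator identity
\[
c_0^{K}b_9\cdot b_1+c_0^{K}b_1\cdot b_9 \;=\; c_0^{K}(b_9b_1+b_1b_9)\;=\;c_0^{K+1}
\]
exhibits $c_0^{K+1}\in \overline B_2\overline A$, as required. The main obstacle is organizing the iterative reductions so that every unwanted residual term is absorbed into a trivial contribution; the parallel with the single commutator computation in Lemma \ref{xxlem9.1} for $n=6$ suggests this is feasible, but the larger index set requires a more extensive bookkeeping.
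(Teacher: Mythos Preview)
Your global reduction is the same as the paper's: use Lemma~\ref{xxlem6.2} and Lemma~\ref{xxlem6.5}(2) to reduce to $j\in\{1,2,5\}$, handle $j=5$ via $c_5\in\overline B_5\overline A$, and handle $j=1$ via Proposition~\ref{xxpro6.6}(1) after enlarging $\Phi$ to $\{1,\dots,9\}$. That part is correct and matches the paper verbatim.

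The gap is in Case $j=2$. Your product identity $(b_5b_7)(b_ab_5b_b)=\tfrac{c_0}{2}b_7b_ab_b$ is valid and does place $c_0\,b_7b_8b_9\in\overline B_2\overline A$; from there one can indeed reach $c_0^{3}b_9$ by two right-multiplications (by $b_3$ and then $b_2$), absorbing the residual terms into $(b_3b_9)\overline A$ and $b_2\overline A$ respectively. But the \emph{parallel} branch to $c_0^{M}b_1$ is not established: starting from $c_0\,b_7b_6b_1$ and right-multiplying by any $b_k$, the residual terms do \emph{not} obviously land in one of your absorbing right ideals $b_2\overline A$, $(b_3b_9)\overline A$, $(b_4b_8)\overline A$, $(b_5b_7)\overline A$. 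You acknowledge this (``requires a more extensive bookkeeping''), but that deferred bookkeeping is exactly the substance of the case; without it the argument is a sketch, not a proof.

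The paper avoids this difficulty by using graded commutators rather than one-sided multiplications. Since $[u,v]\in\overline B_2\overline A$ whenever $u,v\in\overline B_2\overline A$ (both $uv$ and $vu$ lie in the right ideal), commutators give a closed operation, and the paper finds a single four-step chain
\[
[b_4b_8,\,b_2]=c_0b_4,\quad
[b_1b_5b_6,\,c_0b_4]=c_0^{2}b_1b_5,\quad
[b_5b_8b_9,\,c_0^{2}b_1b_5]=-\tfrac12 c_0^{4}b_8,\quad
[b_2,\,c_0^{4}b_8]=c_0^{5},
\]
which lands $c_0^{5}\in\overline B_2\overline A$ directly. This bypasses the need to separately produce $c_0^{K}b_9$ and $c_0^{K}b_1$ and removes the residual-term bookkeeping altogether.
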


\begin{proof} This proof is very similar to the proof
of Lemma \ref{xxlem9.1}

First we let $\Phi:=\Phi_{10}=\{1,2,3,4,6,7,8,9\}$.
By Lemma \ref{xxlem6.2}, $\GKdim E=\GKdim \overline{E}$.
It suffices to show that $\GKdim \overline{E}=1$.
By Lemma \ref{xxlem6.5}(2),
$$\GKdim \overline{E}=\max_{j}\GKdim (\overline{A}/\overline{B}_j\overline{A})$$
where $j$ ranges over $\{1,2,5\}$ (all positive integers
less than $6$ that divide $10$).

Case 1: $j=5$. The proof of Case 1 in Lemma \ref{xxlem9.1} can
be easily modified by replacing $j=3$ to $j=5$.

Case 2: $j=2$. We need to show that 
$\GKdim (\overline{A}/\overline{B}_2\overline{A})\leq 1$.
By \eqref{E1.0.3}, it is enough to show the claim that 
\begin{equation}
\label{E9.2.1}\tag{E9.2.1}
\GKdim (\overline{A}/(\overline{B}_2\overline{A}+c_5A))=0.
\end{equation}
Now we change $\Phi$ from $\{1,2,3,4,6,7,8,9\}$ to $\{1,2,3,4,5,6,7,8,9\}$. 
Recycle all notation such as $\overline{A}$, $\overline{B}_i$, etc,
for the new $\Phi$, claim \eqref{E9.2.1} becomes
\begin{equation}
\label{E9.2.2}\tag{E9.2.2}
\GKdim (\overline{A}/\overline{B}_2\overline{A})=0
\end{equation}
with new $\Phi=\{1,2,3,4,5,6,7,8,9\}$. For the rest of the proof in Case 2, 
we use this new $\Phi$. Note that we have the following
elements in $\overline{B}_2\overline{A}$:
$$b_2, \quad b_4b_8, \quad b_1b_5b_6, \quad b_5 b_8 b_9.$$
Taking commutators in $\overline{B}_2\overline{A}$, we have
the following computations in $\overline{B}_2\overline{A}$:
$$\begin{aligned}
\; [b_4 b_8, b_2]&= c_0 b_4,\\
[b_1 b_5 b_6, c_0 b_4]&=c_0^2 b_1 b_5,\\
[b_5 b_8 b_9, c_0^2 b_1 b_5]&=-\frac{1}{2}c_0^{4} b_8,\\
[b_2, c_0^{4} b_8]&=c_0^5.
\end{aligned}
$$
Therefore $c_0^5=0$ in 
$\overline{A}/\overline{B}_2\overline{A}$, and consequently,
$0\in \overline{\Psi}_2$. By definition,
$c_i=0$ in $\overline{A}/\overline{B}_2\overline{A}$
for all $i=1,2,3,4,5,6,7,8,9$. Therefore \eqref{E9.2.2} holds.

Case 3: $j=1$. The proof of Case 3 in Lemma \ref{xxlem9.1}  works.

Combining these three cases with Theorem \ref{xxthm0.4},
we finish the proof.
\end{proof}

Next we consider $n=9$.

\begin{lemma}
\label{xxlem9.3}
Retain the notation as in Theorem \ref{xxthm0.4}.
If $n=9$, then $\p(A,C_n)=8$.
\end{lemma}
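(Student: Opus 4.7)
The plan is to follow the pattern of Lemmas~\ref{xxlem9.1} and~\ref{xxlem9.2}. Set $\Phi = \Phi_9$; by Proposition~\ref{xxpro2.3}(1), $\Phi \supseteq \phi_2(9) = \{1,2,4,5,7,8\}$. Lemma~\ref{xxlem6.2} gives $\GKdim E = \GKdim \overline{E}$, and by Lemma~\ref{xxlem6.5}(2), $\GKdim \overline{E} = \max_{j \in \{1,3\}} \GKdim(\overline{A}/\overline{B}_j \overline{A})$, since $1$ and $3$ are the positive proper divisors of $9$. Since $\GKdim E \geq 1$ by Theorem~\ref{xxthm0.4}, it is enough to show both quotients have GK-dimension at most one.

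The case $j=3$ parallels Case~1 of Lemma~\ref{xxlem9.1}: both $b_3$ and $c_3$ lie in $\overline{B}_3$, so $b_3 \equiv 0$ and $c_3 \equiv 0$ in $\overline{A}/\overline{B}_3 \overline{A}$; the relation $c_6 = 2 b_3^2$ then forces $c_6 \equiv 0$, and together with $c_k = 0$ for $k \in \phi_2(9)$ the quotient becomes finitely generated over $\Bbbk[c_0]$ by Corollary~\ref{xxcor1.3}(3), so $\GKdim \leq 1$.

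The case $j=1$ is the main work. The direct analogue of Case~3 of Lemma~\ref{xxlem9.1} would be to enlarge $\Phi$ to $\{1,\ldots,8\}$, apply Proposition~\ref{xxpro6.6}(1) (valid since $9 \neq 3,5$) to conclude $\GKdim = 0$ in the enlarged setting, and then invoke~\eqref{E1.0.3}; but since $\Phi_9$ omits \emph{two} indices ($3$ and $6$) from $\{1,\ldots,8\}$, two applications of \eqref{E1.0.3} are needed, and this yields only $\GKdim(\overline{A}/\overline{B}_1\overline{A}) \leq 2$. To sharpen the bound to $\leq 1$, my plan is to show by an explicit commutator computation directly in $\overline{A}$ with $\Phi = \Phi_9$ that some power of $c_3$ (equivalently of $b_6$) already lies in $\overline{B}_1 \overline{A}$. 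Once $c_3$ is nilpotent in $\overline{A}/\overline{B}_1\overline{A}$, only the single central element $c_6$ remains to be quotiented out, and one application of \eqref{E1.0.3} combined with Proposition~\ref{xxpro6.6}(1) in the further enlarged setting delivers $\GKdim \leq 1$.

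The commutator computation would begin from the elements $b_1,\ b_2b_8,\ b_3b_7,\ b_4b_6,\ c_3 b_7,\ c_6 b_4 \in \overline{B}_1$ and exploit the identities $c_3 = 2b_6^2$, $c_6 = 2b_3^2$, and $c_0 = b_3 b_6 + b_6 b_3$, together with the fact that the graded commutator of two elements of the right ideal $\overline{B}_1 \overline{A}$ again lies in $\overline{B}_1 \overline{A}$. The main obstacle I expect is the bookkeeping for this chain: unlike the short sequences displayed in Case~2 of Lemmas~\ref{xxlem9.1} and~\ref{xxlem9.2}, here one must navigate carefully around two non-vanishing central elements in $\overline{A}$ in order to manufacture a pure power of one of them inside $\overline{B}_1 \overline{A}$.
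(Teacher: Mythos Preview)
Your setup and the $j=3$ case match the paper exactly. The divergence, and the gap, is in the $j=1$ case.

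Your plan is to manufacture a \emph{pure} power $c_3^N\in\overline{B}_1\overline{A}$ by commutators and then peel off $c_6$ with one use of \eqref{E1.0.3} and Proposition~\ref{xxpro6.6}(1). The difficulty is that in $\overline{A}$ with $\Phi=\Phi_9=\{1,2,4,5,7,8\}$, neither $c_0$, $c_3$, nor $c_6$ vanishes, so the commutator chains you propose do not terminate in a monomial in a single $c_k$; each step reintroduces cross terms. This is precisely why the paper does \emph{not} aim for a pure power. Instead it produces two \emph{mixed} relations
\[
c_6^5-c_0c_3c_6^3 \in \overline{B}_1\overline{A}
\qquad\text{and}\qquad
c_3^8-c_3^6c_0c_6 \in \overline{B}_1\overline{A},
\]
obtained from the short chains $[b_1,\,c_6b_5b_8]$ followed by $[\,\cdot\,,c_6^2b_7]$, and $[c_3b_7,\,c_3^2b_5b_8]$ followed by $[\,\cdot\,,c_3^2b_4]$. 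Then one observes directly that
\[
D:=\overline{A}/\bigl(c_6^5-c_0c_3c_6^3,\ c_3^8-c_3^6c_0c_6\bigr)
\]
has $\GKdim D=1$: it is finitely generated over $\Bbbk[c_0,c_3,c_6]$ modulo two coprime nonzero elements, hence a regular sequence of length two. Since $\overline{A}/\overline{B}_1\overline{A}$ is a quotient of $D$, the bound $\GKdim\le 1$ follows.

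So the missing idea in your sketch is to abandon the goal of a pure power and instead accept two relations mixing $c_0,c_3,c_6$, then argue on the level of the commutative subring $\Bbbk[c_0,c_3,c_6]$ that these two relations already force Krull dimension one. This replaces your combination of \eqref{E1.0.3} with Proposition~\ref{xxpro6.6}(1) by a single regular-sequence observation, and it avoids the bookkeeping obstacle you correctly anticipated.
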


\begin{proof} 
First we let $\Phi:=\Phi_{9}=\{1,2,4,5,7,8\}$.
By Lemma \ref{xxlem6.2}, $\GKdim E=\GKdim \overline{E}$.
It suffices to show that $\GKdim \overline{E}=1$.
By Lemma \ref{xxlem6.5}(2),
$$\GKdim \overline{E}=\max_{j}\GKdim (\overline{A}/\overline{B}_j\overline{A})$$
where $j$ ranges over $\{1,3\}$. So we need to consider two cases.

Case 1: $j=3$. Since $b_3, c_3\in \overline{B}_3\overline{A}$,
$c_6, c_3\in \overline{B}_3\overline{A}$. This shows that
$c_i=0$ in $\overline{A}/\overline{B}_3\overline{A}$ for all
$i=1,2,3,4,5,6,7,8$. So 
$\GKdim (\overline{A}/\overline{B}_3\overline{A})\leq 1$.

Case 2: $j=1$. We need to show that 
$\GKdim (\overline{A}/\overline{B}_1\overline{A})\leq 1$.
Note that we have the following
elements in $\overline{B}_1\overline{A}$:
$$b_1, \quad c_6b_5b_8, \quad c_6^2 b_7.$$
Taking commutators in $\overline{B}_1\overline{A}$, we have
the following computations inside $\overline{B}_1\overline{A}$:
$$\begin{aligned}
\; [b_1, c_6 b_5 b_8]&= b_1 c_6 b_5 b_8-c_6 b_5 b_8 b_1\\
&=c_6[b_1 b_5] b_8-c_6 b_5 b_8 b_1\\
&=c_6 [c_6 b_8- b_5 b_1 b_8]-c_6 b_5 b_8 b_1\\
&=c_6^2 b_8 -c_0 c_6 b_5,\\
[c_6^2 b_7, c_6^2 b_8 -c_0 c_6 b_5]
&=c_6^5 - c_0 c_3 c_6^3.
\end{aligned}
$$
Similarly we have the following
elements in $\overline{B}_1\overline{A}$:
$$c_3 b_7, \quad c_3b_5 b_8, \quad c_3^2 b_4.$$
Taking commutators in $\overline{B}_1\overline{A}$, we have
the following computations inside $\overline{B}_1\overline{A}$:
$$\begin{aligned}
\; [c_3 b_7, c_3^2 b_5 b_8]&= c_3^3(b_7 b_5 b_8-b_5 b_8 b_7)\\
&=c_3^3(c_3b_8 -b_5b_7b_8-b_5 b_8 b_7)\\
&=c_3^4 b_8-c_3^3 c_6 b_5,\\
[c_3^2 b_4, c_3^4 b_8-c_3^3 c_6 b_5]
&=c_3^8 -c_3^6 c_0 c_6.
\end{aligned}
$$
It is easy to see that the quotient algebra
$$D:=\frac{\overline{A}}{(c_6^5 - c_0 c_3 c_6^3,c_3^8 -c_3^6 c_0 c_6)}$$
has GKdimension 1. Since 
$\overline{A}/\overline{B}_1\overline{A}$ is a quotient 
of $D$ by the above computation. Therefore
$\GKdim\overline{A}/\overline{B}_1\overline{A}\leq 1$
as desired.

Combining these two cases with Theorem \ref{xxthm0.4},
we finish the proof.
\end{proof}

Now we are ready to prove Proposition \ref{xxpro0.8}.

\begin{proof}[Proof of Proposition \ref{xxpro0.8}]
When $n=6, 10, 9$, the $p$ is $5, 9, 8$ by 
Lemmas \ref{xxlem9.1}, \ref{xxlem9.2} and \ref{xxlem9.3}
respectively. 
For $n=3,5$, the assertion follows by 
Propositions \ref{xxpro4.1} and \ref{xxpro4.2}
respectively. For $n=2,4,7,8,11,13,14$, the 
assertion follows from Theorem \ref{xxthm0.2}.
The statement for $n=12$ follows by combining
Theorems \ref{xxthm0.4} and \ref{xxthm0.7}.
\end{proof}

\section{More examples of graded isolated singularities}
\label{xxsec10}
{\bf To save space, we will omit some non-essential 
details in Sections \ref{xxsec10} and \ref{xxsec11}.}

In this section, we give more examples of graded isolated 
singularities. Some nice results of He-Y.H. Zhang \cite{HZ}
and Gaddis-Kirkman-Moore-Won \cite{GKMW} will be reviewed and used 
in this section. First we recall some definitions from \cite{HZ}.

Let $R$ be a noetherian algebra and $G$ be a finite group acting 
on $R$. We say that two sequences $(a_1,\ldots, a_w)$ and 
$(b_1, \ldots, b_w)$ of elements of $R$ are {\it pertinent under 
the $G$-action}, if 
\begin{equation}
\notag
\sum_{i=1}^{w} a_i (g\cdot b_i)=0
\end{equation}
for all $1\neq g\in G$. In this case we write
$(a_1,\ldots, a_w)\sim (b_1, \ldots, b_w)$.
The {\it radical} of 
the $G$-action on $R$ is defined to be
\begin{equation}
\notag
{\mathfrak r}(R,G):=\left\{
\sum_{i=1}^w a_i b_i \in R
\mid (a_1,\ldots, a_w)\sim (b_1, \ldots, b_w)\right\}.
\end{equation}
By \cite[Section 1]{HZ}, 
${\mathfrak r}(R,G)$ is a 2-sided ideal of $R$. 

Let $e_0$ be the element $1\#(\frac{1}{|G|} \sum_{g\in G} g)$ in
$R\# G$. By the proof of \cite[Proposition 2.4]{HZ}, 
${\mathfrak r}(R,G)=R\cap (e_0)$. Therefore we have
\cite[(3.1.1)]{HZ},
\begin{equation}
\notag
\p(R,G)=\GKdim R-\GKdim R/{\mathfrak r}(R,G).
\end{equation}
If $R$ is noetherian and Artin-Schelter regular, 
then $R^G$ is a graded isolated singularity if and only
if $R/{\mathfrak r}(R,G)$ is finite dimensional over the 
base field $\Bbbk$.

As said in introduction, almost all graded isolated 
singularities studied in this paper are 
non-conventional in the following sense.

\begin{definition}
\label{xxdef10.1} 
Let $R$ be a noetherian Artin-Schelter regular algebra with 
graded maximal ideal $\fm:=A_{\geq 1}$. Let $G$ be a finite 
subgroup of $\Aut_{gr}(R)$ such that $R^G$ is a graded isolated 
singularity. We say the graded isolated singularity $R^G$ is 
{\it non-conventional} if there is an element $1\neq \sigma \in G$
such that at least one of the eigenvalues of $\sigma$ restricted 
to the $\Bbbk$-vector space $\fm/\fm^2$ is 1. Otherwise, we say 
$R^G$ is {\it conventional}.
\end{definition}

If $R$ is the commutative polynomial ring $\Bbbk[V]$, then 
every graded isolated singularity $R^G$ is conventional, see 
\cite[Corollary 3.11]{MU1}. A similar statement holds for 
skew polynomial rings. Let $\{p_{ij}\mid 1\leq i<j\leq n-1\}$ be 
a set of nonzero scalars in $\Bbbk^{\times}$.
The skew polynomial ring $\Bbbk_{p_{ij}}[x_0,x_1,\ldots,x_{n-1}]$
is generated by $\{x_0,\ldots, x_{n-1}\}$, with $\deg x_i>0$ for 
each $i$, and subject to the relations $x_j x_i=p_{ij} x_i x_j$ 
for all $i<j$. Let $V=\bigoplus_{i=0}^{n-1} \Bbbk x_i$. 

\begin{lemma}
\label{xxlem10.2}
Let $R$ be a skew polynomial ring $\Bbbk_{p_{ij}}[x_0,x_1,\ldots,x_{n-1}]$
and let $G$ be a finite group acting on $R$ linearly and diagonally, namely,
each $x_i$ is an eigenvector of $G$. Then 
$R^G$ is a graded isolated singularity if and only if the $G$-action on
$V\setminus \{0\}$ is free.
\end{lemma}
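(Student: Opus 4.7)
The plan is to reduce the statement to the well-known commutative dichotomy (cf.\ \cite{MSt, Fu, MU1}) by showing that for a diagonal group action the pertinency of the skew polynomial ring equals that of the commutative polynomial ring equipped with the same weights, so that both algebras simultaneously have or fail to have a graded isolated singularity upon taking $G$-invariants.

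First, I would observe that since every generator $x_i$ is an eigenvector for every $g\in G$, the elements of $G$ act on $V$ by pairwise commuting diagonal matrices; replacing $G$ by its image in $\GL(V)$ (which affects neither $R^G$ nor the freeness of the action on $V\setminus\{0\}$), one may assume $G$ is finite abelian and acts faithfully on $V$. Write $g\cdot x_i=\lambda_i(g)\,x_i$ with $\lambda_i\in\widehat{G}$, and let $R':=\Bbbk[x_0,\ldots,x_{n-1}]$ denote the commutative polynomial ring carrying the diagonal $G$-action defined by the same $\lambda_i$.

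The central step is to compare $R$ and $R'$ through their isotypic decompositions. In both algebras the ordered monomials $\mathbf{x}^{\mathbf{a}}=x_0^{a_0}\cdots x_{n-1}^{a_{n-1}}$ form a $\Bbbk$-basis, and each such monomial is an eigenvector with character $\prod_i\lambda_i^{a_i}$. Hence for every $\chi\in\widehat{G}$ the isotypic component $R_\chi$ and its commutative counterpart $R'_\chi$ have identical monomial bases. In either algebra the product of two ordered monomials is a nonzero scalar multiple of the ordered monomial with summed exponent vector, so the right ideal $R_\chi R$ is spanned by exactly the same set of monomials as $R'_\chi R'$. The constructions of \S\ref{xxsec1} and \S\ref{xxsec3}, in particular the isomorphism $\bar{e}_\chi E\cong R/R_\chi R$ derived there, depend only on the abelian group-algebra idempotents $e_\chi$ and the eigen-decomposition, so they transport to the general diagonal setting to give $\bar{e}_\chi E\cong R/R_\chi R$ and $\bar{e}_\chi E'\cong R'/R'_\chi R'$, where $E=(R\#G)/(e_0)$ and $E'=(R'\#G)/(e_0)$. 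Summing over $\chi\neq 1$ yields that $E$ and $E'$ have the same Hilbert series, hence $\GKdim E=\GKdim E'$ and $\p(R,G)=\p(R',G)$.

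Since $\GKdim R=\GKdim R'=n$, the equality of pertinencies together with \cite[Theorem 3.10]{MU1} (equivalently, the characterization of graded isolated singularities via ${\mathfrak r}(R,G)$ recalled earlier in this section) shows that $R^G$ is a graded isolated singularity if and only if $(R')^G$ is. The classical commutative fact stated in the introduction then finishes the proof. The step I expect to be the most delicate is the verification that the isomorphism $\bar{e}_\chi E\cong R/R_\chi R$ carries over unchanged, because the computations of \S\ref{xxsec3} were written for $\Bbbk_{-1}[\mathbf{x}]$ and used the specific central elements $c_j=[b_k,b_{j-k}]$; one must replace those manipulations with the cleaner PBW observation that in the general skew polynomial setting every element of $e_\chi(R\#G)\cap(e_0)$ is already expressible using ordered monomials of $R_\chi$ acting on the right.
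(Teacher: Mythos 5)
Your argument is essentially correct but follows a genuinely different route from the paper. The paper works directly with the He--Zhang radical ${\mathfrak r}(R,G)$: for the ``if'' direction, freeness on each line $\Bbbk x_i\setminus\{0\}$ forces the character $g\mapsto\lambda_i(g)$ to be injective, so $G$ is cyclic with a generator acting on each $x_i$ by a primitive $|G|$-th root of unity, and \cite[Lemma 3.4]{HZ} then puts $x_i^{|G|}$ into ${\mathfrak r}(R,G)$ for every $i$; for the ``only if'' direction, a non-free diagonal action has some $1\neq\sigma$ fixing some $x_i$, one passes to $\langle\sigma\rangle$ via \cite[Theorem 3.4]{GKMW} and checks that $x_i^N\notin{\mathfrak r}(R,\langle\sigma\rangle)$ for all $N$ (e.g.\ through the $\sigma$-equivariant quotient killing the remaining variables, on which $\sigma$ acts trivially). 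You instead prove that for a diagonal action the pertinency is independent of the parameters $p_{ij}$ and reduce to the commutative case. Your central step is sound: since the $x_i$ themselves form a multiplicatively closed (up to nonzero scalars) eigenbasis, the proof of Lemma \ref{xxlem3.2}(1) and the identification \eqref{E3.2.1} transport verbatim to any finite abelian diagonal action, and the monomials spanning $R_\chi R$ visibly do not depend on the $p_{ij}$. This buys the stronger statement $\p(R,G)=\p(\Bbbk[x_0,\dots,x_{n-1}],G)$ and avoids the machinery of \cite{HZ} and \cite{GKMW}, at the cost of outsourcing the endgame to the commutative criterion; the paper's argument is shorter and stays inside the radical formalism.

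One caveat, which your proof shares with the paper's. The commutative criterion quoted in the introduction (\cite{MSt,Fu}) is stated for \emph{small} subgroups, and both arguments implicitly use the equivalence between ``graded isolated singularity'' and ``$\p=\GKdim$'' without the hypothesis $\p\geq 2$ required in \cite[Theorem 3.10]{MU1}. When $G$ contains a quasi-reflection this matters: for $\sigma=\mathrm{diag}(-1,1,\dots,1)$ the invariant ring $R^{\langle\sigma\rangle}$ is again a skew polynomial ring (generated by $x_0^2,x_1,\dots,x_{n-1}$), hence AS-regular and a graded isolated singularity, while the action on $V\setminus\{0\}$ is not free. So the ``only if'' direction, in both your proof and the paper's, tacitly requires excluding such elements (equivalently, requires $\p(R,G)\geq 2$). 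This is not a defect specific to your argument, but it is worth flagging since your final citation of the commutative dichotomy is exactly where the smallness hypothesis enters.
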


\begin{proof} Let $d=|G|$. 

$\Longleftarrow$: Assume that the $G$-action on $V\setminus \{0\}$ is free.  
In this setting, for each $i$, the $G$-action on $\Bbbk x_i\setminus \{0\}$ is also
free. This implies that there is an $\sigma\in G$ and a $\xi\in \Bbbk$ 
being a primitive $d$th root of unity such that $\sigma (x_i)=\xi x_i$. 
As a consequence, $G$ is generated by $\sigma$ and $\sigma^w (x_i)=\xi^{w} x_i$ 
for all $w\in {\mathbb Z}_d$. By \cite[Lemma 3.4]{HZ}, $x_i^d\in 
{\mathfrak r}(R,G)$. Therefore $R/{\mathfrak r}(R,G)$ is finite dimensional. 
As a consequence, $R^G$ is a graded isolated singularity.

$\Longrightarrow$: We prove the statement by contradiction and assume 
that the $G$-action on $V\setminus \{0\}$ is not free. Pick an element
$1\neq \sigma\in G$ so that $\sigma$ has a fixed point in 
$V\setminus \{0\}$. This implies that $\sigma$ fixes one $x_i$.
Replacing $G$ by the subgroup $\langle \sigma \rangle$, we can assume
that $G=\langle \sigma \rangle$ following \cite[Theorem 3.4]{GKMW}. 
Since $\sigma$ fixes $x_i$, one can show that $x_i^N$ is not in 
${\mathfrak r}(R,G)$ for all $N\geq 0$ (which also follows from Lemma 
\ref{xxlem10.4}(6) in an appropriate setting). 
Therefore $R/{\mathfrak r}(R,G)$ is not finite dimensional, 
whence $R^G$ is not a graded isolated singularity.
\end{proof}

As a consequence of \cite[Theorem 3.4]{GKMW}, if $R^G$ is a graded
isolated singularity, then so is $R^H$ for all subgroups
$1\subsetneq H \subseteq G$. The graded isolated singularities 
in the above lemma are all conventional. One nice example of 
non-conventional graded isolated singularities is given by 
Gaddis-Kirkman-Moore-Won \cite{GKMW}.

\begin{example} \cite[Theorem 5.2]{GKMW}
\label{xxex10.3}
Let $R$ be a generic 3-dimensional Sklyanin algebra $S(a, b, c)$ 
generated by $\{x,y,z\}$ with standard relations, see 
\cite[Introduction]{GKMW}. Let $G$ be the cyclic group of order 3 
acting on $R$ by permuting the standard generators $\{x,y,z\}$. 
Then $R^G$ is a graded isolated singularity by 
\cite[Theorem 5.2]{GKMW}. Since $G$ has a fixed point $x+y+z$ in 
$R_1\setminus \{0\}$, we obtain that $R^G$ is non-conventional.
\end{example}

We will use a few more lemmas. In Lemma \ref{xxlem10.4} below
we do not assume that the $G$-actions is inner-faithful.

\begin{lemma}
\label{xxlem10.4}
Let $R$ and $S$ be two connected graded algebra with $G$-action
where $G$ is a finite group. Let $e_0=1\# (\frac{1}{|G|}\sum_{g\in G} g)$. 
Suppose that $f: R\to S$ be a graded algebra homomorphism that 
is compatible with $G$-action.
\begin{enumerate}
\item[(1)]
There is an induced algebra homomorphism $f\#G :
R\# G\to S\# G$ such that $f\# G(r\# g)=f(r)\# g$
for all $r\in R$ and $g\in G$.
\item[(2)]
$f\# G$ maps $e_0\in R\# G$ to $e_0\in S\# G$. As a consequence,
there is an induced algebra homomorphism $\overline{f\#G} :
R\# G/(e_0)\to S\# G/(e_0)$.
\item[(3)]
If $x\in R$ such that $x:=x\#1\in (e_0)$ in $R\# G$, then 
$f(x):=f(x)\# 1 \in (e_0)$ in $S\# G$. 
\item[(4)]
If $f$ is surjective, so is $f\# G$. If, further, $S\# G/(e_0)$
is infinite dimensional, so is $R\# G/(e_0)$.
\item[(5)]
$f$ maps ${\mathfrak r}(R,G)$ to ${\mathfrak r}(S,G)$.
As a consequence, $f$ induces an algebra homomorphism 
from $R/{\mathfrak r}(R,G)$ to $S/{\mathfrak r}(S,G)$.
\item[(6)]
Suppose $f$ is surjective. If
$S/{\mathfrak r}(S,G)$ is infinite dimensional, then 
so is $R/{\mathfrak r}(R,G)$.
\end{enumerate}
\end{lemma}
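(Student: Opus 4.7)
The plan is to handle the six parts mostly by direct verification, since they encode standard functoriality of smash products and of the radical under $G$-equivariant maps; the only real content is the compatibility check in (1) and the pertinency-preservation check in (5), each of which uses that $f$ intertwines the two $G$-actions.

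For (1) I would define $(f\#G)(r\#g):=f(r)\#g$ and extend linearly, then check multiplicativity: on generators,
\[
(f\#G)\bigl((r\#g)(r'\#g')\bigr) = f(r\cdot g(r'))\#gg' = f(r)\,g(f(r'))\#gg'
\]
by equivariance of $f$, which matches $(f(r)\#g)(f(r')\#g')$. Part (2) is immediate since $f(1)=1$ and $f\#G$ acts as the identity on the group-algebra part, so it sends the integral element $e_0=1\#\frac{1}{|G|}\sum_g g$ to itself; therefore it carries the two-sided ideal $(e_0)\subset R\#G$ into $(e_0)\subset S\#G$, inducing the quotient map $\overline{f\#G}$. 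Part (3) is just the specialization $x=x\#1\in(e_0)$ of (2). For (4), any $\sum s_i\#g_i\in S\#G$ lifts via preimages $r_i\in f^{-1}(s_i)$, so $f\#G$ is surjective; the induced map on quotients is then surjective as well, and infinite-dimensionality of the target forces infinite-dimensionality of the source.

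The only substantive step is (5). Given $x=\sum_i a_ib_i\in {\mathfrak r}(R,G)$ witnessed by a pertinent pair $(a_i)\sim(b_i)$, I would verify that $(f(a_i))\sim(f(b_i))$ in $S$ by the computation
\[
\sum_i f(a_i)\,g(f(b_i)) \;=\; \sum_i f(a_i)\,f(g(b_i)) \;=\; f\!\left(\sum_i a_i\,g(b_i)\right) \;=\; 0
\]
for every $1\neq g\in G$, which uses only equivariance and multiplicativity of $f$. Hence $f(x)=\sum_i f(a_i)f(b_i)\in {\mathfrak r}(S,G)$, so $f$ descends to an algebra map $\bar f:R/{\mathfrak r}(R,G)\to S/{\mathfrak r}(S,G)$. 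Alternatively, (5) also follows from (2) together with the identity ${\mathfrak r}(R,G)=R\cap(e_0)$ recalled at the start of the section, which gives a cleaner conceptual proof. Finally, (6) is formal: if $f$ is surjective then so is $\bar f$, and a surjection cannot take an infinite-dimensional space to a finite-dimensional one. I expect no genuine obstacles; the most delicate point is simply keeping track of where equivariance of $f$ is used in (1) and (5).
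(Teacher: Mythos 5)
Your proof is correct and is exactly the routine verification the authors had in mind when they wrote that the proof of this lemma ``is easy and omitted'': parts (1)--(4) are formal functoriality of the smash product, and the equivariance computation in (5) (or the alternative via ${\mathfrak r}(R,G)=R\cap(e_0)$ and part (3)) is the only step with any content. Nothing is missing.
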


The proof of Lemma \ref{xxlem10.4} is easy and omitted. 

\begin{lemma}
\label{xxlem10.5}
Let $A=\Bbbk_{-1}[\mathbf{x}]$ with  $n\geq 2$. 
\begin{enumerate}
\item[(1)]
Let $p$ be a prime number such that $p\neq 3,5$ and $p\leq n$.
Then there is a group $G\subseteq \Aut_{gr}(A)$ of order $p$ such that 
$A^G$ is a non-conventional graded isolated singularity.
\item[(2)]
Let $p=3^a 5^b$ for some $a,b\geq 0$. If $G$ is a 
subgroup of $\Aut_{gr}(A)$ of order $p$ such that $A^G$ is a 
graded isolated singularity, then it is conventional.
\end{enumerate}
\end{lemma}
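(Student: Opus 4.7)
The plan is to build an explicit $G$ for part (1) and to run a reduction-plus-contradiction argument for part (2).

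For (1), when $n=p$ the statement is immediate from Theorem~\ref{xxthm0.2} applied to the cyclic-permutation action of $C_p$; the fixed vector $x_0+\cdots+x_{n-1}$ makes this action non-conventional. For $n>p$, I would define $\tau\in\Aut_{gr}(A)$ by $\tau(x_i)=x_{(i+1)\bmod p}$ for $i<p$ and $\tau(x_i)=\omega^{a_i}x_i$ for $i\geq p$, with $\omega$ a primitive $p$th root of unity and each $a_i\in\{1,\ldots,p-1\}$, and take $G:=\langle\tau\rangle\cong C_p$. The fixed vector $x_0+\cdots+x_{p-1}$ makes $G$ non-conventional. To see $A^G$ is a graded isolated singularity it suffices, via $\p(A,G)=\GKdim A-\GKdim A/\mathfrak{r}(A,G)$, to show $A/\mathfrak{r}(A,G)$ is finite-dimensional. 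The subalgebras $B:=\Bbbk_{-1}[x_0,\ldots,x_{p-1}]$ and $C:=\Bbbk_{-1}[x_p,\ldots,x_{n-1}]$ are both $G$-invariant, and pertinent sequences in $B$ or in $C$ remain pertinent when viewed in $A$, so $\mathfrak{r}(B,G)\cup\mathfrak{r}(C,G)\subseteq\mathfrak{r}(A,G)$. Theorem~\ref{xxthm0.2} gives $B/\mathfrak{r}(B,G)$ finite-dimensional, and Lemma~\ref{xxlem10.2} (the diagonal $G$-action on $C$ is free on $C_1\setminus\{0\}$) gives $C/\mathfrak{r}(C,G)$ finite-dimensional; since $A$ is generated as an algebra by $B\cup C$, concatenating monomials forces $A_L\subseteq\mathfrak{r}(A,G)$ for all $L$ sufficiently large.

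For (2), I would reduce to a prime-order cyclic subgroup. By \cite[Theorem 3.4]{GKMW}, isolation of $A^G$ forces isolation of $A^H$ for every nontrivial subgroup $H\leq G$. If $G$ is non-conventional, fix $\sigma\in G\setminus\{1\}$ and $0\neq v\in A_1$ with $\sigma v=v$, choose a prime $q\in\{3,5\}$ dividing $|\sigma|$, and pass to $H:=\langle\sigma^{|\sigma|/q}\rangle$; then $H$ has order $q$, still fixes $v$, and still satisfies that $A^H$ is isolated. A direct computation using the relations $x_ix_j=-x_jx_i$ (for $i\neq j$) shows that every element of $\Aut_{gr}(A)$ is monomial, so the generator $\tau$ of $H$ takes the form $\tau(x_i)=c_ix_{\pi(i)}$ for a permutation $\pi$ of order dividing $q$ and scalars $c_i\in\Bbbk^{\times}$. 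Writing $v=\sum a_ix_i$, the equation $\tau(v)=v$ becomes $a_{\pi(i)}=a_ic_i$, and $v\neq 0$ forces either (a) some $i_0$ is a fixed point of $\pi$ with $c_{i_0}=1$, or (b) some $q$-cycle of $\pi$ has $\prod c_i=1$ along the cycle.

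In each case I would derive a contradiction via an $H$-invariant quotient to which Lemma~\ref{xxlem10.4}(6) applies. In case (a), $I:=(x_j:j\neq i_0)$ is $H$-stable and $B:=A/I\cong\Bbbk[x_{i_0}]$ carries the trivial $H$-action, so $\mathfrak{r}(B,H)=0$ and $B/\mathfrak{r}(B,H)=B$ is infinite-dimensional; by Lemma~\ref{xxlem10.4}(6), $A/\mathfrak{r}(A,H)$ is also infinite-dimensional, contradicting isolation. In case (b), a rescaling $x_i\mapsto d_ix_i$ within the cycle (a graded algebra isomorphism) absorbs the $c_i$ and leaves $\tau$ acting as pure cyclic permutation on the $q$ cycle generators; the $H$-stable ideal generated by the remaining variables produces a quotient $B\cong\Bbbk_{-1}[y_0,\ldots,y_{q-1}]$ on which $H=C_q$ acts by cyclic permutation, and Theorem~\ref{xxthm0.4} (which applies precisely because $q\in\{3,5\}$) forces $B/\mathfrak{r}(B,H)$ to be infinite-dimensional, giving the contradiction via Lemma~\ref{xxlem10.4}(6). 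The main obstacle is orchestrating this last step: one must identify an $H$-invariant quotient on which isolation is known to fail while retaining the non-conventional structure of $\tau$, and it is precisely the monomial form of $\tau$ together with the rescaling trick in case (b) that makes this possible.
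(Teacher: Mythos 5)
Your proposal is correct, and it is both more general and more complete than what the paper actually records. For part (1) the paper only treats $p=2$, using the reversal involution $x_i\mapsto x_{n-1-i}$ (with a sign tweak for odd $n$) and citing \cite{HZ} to get $x_i^2\in\mathfrak{r}(A,G)$; it gives no construction for $p\geq 7$. Your hybrid automorphism --- cyclic permutation on a block of $p$ variables glued to a free diagonal action on the remaining $n-p$ variables --- handles every admissible prime uniformly, and the gluing step is sound: $\mathfrak r(B,G)$ and $\mathfrak r(C,G)$ push into $\mathfrak r(A,G)$ by Lemma \ref{xxlem10.4}(5), Theorem \ref{xxthm0.2} and the free-diagonal case of Lemma \ref{xxlem10.2} make the two quotients finite dimensional, and since every monomial of $A$ reorders (up to sign) as a $B$-monomial times a $C$-monomial, $A_{\geq L}\subseteq\mathfrak r(A,G)$ for $L\gg 0$. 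For part (2) the paper omits the proof entirely, so your argument is the only one on record; it is correct. The key facts you need all check out: every graded automorphism of $\Bbbk_{-1}[\mathbf{x}]$ is monomial (the coefficient of $x_k\otimes x_k$ in the image of a relation forces each row of the matrix to have at most one nonzero entry), the subgroup reduction is exactly the consequence of \cite[Theorem 3.4]{GKMW} quoted after Lemma \ref{xxlem10.2}, the dichotomy between a fixed variable with $c_{i_0}=1$ and a $q$-cycle with unit scalar product is forced by $\tau(v)=v$ with $q$ prime, the rescaling $d_{k+1}=d_kc_{i_k}$ closes up precisely because the product of the $c_i$ along the cycle is $1$, and the two contradictions come from Lemma \ref{xxlem10.4}(6) applied to the quotients $\Bbbk[x_{i_0}]$ (trivial action, so $\mathfrak r=0$) and $\Bbbk_{-1}[y_0,\dots,y_{q-1}]$ with cyclic $C_q$-action (Theorem \ref{xxthm0.4}, which is where $q\in\{3,5\}$ enters). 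The one point worth flagging is cosmetic rather than substantive: in case (a) you should note explicitly that Lemma \ref{xxlem10.4} does not require inner-faithfulness (the paper says so just before its statement), since the induced action on $\Bbbk[x_{i_0}]$ is trivial.
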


\begin{proof} We omit the proof of part (2). For part (1),
we show give a proof when $p=2$.

We construct the group $G=\langle \sigma \rangle$ as follows.
If $n$ is even, let $\sigma\in \Aut_{gr}(A)$
be defined by
$$\sigma: x_i \to x_{n-1-i}$$
for all $i\in {\mathbb Z}_n$. If $n$ is odd, let 
$\sigma\in \Aut_{gr}(A)$ be defined by
$$\sigma: x_i \to x_{n-1-i}, \quad
{\text{and}}\quad x_{\frac{n-1}{2}}\to - x_{\frac{n-1}{2}}$$
for all $i\in {\mathbb Z}_n$ not equal to $\frac{n-1}{2}$. 
By \cite[Example 1.6(ii)]{HZ} and \cite[Lemma 3.4]{HZ},
$x_i^2\in {\mathfrak r}(A,G)$ for all $i$. (Some details are 
omitted.) Therefore $A/{\mathfrak r}(A,G)$ is finite
dimensional and $A^G$ is a graded isolated singularity.
Since $G$ preserves $x_0+x_{n-1}$, it is non-conventional.
\end{proof}

The next lemma is due 
to Jason Bell. We thank him for sharing his result with us.
We say an algebra $B$ is PI if it satisfies a polynomial identity.

\begin{lemma}[Jason Bell]
\label{xxlem10.6}
Let $B$ be a noetherian connected graded PI algebra generated in
degree 1. If every linear combination of homogenous elements of 
odd degrees is nilpotent, then $B$ is finite dimensional.
\end{lemma}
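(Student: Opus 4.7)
The strategy is to reduce the conclusion to the classical theorem that a finitely generated nil PI algebra over a field of characteristic zero is nilpotent, via Shirshov's height theorem and a Nagata-Higman-style argument. First, since $B$ is noetherian connected graded, $B_1$ is a finite-dimensional $\Bbbk$-vector space, and by hypothesis every $v \in B_1$ is nilpotent. For each positive integer $N$, the subset $\{v \in B_1 \mid v^N = 0\}$ is Zariski-closed in $B_1$, because the vanishing of $v^N$ in the finite-dimensional space $B_N$ is cut out by a finite set of polynomial equations in the coordinates of $v$. Since the union over all $N$ exhausts $B_1$ and $B_1$ is irreducible as an affine variety, one of these closed sets must already equal $B_1$, yielding a uniform $N$ with $v^N = 0$ for every $v \in B_1$.

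Next, I use this uniform bound together with the PI condition to show that the augmentation ideal $B_{\geq 1}$ is nil. In characteristic zero, multilinearizing the identity $v^N = 0$ on $B_1$ produces the multilinear identity $\sum_{\sigma \in S_N} v_{\sigma(1)}\cdots v_{\sigma(N)}=0$ for all $v_1,\ldots,v_N\in B_1$. Combining this with the PI satisfied by $B$ and Shirshov's height theorem, one can argue that the subalgebra generated by $B_1$ without identity, namely $B_{\geq 1}$, is nil. Shirshov's theorem writes $B$ as the $\Bbbk$-span of bounded-height products of a finite collection of short monomials in $B_1$; the odd-length monomials are nilpotent by the hypothesis directly, while the even-length ones become nilpotent after multiplication on either side by an element of $B_1$, yielding nilpotency via a uniform-bound argument analogous to the first step.

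Finally, since $B_{\geq 1}$ is nil, affine, and PI, Kaplansky's theorem on nil PI algebras together with the finite generation of $B_{\geq 1}$ as an algebra without identity forces $B_{\geq 1}^M = 0$ for some $M$. In particular $B_n = 0$ for $n \geq M$, and since each $B_n$ is finite-dimensional, $B = \bigoplus_n B_n$ is finite-dimensional over $\Bbbk$. The main obstacle lies in the second step: bootstrapping the odd-degree nilpotency hypothesis to ensure that the even-length monomials arising in Shirshov's decomposition are also algebraic over $\Bbbk$. This requires a careful application of multilinearization combined with the noetherian graded PI structure, and it is precisely here that the hypothesis about linear combinations of odd-degree homogeneous elements (rather than merely homogeneous ones) is used essentially, since one must feed in varying elements of $B_1$ to reduce even-length monomials to the odd part.
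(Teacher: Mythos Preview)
Your proposal has a genuine gap in the second step. You correctly obtain a uniform nilpotency bound on $B_1$, and you are right that once $B_{\geq 1}$ is known to be nil the standard PI theory (Kaplansky/Shirshov/Braun) forces it to be nilpotent. The problem is the passage in between: you have not shown that even-degree homogeneous elements are nilpotent. The observation that an even-length word $w$ becomes odd after multiplication by some $a\in B_1$ only yields $(aw)^M=0$ and $(wa)^M=0$; neither of these forces $w^{M'}=0$ for any $M'$. Consequently, in Shirshov's spanning set $w_1^{a_1}\cdots w_h^{a_h}$ the exponents attached to the even-length $w_i$ remain unbounded by your argument, and the spanning set is not shown to be finite. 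Your closing paragraph names this obstacle but does not remove it; ``feeding in varying elements of $B_1$'' still only produces nilpotency of odd-degree products, never of the even-degree word itself. The multilinearized identity $\sum_{\sigma\in S_N}v_{\sigma(1)}\cdots v_{\sigma(N)}=0$ holds only for $v_i\in B_1$, not for arbitrary elements of $B_{\geq 1}$, so Nagata--Higman does not apply directly either.

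The paper takes a completely different route that sidesteps this issue. Arguing by contradiction, it first passes to a quotient in which $B$ is prime and every nonzero ideal is cofinite; this forces $\GKdim B=1$. By Small--Warfield, $B$ is then module-finite over its center $Z(B)$, and the hypothesis forces $Z(B)\subseteq B_{\mathrm{even}}$ (a nonzero homogeneous central element of odd degree would be regular yet nilpotent). Inverting the nonzero homogeneous central elements gives a strongly $\mathbb{Z}$-graded, hence strongly $\mathbb{Z}_2$-graded, quotient ring $Q$ with $Q_{\mathrm{odd}}\cdot Q_{\mathrm{odd}}=Q_{\mathrm{even}}$, and every element of $Q_{\mathrm{odd}}$ is still nilpotent. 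Embedding the total quotient ring into a matrix algebra and using the matrix trace, a lemma of Mathes--Omladi\v{c}--Radjavi gives $\tr(uv)=0$ whenever $u,v,u+v$ are all nilpotent; hence $\tr(Q_{\mathrm{even}})=\tr(Q_{\mathrm{odd}}^{\,2})=0$, contradicting $\tr(1)\neq 0$. This trace argument is precisely what handles the even part, and there is no evident substitute along the Shirshov/Nagata--Higman line you sketch.
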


\begin{proof}
Suppose on the contrary that $B$ is infinite dimensional.
Let $W$ be the set of graded ideals $I$ of $B$ such that
$B/I$ is infinite dimensional. Since $B$ is noetherian,
there is a maximal element $J$ in $W$. Replacing $B$ by
$B/J$, we may assume that every nonzero ideal of $B$ has 
finite codimension. Since $B$ is graded, every 
minimal prime of $B$ is graded. As a consequence, 
the nilradical $N$ of $B$ is graded. Since $B$ is noetherian,
$B$ is infinite dimensional if and only if $B/N$ is infinite
dimensional. This implies that $N=0$. As a consequence,
a product of minimal prime ideals is zero. This in turn 
implies that one of minimal prime is zero, or $B$ is prime.

Since $B$ is PI, there is a nonzero central element in $B$.
We can further assume that this element, say $z$, is homogeneous
and a nonzerodivisor (or regular element). By the last paragraph, 
$B/(z)$ is finite dimensional. Then $\GKdim B=1$ by \eqref{E1.0.3}.

By Small-Warfield's theorem \cite{SW}, the center $Z(B)$ of
$B$ is a finitely generated graded algebra of GKdimension one 
and $B$ is a finite module over $Z(B)$. Note that every 
nonzero element in $Z(B)$ is regular. Hence 
$Z(B)$ is contained in the second Veronese subring of $B$ since 
all odd degree elements are nilpotent. 

Let $Q:=Q_{gr}(B)$ be the graded quotient ring of $B$. 
By a graded version of Posner's theorem, this is just the 
result of inverting the homogeneous nonzero central elements, 
all of which have even degree. The important point here is that 
every element of odd degree in $Q$ can be written in the 
form $az^{-1}$ with $a, z$ homogeneous and $a\in B$ of odd degree 
and $z\in Z(B)$ of even degree. Let $T$ be the (ungraded) total
quotient ring of $B$ (or of $Q$). Then $T$ can be 
embedded into a matrix algebra over a field $F$. With 
this embedding, we fix a trace map $tr$ (the usual matrix trace).
(With a bit more care one can even show that $T\cong M_n(F)$
where $F$ is the fraction field of $Z(B)$.) In particular, 
$tr(1)\neq 0$. 

As a general fact, since $B$ is generated in degree 1, $Q$
is strongly ${\mathbb Z}$-graded in the sense of \cite[A.I.3]{NoV}.
Let $Q_{odd}:=\bigoplus_{i\; {\text{is odd}}} Q_i$
and $Q_{even}:=\bigoplus_{i\; {\text{is even}}} Q_i$.
Then $Q=Q_{odd}\oplus Q_{even}$ is a strongly ${\mathbb Z}_2$-graded
algebra, namely, $Q_{odd}^2=Q_{even}$. 
By the last paragraph, every element $u$ in
$Q_{odd}$ is of the form $a z^{-1}$ where $a\in B$ is a 
linear combination of homogeneous elements of odd degrees 
and where $z\in Z(B)$ is of even degree.
Therefore $u$ is nilpotent by hypothesis. Let $u,v$ be any two
elements in $Q_{odd}$. Then $u,v, u+v$ are all in $Q_{odd}$; 
and consequently, all nilpotent. By \cite[Lemma 1]{MOR},
$tr(uv)=0$. Since $Q_{odd}^2=Q_{even}$, $tr(Q_{even})=0$. 
This contradicts $tr(1)\neq 0$.
\end{proof}

Now we consider twisted tensor products.
Let $\{B(i)\}_{i=1}^{w}$ be a family of connected graded algebras.
Then the tensor product 
$$\bigotimes {^{n}} B(i):=
B(1)\otimes B(2)\otimes \cdots \otimes B(n)$$ 
is a connected graded and ${\mathbb Z}^{\oplus n}$-graded algebra.
Let $u_i$ denote the $i$th unit element $(0,\ldots,0,1,0,\ldots,0)\in 
{\mathbb Z}^{\oplus n}$ where $1$ is in the $i$th position.
Let $\{p_{ij}\in \Bbbk^{\times}\mid 1\leq i<j\leq n\}$ be 
a set of nonzero scalar. Define $f_{u_i}$ to be the 
${\mathbb Z}^{\oplus n}$-graded algebra automorphism of
$\bigotimes^{n} B(i)$ determined by
$$f_{u_i}(1^{\otimes (j-1)}\otimes x_j \otimes 1^{\otimes (n-j)})
=1^{\otimes (j-1)}\otimes x_j \otimes 1^{\otimes (n-j)}$$
for all $i\geq j$ and $x_j\in B(j)$
and
$$f_{u_i}(1^{\otimes (j-1)}\otimes x_j \otimes 1^{\otimes (n-j)})
= p_{ij}^{-\deg x_j} 1^{\otimes (j-1)}\otimes x_j \otimes 1^{\otimes (n-j)}$$
for all $i< j$ and homogeneous elements $x_j\in B(j)$.
Then 
$$F:=\{f_{u_1^{a_1}\cdots u_{n}^{a_n}}:=
f_{u_1}^{a_1}\cdots f_{u_{n}}^{a_n}\mid {u_1^{a_1}\cdots u_{n}^{a_n}
\in {\mathbb Z}^{\oplus n}}\}$$ 
is an twisting system of $\bigotimes^{n} B(i)$
in the sense of \cite[Definition 2.1]{Zh1}. By 
\cite[Proposition and Definition 2.3]{Zh1}, one can define a
twisted algebra of $\bigotimes^{n} B(i)$ associated to the 
twisting system $F$. This twisted algebra is denoted by 
$\bigotimes^{n}_{\{p_{ij}\}} B(i)$. If $B(i)=\Bbbk[x]$ for 
all $i$, then $\bigotimes^{n}_{\{p_{ij}\}} B(i)$ is 
canonically isomorphic to skew polynomial ring 
$\Bbbk_{p_{ij}}[x_1,\ldots,x_n]$, see \cite[p.310]{Zh1}. 
Note that if $a=1^{\otimes (i-1)}\otimes x_i \otimes 1^{\otimes (n-i)}$
and $b=1^{\otimes (j-1)}\otimes x_j \otimes 1^{\otimes (n-j)}$
for two homogeneous elements $x_i\in B(i)$ and $x_j\in B(j)$
for $i<j$. Then one can check that 
$$ba=p_{ij}^{\deg x_i \deg x_j} \; ab.$$
Suppose each $B(i)$ is a noetherian PI Artin-Schelter regular 
algebra (and it is possible that the ``PI'' hypothesis can be 
weakened). One can easily check that 
$\bigotimes^{n}_{\{p_{ij}\}} B(i)$ is noetherian and 
Artin-Schelter regular. Further, $\bigotimes^{n}_{\{p_{ij}\}} B(i)$ 
has {\it enough normal elements} in the sense of \cite[p.392]{Zh2}.
By \cite[Theorem 1]{Zh2}, it is Auslander regular and 
Cohen-Macaulay.

Suppose $G$ is a finite group and $\phi_i: G\to \Aut_{gr}(B(i))$
is an injective map for each $i$. Then there is a unique 
extension of the $G$-action on $\bigotimes^{n}_{\{p_{ij}\}} B(i)$.

\begin{proposition}
\label{xxpro10.7}
Retain the above notation. 
Suppose $G$ is a finite group and $\phi_i: G\to \Aut_{gr}(B(i))$
is an injective map for each $i$. Let 
$B=\bigotimes^{n}_{\{p_{ij}\}} B(i)$.
\begin{enumerate}
\item[(1)]
$B^G$ is a graded isolated
singularity if and only if each $B(i)^G$ is a graded isolated
singularity.
\item[(2)]
Assume $B^G$ is a graded isolated
singularity. Then $B^G$ is 
conventional if and only if each $B(i)^G$ is conventional.
\end{enumerate}
\end{proposition}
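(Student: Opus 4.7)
The plan is to use the characterization, developed in the discussion preceding Definition \ref{xxdef10.1}, that a noetherian Artin-Schelter regular algebra $R$ has $R^G$ as a graded isolated singularity precisely when $R/{\mathfrak r}(R,G)$ is finite dimensional over $\Bbbk$. Both parts of the proposition will then be settled by comparing ${\mathfrak r}(B,G)$ with the individual ${\mathfrak r}(B(i),G)$.

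For the forward direction of part (1), I would construct for each $i$ a surjective $G$-equivariant graded algebra map $\pi_i\colon B\to B(i)$, sending a pure tensor $b_1\otimes\cdots\otimes b_n$ to $b_i$ when $b_j\in B(j)_0=\Bbbk$ for every $j\neq i$ and to $0$ otherwise. This is well-defined: the image sees only tensors whose non-$i$ components are scalars, so the twisting coefficients $p_{jk}$ contribute trivially and the map respects the twisted product. Applying Lemma \ref{xxlem10.4}(6) contrapositively, if some $B(i)^G$ fails to be a graded isolated singularity, then neither does $B^G$.

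For the reverse direction of part (1), which I expect to be the main obstacle, I would exploit that ${\mathfrak r}(\cdot,G)$ is always a two-sided ideal. For each $i$, the natural inclusion $\iota_i\colon B(i)\hookrightarrow B$ placing $B(i)$ into the $i$th tensor slot with $1$'s elsewhere is a $G$-equivariant algebra map, and any pertinency relation $\sum_k u_k(g\cdot v_k)=0$ in $B(i)$ lifts verbatim under $\iota_i$; hence $\iota_i({\mathfrak r}(B(i),G))\subseteq {\mathfrak r}(B,G)$. Assuming each $B(i)/{\mathfrak r}(B(i),G)$ is finite dimensional, we have ${\mathfrak r}(B(i),G)\supseteq B(i)_{\geq N_i}$ for some $N_i$. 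Since ${\mathfrak r}(B,G)$ is two-sided, it contains the two-sided $B$-ideal $I_i$ generated by $\iota_i(B(i)_{\geq N_i})$; but writing a pure tensor $b_1\otimes\cdots\otimes b_n$ as a scalar multiple of the product $\iota_1(b_1)\iota_2(b_2)\cdots\iota_n(b_n)$ in $B$ shows that $I_i$ contains every pure tensor whose $i$th component has degree $\geq N_i$. Therefore $B/\sum_i I_i$ is spanned by pure tensors $b_1\otimes\cdots\otimes b_n$ with $\deg b_i<N_i$ for every $i$, hence is finite dimensional; and $B/{\mathfrak r}(B,G)$ is a quotient of this, as required.

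For part (2), I would start from the $G$-module decomposition ${\fm}_B/{\fm}_B^2\cong B_1=\bigoplus_{i=1}^n B(i)_1$, valid because the $G$-action on $B$ preserves each tensor slot. For any $\sigma\in G$, an eigenvector in $B_1$ with eigenvalue $1$ decomposes uniquely into components in each $B(i)_1$, and each nonzero component is again a $\sigma$-fixed vector; hence $\sigma$ has eigenvalue $1$ on $B_1$ if and only if $\sigma$ has eigenvalue $1$ on some $B(i)_1$. By Definition \ref{xxdef10.1}, $B^G$ is non-conventional precisely when this happens for some $1\neq\sigma\in G$, which is exactly the condition that some $B(i)^G$ is non-conventional.
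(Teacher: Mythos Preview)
Your proposal is correct and follows exactly the approach the paper indicates (the paper's proof reads, in full, ``The proof follows from Lemma \ref{xxlem10.4}(5,6). Details are omitted''): you supply precisely those details, using the projections $\pi_i$ together with Lemma \ref{xxlem10.4}(6) for the forward direction and the inclusions $\iota_i$ together with Lemma \ref{xxlem10.4}(5) for the reverse. One small refinement: in part (2) you should write $\fm_B/\fm_B^2\cong\bigoplus_i \fm_{B(i)}/\fm_{B(i)}^2$ rather than $B_1=\bigoplus_i B(i)_1$, since the $B(i)$ are not assumed generated in degree $1$; your eigenvector argument then goes through verbatim.
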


\begin{proof} The proof follows from Lemma \ref{xxlem10.4}(5,6).
Details are omitted.
\end{proof}

Proposition \ref{xxpro10.7} provides a lot examples of graded isolated 
singularities.

Next let $B(i)=B$, for $i=1,\ldots,n$, be a noetherian PI 
Artin-Schelter regular algebra generated in degree 1.
Let $p_{ij}=-1$ for all $i<j$. We consider $(-1)$-twisted 
tensor product $\bigotimes^{n}_{\{-1\}} B$ and the permutation
automorphism $\sigma\in \Aut_{gr}(\bigotimes^{n}_{\{-1\}} B)$
determined by

\begin{equation}
\label{E10.7.1}\tag{E10.7.1}
\sigma: 1^{\otimes (j-1)}\otimes x_j \otimes 1^{\otimes (n-j)}
\mapsto 1^{\otimes j}\otimes x_j \otimes 1^{\otimes (n-j-1)},
\quad 
1^{\otimes (n-1)} \otimes x_n \mapsto x_n\otimes 1^{\otimes (n-1)}
\end{equation}
for all $x_j, x_n\in B$. 

\begin{proposition}
\label{xxpro10.8}
Retain the above notation.  Assume that $n\geq 2$ is 
admissible in the sense of Definition \ref{xxdef5.2}(2). 
Let $B$ be any noetherian PI Artin-Schelter regular algebra
generated in degree 1.
Let $G$ be the group $\langle \sigma \rangle$ where $\sigma$
is defined in \eqref{E10.7.1}. Then 
$(\bigotimes^{n}_{\{-1\}} B)^G$ is a non-conventional graded 
isolated singularity.
\end{proposition}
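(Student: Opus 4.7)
The plan is to reduce the graded isolated singularity question for $R:=\bigotimes^{n}_{\{-1\}}B$ to the admissibility of $n$ for $\Bbbk_{-1}[\mathbf{x}]$ by constructing many $G$-equivariant maps into $R$. For each $y\in B_1$ I would introduce the algebra homomorphism
$$\iota_y \colon \Bbbk_{-1}[\mathbf{x}] \longrightarrow R, \qquad x_i \longmapsto y^{(i+1)},$$
where $y^{(j)}:=1^{\otimes(j-1)}\otimes y \otimes 1^{\otimes(n-j)}$. It is well defined since $y^{(i)}y^{(j)}=-y^{(j)}y^{(i)}$ for $i\neq j$ (the $(-1)$-twist applied to two degree-one elements in distinct tensor positions), and $G$-equivariant by direct comparison with \eqref{E10.7.1}.

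Since $n$ is admissible, $\Bbbk_{-1}[\mathbf{x}]/\mathfrak{r}(\Bbbk_{-1}[\mathbf{x}],C_n)$ is finite dimensional, so there is an $N$ with $x_i^N\in \mathfrak{r}(\Bbbk_{-1}[\mathbf{x}],C_n)$ for every $i$. Lemma~\ref{xxlem10.4}(5) then gives $(y^N)^{(j)}\in \mathfrak{r}(R,G)$ for every $y\in B_1$ and every tensor position $j$. Letting $y$ run over a basis $y_1,\ldots,y_r$ of $B_1$, the image of the subalgebra $B^{(j)}\cong B$ inside $\bar R:=R/\mathfrak{r}(R,G)$ is a quotient of the finitely generated PI algebra $B/(y_1^N,\ldots,y_r^N)$, which is finite dimensional by Shirshov's theorem (the solution of Kurosh's problem for PI algebras, applicable because every generator is nilpotent). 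Hence there is $M'$ independent of $j$ with $b^{(j)}\in \mathfrak{r}(R,G)$ for every $b\in B_{\geq M'}$. Using the $(-1)$-twist to sort factors by tensor position, every monomial in $R$ equals, up to a sign, a simple tensor $\mu_1\otimes\cdots\otimes\mu_n$ with $\mu_i\in B$; a pigeonhole argument on total degree forces some $\mu_i$ into $B_{\geq M'}$ once $\sum_i\deg\mu_i\geq nM'$, which gives $R_{\geq nM'}\subseteq \mathfrak{r}(R,G)$ and so $\bar R$ is finite dimensional.

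Finite dimensionality of $\bar R$ yields $\p(R,G)=\GKdim R$, whence $R^G$ is a graded isolated singularity by \cite[Theorem 3.10]{MU1}. For non-conventionality, for any nonzero $y\in B_1$ the element $\sum_{i=1}^n y^{(i)}\in R_1$ is $\sigma$-fixed by \eqref{E10.7.1}, so $\sigma$ has eigenvalue $1$ on $R_1\cong\mathfrak{m}/\mathfrak{m}^2$. The main obstacle is the invocation of Shirshov's theorem, together with verifying that the above simple-tensor reordering is compatible with the quotient by $\mathfrak{r}(R,G)$; an attractive alternative is to apply Lemma~\ref{xxlem10.6} (Bell's lemma) directly to $\bar R$, but this would require extending the nilpotency statement above to all odd-degree elements of $\bar R$, which appears more delicate.
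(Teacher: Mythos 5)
Your overall strategy --- embedding copies of $\Bbbk_{-1}[\mathbf{x}]$ equivariantly into $R=\bigotimes^{n}_{\{-1\}}B$, using admissibility to produce nilpotent elements of $\mathfrak{r}(R,G)$, and then bootstrapping to finite dimensionality of $\bar R=R/\mathfrak{r}(R,G)$ --- is exactly the paper's. The construction of $\iota_y$, the use of Lemma \ref{xxlem10.4}(5), the tensor-slot symmetry at the end, and the non-conventionality argument are all fine. The gap is the middle step: from ``$(y^{(j)})^N\in\mathfrak{r}(R,G)$ for every $y\in B_1$'' you conclude that the image of $B^{(j)}$ in $\bar R$ is finite dimensional ``by Shirshov's theorem \dots because every generator is nilpotent.'' That is not what Shirshov/Kurosh gives: the solution of the Kurosh problem for PI algebras requires algebraicity of all elements (in fact of all words of bounded length in the generators), not merely of the generators. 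A finitely generated PI algebra whose generators are nilpotent can be infinite dimensional, e.g.\ $\Bbbk\langle x,y\rangle/(x^2,y^2)$, where $xy$ is not nilpotent. Even the stronger fact you actually possess --- every element of $B_1$, not just each basis element, maps to a nilpotent of index $\le N$ --- is not known to force finite dimensionality of the image of $B$; this is precisely the delicate point for which the paper proves Lemma \ref{xxlem10.6}, whose hypothesis is nilpotency of every linear combination of homogeneous elements of \emph{all odd degrees}, and whose proof (Small--Warfield, graded Posner, a trace argument on $Q_{\mathrm{odd}}$) genuinely needs all odd degrees, not only degree one.

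The repair is cheap and is what the paper does. Your map $\iota_y$ still makes sense, and is still a $G$-equivariant algebra homomorphism, when $y$ is an arbitrary linear combination of homogeneous elements of odd degree in $B$: the twist sign between distinct tensor slots is $(-1)^{\deg x_i\deg x_j}$, and odd times odd is odd, so the elements $y^{(1)},\dots,y^{(n)}$ still pairwise anticommute and generate a copy of $\Bbbk_{-1}[\mathbf{x}]$ (the map is no longer graded, but Lemma \ref{xxlem10.4}(5) does not need that). Running your argument with these $y$ shows that every odd-degree linear combination in the image of $B$ in $\bar R$ is nilpotent, so Lemma \ref{xxlem10.6} applies and gives finite dimensionality of that image; the rest of your proof (symmetry over the $n$ slots, Mori--Ueyama, and the fixed vector $\sum_i y^{(i)}$ for non-conventionality) then goes through unchanged.
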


\begin{proof}
Let $S=\bigotimes^{n}_{\{-1\}} B$. It suffices to show that
$S/{\mathfrak r}(S,G)$ is finite dimensional.

Let $x\in B$ be a linear combination of homogeneous elements 
of odd degrees.
Let $x_i=1^{\otimes i} \otimes x \otimes 1^{\otimes (n-i-1)}
\in S$, for $i=0,\ldots,n-1$. Then the subalgebra generated 
by $\{x_0,x_1,\ldots,x_n\}$ is the $(-1)$-skew polynomial
ring $R:=\Bbbk_{-1}[{\mathbf x}]$. So the inclusion
$f: R\to S$ is compatible with the $G$-action. (Note that 
$f$ is not a graded algebra homomorphism.) Since
$n$ is admissible, the quotient $R/{\mathfrak r}(R,G)$ is finite
dimensional. Hence, for each $x_i$, we have $x_i^N\in 
{\mathfrak r}(R,G)$ for some $N\geq 0$. By Lemma 
\ref{xxlem10.4}(5), $x_i^N\in {\mathfrak r}(S,G)$. 
This is true for all $x$ that is a linear combination of 
homogeneous elements of odd degrees in $B$.
By Lemma \ref{xxlem10.6}, the image of the map
$$B\to B\otimes \Bbbk^{\otimes (n-1)}\subset
{\textstyle{\bigotimes^{n}_{\{-1\}}}} 
B(=S)\to S/{\mathfrak r}(S,G)$$
is finite dimensional. Say this image is $\overline{B}$.
By symmetry, $S/{\mathfrak r}(S,G)$ is a quotient
ring of $\bigotimes^n_{\{-1\}}\overline{B}$, which is finite
dimensional. Therefore $S/{\mathfrak r}(S,G)$ is finite
dimensional as desired. 
\end{proof}

Proposition \ref{xxpro10.8} also provides a lot examples of 
graded isolated singularities by varying $B$.

\section{Some questions and comments}
\label{xxsec11}

It is quite reasonable to adapt Ueyama's definition of a
graded isolated singularity \cite[Definition 2.2]{Ue},
at least in the connected graded case. By Remark 
\ref{xxrem0.3}(2), the straightforward generalization of the 
freeness criterion for commutative quotient isolated 
singularities \cite[Lemma 2.1]{MSt} fails badly in the 
noncommutative case. However the freeness of the 
$G$-action on $V\setminus \{0\}$ is one of the 
easiest and most effective criterions for isolated 
singularities. Therefore we ask

\begin{question}
\label{xxque11.1}
What is the analogue of the freeness criterion of 
isolated singularities in the (connected graded) 
noncommutative setting?
\end{question}

Let $R$ be a noetherian Artin-Schelter regular algebra 
and let $G$ be a finite subgroup of $\Aut_{gr}(R)$. By 
a result of Mori-Ueyama \cite[Theorem 3.10]{MU1} together 
with \cite{HZ}, the following are equivalent:
\begin{enumerate}
\item[(1)]
$R^G$ is a graded isolated singularity,
\item[(2)]
$R/{\mathfrak r}(R,G)$ is finite dimensional,
\item[(3)]
$R\# G/(e_0)$, where $e_0=1\# (\sum_{g\in G} g)$, is finite dimensional,
\item[(4)]
$\p(R,G)=\GKdim R$.
\end{enumerate}
Mori-Ueyama's criterion of graded isolated singularities
is quite convenient. On the other hand, it could be very difficult 
to verify (2), or (3), or to calculate the exact value of $\p(R,G)$. 

One of the key steps in the proof of Theorem \ref{xxthm0.2} is to 
show that the set $Spl(n)$ is non-empty. But we can not prove that
$Spl(n)\neq \emptyset$ is necessary. In particular, we do not 
have answers to the following questions.

\begin{question}
\label{xxquie11.2}
Let $n=p_1 p_2$ for two distinct odd primes $p_1,p_2$.
\begin{enumerate}
\item[(1)]
If $7\leq \mop(n)\leq 17$, is then $n$ admissible?
\item[(2)]
Is $Spl(77)\neq \emptyset$? 
\item[(3)]
If $Spl(77)=\emptyset$, is $77$ admissible? 
\end{enumerate}
\end{question}

Hypersurface isolated singularities have been studied 
extensively, and form a rich topic in algebraic geometry
\cite{Mi}. The noncommutative version of a hypersurface 
was defined in \cite[Definition 1.3(c)]{KKZ2}. 

In the commutative theory, every hypersurface isolated 
singularity produces a finite dimensional Milnor algebra 
(as well as the Tjurina algebra). It would be interesting 
to develop a similar theory for the noncommutative 
hypersurface isolated singularities. At this point, it is 
not clear to us what is the best way of defining the 
noncommutative Jacobian ideal, since there are no 
canonically defined partial derivatives in the 
noncommutative case. Here we will like to propose a 
definition of the Milnor algebra when the hypersurface 
singularity is defined by ``double twisted superpotentials''.

Let $V$ be a finite dimensional vector space 
$\bigoplus_{s=1}^{v} \Bbbk x_i$, or $\{x_s\}_{s=1}^v$ be 
a basis of $V$. Let $F$ be the free algebra $\Bbbk\langle
x_1,\ldots,x_v\rangle=\Bbbk \langle V\rangle$.
Let $\sigma$ denote an element in ${\text{GL}}(V)$.
We define two $\Bbbk$-linear maps from $F$ to $F$. The 
first one is $\phi$, which is determined by
$$\phi: x_{i_1}\otimes \cdots \otimes x_{i_{n-1}}
\otimes x_{i_n}\mapsto 
x_{i_n}\otimes x_{i_1}\otimes \cdots \otimes x_{i_{n-1}}$$
for all $x_{i_s}$ in the basis of $V$. The second one 
$\sigma\otimes 1$, where $\sigma \in {\text{GL}}(V)$, 
is determined by
$$\sigma\otimes 1: x_{i_1}\otimes \cdots \otimes 
x_{i_{n-1}}\otimes x_{i_n} \mapsto \sigma(x_{i_1})
\otimes \cdots \otimes x_{i_{n-1}}\otimes x_{i_n}.$$
Following \cite[Definition 1]{DV}, \cite[p.1502]{BSW},
\cite[Definitions 2.1.3 and 2.1.4]{Ka},
\cite[Definition 2.5]{MSm}
(and taking the quiver with one vertex and $v$ arrows), 
a {\it twisted  superpotential} in the free algebra 
$F$ is an element $w$ in $F$ such that
$$w=(\sigma\otimes 1) \phi(w)$$
for some $\sigma\in {\text{GL}}(V)$. (All papers 
\cite{DV, BSW, Ka, MSm} use slightly different notation, 
but one can easily figure out the discrepancies). For 
every $x_i$, we define a partial derivation $\partial_i$ 
as follows
$$\partial_i (x_{i_1}\otimes x_{i_2}\otimes \cdots \otimes x_{i_w})
=\begin{cases} x_{i_2}\otimes \cdots \otimes x_{i_w}& i_1=i\\
0& i\neq i_1.\end{cases}$$
(This definition of a partial derivative is slightly 
different from the ordinary partial derivative in 
calculus. Another possibility is the cyclic, or circular, 
derivative.) For every $w$, let $\partial(w)$ be the 
$\Bbbk$-linear span of $\{\partial_i(w)\}_{i=1}^v$. For 
an integer $N$, one can define $\partial^N(w)$ inductively 
by $\partial^N(w)=\partial(\partial^{N-1}(w))$. Given a 
twisted superpotential $w$ and an integer $N$, one can 
define {\it superpotential algebra} ${\mathcal D}(w,N)$ 
\cite[Definition 2.1.6]{Ka} (which is the same as the 
derivation-quotient algebra in the sense of \cite{DV, BSW, MSm}) 
to be
$${\mathcal D}(w,N):=F/(\partial^N(w)).$$
Dubois-Violette proved a very nice result \cite[Theorem 11]{DV}:
a Koszul (or higher Koszul) algebra is twisted Calabi-Yau 
if and only if it is isomorphic to a superpotential algebra 
for a unique-up-to-scalar-multiples twisted superpotential $w$. 

\begin{definition}
\label{xxdef11.3} Retain the above notation.
\begin{enumerate}
\item[(1)]
A pair of elements $(w_1, w_2)$ in $F$ are called 
{\it double twisted superpotentials} if 
\begin{enumerate}
\item[(a)]
$w_1$ is a twisted superpotential (with an automorphism 
$\sigma_1\in {\text{GL}}(V)$) such that the superpotential 
algebra $D:={\mathcal D}(w_1,N)$ is a noetherian 
Artin-Schelter regular algebra. 
\item[(b)]
$w_2$ is a twisted superpotential (with an automorphism 
$\sigma_2\in {\text{GL}}(V)$) such that $w_2$ is a normal 
regular element in $D$. 
\end{enumerate}
Let $(w_1,w_2)$ be double twisted superpotentials in 
parts (2,3,4).
\item[(2)]
The algebra $D/(w_2)$ is called the hypersurface 
singularity associated to $(w_1,w_2)$, and is denoted 
by $T(w_1,w_2)$.
\item[(3)]
The {\it Milnor algebra} associated to $(w_1,w_2)$
is defined to be 
$${\mathcal M}(w_1,w_2):=D/(\partial(w_2)).$$
\item[(4)]
The {\it Milnor number} associated to  $(w_1,w_2)$
is defined to be
$$m(w_1,w_2):=\dim_{\Bbbk} {\mathcal M}(w_1,w_2).$$
\end{enumerate}
\end{definition}

With these definitions, we can ask the following:

\begin{question}
\label{xxque11.4}
Is $T(w_1,w_2)$ being a graded isolated singularity 
equivalent to $m(w_1,w_2)$
being finite?
\end{question}

The following example of a hypersurface isolated 
singularity is non-conventional such that Question 
\ref{xxque11.4} has an affirmative answer.

\begin{example}
\label{xxex11.5}
Let $A=\Bbbk_{-1}[x_0,x_1]$ and $G$ be the group
of automorphism of $A$ generated by $f$, where 
$f$ is determined by 
$$f: x_0\mapsto x_1, \quad x_1\mapsto x_0.$$
By \cite[Example 3.1]{KKZ1}, 
$A^G$ is a hypersurface singularity, which can be 
written as
$$A^G=D/(w_2)$$
where $D$ is an Artin-Schelter regular algebra of 
global dimension three and $w_2$ is a normal element 
of degree 6 in $D$. In details, $x=x_0+x_1$ and 
$y=x_0^3+x_1^3$,
$$D=\Bbbk\langle x,y\rangle/(x^2y-yx^2, xy^2-y^2x)$$ 
and 
$$w_2= 2x^6 -\frac{3}{2}(x^3y+x^2yx+xyx^2+yx^3)+4y^2.$$
By Theorem \ref{xxthm0.2}, $A^G$ has a non-conventional 
graded isolated singularity. 

Note that $D$ is $(-1)$-twisted Calabi-Yau 
\cite[Example 1.6]{RRZ}. There is a twisted superpotential 
$$w_1=xy^2 x +yx^2 y-y^2 x^2-x^2 y^2$$
with automorphism $\sigma$ determined by
$$\sigma: x\mapsto -x, y\mapsto -y,$$ 
and $D$ is the superpotential algebra associated to
$w_1$. It is easy to check that 
\begin{enumerate}
\item[(1)]
$w_2$ is a regular normal element in $D$,
\item[(2)]
$w_2$ is a superpotential.
\end{enumerate}
The Milnor ring of the hypersurface singularity 
$A^G$ is 
$$D/(\partial w_2)=D/(12x^5-\frac{3}{2}(x^2y+xyx+yx^2), 
-\frac{3}{2}x^3+4y),$$
which is isomorphic to $\Bbbk[x]/(x^5)$ by an easy 
calculation. As a consequence, the Milnor number of 
$A^G$ is 5. 

Note that the McKay quiver corresponding to $(A,G)$ 
is of type $\widetilde{L}_1$, see 
\cite[Proposition 7.1 and pp. 249-250]{CKWZ1}. This is 
slightly different from the classical $\widetilde{A}$, 
$\widetilde{D}$, $\widetilde{E}$ types.
\end{example}

\begin{remark}
\label{xxrem11.6}
Some other noncommutative hypersurface graded isolated 
singularities are given in \cite[Theorem 5.2]{CKWZ2} and 
\cite[Table 3 in p.537]{CKWZ2}. These are related to 
noncommutative McKay correspondence in dimension two. It 
would be interesting to answer Question \ref{xxque11.4} 
for these hypersurface singularities.
\end{remark}

\end{document}